\documentclass[reqno, 12pt]{amsart}
\usepackage{amsmath,latexsym,amsfonts,amssymb,amsthm}
\usepackage{geometry}
\usepackage[dvipsnames]{xcolor}
\usepackage[colorlinks,citecolor=OliveGreen,linkcolor=Mahogany,urlcolor=Plum,pagebackref]{hyperref}
\usepackage{mathrsfs}
\usepackage{graphicx,color}
\usepackage[alphabetic]{amsrefs}
\usepackage[all,cmtip]{xy}
\usepackage{bbm, stmaryrd}
\usepackage{tabu}
\usepackage{enumitem}
\usepackage{tikz}
\usepackage{tikz-cd}
\usepackage{comment}
\usetikzlibrary{calc, decorations.markings}

\newtheorem{prop}{Proposition}[section]
\newtheorem{thm}[prop]{Theorem}

\newtheorem{cor}[prop]{Corollary}
\newtheorem{conj}[prop]{Conjecture}
\newtheorem{lem}[prop]{Lemma}
\newtheorem{que}[prop]{Question}

\theoremstyle{definition}

\newtheorem{defn}[prop]{Definition}
\newtheorem{expl}[prop]{Example}
\newtheorem{rem}[prop]{\it Remark}

\newtheorem*{claim*}{Claim}

\newcommand{\bP}{\mathbb{P}}

\newcommand{\bR}{\mathbb{R}}
\newcommand{\bA}{\mathbb{A}}
\newcommand{\bQ}{\mathbb{Q}}
\newcommand{\bZ}{\mathbb{Z}}
\newcommand{\bN}{\mathbb{N}}

\newcommand{\bG}{\mathbb{G}}
\newcommand{\bT}{\mathbb{T}}
\newcommand{\bS}{\mathbb{S}}
\newcommand{\bF}{\mathbb{F}}
\newcommand{\bk}{\mathbbm{k}}

\newcommand{\oX}{\overline{X}}

\newcommand{\tX}{\widetilde{X}}

\newcommand{\tD}{\widetilde{D}}

\newcommand{\tw}{\widetilde{w}}

\newcommand{\tW}{\widetilde{W}}

\newcommand{\cX}{\mathcal{X}}

\newcommand{\cO}{\mathcal{O}}
\newcommand{\cL}{\mathcal{L}}
\newcommand{\cI}{\mathcal{I}}

\newcommand{\cF}{\mathcal{F}}
\newcommand{\cG}{\mathcal{G}}

\newcommand{\cE}{\mathcal{E}}
\newcommand{\cD}{\mathcal{D}}

\newcommand{\cR}{\mathcal{R}}
\newcommand{\cH}{\mathcal{H}}

\newcommand{\cS}{\mathcal{S}}

\newcommand{\fa}{\mathfrak{a}}

\newcommand{\fg}{\mathfrak{g}}
\newcommand{\fS}{\mathfrak{S}}

\newcommand{\hX}{\widehat{X}}

\newcommand{\Spec}{\mathrm{Spec}}
\newcommand{\Supp}{\mathrm{Supp}}
\newcommand{\Hom}{\mathrm{Hom}}

\newcommand{\Aut}{\mathrm{Aut}}

\newcommand{\ord}{\mathrm{ord}}

\newcommand{\Diff}{\mathrm{Diff}}

\newcommand{\gr}{\mathrm{gr}}

\newcommand{\wt}{\mathrm{wt}}
\newcommand{\QM}{\mathrm{QM}}

\newcommand{\Trop}{\mathrm{Trop}}
\newcommand{\trop}{\mathrm{trop}}

\newcommand{\triv}{\mathrm{triv}}

\newcommand{\tcX}{\widetilde{\mathcal{X}}}

\newcommand{\Proj}{\mathrm{Proj}}

\newcommand{\fX}{\mathfrak{X}}
\newcommand{\fD}{\mathfrak{D}}
\newcommand{\fE}{\mathfrak{E}}

\newcommand{\fH}{\mathfrak{H}}
\newcommand{\fP}{\mathfrak{P}}
\newcommand{\fF}{\mathfrak{F}}

\newcommand{\fL}{\mathfrak{L}}
\newcommand{\fY}{\mathfrak{Y}}
\newcommand{\fG}{\mathfrak{G}}
\newcommand{\fB}{\mathfrak{B}}

\newcommand{\hf}{\hat{f}}
\newcommand{\hD}{\widehat{D}}

\newcommand{\GL}{\mathrm{GL}}
\newcommand{\PGL}{\mathrm{PGL}}

\newcommand{\SV}{\mathrm{SV}}
\newcommand{\PsAut}{\mathrm{PsAut}}
\newcommand{\bmu}{\boldsymbol{\mu}}
\newcommand{\hcX}{\widehat{\mathcal{X}}}
\newcommand{\hcD}{\widehat{\mathcal{D}}}
\newcommand{\hfX}{\widehat{\mathfrak{X}}}

\newcommand{\hfP}{\widehat{\mathfrak{P}}}
\newcommand{\hGamma}{\widehat{\Gamma}}

\newcommand{\oS}{\overline{S}}
\newcommand{\os}{\overline{s}}

\newcommand{\Sk}{\mathrm{Sk}}

\newcommand{\coeff}{\mathrm{coeff}}

\newcommand{\bH}{\mathbb{H}}
\newcommand{\Int}{\mathrm{Int}}
\newcommand{\an}{\mathrm{an}}
\newcommand{\val}{\mathrm{val}}

\newcommand{\rfg}{\mathrm{fg}}
\newcommand{\qm}{\mathrm{qm}}
\newcommand{\ratrank}{\mathrm{rat.rank}}

\newcommand{\LCP}{\mathrm{LCP}}
\newcommand{\rdiv}{\mathrm{div}}
\newcommand{\Rees}{\mathrm{Rees}}
\newcommand{\hDelta}{\widehat{\Delta}}
\newcommand{\Exc}{\mathrm{Exc}}

\newcommand{\tfX}{\widetilde{\mathfrak{X}}}
\newcommand{\Bl}{\mathrm{Bl}}
\newcommand{\oK}{\overline{K}}
\newcommand{\red}{\mathrm{red}}

\newcommand{\sX}{\mathscr{X}}
\newcommand{\sD}{\mathscr{D}}
\newcommand{\pr}{\mathrm{pr}}
\newcommand{\ux}{\underline{x}}
\newcommand{\oell}{\overline{\ell}}
\newcommand{\ocS}{\overline{\mathcal{S}}}
\newcommand{\ofS}{\overline{\mathfrak{S}}}
\newcommand{\codim}{\mathrm{codim}}
\newcommand{\PL}{\mathrm{PL}}
\newcommand{\Cone}{\mathrm{Cone}}
\newcommand{\VPi}{\mathrm{Vert}(\Pi)}
\newcommand{\Stab}{\mathrm{Stab}}
\newcommand{\Adm}{\mathrm{Adm}}

\numberwithin{equation}{section}

\newcommand{\YL}[1]{{\textcolor{blue}{[Yuchen: #1]}}}

\title{Special valuations and automorphisms of affine log Calabi--Yau varieties}

\date{\today}

\author{Yuchen Liu}
\address{Department of Mathematics, Northwestern University, Evanston, IL 60208, USA.}
\email{yuchenl@northwestern.edu}

\begin{document}

\begin{abstract}
Let $U$ be an affine log Calabi--Yau variety. Although special
and finitely generated valuations are defined using a log CY-Fano
compactification of $U$, we prove that the corresponding skeleta
in the dual complex are independent of this choice, and hence
invariant under $\Aut(U)$. We also show that a finitely generated
valuation is special if and only if it is maximal with respect to
a natural partial order induced by regular functions on $U$.

We then consider the affine log Calabi--Yau threefold obtained as the complement of the Markov cubic surface in $\bA^3$. We
describe the action of the three Vieta involutions on the special
skeleton and relate it to the $(\infty,\infty,\infty)$-triangle
reflection group on the hyperbolic plane. We also show that, for every finite triangulation of the dual complex, the special skeleton fails to be locally closed on some simplex. Via the cone construction, this provides
a counterexample to a conjecture of the author and Xu.
\end{abstract}

\maketitle

\tableofcontents

\section{Introduction}

A recurring theme in birational geometry and K-stability is that degenerations of an algebraic variety can be encoded by valuations. For log Calabi--Yau pairs, these valuations naturally lie in the cone of log canonical places, whose projectivization is the dual complex. The aim of this paper is to study distinguished loci in the dual complex corresponding to particularly well-behaved degenerations, and to show
that these loci are intrinsic to the underlying affine log Calabi--Yau variety.

Let $(X,D)$ be an lc log Calabi--Yau (CY) pair such that $D$ is reduced and fully supports an ample divisor, and assume that $U:=X\setminus D $ is klt. We call $U$ an \emph{affine log Calabi--Yau variety} and $(X,D)$ a \emph{log Calabi--Yau compactification} of $U$. Any two such compactifications are crepant birational; in particular, they determine the same cone of log canonical places. We denote this cone and its projectivization by \[ U^{\trop}:=\LCP(X,D), \qquad   \cD(U):= \bigl(U^{\trop}\setminus\{v_{\triv}\}\bigr)/\bR_{>0}. \]
Then $\cD(U)$ is naturally identified with the dual complex $\cD(X,D)$; see \cite{dFKX17, KX15}. Thus $U^{\trop}$ and $\cD(U)$ depend only on $U$, rather than on the choice of compactification.
The ample boundary condition also equips $X$ with a natural auxiliary log Fano structure: one may choose a $\bQ$-divisor $ 0\leq \Delta\leq D $ such that $(X,\Delta)$ is a klt log Fano pair. We call $(X,D;\Delta)$ a \emph{log CY-Fano compactification} of $U$.

Fix such a compactification. A valuation $v\in U^{\trop}$ is called \emph{finitely generated} with respect to $(X,\Delta)$ if the associated graded ring $\gr_v R$ of the anti-log-canonical section ring of $(X,\Delta)$ is finitely generated. It is called \emph{special} if, in addition, the induced log Fano degeneration $(X_v,\Delta_v)$ is klt.
For divisorial valuations, specialness corresponds precisely to special test
configurations in the sense of Li--Xu \cite{LX14}, who showed that K-stability of a Fano variety can be tested using special test configurations.
More generally, special valuations correspond to special $\bR$-test configurations in the sense of \cite{DS20, HL20,BLXZ23}; see Section~\ref{sec:prelim} for precise definitions.

Special valuations have played an important role in K-stability and in the study of canonical degenerations. In various settings, showing that a minimizing valuation is finitely generated or even special is a crucial step in the study of K-moduli, the singular Yau--Tian--Donaldson conjecture, K\"ahler--Ricci soliton degenerations, and the stable degeneration conjecture for klt singularities \cite{LXZ22,BLXZ23,XZ22,Che25}. In the present paper, however, we shift the focus from a fixed log Fano pair to the underlying affine log CY variety, and study the collection of all special and finitely generated valuations.

Our first main result proves that the subsets of $\cD(U)$ consisting respectively of finitely generated and special valuations are intrinsic to the affine log CY variety $U$, despite their apparent dependence on the choice of log CY-Fano compactification $(X,D;\Delta)$.

\begin{thm}\label{thm:main}
Let $U$ be an affine log CY variety. Then the following sets
\begin{align*}
 \cD^{\SV}(X, D; \Delta)& := \{[v]\in \cD(U)\mid v\textrm{ is special with respect to } (X,\Delta)\},\\
\cD^{\rfg}(X,D;\Delta)& :=\{[v]\in \cD(U)\mid v\textrm{ is finitely generated with respect to } (X,\Delta)\}
\end{align*}
are independent of the choice of log CY-Fano compactification $(X,D;\Delta)$ of $U$. 
\end{thm}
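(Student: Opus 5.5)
The proof will show that finite generation and specialness of $v\in U^{\trop}$ can be read off from data attached to $U$ and to the crepant birational class of $(X,D)$, and not to the auxiliary boundary $\Delta$. Throughout, we use that for any log CY-Fano compactification $(X,D;\Delta)$, the divisor $\Gamma:=D-\Delta\ge 0$ is a $\bQ$-complement of the log Fano pair $(X,\Delta)$, because $K_X+\Delta+\Gamma=K_X+D\sim_\bQ 0$. Consequently every $v\in U^{\trop}=\LCP(X,D)$ is an lc place of a complement of $(X,\Delta)$. We proceed in two steps: first vary $\Delta$ with $X$ fixed, then vary $X$.

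\emph{Step 1 (fixed $X$, varying $\Delta$).} Let $\Delta,\Delta'$ be two admissible boundaries on the same $X$. Since $X$ is of Fano type, the bigraded ring
\[
R(X;\,D-\Delta,\,D-\Delta')=\bigoplus_{(a,b)} H^0\bigl(X,\lfloor a(D-\Delta)+b(D-\Delta')\rfloor\bigr)
\]
is finitely generated by \cite{BCHM}, and so is its Veronese. The valuation $v$ induces a filtration on this bigraded ring that restricts to the $v$-filtrations on $R(X,\Delta)$ and $R(X,\Delta')$. I would then apply the finite generation criterion of \cite{LXZ22,XZ22} for lc places of complements. By that criterion, $\gr_v R$ is finitely generated exactly when the quasi-monomial valuation $v$ lies in the simplex $\QM(Y,E)$ of a qdlt Fano type model $\pi\colon (Y,E)\to X$ whose components are lc places of a complement, together with a finite-generation condition on the graded pieces.
\begin{itemize}
\item The lc places of $(X,D)$ are lc places of every complement $D-\Delta$. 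Also, $(X,(1-\varepsilon)D+\varepsilon\Delta)$ is klt log Fano for every $0<\varepsilon\le 1$.
\item By BCHM, any projective model $Y$ extracting only lc places of $(X,D)$ is therefore of Fano type, and this holds independently of $\Delta$.
\end{itemize}
So the models appearing in the criterion are the same for $\Delta$ and for $\Delta'$. The remaining task is to check that the finite-generation condition on the graded pieces is insensitive to interpolating $\Delta_t=(1-t)\Delta+t\Delta'$. The plan for this is to view the $v$-filtration on the bigraded ring as a multigraded Rees algebra over a finitely generated Cox-type ring. Finite generation of its associated graded along one ample ray should then propagate to every ray in the interior of the cone. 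This is where the bigraded ring is used.

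\emph{Step 2 (varying $X$).} Two log CY compactifications $(X,D)$ and $(X',D')$ of $U$ are crepant birational. Choose a common $\bQ$-factorial dlt modification $(Y,D_Y)$ extracting the needed lc places. By the previous step, $Y$ is of Fano type over both, and qdlt models through which $v$ is computed can be taken on the $Y$ side. For a common index $m$, the rings $R(X,\Delta)$ and $R(X',\Delta')$ are then both quotients or Veronese subrings of a finitely generated multigraded section ring on $Y$. Running Step 1 on $Y$ with the two pulled-back boundaries, which are big and semiample rather than ample, transfers finite generation of $\gr_v$ from one to the other.

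\emph{Specialness.} Given finite generation, specialness means that the degeneration $(X_v,\Delta_v)$ is klt. I would use the standard equivalence: for an lc place $v$ of the complement $\Gamma$, $(X_v,\Delta_v)$ is klt if and only if $(X_v,\Delta_v+\Gamma_v)$ is lc log CY with $v$ inducing the only lc centre direction, i.e.\ the degenerate $(X_v,D_v)$ is log CY with boundary the degeneration of $D$. Here $D_v$ is the degeneration of $D$, which does not depend on $\Delta$. Klt-ness of $(X_v,\Delta_v)$ is then equivalent to klt-ness of $(X_v,\Delta_{t,v})$ for any interpolated $\Delta_t$, since $\Delta_v\le D_v$ and $X_v\setminus D_v$ is the degeneration of the klt variety $U$. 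This reduces specialness to a condition on $(X_v,D_v)$, which Steps 1--2 show is determined by $U$.

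\emph{Main obstacle.} The hardest step is the claim in Step 1 that finite generation of $\gr_v$ along a single ample ray implies it along all rays. Finite generation of associated gradeds is not in general stable under changing the polarization. I would first try to deduce it from the fact that $v$ is an lc place of the \emph{same} $\bR$-complement $D$ for every ray, using the $\bR$-test configuration picture of \cite{BLXZ23}. If that fails, the fallback is to extract the Koll\'ar-type model $Y$ and reduce to a statement about the Fano type model $Y$ alone, which is intrinsic to $U$.
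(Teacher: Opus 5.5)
\textbf{The finite-generation half has a genuine gap.} It sits exactly at the step you flag as the main obstacle, and the rest of the argument depends on it.

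\emph{Steps 1 and 2 do not transfer finite generation.} The assertion that finite generation of $\gr_v$ along one ray of the bigraded ring ``propagates'' to the other rays is just the theorem restated for fixed $X$. Step~2 reduces to the same assertion on a model $Y$ whose polarizations are only big and semiample, so it inherits the gap.

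\emph{Your suggested justifications do not supply it.}
\begin{itemize}
\item Every $v\in U^{\trop}$ is an lc place of the same complement $D$, including the many non-finitely-generated ones (Example~\ref{ex:nodal-cubic}, Theorem~\ref{thm:markov-fg}). So this property alone cannot carry finite generation across.
\item An $\bR$-test configuration exists only on the side where finite generation is already known.
\item The qdlt Fano type models in the \cite{LXZ22} criterion are defined relative to $(X,\Delta)$, so you cannot assert they coincide for $\Delta$ and $\Delta'$.
\end{itemize}

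\emph{The missing idea.} Convert finite generation into integrality of the central fiber of a degeneration that exists for \emph{every} compactification, and detect that integrality through $\cO_U(U)$.
\begin{itemize}
\item By Theorem~\ref{thm:fg-multideg} and Remark~\ref{rem:fg-multideg}, $v$ of rational rank $r$ is finitely generated iff the multi-degeneration induced by suitable divisorial lc places $E_1,\dots,E_r$ near $v$ has integral central fiber. The $E_i$ may be taken on a model dominating both $X$ and $X'$, and such multi-degenerations $\fX,\fX'$ exist on both sides.
\item Restrict to the diagonal line $C\subset\bA^r$. Proposition~\ref{prop:fg-lcp}, applied to $g$ and $g^{-1}$, is a discrepancy/MMP comparison that uses that $g|_U$ is an isomorphism. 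It gives $\LCP(\fX_C,\Delta_{\fX_C}+\fX_0)=\LCP(\fX'_C,\Delta_{\fX'_C}+\fX'_0)$, so the components of $\fX'_0$ yield divisorial lc places $v_{\fX'_C,i}$ of $(X,D)$.
\item Proposition~\ref{prop:sa-tc-integral} converts integrality of $\fX_0$ into the inequalities $v_{\fX_C}(f)\le v_{\fX'_C,i}(f)$ for all $f\in\cO_U(U)$.
\item Since $\cO_U(U)=\cO_{U'}(U')$, the same proposition applied on the $X'$ side forces $\fX'_0$ to be irreducible.
\end{itemize}

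\textbf{The specialness paragraph contains false claims, but the idea is repairable.}

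\emph{What is false.} $X_v=\Proj\gr_v R$ depends on the polarization $-(K_X+\Delta)$, so $D_v$ and $(X_v,D_v)$ are not determined by $U$. Also, the degeneration of the klt variety $U$ need not be klt: the open exceptional lines in Proposition~\ref{prop:exc-lines-fg} are finitely generated but not special.

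\emph{The repair, once finite generation is known to be intrinsic.}
\begin{itemize}
\item Localizing $\gr_v R$ at the initial form of a section $s_0$ with $\rdiv(s_0)=lm_0B$ identifies $X_v\setminus\Supp(B_v)$ with $\Spec\gr_v\cO_U(U)$, which is intrinsic to $U$ and $v$.
\item $(X_v,D_v)$ is slc log CY with $\Supp(D_v)=\Supp(B_v)$. The double locus of $X_v$ cannot lie in $D_v$, so normality of $\Spec\gr_v\cO_U(U)$ gives normality of $X_v$.
\item The discrepancy argument of Proposition~\ref{prop:log-CY-comp} then shows that $(X_v,\Delta_v)$ is klt iff $\Spec\gr_v\cO_U(U)$ is klt.
\end{itemize}
This would be a clean route to the specialness half, different from the paper's plt comparison in Proposition~\ref{prop:SV-contraction}. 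It still presupposes the finite-generation half, which your proposal does not establish.
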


Accordingly, we define intrinsic subsets
\[
\cD^{\SV}(U)\subset \cD^{\rfg}(U)\subset \cD(U),
\]
which we call the \emph{special skeleton} and the \emph{finitely generated skeleton} of $U$, respectively. Since every automorphism of $U$ acts naturally on $\cD(U)$, it follows that both skeleta are preserved by this action.

\begin{cor}\label{cor:aut}
Let $U$ be an affine log CY variety. Then both the special skeleton $\cD^{\SV}(U)$ and the finitely generated skeleton $\cD^{\rfg}(U)$  are invariant under the $\Aut(U)$-action on the dual complex $\cD(U)$. 
\end{cor}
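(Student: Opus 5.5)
The corollary will follow directly from Theorem~\ref{thm:main} by transporting compactifications along automorphisms. Fix $\phi\in\Aut(U)$ and a log CY-Fano compactification $(X,D;\Delta)$ of $U$.

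First, I will make the action of $\phi$ on $\cD(U)$ explicit. The open immersion $\iota\colon U\hookrightarrow X$ gives a second one, $\iota\circ\phi^{-1}\colon U\hookrightarrow X$, and hence a second log CY-Fano compactification $(X',D';\Delta')$. Its underlying pair equals $(X,D;\Delta)$, but $U$ is identified with $X\setminus D$ through $\phi^{-1}$.

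Valuations on the function field $K(U)$ transform by $v\mapsto \phi_*v$, where $(\phi_*v)(f)=v(f\circ\phi)$. This map preserves the trivial valuation, commutes with scaling, and sends $\LCP(X,D)$ onto the cone of log canonical places computed in $(X',D')$. Since $U^{\trop}$ is intrinsic, this is exactly the $\Aut(U)$-action on $U^{\trop}$, and it descends to $\cD(U)$.

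Next, I will check that $v$ is special (respectively finitely generated) with respect to $(X,\Delta)$ if and only if $\phi_*v$ is special (respectively finitely generated) with respect to $(X',\Delta')$. This is pure transport of structure. The isomorphism of section rings $R(X,\Delta)\cong R(X',\Delta')$ induced by the identity of $X$ intertwines the filtration $\cF_v$ with $\cF_{\phi_*v}$. Consequently the graded rings agree, $\gr_v R\cong \gr_{\phi_*v}R'$, and so do the degenerations, $(X_v,\Delta_v)\cong(X'_{\phi_*v},\Delta'_{\phi_*v})$. It follows that
\[
\phi\bigl(\cD^{\SV}(X,D;\Delta)\bigr)=\cD^{\SV}(X',D';\Delta'),
\]
and the analogous equality holds for $\cD^{\rfg}$.

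Finally, Theorem~\ref{thm:main} gives $\cD^{\SV}(X',D';\Delta')=\cD^{\SV}(X,D;\Delta)=\cD^{\SV}(U)$, and likewise for $\cD^{\rfg}$. Combining this with the previous step, both skeleta are $\phi$-invariant.

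The main thing to verify carefully is the first step: the identification of $\cD(U)$ with the dual complex of each compactification must be compatible with how $\Aut(U)$ is defined to act. Concretely, the action on valuations of $K(U)$ must agree with the crepant birational identification $\cD(X,D)\cong\cD(X',D')$ induced by $\phi$. All of the genuine content is carried by Theorem~\ref{thm:main}.
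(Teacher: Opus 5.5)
Your proposal is correct and follows essentially the same route as the paper. The paper (via Corollary~\ref{cor:aut-pair} with $\Delta_U=0$) also transports a log CY-Fano compactification along the automorphism so that it extends to an isomorphism of triples, uses that finite generation and specialness are preserved under isomorphism, and concludes by compactification-independence (Theorem~\ref{thm:independence}, equivalently Theorem~\ref{thm:main}).
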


Compactification-independence suggests that specialness should
admit a characterization directly in terms of the affine log CY
variety $U$. We obtain such a characterization using regular
functions on $U$. There is a natural partial order on
$U^{\trop}$: for
$v,w\in U^{\trop}$, we write
\[
v\preceq w
\quad\textrm{if and only if}\quad
v(f)\leq w(f)
\quad\text{for every }f\in \cO_U(U).
\]
Our second main result  characterizes special valuations as the
maximal elements of this order among finitely generated
valuations.

\begin{thm}\label{thm:maximality-intro}
Every special valuation in $U^{\trop}$ is maximal with respect to $\preceq$. Conversely, every finitely generated maximal valuation in $U^{\trop}$ is special.
In particular, a divisorial valuation $v\in U^{\trop}$ is special if and only if it is maximal with respect to $\preceq$.
\end{thm}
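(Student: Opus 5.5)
The plan is to translate $\preceq$ into a statement about valuations on $U$ and on the degeneration $U_v$, and then to use that an affine klt variety has no log canonical places of its boundary centred in its interior.

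\emph{Step 1 (dictionary).} Fix a log CY-Fano compactification $(X,D;\Delta)$. Set $\Gamma:=D-\Delta\ge 0$, so that $\Gamma\sim_{\bQ}-(K_X+\Delta)$ because $K_X+D\sim_{\bQ}0$. A section $s\in R_m=H^0(X,-m(K_X+\Delta))$ corresponds to a regular function $f=s/\sigma_\Gamma^m\in\cO_U(U)$, and
\[
v(s)=v(f)+m\,v(\Gamma)=v(f)+m\,A_{X,\Delta}(v),
\]
where the last equality holds because $A_{X,D}(v)=0$ for $v\in U^{\trop}$. Since $D$ supports an ample divisor, every $f\in\cO_U(U)$ arises this way for $m\gg0$. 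Hence $v\preceq w$ is equivalent to an inclusion of filtrations of $R$, with the shift in degree $m$ given by $m\,A_{X,\Delta}$.

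\emph{Step 2 (special $\Rightarrow$ maximal).} Let $v$ be special and let $w\in U^{\trop}$ satisfy $v\preceq w$.
\begin{itemize}
\item Degenerate to $(X_v,D_v;\Delta_v)$ with torus action and Reeb-type valuation $\xi_v$. Then $U_v=X_v\setminus D_v$ is affine and klt, since $(X_v,\Delta_v)$ is klt and $\Delta_v=D_v$ near $D_v$ only up to a klt perturbation away from the boundary.
\item Take the initial degeneration of $w$ with respect to $\gr_vR$. This gives a $\bT$-invariant valuation $w'$ on $X_v$. It satisfies $w'\in\LCP(X_v,D_v)$, $w'\ge\xi_v$ on all $\bT$-semi-invariant functions of $U_v$, and $w'=\xi_v$ if and only if $w=v$.
\item Because $w'$ is torus invariant, the "difference" $w'':=w'-\xi_v$ is again a valuation in $\LCP(X_v,D_v)$. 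Here one uses that $A$ is linear along the torus directions and that $A_{X_v,D_v}(\xi_v)=0$.
\item By construction $w''\ge 0$ on $\cO(U_v)$. Since $U_v$ is affine, the centre of $w''$ lies in $U_v$.
\item But $w''$ is an lc place of $(X_v,D_v)$, and $U_v$ is klt. Hence $w''$ is trivial, so $w=v$.
\end{itemize}
The same centre argument on $U$ itself shows that $\preceq$ is antisymmetric on $U^{\trop}$.

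\emph{Step 3 (fg and maximal $\Rightarrow$ special).} Suppose $v$ is finitely generated but not special. Then $(X_v,\Delta_v)$ is not klt, which forces $U_v$ to be lc but not klt.
\begin{itemize}
\item Choose a $\bT$-invariant lc place $u$ of $(X_v,D_v)$ whose centre meets $U_v$. Then $u\ge 0$ on $\cO(U_v)$, with strict inequality on some function.
\item For small $\epsilon>0$, form the torus-twisted valuation $\xi_v+\epsilon u\in\LCP(X_v,D_v)$. It dominates $\xi_v$ on $\cO(U_v)$ and is strictly larger on some function.
\item Lift this back to $U^{\trop}$. I expect to use the standard correspondence between quasi-monomial lc places of the degeneration near $\xi_v$ and lc places of $(X,D)$ near $v$. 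This correspondence comes from the test configuration and the identification of dual complexes under crepant degenerations, i.e.\ the finitely generated $\bR$-test configuration attached to $v$.
\item Transport the inequality through Step 1 to produce $w\in U^{\trop}$ with $v\preceq w$ and $w\ne v$. This contradicts maximality.
\end{itemize}
I expect this lifting, in particular checking that the order is preserved under it, to be the main obstacle. I would first try to realise it on a common dlt modification on which both $v$ and $u$ are monomial.

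\emph{Step 4 (divisorial case).} A divisorial $v\in U^{\trop}$ is an lc place of the complement $\Gamma$ of $(X,\Delta)$, so $\gr_vR$ is finitely generated by BCHM. Steps 2 and 3 then give the equivalence.
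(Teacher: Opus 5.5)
\textbf{There is a genuine gap.} Your overall strategy is the right one: read off specialness from the singularities of the degeneration, and in the non-special case perturb toward an extra lc place. But the proposal never connects $\preceq$ to lc places of the degeneration, and that connection is the whole content of the proof.

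In Step 2, the ``initial degeneration'' $w'$ of $w$ on $X_v$ is not a standard object. The filtration $\cF_w$ induces a multiplicative filtration on $\gr_v R$, but its order function is a valuation only if the associated graded ring $\gr_w\gr_v R$ is a domain. You do not establish this, and it is not automatic. You also assert $w'\in\LCP(X_v,D_v)$, which would need some semicontinuity of log discrepancies under degeneration, without argument. The properties $w'\geq\xi_v$ and ``$w'=\xi_v$ iff $w=v$'' are the easy part. The twist $w'\mapsto w'-\xi_v$ and the klt-ness of $U_v$ then only finish an argument whose key input is missing.

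In Step 3 you flag the lifting yourself, and it is a real obstruction. A $\bT$-invariant valuation on $X_v$ does not canonically come from a valuation on $X$. There is also no ready-made correspondence of dual complexes that transports the inequality on $\cO(U_v)$ back to one on $\cO_U(U)$.

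The missing idea is never to descend to $X_v$. Instead, work on the total space $\fX\to\bA^r$ of the multi-degeneration of a finitely generated geodesic simplex containing $v=v_{\fX,\xi}$ in its interior. There, a $\bT$-invariant valuation with value $\xi_i$ on each $t_i$ is automatically the Gauss extension $w_\xi$ of some valuation $w$ on $X$, so lifting is free. The key computation (Proposition~\ref{prop:max-log-disc}) uses the Rees algebra description of $\fX$ and a valuatively independent basis to show
\[
A_{\fX,\Delta_{\fX}+\fX_V}(w_\xi)=\sup_{m,\,f}\frac{v(f)-w(f)}{ml}
\]
(with $f=s/s_0^m$, $s\in R_m$). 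Hence $v\preceq w$ if and only if $w_\xi$ is an lc place of $(\fX,\Delta_{\fX}+\fX_V)$. Both directions then follow:
\begin{itemize}
\item If $v$ is special, inversion of adjunction makes $(\fX,\Delta_{\fX}+\fX_V)$ dlt near $\fX_0$ with $\fX_0$ the minimal lc center. Such an lc place is then monomial in the $\fX_{V_i}$, so it equals $\wt_\xi$, i.e.\ $w=v$.
\item If $v$ is not special, there is an lc place $F$ over a smaller lc center $W\subsetneq\fX_0$ on a $\bT$-equivariant dlt modification. The monomial valuation with weights $\xi_i-c\,\ord_F(t_i)$ on the strict transforms of the $\fX_{V_i}$ and $c$ on $F$ is $\bT$-invariant with the right $t_i$-values. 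It therefore equals $w_\xi$ for some $w\in\LCP(X,D)$ with $w\neq v$ and $v\preceq w$. This is the rigorous version of your $\xi_v+\epsilon u$.
\end{itemize}
Your use of the klt-ness of $U_v$ is the fiberwise shadow of this inversion-of-adjunction step. Step 4 is fine as written.
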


We also give an equivalent description of $\preceq$ using the
tropical theta functions arising from the valuatively
independent bases of \cite{BL26}; see Proposition \ref{prop:theta-order}.

The intrinsic nature of the special skeleton makes it natural to study the dynamics of the $\Aut(U)$-action on $\cD^{\SV}(U)$. We carry this out for the complement of the Markov cubic surface
\[
S=(xyz+x^2+y^2+z^2=0)\subset \bA^3.
\]
Let $U:=\bA^3\setminus S$. Then $U$ is an affine log CY threefold, with
 log CY-Fano compactification
$(\bP^3,\overline S+H;0)$,
where $\overline S$ is the projective closure of $S$ and $H$ is the hyperplane at infinity. 
The automorphism group of $U$ contains the subgroup $G$ generated by the three Vieta involutions. Using the tropical dynamics studied by Jang \cite{Jan23}, we obtain an explicit description of the $G$-action on the special skeleton, revealing a striking connection with hyperbolic geometry.

\begin{thm}\label{thm:intro-markov}
Let $U=\bA^3\setminus S$ be as above, and let
$G<\Aut(U)$ be the subgroup generated by the three Vieta involutions.
Then 
\[
\cD^{\SV}(U)=\{\ord_S\}\sqcup G\cdot \Pi,
\]
where $G\cdot\Pi$ is a connected component and $\Pi$ is a rational polyhedral fundamental domain. There is a
$G$-equivariant continuous bijection
\[
\Psi: G\cdot \Pi\to
\bH^2\cup\bP^1(\bQ),
\]
where the target is endowed with the Satake topology and the $G$-action
is the $(\infty,\infty,\infty)$-triangle reflection group action.
Moreover, its restriction $\Psi^\circ:
\operatorname{Int}(G\cdot \Pi)
\xrightarrow{\sim}\bH^2$
is a $G$-equivariant homeomorphism.
\end{thm}

See Figures~\ref{fig:SV-Markov} and~\ref{fig:hyp-reflection}
for the geometry of $G\cdot\Pi$ and the corresponding
$(\infty,\infty,\infty)$-triangle reflection group action.

The hyperbolic description also reveals an unexpected topological property of the special skeleton: with respect to any finite triangulation of $\cD(U)$, its intersection with the open simplices need not be locally closed. This disproves the natural global analogue for special valuations of the simplexwise local closedness conjectures for Koll\'ar valuations proposed in \cite[Conjecture~1.8(1),(1')]{LX24}. Moreover, via the cone construction, we obtain a counterexample to the original local conjectures.

\begin{thm}\label{thm:counterex-LX24}
Let $U=\bA^3\setminus S$ be as in Theorem \ref{thm:intro-markov}. Then for any finite triangulation of $\cD(U)$, there exists an open simplex $C^\circ$ such that $\cD^{\SV}(U)\cap C^\circ$ is not open in its closure in $C^{\circ}$.
Moreover, the cone construction yields a $4$-dimensional counterexample to \cite[Conjecture~1.8(1),(1')]{LX24}.
\end{thm}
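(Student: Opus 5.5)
The plan is to read off the non-local-closedness from the hyperbolic model in Theorem~\ref{thm:intro-markov}, and then transfer it to a cone over a log Fano to get the local counterexample.

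\textbf{Step 1: where the special locus sits.} Fix a finite triangulation of $\cD(U)$. By Theorem~\ref{thm:intro-markov}, $\cD^{\SV}(U)=\{\ord_S\}\sqcup G\cdot\Pi$. Under $\Psi$, the cusps $\bP^1(\bQ)$ correspond to the boundary points of $G\cdot\Pi$ not lying in $\operatorname{Int}(G\cdot\Pi)$. The closure of $G\cdot\Pi$ in $\cD(U)$ is compact, while $\Psi$ is not a homeomorphism onto the Satake topology near cusps. So accumulation points of $G\cdot\Pi$ in $\cD(U)$ should be the boundary $\partial\bH^2=\bP^1(\bR)$, and the irrational boundary points $\bP^1(\bR)\setminus\bP^1(\bQ)$ should correspond to non-special points of $\cD(U)$. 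Concretely, I would use Jang's tropical dynamics~\cite{Jan23} to identify, via a piecewise-linear model, the closure of $G\cdot\Pi$ with a closed disk $\overline{\bH^2}$. Its boundary circle lies in a fixed compact piece of $\cD(U)$ and is traversed by PL curves. Rational boundary points are special (the cusps, i.e.\ vertices of the $G$-translates of $\Pi$); irrational ones are not, since they are limits of the ideal triangles and are not in $G\cdot\Pi$.

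\textbf{Step 2: finding a bad simplex.} The boundary circle is covered by finitely many closed simplices. Hence some open simplex $C^\circ$ contains a nontrivial arc $\gamma$ of the limit circle, or at least a set of boundary points with both rational and irrational points accumulating. Inside $C^\circ$, consider $Z:=\cD^{\SV}(U)\cap C^\circ$. The points of $\gamma$ with rational parameter lie in $Z$, and rational points are dense in $\gamma$.

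If $Z$ were open in its closure $\overline Z\cap C^\circ$, take a rational point $p\in\gamma\cap Z$. Then some neighborhood $N$ of $p$ in $C^\circ$ would satisfy $N\cap\overline Z\subset Z$. But $N\cap\gamma\subset\overline Z$ contains irrational points, which are not special. This is a contradiction.

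Ensuring $\gamma\cap C^\circ$ is an arc rather than finitely many points needs a dimension count. The circle is $1$-dimensional inside a $2$-dimensional $\cD(U)$, so I would pick a simplex of top or next-to-top dimension meeting the circle in a set with nonempty relative interior. This exists because a circle cannot be contained in the union of finitely many $0$-simplices.

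\textbf{Step 3: the cone construction.} Take the cone $Y$ over the log Fano $(\bP^3,\tfrac12(\overline S+H))$, or a suitable klt log Fano compactification. Its vertex is a $4$-dimensional klt singularity $x\in Y$ with lc complement $\Gamma$ given by the cone over $\overline S+H$. Koll\'ar valuations of $(Y\ni x)$ in $\LCP(Y,\Gamma)$ correspond, via the standard correspondence between special valuations on the base and Koll\'ar components/quasi-monomial Koll\'ar valuations on the cone (join with the cone valuation), to the special skeleton. The open simplices of the dual complex of the cone pair are joins of those of $\cD(U)$ with the cone point. So the failure of local closedness transfers to give a counterexample to \cite[Conjecture~1.8(1),(1')]{LX24}.

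\textbf{Main obstacle.} The hardest step is Step 1: proving that the limit points of $G\cdot\Pi$ in $\cD(U)$ with irrational position are genuinely not special, while the rational cusps are special. For this I would rely on the decomposition in Theorem~\ref{thm:intro-markov} itself: any special point in the closure lies either in $G\cdot\Pi$ or equals $\ord_S$. The remaining check is that irrational boundary points are neither, using that $\Psi$ is a bijection onto $\bH^2\cup\bP^1(\bQ)$.
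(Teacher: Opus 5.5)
\textbf{There is a genuine gap: the geometric picture in your Step~1 is wrong, and Step~2 depends on it.}

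The closure of $G\cdot\Pi$ in $\cD(U)$ is indeed a closed disk, namely $\overline{\cD^-}=\cD^-\sqcup\cD^0$. However, the cusps do not lie on its boundary circle.
\begin{itemize}
\item The points of $G\cdot\VPi$ are $[v'_{(p,q,3p+3q-2),i}]$. They satisfy $\gamma_3<3\gamma_1+3\gamma_2$, so they lie in the open disk $\cD^-$.
\item The boundary circle $\cD^0$ contains no special points at all, since $\cD^{\SV}(U)\cap\overline{\cD^+}=\{\ord_S\}$ and $\ord_S\in\cD^+$.
\end{itemize}
So for any arc $\gamma\subset\cD^0$ you have $\gamma\cap\cD^{\SV}(U)=\emptyset$. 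There is no ``rational point $p\in\gamma\cap Z$'', and the contradiction in Step~2 never materializes. Your rational/irrational dichotomy on the boundary circle is not the relevant structure: all of $\cD^0$ is non-special.

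The failure of $\Psi$ to be a homeomorphism has a different source. Each cusp $v$ is an interior point of $\cD^-$, and a non-special open exceptional ray $\ell_v^-$ runs from $v$ out to $\cD^0$. Near $v$, the special skeleton therefore looks like $\bR^2\setminus\bR_{>0}$ with $v$ at the origin.

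The argument has to be built from this local picture, which is what the paper does.
\begin{enumerate}
\item If a cusp $v$ lay in an open $2$-simplex $C^\circ$, then $\cD^{\SV}(U)\cap C^\circ$ would not be open in its closure at $v$. Indeed, points of $\ell_v^-$ arbitrarily close to $v$ lie in the closure but not in the set.
\item Hence, for a triangulation $\mathscr T$ with the local closedness property, $G\cdot\VPi\subset\mathscr T^{(1)}$.
\item The accumulation set of $G\cdot\VPi$ is exactly $\cD^0$ (Lemma~\ref{lem:vertex-accumulation}), and $\mathscr T^{(1)}$ is closed. This forces $\cD^0\subset\mathscr T^{(1)}$.
\item At a generic point $p\in\cD^0$, the finite graph $\mathscr T^{(1)}$ locally coincides with $\cD^0$. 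Yet cusps off $\cD^0$ accumulate at $p$ and also lie in $\mathscr T^{(1)}$, which is a contradiction.
\end{enumerate}
The missing idea in your plan is that the obstruction lives at the cusps inside $\cD^-$. It is their accumulation along the whole circle $\cD^0$ that defeats every finite triangulation.

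\textbf{Your Step~3 also needs two repairs.}
\begin{itemize}
\item A counterexample must defeat every triangulation of the cone dual complex, not only joins of simplices of $\cD(U)$ with the cone point. The paper handles this by intersecting an arbitrary triangulation with a transverse slice $\{A_X=c\}\cong\cD(U)$ and subdividing.
\item The paper uses a complement built from general elements of $(F,\mathfrak m_o^{\ell})$, so that $o$ is the unique lc center. Then $\cD(Z,D_Z')$ is a truncated cone over $\cD(U)$ consisting of valuations centered at $o$. Taking the cone over $\overline S+H$ itself as the complement instead brings in lc places not centered at the vertex.
\end{itemize}
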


\begin{rem}
During the preparation of this paper, we learned that Peng \cite{Pen25} studied special valuations for log CY pairs $(X,D)$ with $X$ a del Pezzo surface and $D$ a nodal anticanonical curve. In particular, \cite[Theorem~1.3(d)]{Pen25} proves that specialness is preserved under certain contractions of boundary components, giving a compactification-independence result that may be viewed as a special case of Theorem~\ref{thm:main} for special valuations in dimension two.
\end{rem}

\subsection*{Organization of the paper}

In Section~\ref{sec:prelim}, we review background on log CY-Fano triples,
valuations, test configurations, and multi-degenerations, and
establish several auxiliary results used later in the paper.
We also recall the non-Archimedean and tropical framework
needed for the Markov cubic complement.

In Section~\ref{sec:intrinsic}, we prove the intrinsic results on special and
finitely generated valuations. The key idea is to interpret
finite generation and specialness in terms of the central
fibers of multi-degenerations and compare these degenerations
under crepant birational changes of compactification. This
yields Theorem~\ref{thm:main} and Corollary~\ref{cor:aut}. We then relate the
partial order induced by regular functions to log canonical
places of the corresponding multi-degenerations, leading to
the maximality characterization in Theorem~\ref{thm:maximality-intro}.

Section~\ref{sec:surfaces} presents three examples of affine log CY surfaces
with infinite discrete automorphism groups, illustrating
different possible behaviors of the special and finitely
generated skeleta.

In Section~\ref{sec:Markov}, we study the complement of the Markov cubic.
After describing its dual complex and the action of the Vieta
involutions, we relate this action to the tropical dynamics
studied by Jang \cite{Jan23}. We then determine the special
and finitely generated skeleta, establish the hyperbolic
description in Theorem~\ref{thm:intro-markov}, and prove the failure of
simplexwise local closedness and the resulting counterexample
in Theorem~\ref{thm:counterex-LX24}.

Finally, Section~\ref{sec:discussions} discusses a possible non-Archimedean
analogue of the cone conjecture, connections with cluster
complexes and mirror symmetry, and questions concerning finite
generation and maximality.

\subsection*{AI disclosure}
All mathematical results and arguments in this paper were developed by the author. ChatGPT (OpenAI) was used to assist with drafting and polishing the exposition of several proofs and examples from the author's arguments (notably in Sections \ref{sec:surfaces} and \ref{sec:topology}), for editorial assistance, notation consistency checks, and Python code for the figures in Section \ref{sec:Markov}. Claude (Anthropic) was used for feedback on the draft. All AI-assisted content was independently checked and revised by the author, who takes full responsibility for the paper.

\subsection*{Acknowledgements} 

We would like to thank Eduardo Alves da Silva, Harold Blum, Steven Cutkosky, Rankeya Datta, Fernando Figueroa, Simion Filip, Kento Fujita, Paul Hacking, Andres H\"oring, Mattias Jonsson, Sean Keel, J\'anos Koll\'ar, Wendelin Lutz, Junyao Peng, Chenyang Xu, Eric Zaslow, and Ziquan Zhuang for helpful discussions. The author was partially supported by NSF CAREER Grant DMS-2237139 and an AT\&T Research Fellowship from Northwestern University.

\section{Preliminaries and auxiliary results}\label{sec:prelim}

In this section, we collect the preliminaries and auxiliary results needed throughout the paper.
We first collect the basic background on log CY-Fano triples, valuations, and degenerations. Then  Sections \ref{sec:max-rank} and \ref{sec:geodesic-normal-cone}  establish auxiliary results that will be used in Section \ref{sec:Markov}. Finally, Section \ref{sec:an-trop} introduces the non-Archimedean and tropical language needed for the study of the Markov cubic example in Section \ref{sec:Markov}.

We work over an algebraically closed field $\bk$ of characteristic zero. We follow the standard notation and convention from \cite{KM98, Kol13}.

\subsection{Log CY-Fano triples}

\begin{defn}
    A \emph{pair} $(X,D)$ consists of a demi-normal variety $X$ and an effective $\bQ$-divisor $D$ where $\Supp(D)$ does not contain any codimension $1$ singular locus of $X$, such that $K_X + D$ is $\bQ$-Cartier.
\end{defn}

\begin{defn}
Let $(X,D)$ be a normal pair. A \emph{divisor $E$ over} $X$ is a prime divisor on a normal variety $Y$ together with a birational morphism $\mu: Y\to X$. We define the \emph{log discrepancy} of $E$ with respect to $(X,D)$ by
\[
A_{X,D}(E):= 1 + \coeff_E(K_Y - \mu^*(K_X+D)).
\]
\end{defn}

\begin{defn}
A \emph{log CY pair} $(X,D)$ is a projective slc pair such that $K_X + D\sim_{\bQ} 0$. A \emph{log Fano pair} $(X,\Delta)$ is a projective slc pair such that $-K_X - \Delta$ is ample. 

A \emph{log CY-Fano triple} $(X,D;\Delta)$ is a log CY pair $(X,D)$ together with a log Fano pair $(X,\Delta)$ such that $\Delta \leq D$. We say that a log CY-Fano triple $(X,D;\Delta)$ is \emph{lc-klt} if $(X,D)$ is lc and $(X,\Delta)$ is klt.
\end{defn}

We note that in some literature, log CY pairs (resp.\ log Fano pairs) are often assumed to be lc (resp.\ klt) which is stronger than our assumptions. Nevertheless,  these stronger assumptions will be imposed at certain places of the paper. In the terminology of \cite{BABWILD, BL24}, a log CY-Fano triple $(X,D;\Delta)$ precisely corresponds to a \emph{boundary polarized CY pair} $(X,\Delta+ B)$ with $B:= D-\Delta$.

\begin{defn}\label{def:affine-CY}
Let $(X,D)$ be an lc log CY pair with $D$ reduced such that $U:= X\setminus D$ is klt and $D$ fully supports an ample divisor. We call $U$ an \emph{affine log CY variety} and $(X,D)$ a \emph{log CY compactification} of $U$. If $0\leq \Delta \leq D$ is a $\bQ$-divisor such that $(X,\Delta)$ is a klt log Fano pair, we call $(X,D;\Delta)$  a \emph{log CY-Fano compactification} of $U$. Such $\Delta$ always exists by the following proposition.
\end{defn}

\begin{prop}\label{prop:log-CY-comp}
Let $U$ be an affine log CY variety with a log CY compactification $(X,D)$. Then there exists a $\bQ$-divisor $0\leq \Delta\leq D$ on $X$ such that $(X,D;\Delta)$ is a log CY-Fano compactification of $U$.
\end{prop}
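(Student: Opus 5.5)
The plan is to take $\Delta:=(1-\epsilon)D+\epsilon' A$-type perturbations inside the support of $D$. Since $D$ fully supports an ample divisor, there is an effective ample $\bQ$-divisor $A=\sum_i a_i D_i$, where $D=\sum_i D_i$ and all $a_i>0$ (after rescaling). We will then set
\[
\Delta_t := D - tA = \sum_i (1-ta_i) D_i, \qquad 0<t\ll 1 .
\]
For $t$ small, all coefficients lie in $[0,1)$, so $0\leq \Delta_t\leq D$, and
\[
-K_X-\Delta_t \sim_{\bQ} -(K_X+D)+tA \sim_{\bQ} tA
\]
is ample. It remains to check that $(X,\Delta_t)$ is klt.

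The klt check proceeds by comparing log discrepancies. For any divisor $E$ over $X$ we have
\[
A_{X,\Delta_t}(E)=A_{X,D}(E)+t\,\ord_E(A),
\]
where $\ord_E(A)$ is the coefficient of $\mu^*A$ along $E$. Since $(X,D)$ is lc, $A_{X,D}(E)\geq 0$. We then split into two cases according to the center $c_X(E)$.

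If $c_X(E)\not\subset \Supp D$, the center meets $U$. Then $A_{X,D}(E)=A_{U}(E|_U)>0$ because $U$ is klt, and the extra term $t\,\ord_E(A)\geq 0$ only helps.

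If $c_X(E)\subset \Supp D=\Supp A$, then $\ord_E(A)>0$, because $A$ is effective, $\bQ$-Cartier, and has support equal to $\Supp D$. Hence
\[
A_{X,\Delta_t}(E)\geq t\,\ord_E(A)>0 .
\]
This also covers the components $D_i$ themselves, which have log discrepancy $ta_i>0$. Thus $A_{X,\Delta_t}(E)>0$ for all $E$, i.e.\ $(X,\Delta_t)$ is klt, and $(X,D;\Delta_t)$ is a log CY-Fano compactification.

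The main subtlety is making sure $K_X+\Delta_t$ is $\bQ$-Cartier, so that the log discrepancies above make sense. This holds because $K_X+D$ is $\bQ$-Cartier (it is $\bQ$-linearly trivial) and $A$ is $\bQ$-Cartier (being ample). It is also worth noting that "fully supports an ample divisor" should be read as producing an ample $A$ whose support is exactly $\Supp D$, with all coefficients positive. If the hypothesis only gives $\Supp A\subset\Supp D$, the second case could fail for components of $D$ missing from $A$. In that case I would first argue that $\Supp A=\Supp D$ can be arranged: add small multiples of the remaining components of $D$ (these are $\bQ$-Cartier since $X\setminus D$ contains no such issue, or alternatively use that ampleness is open) to $A$. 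I expect this interpretation and $\bQ$-Cartierness point to be the only real obstacle; the discrepancy computation itself is routine.
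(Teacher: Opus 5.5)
Your argument is correct and is essentially the paper's proof: take $\Delta=D-tA$ with $A$ an effective ample divisor supported exactly on $D$, note $-K_X-\Delta\sim_{\bQ}tA$, and get klt from $A_{X,\Delta}(E)=A_{X,D}(E)+t\,\ord_E(A)$. For this you use kltness of $U$ when the center meets $U$, and $\ord_E(A)>0$ when the center lies in $D$. The fallback in your last paragraph is unnecessary, since ``fully supports'' already means $\Supp A=\Supp D$; its justification should be dropped anyway, because components of $D$ need not be $\bQ$-Cartier.
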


\begin{proof}
By assumption, there exists an ample effective Cartier divisor $L$ on $X$ fully supported in $D$. Let $\Delta := D-\epsilon L$ for $0<\epsilon\ll 1$. Then clearly $\Delta\leq D$ and $-K_X-\Delta\sim_{\bQ} \epsilon L$ is ample. It remains to show that $(X,\Delta)$ is klt. Since $U = X\setminus D$ is klt, it suffices to show that $A_{X,\Delta}(E)>0$ for any divisor $E$ over $X$ satisfying $c_X(E)\subset D$. This is true as 
\[
A_{X,\Delta}(E) = A_{X,D}(E) + \ord_E(\epsilon L) > A_{X,D}(E) \geq 0. 
\]
Thus $(X,\Delta)$ is klt. 
\end{proof}

The following lemma is probably well-known. A special case can be found in \cite[Lemma 1.4(2)]{GHK15}.

\begin{lem}\label{lem:cbir}
 Suppose $(X,D)$ and $(X',D')$ are two lc log CY pairs. Let $B$ and $B'$ be effective ample $\bQ$-Cartier $\bQ$-divisors on $X$ and $X'$ respectively such that $B_{\red}\leq D$ and $B_{\red}'\leq D'$. 
 Denote by $U:=X\setminus \Supp(B)$ and $U':=X'\setminus \Supp(B')$. Suppose there is an isomorphism of pairs $g: (U, D|_U) \xrightarrow{\sim} (U', D'|_{U'})$. Then the induced birational map $g:(X,D) \dashrightarrow (X',D')$ is crepant birational. In particular, any two log CY compactifications of an affine log CY variety are crepant birational.
\end{lem}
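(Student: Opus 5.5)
The argument compares the two pairs on a common resolution and uses ampleness of $B$, $B'$ together with the negativity lemma to kill the discrepancy divisor. Take a common log resolution $p: W\to X$, $q: W\to X'$ of the birational map $g$, which is an isomorphism over $U\cong U'$. Write
\[
p^*(K_X+D) = q^*(K_{X'}+D') + F,
\]
where $F$ is a $\bQ$-divisor on $W$ with $F\sim_{\bQ} 0$, since both $K_X+D$ and $K_{X'}+D'$ are $\bQ$-linearly trivial. The claim is $F=0$. Because $g$ restricts to an isomorphism of pairs $(U,D|_U)\cong(U',D'|_{U'})$, the divisor $F$ has no component meeting the preimage of $U$. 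Every component of $F$ therefore maps into $\Supp(B)$ on $X$ and into $\Supp(B')$ on $X'$. In particular, $F$ is supported on the locus over $\Supp(B)\subset D$ and over $\Supp(B')\subset D'$.

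Next I use the lc condition to bound the coefficients of $F$ on one side, for components that are divisors on $X$. Let $E$ be a prime divisor on $W$ lying over $\Supp B$. Then $\coeff_E F = A_{X',D'}(E) - A_{X,D}(E)$. Suppose $E$ is not $p$-exceptional. Then $E$ is a component of $\Supp B\subset D$, so $A_{X,D}(E)=0$. Since $(X',D')$ is lc, this gives $\coeff_E F\ge 0$. Symmetrically, if $E$ is not $q$-exceptional, then $\coeff_E F\le 0$. A divisor that is exceptional for neither map therefore has coefficient zero. The remaining components are exceptional for at least one of $p$, $q$, and controlling these is the main obstacle.

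To handle them, I would use ampleness of $B$ and $B'$. Write $F=F^+-F^-$ with $F^\pm\ge 0$ having no common components. Consider $p_*F$, which is $\bQ$-linearly trivial on $X$. Its components lie in $\Supp B$ and are non-$p$-exceptional, so by the previous paragraph $p_*F\ge 0$, hence $p_*F^-=0$. Thus $F^-$ is $p$-exceptional. Similarly, $q_*F\le 0$, so $F^+$ is $q$-exceptional. Now apply the negativity lemma. Over $X$, the divisor $-F^- = F - F^+$ is, modulo $\bQ$-linear equivalence, $-F^+$ with $F\sim_\bQ 0$. Since $-F^+$ is $p$-nef up to adding the pullback of an ample, I would instead argue with a curve-covering argument: if $F^+\neq 0$, a component $E$ of $F^+$ is $q$-exceptional and not contained in the $p$-exceptional locus.

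The key step at that point is where ampleness enters: $p^*B$ is a big divisor whose support contains $p(E)$. I would first try to show that $F^+$ vanishes on general curves in $E$ contracted by $q$, contradicting the negativity lemma for $q$ applied to $F^+\equiv_q -F^-+(\text{num.\ trivial})$. If that fails, the fallback is the standard argument of \cite[Lemma 1.4(2)]{GHK15}. There, ampleness of $B'$ implies that $g^{-1}$ contracts no divisor meeting $U'$; combined with the fact that every divisor over $\Supp B'$ has $A_{X',D'}\ge 0$, this gives equality of the log discrepancy functions and hence $F=0$.

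The final statement follows by taking $B$ and $B'$ to be ample divisors fully supported on $D$ and $D'$, so that $U$ and $U'$ are the affine log CY variety and $D|_U=0$.
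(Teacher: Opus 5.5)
Your first two paragraphs are correct. The divisor $F=\sum_E\bigl(A_{X',D'}(E)-A_{X,D}(E)\bigr)E$ vanishes over $U\cong U'$, and the lc condition together with $B_{\red}\le D$ and $B'_{\red}\le D'$ gives $p_*F\ge 0$ and $q_*F\le 0$.

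The proposal then does not actually finish. The claim that $-F^+$ is ``$p$-nef up to adding the pullback of an ample'' is unjustified, and the curve-covering argument is never carried out. The fallback to the argument of Gross--Hacking--Keel only restates the conclusion (``equality of the log discrepancy functions'') without proving it. So, as written, the components of $F$ that are exceptional for $p$ or $q$ are not controlled, and the proof is incomplete. Your diagnosis that ampleness of $B,B'$ (or bigness of $p^*B$) is what resolves the difficulty is also off: ampleness plays no role in this lemma.

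The missing step follows from facts you already established. Since $F\sim_{\bQ}0$ on the smooth $W$, both $F$ and $-F$ are nef over $X$ and over $X'$.
\begin{itemize}
\item Apply the negativity lemma to $p$ and $F$: $-F$ is $p$-nef and $p_*F\ge 0$, so $F\ge 0$.
\item Apply it to $q$ and $-F$: $F$ is $q$-nef and $q_*(-F)\ge 0$, so $F\le 0$.
\end{itemize}
Hence $F=0$. You went wrong by trying to apply negativity to $F^+$ and $F^-$ separately instead of to $F$ itself.

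The paper's route is slightly different. Since $p_*F$ is effective and $\bQ$-linearly trivial on the normal projective variety $X$, it must vanish. In the paper's notation this is the step where $D\ge p_*\tD'$ and $D\sim_{\bQ}p_*\tD'$ force $D=p_*\tD'$. Then $F$ is $p$-exceptional and numerically trivial over $X$, so $F=0$ by the negativity lemma.

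Either way, only two inputs are used: projectivity of $X$, and the fact that every prime divisor in $\Supp B$ has coefficient one in $D$. Ampleness of $B$ and $B'$ is not needed.
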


\begin{proof}
Let $\tX$ be the normalization of the graph of $g:X\dashrightarrow X'$. Then we have birational morphisms $p: \tX \to X$ and $q: \tX \to X'$ such that $g = q\circ p^{-1}$. Then we write
\[
0 \sim_{\bQ} q^*(K_{X'} + D') = K_{\tX} + \tD' + E',
\]
where every irreducible component of $\tD'$ is not $p$-exceptional, while $E'$ is $p$-exceptional. Thus we have 
\[
K_X + D\sim_{\bQ} 0\sim_{\bQ} p_*(K_{\tX} + \tD' + E') = K_X + p_* \tD' .
\]
This implies that $D \sim_{\bQ} p_* \tD'$.

Next, we claim that $D \geq p_* \tD'$, which together with $D \sim_{\bQ} p_* \tD'$ implies $D =  p_* \tD'$. Since $g: (U, D|_U)\xrightarrow{\sim} (U', D'|_{U'})$ is an isomorphism, we know that $p$ and $q$ are isomorphic over $U$ and $U'$ respectively. 
Thus we have $D|_U = p_* \tD'|_{U}$ as $E'$ is $p$-exceptional. Thus it suffices to show that for every prime divisor $B_i\subset X\setminus U = \Supp(B)$ we have $\coeff_{B_i}(D) \geq \coeff_{B_i}(p_* \tD')$. Since $D\geq B_{\red}\geq B_i$ and $(X,D)$ is lc, we know that $\coeff_{B_i}(D) = 1$. Since $(X',D')$ is lc, we know that 
\[
\coeff_{B_i}(p_*\tD') = \coeff_{p_*^{-1}B_i} (\tD') =   1 - A_{X', D'}(\ord_{B_i}) \leq 1.
\]
Thus the claim is proved. 

Next, denote by $p^*(K_X+ D) = K_{\tX} + \tD' + E$ where $E$ is $p$-exceptional. Then we have 
\[
0\sim_{\bQ} p^*(K_X + D) - q^*(K_{X'} + D')  = E - E'
\]
is $p$-exceptional. By negativity lemma we conclude that $E= E'$ and hence $p^*(K_X+D) = q^*(K_{X'}+D')$, i.e.\ $g$ is crepant birational.

For the final statement, let \((X,D)\) and \((X',D')\) be two log CY compactifications of an affine log CY variety \(U\). By Proposition \ref{prop:log-CY-comp}, there exist effective ample $\bQ$-Cartier $\bQ$-divisors \(B=D-\Delta\) and \(B'=D'-\Delta'\) fully supported on \(D\) and \(D'\), respectively. Hence $ B_{\rm red}=D$ and $B'_{\rm red}=D'$.
Thus the result follows from the first statement.
\end{proof}


\subsection{Valuations and dual complexes}

Let $X$ be a normal variety over $\bk$ throughout this subsection.

\begin{defn}
A \emph{valuation} $v$ on $X$ is an $\bR$-valuation $v: K(X)^{\times}\to \bR$ satisfying $v(\bk^{\times}) = 0$. We set $v(0) = \infty$ by convention. The \emph{center} of a valuation $v$ on $X$ is a scheme-theoretic point $x\in X$ such that the local ring $\cO_{X,x}$ is dominated by the valuation ring $\cO_v$. Note that the center exists if $X$ is proper but may not exist otherwise, and is unique if it exists by the separatedness of $X$.

The \emph{valuative group} of a valuation $v$ is denoted by $\Phi_v:= v(K(X)^{\times})$. The \emph{rational rank} of $v$ is defined as 
\[
\ratrank(v):= \dim_{\bQ}(\Phi_v\otimes_{\bZ}\bQ).
\]

The space of valuations on $X$ is denoted by $X^{\val}$. We equip $X^{\val}$ with the weakest topology such that $v\mapsto v(f)$ is continuous for every $f\in K(X)^{\times}$. We denote by $v_{\triv}$ the trivial valuation on $X$.
\end{defn}

\begin{defn}
A valuation $v$ on $X$ is called \emph{divisorial} (resp.\ $\bQ$-\emph{divisorial}, $\bZ$-\emph{divisorial}) if there exists a prime divisor $E$ on a normal variety $Y$ birational to $X$ and $c\in \bR_{\geq 0}$ (resp.\ $c\in \bQ_{\geq 0}$, $c\in \bZ_{\geq 0}$) such that $v = c\,\ord_E$. 

We denote the space of divisorial valuations on $X$ by $X^{\rm div}$. Let $X^{\rm div}(\bQ)$ (resp.\ $X^{\rm div}(\bZ)$) denote the space of $\bQ$-divisorial (resp.\ $\bZ$-divisorial) valuations.
\end{defn}

\begin{defn}
Let $Y$ be a normal variety birational to $X$. Let $\eta\in Y$ be a scheme-theoretic point such that $Y$ is regular at $\eta$. Let $y_1,\cdots, y_r\in \cO_{Y, \eta}$ be a system of parameters. Let $\alpha = (\alpha_1, \cdots, \alpha_r)\in \bR_{\geq 0}^r$.
By the Cohen structure theorem, there is an isomorphism
$\widehat{\mathcal O}_{Y,\eta}
\cong
\kappa(\eta)[[y_1,\cdots,y_r]]$,
where $\kappa(\eta)$ is identified with a coefficient field.
We define a valuation $v_\alpha$ on $X$ as follows: for any $f\in \cO_{Y,\eta}$, we may write $f = \sum_{\beta\in \bZ_{\geq 0}^r} c_{\beta} y^{\beta}$ with $c_{\beta}\in \kappa(\eta)$, and we define
\[
v_{\alpha}(f) := \min_{c_{\beta}\neq 0}\langle \alpha, \beta\rangle. 
\]
A valuation $v$ on $X$ is called \emph{quasi-monomial} if $v$ is of the form $v_{\alpha}$ for some $Y$, $\eta$, $y_i$ and $\alpha$ as above.

We denote the space of quasi-monomial valuations on $X$ by $X^{\qm}$.  From \cite[Remark 3.9]{JM12} we know that $X^{\rdiv}$ is the subspace of $X^{\qm}$ consisting of rational rank $\leq 1$ valuations. Moreover, the subspace of $X^{\qm}$ where each $\alpha_i\in \bQ_{\geq 0}$ (resp.\ each $\alpha_i\in \bZ_{\geq 0})$ is precisely $X^{\rdiv}(\bQ)$ (resp.\ $X^{\rdiv}(\bZ)$).
\end{defn}

\begin{defn}
Let $(X,D)$ be an lc pair. Let $(Y, \sum_{i=1}^{r} E_i)$ be a smooth variety with a simple normal crossing divisor together with a birational morphism $\mu: Y\to X$. Let $\eta$ be a generic point of $\cap_{i=1}^r E_i$. Let $y_i\in \cO_{Y,\eta}$ such that $E_i  = (y_i = 0)$. Let $v_{\alpha}$ be the quasi-monomial valuation at $\eta$ in the local coordinates $(y_i)$ of weight $\alpha$.  We define the \emph{log discrepancy} of $v_{\alpha}$ as
\[
A_{X,D}(v_{\alpha}):= \sum_{i=1}^r \alpha_i A_{X,D}(E_i).
\]
\end{defn}

For log discrepancy of general valuations, see \cite{JM12, BdFFU15}.

From the above definition, it is easy to see that a birational map $X\dashrightarrow Y$ induces isomorphisms on the spaces of valuations $X^{\val} \cong Y^{\val}$, $X^{\rm div} \cong Y^{\rm div}$ and $X^{\qm} \cong Y^{\qm}$.

\begin{defn}
Let $(X,D)$ be an lc pair.
A valuation $v\in X^{\val}$ is called an \emph{lc place} of $(X,D)$ if $v$ has a center in $X$ and $A_{X,D}(v) = 0$. Note that this condition implies  $v\in X^{\qm}$. We denote by $\LCP(X,D)$ the set of all lc places of $(X,D)$. 

Assume in addition that $X$ is projective.
Then we know that $\LCP(X,D)\subset X^{\qm}$ is a finite regular $\Delta$-cone complex with an integral PL structure. Denote by $\LCP^{\rdiv}(X,D):=\LCP(X,D)\cap X^{\rdiv}$, $\LCP(X,D)(\bZ):=\LCP(X,D)\cap X^{\rdiv}(\bZ)$, and $\LCP(X,D)(\bQ):=\LCP(X,D)\cap X^{\rdiv}(\bQ)$. 

The \emph{dual complex} $\cD(X,D)$ for a projective lc pair $(X,D)$ is defined to be the dual complex of a dlt modification of $(X,D)$. By \cite{dFKX17}, $\cD(X,D)$ is a finite regular $\Delta$-complex with a rational PL structure that is  independent of the choice of a dlt modification. Moreover, we have a natural identification
\[
(\LCP(X,D)\setminus \{v_{\triv}\})/\bR_{>0} = \cD(X,D),
\]
where the $\bR_{>0}$-action on $\LCP(X,D)$ is given by scaling of valuations. Denote by $\cD(X,D)(\bQ)$ the subset of $\cD(X,D)$ corresponding to the quotient of $\LCP^{\rdiv}(X,D)\setminus \{v_{\triv}\}$. Then $\cD(X,D)(\bQ)$ is precisely the set of rational points in $\cD(X,D)$ with respect to its rational PL structure.

By abuse of notation, we will often regard a nontrivial
valuation $v\in\LCP(X,D)$ as a point of $\cD(X,D)$ and write
$v\in\cD(X,D)$ for its image under the quotient map.
\end{defn}

\begin{defn}
Let $U$ be an affine log CY variety with a log CY compactification $(X,D)$. Then we define the \emph{space of tropical points of $U$} as 
\[
U^{\trop} := \LCP(X,D).
\]
Similarly, we define $U^{\trop}(\bZ):= U^{\trop} \cap U^{\rm div}(\bZ)$ and $U^{\trop}(\bQ):= U^{\trop} \cap U^{\rm div}(\bQ)$.
We also define the \emph{dual complex} of $U$ as
\[
\cD(U):= \cD(X,D) = (U^{\trop}\setminus \{v_{\triv}\})/\bR_{>0}. 
\]
\end{defn}

By Lemma \ref{lem:cbir}, the above definition is independent of the choice of log CY compactifications. Moreover, $U^{\trop}$ is a finite $\Delta$-cone complex with an integral PL structure.

\subsection{Families of pairs}

\begin{defn}
Let $T$ be a normal scheme.
A \emph{family of slc (resp.\ lc, klt) pairs} $f:(X,\Delta) \to T$ over $T$ consists of a flat proper surjective morphism $f: X\to T$ with connected $S_2$ fibers and an effective $\bQ$-divisor $\Delta$ on $X$ such that
\begin{enumerate}
    \item Every irreducible component of $\Delta$ is a relative Mumford divisor;
    \item $K_{X/T}+\Delta$ is $\bQ$-Cartier;
    \item $(X_t, \Delta_t)$ is slc (resp.\ lc, klt) for every $t\in T$.
\end{enumerate}

A family of slc pairs $f:(X,\Delta) \to T$ is called a \emph{family of log Fano pairs} (resp.\ \emph{family of weak log Fano pairs}, \emph{family of log CY pairs}) if in addition $-K_{X/T}-\Delta$ is ample (resp.\ nef and big, $\bQ$-linearly trivial) over $T$.

A \emph{family of log CY-Fano triples} $f:(X,D;\Delta)\to T$ consists of a family of log CY pairs $f:(X,D)\to T$ and a family of log Fano pairs $f:(X,\Delta)\to T$ such that $\Delta\leq D$.
\end{defn}

\begin{prop}\label{prop:slc-family}
Let $f: (X,\Delta) \to T$ be a family of slc pairs over a smooth scheme $T$. Let $V$ be an snc divisor on $T$. Then $(X, \Delta+f^*V)$ is slc. Moreover, if $(X\setminus f^{-1}(V),\Delta|_{X\setminus f^{-1}(V)})$ is klt, then $(X,\Delta)$ is klt.
\end{prop}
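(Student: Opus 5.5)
The plan is to reduce both assertions to inversion of adjunction over a snc base, arguing by induction on the number of components of $V$. It suffices to work locally on $T$. Near a point $t\in T$, write $V=V_1+\cdots+V_k$, where the $V_i$ are the components through $t$, and choose local coordinates so that $V_i=(t_i=0)$. Since $T$ is smooth, each $V_i$ is Cartier. Because $f$ is flat, $f^*V_i$ is a Cartier divisor on $X$. Moreover, every component of $\Delta$ is a relative Mumford divisor, so $f^*V_i$ shares no components with $\Delta$. Also $K_X+\Delta = (K_{X/T}+\Delta)+f^*K_T$ is $\bQ$-Cartier. Hence $(X,\Delta+f^*V)$ is a pair in the sense of the definition above.

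For the slc statement I would induct on $\dim T$, or equivalently cut down by the hyperplanes $V_1$. The restriction $f^{-1}(V_1)\to V_1$ is again a family of slc pairs: its fibers are the same fibers $(X_t,\Delta_t)$, and $K_{f^{-1}(V_1)/V_1}+\Delta|_{f^{-1}(V_1)}$ is the restriction of a $\bQ$-Cartier divisor. Since $f^*V_1$ is Cartier and $X$ is $S_2$ by flatness with $S_2$ fibers over the regular base, the fiber $f^*V_1$ is demi-normal. By induction, $(f^{-1}(V_1),\Delta|+f^*(V_2+\cdots+V_k)|)$ is slc. In the base case $\dim T=0$ the statement is trivial, and at this step the remaining $V_i|_{V_1}$ form an snc divisor on the smooth $V_1$. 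Inversion of adjunction for slc pairs along a Cartier divisor then gives that $(X,\Delta+f^*V)$ is slc in a neighborhood of $f^{-1}(V_1)$. Away from $f^{-1}(V)$ there is nothing to prove: we simply apply the same argument with a general complete intersection of hyperplanes through a point, or use inversion of adjunction iterated along a regular system of parameters $t_1,\dots,t_n$ at $t$, with $V$ added. That the fiber $X_t$ is slc then forces $(X,\Delta+\sum_{i\le n} f^*(t_i=0))$ to be slc near $X_t$, and $V$ is contained in that sum. Inversion of adjunction for slc pairs is known (Kollár, via normalization and the lc case of Kawakita), and this is the step I expect to need the most care. In particular, I must check that the normalization of $X$ restricts compatibly with the conductor, which uses that $\Supp(\Delta)$ and $f^*V_i$ avoid the codimension one singular locus generically.

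For the klt statement, I would first use the same inversion-of-adjunction argument to show that $X$ is normal near $f^{-1}(V)$. Since $(X,\Delta+f^*V)$ is slc, every log discrepancy satisfies $A_{X,\Delta+f^*V}(E)\ge 0$, and therefore
\[
A_{X,\Delta}(E)=A_{X,\Delta+f^*V}(E)+\ord_E(f^*V)\ge \ord_E(f^*V).
\]
For a divisor $E$ over $X$ whose center meets $f^{-1}(V)$, we have $\ord_E(f^*V)>0$, so $A_{X,\Delta}(E)>0$. For the remaining divisors, klt-ness on $X\setminus f^{-1}(V)$ gives positivity. Normality of $X$ also follows: a normal crossing of $X$ would be an lc center of $(X,\Delta)$ itself, and hence of $(X,\Delta+f^*V)$, of the form above. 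By the same inequality, since the center of such a place meets $f^{-1}(V)$, that place would have positive discrepancy for $\Delta$, whereas double crossings have log discrepancy $0$. By assumption, the singular locus does not intersect the complement of $f^{-1}(V)$ in codimension one. Hence $X$ is normal and $(X,\Delta)$ is klt.
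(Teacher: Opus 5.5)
Your proposal is correct and takes essentially the paper's route. The slc statement is the standard iterated inversion of adjunction along a regular system of parameters adapted to $V$, which is what the cited references do, and the klt statement is the same normality-then-discrepancy comparison via $A_{X,\Delta}(E)=A_{X,\Delta+f^*V}(E)+\ord_E(f^*V)$.

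Two small corrections are needed. First, slc-ness of $(X,\Delta)$ away from $f^{-1}(V)$ is not automatic; it is exactly what your regular-system-of-parameters argument supplies. Second, $\ord_E(f^*V)>0$ holds only when the center of $E$ is \emph{contained} in $f^{-1}(V)$, not merely when it meets $f^{-1}(V)$; divisors whose centers are not contained in $f^{-1}(V)$ are instead handled by the klt hypothesis on the complement.
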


\begin{proof}
The first statement follows from \cite[Proof of Theorem 4.54]{Kol24} (see also \cite[Lemma 2.7]{BABWILD}). For the second statement, we first show that $X$ is normal hence $(X,\Delta+f^*V)$ is lc. If $X$ has a codimension $1$ singular locus $Z$, then we have $Z \subset f^{-1} (V)$ as $(X,\Delta)$ is klt away from $f^{-1}(V)$. But then $(X,\Delta + f^*V)$ would not be slc as $f^*V$ contains $Z$, a contradiction. Thus $X$ is demi-normal and regular in codimension $1$ which implies that $X$ is normal. Now, if $(X,\Delta)$ is not klt, then there exists a prime divisor $E$ over $X$ such that $A_{X,\Delta}(E) = 0$. Since $(X,\Delta)$ is klt away from $f^{-1}(V)$, we know that the center of $E$ on $X$ is contained in $f^{-1}(V)$, which implies that $\ord_E(f^*V)>0$. Thus
\[
A_{X,\Delta + f^*V}(E) = A_{X,\Delta}(E) - \ord_E (f^{*}V) <0,
\]
a contradiction. As a result, the pair $(X,\Delta)$ is klt.
\end{proof}

\begin{prop}\label{prop:cbir-CY-family}
Let $f:(X,\Delta) \to T$ be a family of log CY pairs over a smooth scheme $T$ with $X$ normal. Let $f':(X',\Delta')\to T$ be a proper morphism from a normal pair.  Let $\phi: (X,\Delta)\dashrightarrow (X',\Delta')$ be a crepant birational map over $T$. Then $f'$ is also a family of log CY pairs.
\end{prop}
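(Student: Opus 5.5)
We have to show that $f'$ is flat with connected $S_2$ fibers, that $K_{X'/T}+\Delta'$ is $\bQ$-Cartier and $\bQ$-linearly trivial over $T$, that the components of $\Delta'$ are relative Mumford divisors, and that every fiber $(X'_t,\Delta'_t)$ is slc. The approach is local on $T$: we reduce to the case where $t\in T$ is a closed point, and choose a regular system of parameters $t_1,\dots,t_n$ at $t$, with $V=\sum_i(t_i=0)$ an snc divisor near $t$. Let $\tX$ be a common resolution with $p:\tX\to X$ and $q:\tX\to X'$. By crepancy,
\[
p^*(K_X+\Delta+f^*V)=q^*(K_{X'}+\Delta'+f'^*V),
\]
since $f\circ p=f'\circ q$ (both morphisms are over $T$). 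By Proposition~\ref{prop:slc-family}, $(X,\Delta+f^*V)$ is lc, so $(X',\Delta'+f'^*V)$ has all log discrepancies $\geq 0$. Moreover $K_{X'}+\Delta'$ is $\bQ$-Cartier because it pulls back to the $\bQ$-Cartier divisor $p^*(K_X+\Delta)\sim_{\bQ,T}0$ and $\mathcal{O}_{\tX}$-trivial divisors descend. More precisely, $K_X+\Delta\sim_{\bQ}f^*M$ for some $M$ locally on $T$, so $q^*(K_{X'}+\Delta')\sim_{\bQ} q^*f'^*M$. Since $q$ has connected fibers and $X'$ is normal, $K_{X'}+\Delta'-f'^*M$ is a $\bQ$-Weil divisor whose pullback is $\bQ$-linearly trivial, hence it is itself $\bQ$-linearly trivial (push forward a section). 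Thus $K_{X'}+\Delta'\sim_{\bQ,T}0$, and in particular it is $\bQ$-Cartier. Hence $(X',\Delta'+f'^*V)$ is lc.

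Next we deduce the fiber properties by adjunction and inversion. We apply the standard result that if $(X',\Delta'+\sum_i f'^*(t_i=0))$ is lc and $f'$ is flat, then the fiber is slc; flatness has to be established first. Each $f'^*(t_i=0)$ is Cartier, and an lc pair plus a reduced Cartier divisor that is an lc center of the pair implies that the divisor is $S_2$ and demi-normal. This is Kollár's result that lc centers of this kind, more precisely unions of lc centers, are seminormal and $S_2$ when the complement has the right depth. Cutting successively by $t_1,\dots,t_n$, we get that $f'^{-1}(t)$ is a Cartier intersection of the correct codimension, provided its dimension is right. Dimension follows from the fact that $X'$ is irreducible and dominates $T$, so every fiber has dimension at least $\dim X'-\dim T$. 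Equality comes from the fact that lc centers of $(X',\Delta'+f'^*V)$ over $t$ are exactly the strata images, which have the correct dimension since they correspond under crepant birationality to lc centers of $(X,\Delta+f^*V)$ over $t$, all of which are fiberwise of the correct dimension. Miracle flatness then fails because $X'$ need not be CM. Instead, I would use Kollár's criterion \cite[Theorem 4.54 / Cor.\ 4.55]{Kol24}: over a smooth base, if $(X',\Delta'+f'^*V)$ is lc and $X'_t$ has pure expected dimension, then $f'$ is flat with slc fibers, i.e.\ a locally stable family. This is precisely the converse direction used in Proposition~\ref{prop:slc-family}. Applying it gives flatness, the $S_2$ property, and slc fibers $(X'_t,\Delta'_t)$, where $K_{X'_t}+\Delta'_t=(K_{X'/T}+\Delta')|_{X'_t}\sim_\bQ 0$.

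Finally, connectedness of fibers holds because $f'_*\cO_{X'}=f_*\cO_X=\cO_T$, using Stein factorization through the common resolution. Relative Mumford holds because the components of $\Delta'$ do not contain fiber components: a fiber component $F$ of $X'_t$ is an lc place of $(X',\Delta'+f'^*V)$ appearing with coefficient $\geq 1$ in $f'^*V$, so the coefficient of $F$ in $\Delta'$ must be $0$. The main obstacle I expect is the equidimensionality of fibers of $f'$, which feeds into the local stability criterion. My first attempt there is to argue via crepant correspondence of lc centers as above. As a fallback, I would use the fact that any divisor over $X'$ centered in a fiber with too-small discrepancy would contradict the fiber dimension bound on $X$ via $A_{X,\Delta+f^*V}\geq 0$ combined with the fact that $\ord(f^*V)\geq 1$ on such divisors.
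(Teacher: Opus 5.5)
Your proposal is correct and follows the same route as the paper. You localize at $t$ with an snc divisor $V$ cutting out $t$, obtain that $(X',\Delta'+f'^*V)$ is lc from Proposition~\ref{prop:slc-family} together with crepancy, apply the local-stability criterion over a smooth base (the paper cites \cite[Lemma 2.7]{BABWILD}, you cite \cite[Theorem 4.54]{Kol24}), and transfer $K+\Delta\sim_{\bQ,T}0$ through the crepant map.

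Your separate arguments for flatness, connectedness, the Mumford condition and equidimensionality are redundant, because the criterion yields all of them; in particular, the pure expected dimension of $X'_t$ already follows by adjunction from lc-ness with $\dim T$ reduced Cartier components through $t$. You should drop the crepant-correspondence justification of equidimensionality, which does not work as stated, since crepant birational maps do not preserve the dimensions of lc centers.
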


\begin{proof}
    By smoothness of $T$, for any closed point $t\in T$ there exist an open neighborhood $S$ of $t\in T$ and an snc divisor $V=\sum_{i=1}^r V_i$ on $S$ such that $\cap_{i=1}^r V_i = t$. Denote by $f_S: (X_S,\Delta_S) \to S$, $f_S': (X_S',\Delta_S') \to S$ and $\phi_S: (X_S, \Delta_S) \dashrightarrow (X_S', \Delta_S')$ the base change of $f$, $f'$, and $\phi$ to $S$ respectively. Since $f_S$ is a family of slc pairs and $X_S$ is normal, by Proposition \ref{prop:slc-family} we know that $(X_S, \Delta_S + f_S^* V)$ is lc. Thus the crepant birationality of $\phi_S$ together with the normality of $X_S'$ implies that $(X_S', \Delta_S' + f_S'^* V)$ is lc. Thus \cite[Lemma 2.7]{BABWILD} implies that $f'$ is a family of slc pairs in a neighborhood of $t$, and hence a family of slc pairs over $T$ as $t$ can be chosen arbitrarily in $T$. Since $K_{X/T} + \Delta\sim_{\bQ, T} 0$, by the crepant birationality of $\phi$ we know that $K_{X'/T}+\Delta'\sim_{\bQ, T} 0$ and hence $f':(X',\Delta')\to T$ is a family of log CY pairs.  
\end{proof}

\subsection{Special valuations and $\bR$-test configurations}

Unless otherwise stated, throughout this subsection $(X,D;\Delta)$ denotes an lc-klt log CY-Fano triple over $\bk$. Let $L:= - l (K_X+\Delta)$ be an ample Cartier divisor for some $l\in \bN$. Denote the section ring of $(X,L)$ by  $R:= R(X,L) = \oplus_{m\in\bN} R_m$ with $R_m = H^0(X, mL)$.

\subsubsection{Finitely generated valuations and special valuations}
\begin{defn}
For $v\in X^{\val}$, we define the filtration $\cF_v R$ as 
\[
\cF^{\lambda}_v R_m := \{s\in R_m \mid v(s) \geq \lambda\} \quad \textrm{ for any }\lambda\in \bR\textrm{ and }m\in \bN.
\]
We also set \[
\cF^{>\lambda}_v R_m := \cup_{\lambda'>\lambda} \cF^{\lambda'}_v R_m \quad\textrm{ and } \quad \gr_v^{\lambda} R_m := \cF^{\lambda}_v R_m/\cF^{>\lambda}_v R_m.
\]
The \emph{associated graded ring} of $v$ is defined as 
\[
\gr_v R : = \bigoplus_{m\in \bN} \bigoplus_{\lambda\in \Phi_v} \gr_v^{\lambda} R_m.
\]

\end{defn}

\begin{defn}
A quasi-monomial valuation $v\in X^{\qm}$ is called 
\begin{enumerate}
    \item \emph{finitely generated} with respect to $(X,\Delta)$ if $\gr_v R$ is a finitely generated $\bk$-algebra;
    \item \emph{special} with respect to  $(X,\Delta)$ if it is finitely generated with respect to $(X,\Delta)$, and the induced log Fano degeneration $(X_v, \Delta_v)$ of $(X,\Delta)$ is klt where $X_v:=\Proj\, \gr_v R$ and $\Delta_v$ is the $\bQ$-divisor defined as in \cite[Paragraph below Theorem 4.2]{LXZ22}.
\end{enumerate}
\end{defn}

In this paper, we are mainly interested in finitely generated and special valuations with respect to $(X,\Delta)$ that are lc places of $(X,D)$. Note that by \cite[Appendix]{BLX22}, any finitely generated valuation $v$ inducing an slc log Fano degeneration of $(X,\Delta)$ (called a \emph{finitely generated weakly special valuation}) is an lc place of $(X,D)$ for some bounded complement $D$. 

\begin{defn}
Denote by 
\begin{align*}
\LCP^{\rfg}(X,D;\Delta)&:= \{v\in \LCP(X,D)\mid v\textrm{ is finitely generated with respect to }(X,\Delta)\};\\
\LCP^{\SV}(X,D;\Delta)&:= \{v\in \LCP(X,D)\mid v\textrm{ is special with respect to }(X,\Delta)\}.
\end{align*}
Similarly, denote the spaces of finitely generated valuations and special valuations in the dual complex $\cD(X,D)$ by
\begin{align*}
\cD^{\rfg}(X,D;\Delta)&:=(\LCP^{\rfg}(X,D;\Delta)\setminus \{v_{\triv}\})/\bR_{>0};\\ \cD^{\SV}(X,D;\Delta)&:=(\LCP^{\SV}(X,D;\Delta)\setminus \{v_{\triv}\})/\bR_{>0}.
\end{align*}
\end{defn}

\subsubsection{Test configurations}

We first define test configurations for an arbitrary log CY-Fano triple, allowing the underlying pair to be slc.

\begin{defn}
Let $(X,D;\Delta)$ be a log CY-Fano triple. 
Let  $\pi: \cX \to \bA^1$ be a flat proper morphism from a demi-normal scheme $\cX$ equipped with a $\bG_m$-action.
\begin{enumerate}
    \item We say that $\cX$ is a \emph{test configuration} of $(X,D;\Delta)$ if the following hold.
\begin{enumerate}[label=(\roman*)]
    \item $\pi$ is $\bG_m$-equivariant with  the standard $\bG_m$-action on $\bA^1$;
    \item $\cX\setminus \cX_0$ is $\bG_m$-equivariantly isomorphic to $X\times (\bA^1\setminus \{0\})$ with the trivial $\bG_m$-action on $X$;
    \item let $D_{\cX}$ and $\Delta_{\cX}$ be the closure of $D\times (\bA^1\setminus \{0\})$ and $\Delta\times (\bA^1\setminus \{0\})$ on $\cX$ respectively under the above isomorphism, then $(\cX, D_{\cX}+\cX_0)$ is slc and $K_{\cX}+D_{\cX}\sim_{\bQ,\pi} 0$;    
    \item $-K_{\cX} - \Delta_{\cX}$ is $\bQ$-Cartier and $\pi$-ample.
\end{enumerate}

\item We say that $\cX$ is a \emph{semiample test configuration} of $(X, D;\Delta)$ if it satisfies (i), (ii), (iii) and the following:
\begin{enumerate}
    \item[(iv')] $-K_{\cX}-\Delta_{\cX}$ is $\bQ$-Cartier and $\pi$-semiample. 
\end{enumerate}

\item A test configuration $\cX$ of $(X,D;\Delta)$ is called \emph{trivial} if there is a $\bG_m$-equivariant isomorphism $\cX \cong X\times \bA^1=: X_{\bA^1}$ where the $\bG_m$-action on $X$ is trivial.

\item Assume in addition that $(X,D;\Delta)$ is lc-klt. A test configuration $\cX$ of $(X,D;\Delta)$ is called \emph{special} if $(\cX, \Delta_{\cX}+\cX_0)$ is plt.
\end{enumerate}

\end{defn}

We note that in some literature (e.g.\ \cite{BLX22, BABWILD, BL24}), our notion of a test configuration $\cX$ of $(\cX,D;\Delta)$ refers to a \emph{weakly special test configuration} of the boundary polarized CY pair $(X,\Delta +(D-\Delta))$.

For the remainder of this subsection, we again assume that  $(X,D;\Delta)$ is an lc-klt log CY-Fano triple.

 Given a semiample test configuration $\tcX$ of $(X,D;\Delta)$, the ample model $\cX$ of $-K_{\tcX}-\Delta_{\tcX}$ is a test configuration as $\tcX\to \cX$ is a birational contraction which implies that $(\tcX, D_{\tcX})\to (\cX, D_{\cX})$ is crepant birational and hence $(\cX,D_{\cX}+ \cX_0)$ is lc and $K_{\cX}+ D_{\cX} \sim_{\bQ,\pi} 0$.

\begin{defn}
Let $\cX$ be a semiample test configuration of $(X,D;\Delta)$. Let $\cX_0=\cup_{i=1}^{d} \cX_0^{(i)}$ be the irreducible components. Denote by $v_{\cX,i}:=\ord_{\cX_0^{(i)}}|_{K(X)}$ where we identify $K(\cX) = K(X)(t)$ under the isomorphism $\cX\setminus \cX_0 \cong X\times (\bA^1\setminus\{0\})$. If in addition $\cX_0$ is integral, we denote by $v_{\cX}:= v_{\cX,1}$.
\end{defn}

\begin{thm}[cf.\ \cite{BLX22, BABWILD}]\label{thm:sa-tc}
Let $\tcX$ be a semiample test configuration of $(X,D;\Delta)$ where $\tcX_0$ has $d$ irreducible components. Then $\{v_{\tcX,i}\}_{i=1}^{d}\subset \LCP(X,D)(\bZ)$. 

Conversely, for any finite subset $\{v_1, \cdots, v_d\}\subset \LCP(X,D)(\bZ)$, there exists a semiample test configuration $\tcX$ of $(X,D;\Delta)$ where $\tcX_0$ has $d$ irreducible components such that $v_{\tcX,i} = v_{i}$.
Moreover, define  a filtration $\cF$ of $R$ by
\[
\cF^{\lambda} R_m : = \{s\in R_m \mid  v_{i}(s) \geq \lambda + lm A_{X,\Delta}(v_{i})\textrm{ for any }1\leq i \leq d\}.
\]
Denote the Rees algebra of $\cF$ by 
\[
\Rees(\cF):= \bigoplus_{m\in \bZ_{\geq 0}}\bigoplus_{\lambda\in \bZ} t^{-\lambda}\cF^\lambda R_m.
\]
Then $\Rees(\cF)$ is finitely generated as a $\bk[t]$-algebra, and $\cX:=\Proj \,\Rees(\cF)$ is an ample model of $\tcX$. 
\end{thm}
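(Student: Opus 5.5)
The statement has two directions, and I would prove them separately before turning to the Rees algebra description.

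\emph{First direction: the valuations $v_{\tcX,i}$ are integral lc places.} Start from a semiample test configuration $\tcX$ with central fiber components $\tcX_0^{(i)}$. Each $v_{\tcX,i}$ is the restriction to $K(X)$ of $\ord_{\tcX_0^{(i)}}$ on $K(X)(t)$. Since $\tcX_0=\pi^*(0)$ is reduced (condition (iii) makes $(\tcX,D_{\tcX}+\tcX_0)$ slc), we get $\ord_{\tcX_0^{(i)}}(t)=1$. By \cite[Lemma 4.1]{BHJ17}, the restriction is then $\bZ$-divisorial, say $v_{\tcX,i}=c\,\ord_E$ with $c\in\bZ_{>0}$.

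To see that it is an lc place of $(X,D)$, compare $(\tcX,D_{\tcX}+\tcX_0)$ with the trivial family $(X_{\bA^1},D_{\bA^1}+X\times\{0\})$. Both satisfy $K+D\sim_{\bQ,\pi}0$ and agree away from the central fiber, so Lemma~\ref{lem:cbir}, or rather its relative version (negativity lemma over $\bA^1$), shows they are crepant birational. Hence $\tcX_0^{(i)}$, which has coefficient $1$ in $D_{\tcX}+\tcX_0$, is an lc place of $(X_{\bA^1},D_{\bA^1}+X_0)$. The standard computation $A_{X_{\bA^1},D_{\bA^1}+X_0}(\ord_F)=A_{X,D}(v)$, for a $\bG_m$-invariant $F$ with restriction $v$ and $\ord_F(t)=1$, then gives $A_{X,D}(v_{\tcX,i})=0$. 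This lets me extract $E$ to get a center in $X$, which follows from properness.

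\emph{Second direction: constructing $\tcX$ from given valuations.} Given $v_1,\dots,v_d\in\LCP(X,D)(\bZ)$, let $F_i$ be the $\bG_m$-invariant divisor over $X_{\bA^1}$ with $\ord_{F_i}|_{K(X)}=v_i$ and $\ord_{F_i}(t)=1$. As above, each $F_i$ is an lc place of $(X_{\bA^1},D_{\bA^1}+X_0)$. By \cite[Corollary 1.4.3]{BCHM} in the lc setting (the dlt modification extraction), I extract exactly $F_1,\dots,F_d$ on a $\bG_m$-equivariant birational model $\cY\to X_{\bA^1}$ and then contract $X\times\{0\}$. One route is to extract all the $F_i$ on a $\bQ$-factorial dlt model, run a $(K+D_{\cY}+\cY_0-\epsilon\,\widetilde{X_0})$-MMP over $\bA^1$, and note that $\widetilde{X_0}$ is contracted provided no $v_i$ is trivial. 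If some $v_i=v_{\triv}$, then $X_0$ itself is kept.

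Next I would run an MMP for $-K-\Delta$ over $\bA^1$, or simply take the model on which $-K_{\tcX}-\Delta_{\tcX}$ is semiample. Semiampleness should come from $-K-\Delta\sim_{\bQ}D-\Delta$, which is pulled back from the ample divisor $L$, modulo vertical lc places. Concretely, I would use the result of \cite{BLX22}/\cite{BABWILD} that the filtration $\cF$ defined by the $v_i$ has finitely generated Rees algebra, because each $v_i$ is an lc place of a complement $(X,D)$ of $(X,\Delta)$. In that framework this finite generation is exactly \cite[Theorem 4.2]{BLX22}: lc places of complements yield finitely generated filtrations via the MMP.

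\emph{The Rees algebra and the expected obstacle.} Granting finite generation, $\cX=\Proj\,\Rees(\cF)$ is the ample model of $-K_{\tcX}-\Delta_{\tcX}$. Indeed, sections of $m(-K_{\tcX}-\Delta_{\tcX})$ of $t$-weight $-\lambda$ correspond to $s\in R_m$ with $\ord_{F_i}(s\,t^{-\lambda})\ge -m\,l\,\ord_{F_i}(\cdots)$. The shift by $lmA_{X,\Delta}(v_i)$ arises because $A_{X_{\bA^1},\Delta_{\bA^1}+X_0}(F_i)=A_{X,\Delta}(v_i)$, so that $\ord_{F_i}$ of the pullback of $-K-\Delta$ equals the log discrepancy difference. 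The main obstacle is ensuring that the semiample model has central fiber with exactly $d$ components, none of which is contracted. I would handle this by running the MMP for $-K_{\tcX}-\Delta_{\tcX}$ together with the crepant structure, as in \cite[Appendix]{BLX22}.
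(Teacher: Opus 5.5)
Your first direction is fine and follows the paper's argument. The relative negativity-lemma comparison with the trivial family $(X_{\bA^1},D_{X_{\bA^1}}+X\times\{0\})$ is \cite[Lemma 2.10(1)]{BABWILD}, and the Gauss-extension computation then gives $A_{X,D}(v_{\tcX,i})=0$.

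\textbf{The gap is in the converse.}

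\emph{A sign slip in the contraction step.} On your extraction $\mathcal{Y}$, the reduced central fiber $\mathcal{Y}_0=\widetilde{X_0}+\sum_iF_i$ is $\sim 0$ over $\bA^1$. Hence $K_{\mathcal{Y}}+D_{\mathcal{Y}}+\mathcal{Y}_0-\epsilon\widetilde{X_0}\sim_{\bQ,\bA^1}\epsilon\sum_iF_i$. An MMP for this divisor can only contract some of the $F_i$ and never $\widetilde{X_0}$. To contract $\widetilde{X_0}$ you want the $(K_{\mathcal{Y}}+D_{\mathcal{Y}}+\mathcal{Y}_0-\epsilon\sum_iF_i)$-MMP instead.

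\emph{The main missing step.} The real content of the statement is a model whose central fiber has exactly the $d$ components $F_i$ and on which $-K-\Delta$ is semiample over $\bA^1$. You only name this as ``the main obstacle'' and defer it, and ``simply take the model on which $-K_{\tcX}-\Delta_{\tcX}$ is semiample'' is circular. Finite generation of $\Rees(\cF)$ does not supply this step. Your weight computation correctly shows that $\Rees(\cF)$ is the section ring of $-l(K+\Delta)$ on \emph{any} normal model whose central fiber components are exactly the $F_i$. But a finitely generated section ring does not make the divisor semiample on a given model. By itself it also does not rule out that a $(-K-\Delta)$-MMP contracts some $F_j$. So neither the existence of $\tcX$ nor the claim that $\Proj\,\Rees(\cF)$ is its ample model is established.

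\textbf{How the paper closes it.} The paper reverses the order and starts from $\cX=\Proj\,\Rees(\cF)$. This is a test configuration by \cite[Proposition 4.9]{BABWILD}, and its central fiber components are the Gauss extensions of $v_i$ for $i$ in some nonempty $I\subset\{1,\dots,d\}$. The key input, from \cite{BLX22}, is that for $j\notin I$ the Gauss extension of $v_j$ is an lc place of $(\cX,\Delta_{\cX}+\cX_0)$. A computation as in Proposition~\ref{prop:max-log-disc} gives its log discrepancy as $\sup_{m,s}\frac{1}{ml}\bigl[\min_i(v_i(s)-lmA_{X,\Delta}(v_i))-(v_j(s)-lmA_{X,\Delta}(v_j))\bigr]\le 0$. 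It is also $\ge 0$ because the pair is lc, so it equals $0$. Extracting these divisors by \cite{BCHM10} is therefore crepant for $K+\Delta+\cX_0$. Then $-K_{\tcX}-\Delta_{\tcX}$ is the pullback of the relatively ample $-K_{\cX}-\Delta_{\cX}$, and semiampleness, the count of $d$ components, and the ample-model statement follow at once.

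\textbf{How your route could be completed.} It needs an idea not in the proposal. One has $-K_{\mathcal{Y}}-\Delta_{\mathcal{Y}}\sim_{\bQ,\bA^1}B_{\mathcal{Y}}$, the strict transform of $B\times\bA^1$. This has no vertical component precisely because $A_{X,\Delta}(v_i)=v_i(B)$ for $v_i\in\LCP(X,D)$, so no $F_i$ lies in its stable base locus. Moreover, $\mathcal{Y}$ is of Fano type over $\bA^1$: the $F_i$ have log discrepancy $<1$ for the klt pair $(X_{\bA^1},(1-\epsilon)(D_{X_{\bA^1}}+X\times\{0\})+\epsilon\Delta_{X_{\bA^1}})$. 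Hence the $(-K-\Delta)$-MMP over $\bA^1$ consists only of flips and ends with a semiample model that keeps all $d$ components.
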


\begin{proof}
For the first statement, we consider a semiample test configuration $\tcX$. Let $\cX$ be an ample model of $\tcX$. Then we know that $(\tcX, D_{\tcX}+\tcX_0)$ is crepant birational to $(\cX, D_{\cX}+\cX_0)$  over $\bA^1$, which is crepant birational to $(X_{\bA^1}, D_{X_{\bA^1}} + X\times \{0\})$ over $\bA^1$ by \cite[Lemma 2.10(1)]{BABWILD}. Thus every irreducible component $\tcX_0^{(i)}$ is an lc place of $(X_{\bA^1}, D_{X_{\bA^1}} + X\times \{0\})$. Since $\ord_{\tcX_0^{(i)}}$ is the Gauss extension of $v_{\tcX,i}$, we know that $v_{\tcX,i}$ is a $\bZ$-divisorial lc place of $(X,D)$. 

For the remaining statement, by \cite[Proposition 4.9]{BABWILD} we know that the Rees algebra $\Rees(\cF)$ is finitely generated, and $\cX:=\Proj\, \Rees(\cF)$ is a test configuration of $(X,D;\Delta)$. Moreover, there exists a subset $\emptyset\neq I\subset \{1,\cdots, d\}$ such that $\cX_0 = \cup_{i\in I} \cX_0^{(i)}$ where $v_{\cX, i} = v_i$. By  \cite[Last paragraph of Proof of Proposition A.5]{BLX22}, for each $j\not\in I$ the Gauss extension of $v_{j}$ is an lc place of $(\cX, \Delta_{\cX}+ \cX_0)$. By \cite{BCHM10}, there exists a proper birational morphism $\tcX\to \cX$ extracting prime divisors $\tcX_0^{(j)}$ for all $j \not\in I$ such that $v_{\tcX, j}= v_j$ and $\tcX$ is a semiample test configuration of $(X,D;\Delta)$. The proof is finished.
\end{proof}

\begin{cor}[cf.\ \cite{CZ22, BABWILD}]
There is a one-to-one correspondence between test configurations $\cX$ of $(X,D;\Delta)$ with integral $\cX_0$ and $\LCP(X,D)(\bZ)$ where $\cX\mapsto v_{\cX}$. 
\end{cor}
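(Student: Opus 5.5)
We derive this correspondence directly from Theorem~\ref{thm:sa-tc}, taking $d=1$ in both directions and checking that the two maps are mutually inverse.

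\textbf{From test configurations to valuations.} Let $\cX$ be a test configuration of $(X,D;\Delta)$ with $\cX_0$ integral. Since $\pi$-ample implies $\pi$-semiample, $\cX$ is in particular a semiample test configuration with $d=1$. The first part of Theorem~\ref{thm:sa-tc} therefore gives $v_{\cX}\in \LCP(X,D)(\bZ)$, so the map $\cX\mapsto v_{\cX}$ is well defined.

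\textbf{Surjectivity.} Fix $v\in \LCP(X,D)(\bZ)$. If $v=v_{\triv}$, the trivial test configuration $X_{\bA^1}$ has $v_{X_{\bA^1}}=v_{\triv}$, because $\ord_{X\times\{0\}}$ restricts trivially to $K(X)$. If $v\neq v_{\triv}$, apply Theorem~\ref{thm:sa-tc} with $d=1$ and $v_1=v$. This produces a semiample test configuration $\tcX$ whose central fiber is irreducible with $v_{\tcX}=v$, together with its ample model $\cX=\Proj\,\Rees(\cF_v')$, where
\[
\cF'^{\lambda}_v R_m=\{s\in R_m \mid v(s)\ge \lambda+lmA_{X,\Delta}(v)\}.
\]
In the proof of Theorem~\ref{thm:sa-tc}, the set $I$ is nonempty, so $I=\{1\}$. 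Hence no divisor is extracted, $\tcX=\cX$, and $\cX$ is a genuine test configuration with $\cX_0$ irreducible and $v_{\cX}=v$. Reducedness of $\cX_0$ holds because $\cX$ is demi-normal and $(\cX,\cX_0+D_{\cX})$ is slc, which forces the coefficient of $\cX_0$ in the divisor $\cX_0$ to be $1$. Alternatively, since $v$ is $\bZ$-divisorial, i.e.\ an integral multiple of some $\ord_E$ with the normalization built into the Gauss extension, one can cite the standard fact that $\Proj\,\Rees$ of a $\bZ$-valued valuation filtration has reduced central fiber.

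\textbf{Injectivity.} This is the step needing the most care. Let $\cX$ be any test configuration with $\cX_0$ integral and $v:=v_{\cX}$. I will show $\cX\cong\Proj\,\Rees(\cF'_v)$ equivariantly. Since $-K_{\cX}-\Delta_{\cX}$ is $\pi$-ample, $\cX=\Proj_{\bA^1}\bigoplus_m H^0(\cX,-ml(K_{\cX}+\Delta_{\cX}))$. Each graded piece embeds into $R_m\otimes\bk[t,t^{-1}]$, and by the usual analysis (as in \cite{BHJ17}-style arguments) it equals $\bigoplus_\lambda t^{-\lambda}\{s : \ord_{\cX_0}(\bar s)\ge 0\}$, where $\bar s$ is the section extended from the generic fiber. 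Using the crepant identification $(\cX,D_{\cX}+\cX_0)\dashrightarrow (X_{\bA^1},D_{X_{\bA^1}}+X\times\{0\})$ from \cite[Lemma 2.10(1)]{BABWILD}, one compares the canonical extension of $L$ on $\cX$ with the pullback from $X_{\bA^1}$. The two differ by $lA_{X,\Delta}(v)\cdot\cX_0$, since $\cX_0$ is an lc place of $(X_{\bA^1},D_{X_{\bA^1}}+X\times\{0\})$ and $\Delta\le D$ accounts for the log discrepancy shift. The condition thus becomes $v(s)\ge\lambda+lmA_{X,\Delta}(v)$, which gives the claimed isomorphism. Hence $\cX$ is determined by $v_{\cX}$.

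The main obstacle is tracking this twist exactly: identifying the normalization of $\ord_{\cX_0}$, which requires $\cX_0$ reduced so that $\ord_{\cX_0}(t)=1$, and the shift $lmA_{X,\Delta}(v)$. This is precisely what matches the filtration in Theorem~\ref{thm:sa-tc}, so the two constructions are mutually inverse.
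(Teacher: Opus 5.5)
Your proposal is correct and follows the paper's route: Theorem~\ref{thm:sa-tc} with $d=1$ gives well-definedness and surjectivity, and injectivity rests on the fact that a test configuration with integral central fiber is determined by $\ord_{\cX_0}$. You make that fact explicit through the Rees algebra with the shift $lmA_{X,\Delta}(v)$, where the paper simply asserts it and cites \cite[Theorem 4.8]{BABWILD}. One point you leave unstated is that your Hartogs-type description of $H^0(\cX,m\cL)$ needs $\cX$ to be normal; this holds because $\cX$ is $S_2$ by demi-normality and is regular at the generic point of the reduced Cartier divisor $\cX_0$.
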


\begin{proof}
This is a simple consequence of Theorem \ref{thm:sa-tc} as a test configuration $\cX$ with integral $\cX_0$ is uniquely determined by $\ord_{\cX_0}$. See also \cite[Theorem 4.8]{BABWILD}.
\end{proof}

\begin{cor}
A $\bZ$-divisorial valuation $v\in \LCP(X,D)(\bZ)$ is special if and only if there exists a special test configuration $\cX$ of $(X,D;\Delta)$ such that $v = v_{\cX}$. 
\end{cor}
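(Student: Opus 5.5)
By the previous corollary, $v\mapsto \cX$ is a bijection between $\LCP(X,D)(\bZ)$ and test configurations of $(X,D;\Delta)$ with integral central fiber. Under this bijection $\cX=\Proj\,\Rees(\cF_v)$. So the plan is to identify the central fiber with the degeneration $(X_v,\Delta_v)$ attached to $v$, and then translate the klt condition on $(X_v,\Delta_v)$ into the plt condition on $(\cX,\Delta_{\cX}+\cX_0)$ by inversion of adjunction.

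First, let $v\in\LCP(X,D)(\bZ)$ and let $\cX=\cX_v$ be the test configuration from Theorem~\ref{thm:sa-tc} with $d=1$. Its filtration $\cF$ differs from $\cF_v$ only by the shift $\lambda\mapsto\lambda+lmA_{X,\Delta}(v)$. This shift is linear in $m$, so it does not change the associated graded ring up to regrading. Hence
\[
\cX_0=\Proj\bigl(\Rees(\cF)/t\Rees(\cF)\bigr)\cong \Proj\,\gr_vR = X_v .
\]
In particular $\gr_vR$ is finitely generated, because $\Rees(\cF)$ is. So every $v\in\LCP(X,D)(\bZ)$ is automatically finitely generated, and the only remaining issue is the klt condition. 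Next I check that $\Delta_{\cX}|_{\cX_0}=\Delta_v$. The divisor $\Delta_v$ in \cite{LXZ22} is defined as the degeneration of $\Delta$ under the filtration, which is the same as restricting the closure $\Delta_{\cX}$ to the central fiber. The point needing care is that $\cX_0$ might carry a nontrivial different. This does not happen: $\cX_0$ is integral and Cartier, being the fiber over $0$, and $\cX$ is $S_2$. So by the standard argument for test configurations (as in \cite{LX14, BLX22}) the different vanishes, and $K_{\cX}+\Delta_{\cX}+\cX_0$ restricts to $K_{X_v}+\Delta_v$.

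Now I use adjunction. If $(\cX,\Delta_{\cX}+\cX_0)$ is plt, then $\cX_0$ is normal and $(\cX_0,\Delta_{\cX}|_{\cX_0})=(X_v,\Delta_v)$ is klt, so $v$ is special. Conversely, suppose $(X_v,\Delta_v)$ is klt. Then $\cX_0$ is normal, and inversion of adjunction (Kawakita's theorem, or \cite[Theorem~4.9]{Kol13}) shows that $(\cX,\Delta_{\cX}+\cX_0)$ is plt near $\cX_0$. Away from $\cX_0$ we have $\cX\cong X\times\bG_m$, where the pair is klt because $(X,\Delta)$ is. Hence $\cX$ is a special test configuration with $v_{\cX}=v$.

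For the "only if" direction, it remains to show that a special test configuration $\cX$ with $v=v_{\cX}$ is the one produced above. Being plt, $\cX$ has irreducible, hence integral, central fiber $\cX_0$, since a plt boundary component is normal. The one-to-one correspondence then forces $\cX=\cX_v$. I expect the main obstacle to be the identification $\Delta_{\cX}|_{\cX_0}=\Delta_v$ with vanishing different, which has to be matched carefully with the precise definition in \cite{LXZ22}; the rest is formal.
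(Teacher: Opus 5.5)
Your proposal is correct and follows the same route as the paper. The paper notes via Theorem~\ref{thm:sa-tc} that the test configuration with $v_{\cX}=v$ has central fiber $\cX_0\cong\Proj\,\gr_v R$ and concludes by (inversion of) adjunction; your remarks on the vanishing of the different (needed only once $\cX_0$ is normal) and on a plt central fiber being integral make explicit what the paper leaves implicit, though the step showing a given special test configuration equals $\cX_v$ belongs to the ``if'' direction, not the ``only if'' one.
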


\begin{proof}
By Theorem \ref{thm:sa-tc}, we know that the test configuration $\cX$ with $v_{\cX} = v$ has central fiber $\cX_0 \cong \Proj \, \gr_v R$. Hence the statement follows from (inversion of) adjunction.
\end{proof}

\begin{prop}\label{prop:div-fg}
We have 
\[
\LCP^{\rdiv}(X,D)\cup \LCP^{\SV}(X,D;\Delta) \subset \LCP^{\rfg}(X,D;\Delta) \subset \LCP(X,D).
\]
In particular,
\[
\cD(X,D)(\bQ)\cup \cD^{\SV}(X,D;\Delta) \subset \cD^{\rfg}(X,D;\Delta) \subset \cD(X,D).
\]
\end{prop}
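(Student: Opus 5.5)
Three inclusions need proof. The inclusions $\LCP^{\SV}(X,D;\Delta)\subset\LCP^{\rfg}(X,D;\Delta)\subset\LCP(X,D)$ hold by definition: a special valuation is in particular finitely generated, and both sets are defined as subsets of $\LCP(X,D)$. The only nontrivial statement is therefore $\LCP^{\rdiv}(X,D)\subset\LCP^{\rfg}(X,D;\Delta)$, i.e.\ every divisorial lc place of $(X,D)$ is finitely generated with respect to $(X,\Delta)$. The second line of the statement will then follow by removing $v_{\triv}$ and passing to the quotient by $\bR_{>0}$, using that $\cD(X,D)(\bQ)$ is exactly the image of $\LCP^{\rdiv}(X,D)\setminus\{v_{\triv}\}$.

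For the divisorial inclusion, first reduce to $\bZ$-divisorial valuations. The trivial valuation has $\gr_{v_{\triv}}R\cong R$, which is finitely generated since $L$ is ample. For $v=c\,\ord_E$ with $c>0$, rescaling $v$ by a positive constant only rescales the grading of $\gr_vR$ by $\lambda\mapsto c'\lambda$. It leaves the underlying algebra, and hence finite generation, unchanged. So we may assume $v\in\LCP(X,D)(\bZ)$.

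Now apply Theorem \ref{thm:sa-tc} with $d=1$ and $v_1=v$. It says that the Rees algebra of the filtration
\[
\cF^\lambda R_m=\{s\in R_m\mid v(s)\geq \lambda+lmA_{X,\Delta}(v)\}
\]
is finitely generated over $\bk[t]$. This filtration is $\cF_v$ shifted in degree $m$ by the linear amount $lmA_{X,\Delta}(v)$. Such a shift changes neither the Rees algebra up to regrading nor the associated graded ring $\gr_vR$. Indeed, $\Phi_v\subset\bZ$ and $lA_{X,\Delta}(v)$ is rational, so after passing to a finite index Veronese we may assume it is an integer. Moreover, finite generation of $\gr_vR$ is insensitive to Veronese subrings, since $\gr_vR$ is a finite module over its Veronese, being integral over it and of finite type once the Veronese is. Since $\gr_vR\cong\Rees(\cF)/t\Rees(\cF)$, it is a quotient of a finitely generated algebra, hence finitely generated. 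This gives $v\in\LCP^{\rfg}(X,D;\Delta)$.

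The main care point is this last bookkeeping: checking that the shift by $lmA_{X,\Delta}(v)$ is compatible with the grading and that the Veronese reduction really preserves finite generation in both directions. The cleaner route I would take to the latter is to note that $\gr_vR$ is a normal-enough domain, or more simply to invoke the standard fact (e.g.\ \cite[Lemma 2.5]{BLX22}-type statements) that the $\bZ$-Rees algebra of a filtration induced by a $\bZ$-divisorial valuation is finitely generated if and only if $\gr_vR$ is. All geometric input is already contained in Theorem \ref{thm:sa-tc}.
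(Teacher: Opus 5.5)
Your proposal is correct and follows the paper's proof: the inclusions $\LCP^{\SV}\subset\LCP^{\rfg}\subset\LCP$ are definitional, and the divisorial inclusion comes from applying Theorem~\ref{thm:sa-tc} to a single $\bZ$-divisorial lc place $v$, whose finitely generated Rees algebra has central fiber $\Proj\,\gr_vR$. Your extra bookkeeping (the trivial valuation, rescaling, and the Veronese step) is sound, with one repair: justify the Veronese step by the fact that $\gr_vR$ is a domain. Multiplying by a fixed nonzero homogeneous element of complementary degree embeds each residue-class summand into the Veronese as an ideal, hence as a finite module. Integrality over a finitely generated algebra alone does not give module-finiteness.
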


\begin{proof}
The containment $\LCP^{\SV}(X,D;\Delta) \subset \LCP^{\rfg}(X,D;\Delta) \subset \LCP(X,D)$ is clear from definitions. The containment $\LCP^{\rdiv}(X,D) \subset \LCP^{\rfg}(X,D;\Delta)$ follows from Theorem \ref{thm:sa-tc} and the fact that the central fiber $\cX_0$ of a test configuration $\cX$ with $v_{\cX}= v\in \LCP(X,D)(\bZ)$ satisfies $\cX_0 \cong \Proj\,\gr_v R$.
\end{proof}

\subsubsection{Multi-degenerations and $\bR$-test configurations}

\begin{defn}\label{def:multideg}
Let $(X,D;\Delta)$ be an lc-klt log CY-Fano triple. Let $\pi: \fX \to \bA^r$ be a flat proper morphism from a normal variety $\fX$ equipped with a $\bT=\bG_m^r$-action. 

\begin{enumerate}
    \item We say that $\fX$ is a \emph{multi-degeneration} of $(X,D;\Delta)$ if the following hold.
    \begin{enumerate}[label=(\roman*)]
        \item $\pi$ is $\bT$-equivariant with the standard $\bT$-action on $\bA^r$;
        \item $\fX\times_{\bA^r} \bT$ is $\bT$-equivariantly isomorphic to $X\times \bT$ with the trivial $\bT$-action on $X$;
        \item let $\fD$ and $\Delta_{\fX}$ be the closure of $D\times \bT$ and $\Delta\times \bT$ on $\fX$ respectively under the above isomorphism, then $\pi:(\fX,\fD;\Delta_{\fX})\to \bA^r$ is a family of log CY-Fano triples.
    \end{enumerate}
    \item Let $\xi \in \bR_{>0}^r$ where we identify $\bR^{r} = \Hom(\bG_m, \bT)\times_{\bZ} \bR$. Then we call $(\fX, \xi)$ an \emph{$\bR$-test configuration} of $(X,D;\Delta)$. 
    \item A multi-degeneration $\fX$ is called \emph{special} (resp.\ \emph{integral}) if the log Fano central fiber $(\fX_0, \Delta_{\fX_0})$ is klt (resp.\ integral). An $\bR$-test configuration $(\fX,\xi)$ is called \emph{special} (resp.\ \emph{integral}) if $\fX$ is a special (resp.\ integral) multi-degeneration.
\end{enumerate}

\end{defn}

\begin{thm}[cf.\ {\cite[Theorem 1.3]{Xu21}}]\label{thm:multideg}
Let $E_1,\cdots, E_r$ be prime divisors over $X$ such that $\ord_{E_i}\in \LCP(X,D)$ for every $1\leq i\leq r$. Then there exists a multi-degeneration $\fX$ of $(X,D;\Delta)$ such that if we restrict to the $r$ axes through $(1,\cdots, 1)\in \bA^r$, we obtain    test configurations $(\cX_i)_{i=1}^r$ with $v_{\cX_i} = \ord_{E_i}$. We call $\fX$ the \emph{multi-degeneration induced by} $\{E_i\}_{i=1}^r$.
\end{thm}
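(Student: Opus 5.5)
Following \cite{Xu21}, I would build $\fX$ as the Proj of a multi-graded Rees algebra and then check the conditions of Definition~\ref{def:multideg}. Write $v_i:=\ord_{E_i}$; after rescaling we may assume each $v_i$ is $\bZ$-divisorial, so $v_i\in\LCP(X,D)(\bZ)$. For $\lambda=(\lambda_1,\dots,\lambda_r)\in\bZ^r$ I would define the $\bZ^r$-filtration
\[
\cF^{\lambda}R_m:=\{s\in R_m\mid v_i(s)\geq \lambda_i+lmA_{X,\Delta}(v_i)\ \text{for all } 1\le i\le r\},
\]
with Rees algebra $\Rees(\cF):=\bigoplus_{m}\bigoplus_{\lambda\in\bZ^r}t^{-\lambda}\cF^{\lambda}R_m\subset R[t_1^{\pm1},\dots,t_r^{\pm1}]$. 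This is a $\bk[t_1,\dots,t_r]$-algebra with a natural $\bT$-action. I would then set $\fX:=\Proj_{\bA^r}\Rees(\cF)$, with $\fD$ and $\Delta_{\fX}$ the closures of $D\times\bT$ and $\Delta\times\bT$.

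The main step, and the main obstacle, is showing that $\Rees(\cF)$ is finitely generated. The approach of \cite{Xu21} reduces this to the single-valuation case using a common model. All the $E_i$ are lc places of the lc log CY pair $(X,D)$. By \cite{BCHM10}, as in the proof of Theorem~\ref{thm:sa-tc}, there is a $\bQ$-factorial dlt modification $Y\to X$ of $(X,D)$ that extracts exactly $E_1,\dots,E_r$ (together with components of $D$). Write $-K_Y-\Delta_Y=\mu^*(-K_X-\Delta)$ with $\Delta_Y\le D_Y$, so that $\cF^\lambda R_m=H^0(Y,\mu^*(mL)-\sum_i(\lambda_i+lmA_i)E_i)$ and $\Rees(\cF)$ is a Cox-type ring $\bigoplus H^0(Y,mL_Y-\sum a_iE_i)$. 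To get finite generation I would follow \cite[Theorem 1.3]{Xu21} (equivalently \cite[Proposition 4.9]{BABWILD} in the multi-graded setting) and run MMP with scaling for the divisors $-K_Y-\Delta_Y-\sum\epsilon_iE_i$. Since each $E_i$ is an lc place of $(Y,D_Y)$, we have $-K_Y-\Delta_Y-\sum\epsilon_iE_i\sim_\bQ D_Y-\Delta_Y-\sum\epsilon_iE_i$, and for small rational $\epsilon$ these are adjoint divisors of klt pairs $(Y,\Delta_Y+\sum\epsilon_i E_i+\delta(\dots))$ that are log Fano type. $Y$ is then of Fano type, so it is a Mori dream space \cite{BCHM10}, which gives finite generation of the multigraded ring.

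Next I would verify (i)--(iii). The $\bT$-equivariance and the identification $\fX\times_{\bA^r}\bT\cong X\times\bT$ are formal: inverting all $t_i$ gives $R[t^{\pm1}]$. Flatness follows because $\Rees(\cF)$ is torsion free over $\bk[t]$ and each graded piece is a filtration by subspaces; I would check this via the standard basis compatible with all the $v_i$ simultaneously, which exists since the filtration comes from finitely many divisors on the Mori dream space. For (iii), I expect $K_{\fX/\bA^r}+\fD\sim_{\bQ}0$ to come from the shift by $lmA_{X,\Delta}(v_i)$, exactly as in the rank-one case. The slc condition on the fibres would be reduced to Proposition~\ref{prop:slc-family} and \cite[Lemma 2.7]{BABWILD}: I would show that $(\fX,\fD+\pi^*V)$ is lc, where $V$ is the union of the coordinate hyperplanes. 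This holds because $(\fX,\fD+\pi^*V)$ is crepant birational to $(X\times\bA^r,D\times\bA^r+\pi^*V)$, and all the divisors over the central strata are toric-type lc places, namely quasi-monomial combinations of the Gauss extensions of the $v_i$. Ampleness of $-K-\Delta_{\fX}$ comes from the Proj construction. Finally, restricting to the $i$-th axis through $(1,\dots,1)$ kills the other $t_j$ and gives the Rees algebra of $\cF_{v_i}$ alone. By Theorem~\ref{thm:sa-tc} with $d=1$ this is the test configuration $\cX_i$ with $v_{\cX_i}=\ord_{E_i}$.
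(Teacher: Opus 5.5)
The paper gives no proof of its own here; it simply quotes \cite[Theorem 1.3]{Xu21}, and later uses \cite[Proposition 3.17]{BL26} for the Rees-algebra description. Your overall architecture matches that standard construction: the $\bZ^r$-graded Rees algebra of the shifted filtrations $\cF_{v_i}$, finite generation from a Fano-type model extracting the $E_i$, and identification of the restriction to the $i$-th axis with the test configuration of Theorem~\ref{thm:sa-tc}.

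\textbf{The gap: flatness of $\fX\to\bA^r$.} Torsion-freeness over $\bk[t_1,\dots,t_r]$ gives flatness only for $r=1$. For $r=2$ these Rees modules are still flat, because two filtrations of a finite-dimensional space always admit a common adapted basis. For $r\ge 3$, however, flatness of $\bigoplus_\lambda \cF^\lambda R_m\,t^{-\lambda}$ is equivalent to the existence of a basis of $R_m$ adapted to $\cF_1,\dots,\cF_r$ simultaneously, and this is a genuine condition.

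Your reason for such a basis, that the filtrations come from finitely many divisors on a Mori dream space, is false. Take $Y=\bP^2$ and the three concurrent lines $x=0$, $y=0$, $x+y=0$. No basis of $H^0(\cO(1))$ contains multiples of $x$, $y$ and $x+y$. Correspondingly, the fibre of the degree-one Rees module over $0\in\bA^3$ has dimension $4$ (one from $\lambda=0$ and one from each $\lambda=e_i$) instead of $3$. The same defect persists in every degree $m\ge 1$ through the summand $z^{m-1}\langle x,y\rangle$, so the Rees algebra is not flat.

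What rules this out in the statement is that the $v_i$ are lc places of a single lc log CY pair $(X,D)$; three concurrent lines are not lc. You never use this hypothesis for flatness. The missing input is a basis of each $R_m$ adapted to all of $\LCP(X,D)$ at once. These are the valuatively independent bases of \cite{BL26}, which the paper uses together with \cite[Proposition 3.17]{BL26} to describe $H^0(\fX,m\fL)$ as a $\bk[t_1,\dots,t_r]$-module. Alternatively you need an argument of Xu's type exploiting the complement. Finite generation alone does not suffice.

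\textbf{Consequences for the later steps.} Everything afterwards rests on flatness. Flatness guarantees that the only divisors of $\fX$ over $V$ are the prime divisors $\pi^*V_i$, whose valuations are the Gauss extensions $\tilde v_i$. From this one gets $K_{\fX}+\fD\sim_{\bQ,\bA^r}0$ and $\cO_{\fX}(1)\cong\cO_{\fX}(-l(K_{\fX/\bA^r}+\Delta_{\fX}))$. It is the latter, not the former, that the shift by $lmA_{X,\Delta}(v_i)$ is designed to produce. Flatness is also a hypothesis of \cite[Lemma 2.7]{BABWILD}.

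\textbf{Crepant birationality.} Your justification of crepant birationality with $(X\times\bA^r,D\times\bA^r+X\times V)$ is incorrect as stated. For $r\ge 2$ the fibre over the origin contains no divisors of $\fX$. When that fibre is reducible, which happens exactly in the non-finitely-generated case of Theorem~\ref{thm:fg-multideg}, its components correspond to lc places $w$ that are not combinations of the $v_i$. Crepancy should instead come from $K_{\fX}+\fD+\fX_V\sim_{\bQ,\bA^r}0$ together with the negativity lemma, once $\pi^*V_i$ is known to be prime with valuation $\tilde v_i$.

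\textbf{A smaller point.} A dlt modification extracting exactly $E_1,\dots,E_r$ need not exist. What you need, and what \cite{BCHM10} provides, is a $\bQ$-factorial model extracting exactly these divisors. It is obtained from the klt pair $(X,t\Delta+(1-t)D)$ with $0<t\ll 1$, for which the log discrepancy of each $E_i$ is $tA_{X,\Delta}(E_i)<1$. This also gives an effective Fano-type boundary, whereas your $\Delta_Y$ has coefficients $1-A_{X,\Delta}(E_i)$, which can be negative.
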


\begin{thm}[cf.\ {\cite[Theorem 1.4]{Xu21}}]\label{thm:fg-multideg}
Let $v\in \LCP(X,D)$ be a valuation of rational rank $r$. Then $v$ is finitely generated (resp.\ special) with respect to $(X,\Delta)$ if and only if there exists a birational morphism $Y\to X$ extracting prime divisors $E_1,\cdots, E_r$ such that $(Y, E=E_1+\cdots+E_r)$ is qdlt,  $v$ is toroidal over $(\eta\in Y,  E_1+\cdots+E_r)$ where $\eta$ is a generic point of $\cap_{i=1}^r E_i$, and the multi-degeneration $\fX$ induced by $\{E_i\}_{i=1}^r$ is integral (resp.\ klt). 
\end{thm}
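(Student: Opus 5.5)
The plan is to reduce both directions to one identification: for a suitable qdlt model, the central fiber of the multi-degeneration is $\Proj\,\gr_v R$, so finite generation and specialness of $v$ become integrality and kltness of that central fiber.

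\textbf{Step 1: the model $Y$.} Since $v\in\LCP(X,D)$, it is quasi-monomial and lies in some cone of the $\Delta$-cone complex $\LCP(X,D)$. Choose a dlt modification of $(X,D)$ on which $v$ is monomial at the generic point of a stratum, with weight vector $\alpha$.

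\begin{enumerate}
\item The $\bQ$-span of the entries of $\alpha$ has dimension $r=\ratrank(v)$.
\item After a toroidal (weighted) subdivision of the relevant cone, $\alpha$ lies in the relative interior of an $r$-dimensional simplicial subcone. This subcone is spanned by primitive integral vectors $\ord_{E_1},\dots,\ord_{E_r}$.
\item Each $\ord_{E_i}$ is an lc place of $(X,D)$. By \cite{BCHM10}, there is a birational model $Y\to X$ extracting exactly $E_1,\dots,E_r$.
\item Using the toroidal structure of the dlt model, $(Y,E_1+\cdots+E_r)$ is qdlt and $v$ is toroidal over $(\eta\in Y,E)$ with weights $\beta\in\bR_{>0}^r$ whose entries are $\bQ$-linearly independent.
\end{enumerate}

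\textbf{Step 2: identify the central fiber.} Let $\fX$ be the multi-degeneration induced by $\{E_i\}$ (Theorem~\ref{thm:multideg}). Its coordinate ring is the $\bZ^r$-Rees algebra of the multi-filtration
\[
\cF^{\underline\lambda}R_m=\{s\in R_m \mid \ord_{E_i}(s)\ge\lambda_i+lmA_{X,\Delta}(E_i)\ \text{for all } i\},
\]
and $\fX_0=\Proj\,\gr_{\cF}R$.

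The claim is that $\fX_0$ is integral if and only if $\gr_{\cF}R\cong\gr_vR$. The argument runs as follows.

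\begin{enumerate}
\item Monomiality of $v$ in the toroidal chart, together with $\bQ$-independence of $\beta$, gives $v(s)=\langle\beta,\underline\lambda\rangle$ on the initial terms. Thus $\Phi_v=\langle\beta,\bZ^r\rangle$ is identified with $\bZ^r$.
\item Under this identification, the $v$-filtration is the pushforward of the $\bZ^r$-filtration along the linear form $\beta$. This gives a natural surjection
\[
\gr_{\cF}R\twoheadrightarrow \gr_vR
\]
that preserves the $\bZ^r$-grading.
\item This surjection is an isomorphism exactly when $\gr_{\cF}R$ is a domain. Indeed, $\gr_vR$ is always a domain. Conversely, if $\gr_{\cF}R$ is a domain, then every initial form for the multi-filtration is realized by a single $v$-initial term, so the kernel vanishes.
\end{enumerate}

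\textbf{Step 3: deduce both equivalences.}

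\begin{enumerate}
\item If $\fX$ is integral, then $\gr_vR=\gr_{\cF}R$. This is finitely generated because $\fX$ is a flat proper family, so its Rees algebra is finitely generated (Theorem~\ref{thm:sa-tc} and \cite{BABWILD}). Hence $v$ is finitely generated.
\item Conversely, suppose $\gr_vR$ is finitely generated. Choose homogeneous generators. The condition that their $\beta$-weights determine the $\bZ^r$-grading is an open condition on the weight vector; this is Xu's finite-generation argument in \cite{Xu21}. So after refining the simplicial cone in Step 1 to a sufficiently small one around $\beta$, the multi-filtration agrees with the one induced by $v$, and $\fX_0=\Proj\,\gr_vR=X_v$ is integral.
\item For specialness: in the integral case, $(\fX_0,\Delta_{\fX_0})\cong(X_v,\Delta_v)$ by construction of $\Delta_v$ in \cite{LXZ22}. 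So $v$ is special if and only if $\fX$ is a special multi-degeneration.
\end{enumerate}

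\textbf{Main obstacle.} I expect the hardest step to be the converse direction in Step 3(2): choosing $Y$, i.e.\ a small enough cone around $\beta$, so that the finitely many generators of $\gr_vR$ have initial terms detected by the multi-filtration. I would model this on the proof of \cite[Theorem~1.4]{Xu21}, using the fact that finite generation of $\gr_vR$ is constant on an open neighborhood of $v$ within the rational subspace through $\beta$.
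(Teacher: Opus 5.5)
The paper gives no proof of this statement; it is imported from \cite[Theorem~1.4]{Xu21}. Your ``only if'' direction is the standard argument of \cite[Lemma~2.10]{LX18}, and it is fine in outline: for weights near $\beta$ in the rational envelope, all filtrations are induced by the grading of the finitely generated ring $\gr_vR$, so a small enough simplex gives $\gr_{\cF}R\cong\gr_vR$.

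The gap is in Step~2, which carries the whole ``if'' direction. A graded ring map $\phi\colon\gr_{\cF}R\to\gr_vR$ does exist: monomiality gives $v(s)\ge\sum_i\beta_i\ord_{E_i}(s)$, and $A_{X,\Delta}$ is linear on $\QM_\eta(Y,E)$. However, $\phi$ is neither surjective nor injective for formal reasons.
\begin{itemize}
\item In Step~2(1), the exponent of the $v$-initial monomial of $s$ is in general not $(\ord_{E_i}(s))_i$. Different monomials of $s$ can realize the individual minima; think of a local expansion $y_1+y_2$. So identifying $\Phi_v$ with a lattice does not identify the two gradings.
\item Consequently, Step~2(2) (``the $v$-filtration is the pushforward of the multi-filtration along $\beta$'') is exactly what has to be proved, not a consequence of monomiality.
\item In Step~2(3), the kernel of a graded map from a domain to a domain is a prime ideal, and nothing you say forces it to be zero.
\end{itemize}

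Here is what integrality actually gives. Flatness supplies a basis of $R_m$ compatible with all the $\ord_{E_i}$-filtrations, e.g.\ a valuatively independent basis as in \cite{BL26}. Integrality of $\fX_0$ implies the corresponding sections $t^{-a(s)}\overline{s}_{\fX}$ do not vanish at the generic point of $\fX_0$. Hence $w:=v_{\fX,\beta}=\wt_\beta|_{K(X)}$ satisfies the following:
\begin{itemize}
\item $w$ is computed by that basis, so $\gr_wR\cong\gr_{\cF}R$ is finitely generated;
\item $w\le v$ on $R$;
\item $\phi$ factors as $\gr_{\cF}R\cong\gr_wR\to\gr_vR$, which is injective if and only if $w=v$.
\end{itemize}
So the missing step is the identification $v_{\fX,\beta}=v$, i.e.\ that the geodesic simplex defined by $\fX$ is the linear simplex $\QM_\eta(Y,E)$.

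This cannot come from the algebra of $\gr_{\cF}R$ alone. The multi-filtration, hence $\gr_{\cF}R$ and $\fX$, depend only on $E_1,\dots,E_r$, not on the chosen generic point $\eta$ of $\bigcap_iE_i$. Suppose this intersection had two components $\eta_1\neq\eta_2$. Then your Step~2 would apply verbatim to the two distinct monomial valuations of weight $\beta$ at $\eta_1$ and at $\eta_2$, and would force both to equal $w$. What is actually true is that integrality excludes such configurations, and that is geometric content your argument skips.

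You need Proposition~\ref{prop:geodesic-simplex} (cf.\ \cite[Lemma~4.8]{XZ22}), applied to the cell $\QM_\eta(Y,E)$, which contains all the vertices $\ord_{E_i}$. Alternatively, you could analyze which component of the central fiber of the multi-degeneration of $Y$ to the normal cone of the $E_i$ survives in $\fX_0$. Once $v=v_{\fX,\beta}$ is established, your Step~3 goes through, including the comparison $(\fX_0,\Delta_{\fX_0})\cong(X_v,\Delta_v)$ for specialness.
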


\begin{rem}\label{rem:fg-multideg}
    
    In Theorem \ref{thm:fg-multideg}, if the divisors $E_i$ induce a multi-degeneration $\fX$ that is integral or special, then the same property holds for any $\{E_i'\}_{i=1}^r$ in $\QM_\eta(Y, E)(\bZ)$ such that the simplicial cone generated by $\{E_i'\}_{i=1}^r$ contains $v$. This can be seen by a toric base change of the multi-degeneration.
\end{rem}

\begin{defn}\label{def:geodesic}
Let $\pi:\fX\to \bA^r$ be an integral (resp.\ special) multi-degeneration of $(X,D;\Delta)$. Denote by $V_i:=(t_i=0)\subset \bA^r$ and $\fX_{V_i}:=\pi^*V_i$ where $(t_1,\cdots,t_r)$ is the affine coordinate of $\bA^r$. In particular, the pair $(\fX,\fX_{V_1}+\cdots + \fX_{V_r})$ is snc at the generic point of $\fX_0$. For each $\xi\in \bR_{\geq 0}^r$ we associate a valuation $v_{\fX,\xi}\in X^{\val}$ in the following way. Since we are given a $\bT$-equivariant isomorphism between $\fX\times_{\bA^r} \bT$ and $X\times\bT$, there is a canonical field extension $K(X) \hookrightarrow K(\fX)$. Then we define $v_{\fX,\xi}:= \wt_{\xi}|_{K(X)}$ where  $\wt_{\xi}$ is the monomial valuation in the local coordinates determined by  $\fX_{V_1},\cdots, \fX_{V_r}$  of weight $\xi$. By \cite{BLX22, Xu21} we know that every $v_{\fX,\xi}$ is in $\LCP(X,D)$. We call the subset
\[
\{v_{\fX,\xi}\mid \xi\in \bR_{\geq 0}^r \}\subset \LCP(X,D)
\]
a \emph{finitely generated (resp.\ special) geodesic $r$-simplicial cone}, and its projectivization in $\cD(X,D)$ is called
a \emph{finitely generated (resp.\ special) geodesic $(r-1)$-simplex}. A geodesic $1$-simplex is called a \emph{geodesic line}, while a geodesic $2$-simplex is called a \emph{geodesic triangle}.
\end{defn}

We note that our notion of geodesic simplices comes from a more general notion of geodesics of filtrations; see \cite[Section 3.1.2]{BLXZ23}.

\begin{prop}[{cf.~\cite[Lemma 4.8]{XZ22}}]\label{prop:geodesic-simplex}
Let $(Y, D_Y) \to (X,D)$ be a dlt modification. If a finitely generated geodesic simplex $P$ satisfies that all its vertices belong to the same cell $P'$ in $\cD(Y, D_Y)$, then $P\subset P'$ is a rational subsimplex.
\end{prop}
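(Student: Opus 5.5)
We have to show that if a finitely generated geodesic simplex $P$, coming from an integral multi-degeneration $\pi:\fX\to\bA^r$, has all its vertices $v_{\fX,e_1},\dots,v_{\fX,e_r}$ in one cell $P'$ of $\cD(Y,D_Y)$, then $P\subset P'$ is a rational subsimplex. The plan is to compare two linear structures on the same simplex: the one coming from the $\xi$-parametrization $\xi\mapsto v_{\fX,\xi}$, and the one on $P'$ coming from the monomial structure at a stratum of the dlt model. Concretely, $P'$ corresponds to a stratum $Z$ of $(Y,D_Y)$, the generic point $\eta_Z$ of an intersection of components $F_1,\dots,F_s$ of $D_Y^{=1}$. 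Its cone is $\QM_{\eta_Z}(Y,D_Y^{=1})\cong\bR_{\ge0}^s$, with coordinates $w\mapsto (w(F_1),\dots,w(F_s))$, and these coordinates are linear on the cone.

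The first step is to check that each $v_\xi:=v_{\fX,\xi}$ has center $\eta_Z$ on $Y$, or at least that it lies in the closed cone of $P'$. This holds for the vertices $v_{e_i}=\ord_{E_i}$ (up to scaling) by hypothesis. For general $\xi$, I would use that the quasi-monomial valuation $\wt_\xi$ on $\fX$ is centered at the generic point of $\fX_0$. Then, via the $\bT$-equivariant birational identification $\fX\dashrightarrow Y\times\bA^r$ and Gauss extensions, $v_\xi(f)=\min$ over the "toric" expansion of $f$. For a regular function $g$ on $Y$ near $\eta_Z$ this gives $v_\xi(g)\ge\min_i$-type bounds. The cleaner route I would follow is the one in \cite[Lemma 4.8]{XZ22}: work on the Rees algebra. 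For $f\in K(X)$ that is a local section of $\cO_Y$ at $\eta_Z$, the function $\xi\mapsto v_\xi(f)$ is concave and piecewise linear in $\xi$, since $v_\xi(f)$ is the minimum of $\langle\xi,\beta\rangle$ over the $\bT$-weights $\beta$ appearing in the initial decomposition of $f$ in $\gr$ of the multi-filtration. In particular $v_\xi(f)\ge\sum\xi_iv_{e_i}(f)\ge0$ by concavity, so $v_\xi$ is centered in $\overline{\{\eta_Z\}}$.

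The second step is the key inequality. Since $v_\xi$ is an lc place of $(X,D)$ (Definition~\ref{def:geodesic}), hence of $(Y,D_Y)$, and is centered in $\overline Z$, it lies in $\LCP$ restricted to strata contained in $\overline Z$. Both $A_{Y,D_Y}(v_\xi)=0$ and the retraction to the cone give $v_\xi\le$ the monomial valuation $\rho(v_\xi)$ with weights $v_\xi(F_j)$, with equality exactly when $v_\xi$ is quasi-monomial in that chart. I would show that the coordinates are linear: $v_\xi(F_j)=\sum_i\xi_i v_{e_i}(F_j)$. Concavity gives $\ge$. For $\le$, I use that $v_\xi$ is monomial with respect to the divisors $\fX_{V_i}$, so local equations of $F_j$ have a single initial term in $\gr$. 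This is where finite generation/integrality of $\fX_0$ enters: the initial form of a local equation of $F_j$ is a homogeneous element whose weight is the vector $(v_{e_i}(F_j))_i$. Then $v_\xi$ and the monomial valuation $\sum\xi_i v_{e_i}$ (in the $F_j$-coordinates) agree on these coordinates, both are lc places centered at $\eta_Z$, and an lc place in $\QM$ is determined by its values on the $F_j$ (it equals its retraction, as $A=0$ forces equality in $A(w)\ge A(\rho(w))$). Hence $v_\xi$ is the point of $P'$ with linear coordinates $\sum_i\xi_i(v_{e_i}(F_j))_j$.

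Finally, the map $\xi\mapsto v_\xi$ is then the restriction of a linear map $\bR^r_{\ge0}\to\bR^s_{\ge0}$ whose matrix $(v_{e_i}(F_j))$ has rational (indeed integer) entries. Its image is the rational simplicial cone spanned by the vertices, and projectivizing gives that $P$ is a rational subsimplex of $P'$. The main obstacle I expect is the linearity step, namely showing that the local equations of the $F_j$ have homogeneous initial forms of the expected multi-weight. I would handle this by passing to a common log resolution and using the $\bT$-equivariant toroidal structure of $\fX$ near the generic point of $\fX_0$, reducing to the monomial computation as in \cite{XZ22}.
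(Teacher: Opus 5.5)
Your outline has the right shape: place each $v_\xi:=v_{\fX,\xi}$ in the closed cone of $P'$, then show that its coordinates $v_\xi(F_j)$ are linear in $\xi$. The first half is essentially fine, since superadditivity of the concave, $1$-homogeneous function $\xi\mapsto v_\xi(f)$ gives $v_\xi\ge\sum_i\xi_iv_{e_i}\ge 0$ on $\cO_{Y,\eta_Z}$, and in particular $v_\xi(F_j)\ge\sum_i\xi_iv_{e_i}(F_j)$.

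Two directions are reversed, though. First, nonnegativity on $\cO_{Y,\eta_Z}$ says the center of $v_\xi$ is a \emph{generization} of $\eta_Z$ (its closure contains $Z$), not a point of $\overline Z$. That is the direction you need. Because $v_\xi$ is an lc place of the dlt pair $(Y,D_Y)$, its center is then a stratum containing $Z$, so $v_\xi$ lies in a face of the closed cone of $P'$ and equals the quasi-monomial valuation with weights $(v_\xi(F_j))_j$. If the center were a smaller stratum inside $\overline Z$, the values on $F_1,\dots,F_s$ would not determine $v_\xi$. Second, the retraction satisfies $w\ge\rho(w)$, not $w\le\rho(w)$.

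The genuine gap is the reverse inequality $v_\xi(F_j)\le\sum_i\xi_iv_{e_i}(F_j)$. You claim that monomiality of $\wt_\xi$ along $\fX_{V_1},\dots,\fX_{V_r}$ plus integrality of $\fX_0$ forces a local equation $y_j$ of $F_j$ to have a single initial term of multi-weight $(v_{e_i}(F_j))_i$. This does not follow. Monomiality only gives $v_\xi(f)=\min_\beta\langle\xi,\beta\rangle$ over the support of $f$, and linearity in $\xi$ is equivalent to that support having a componentwise minimum. That property fails for general functions even inside the cell: if $v_{e_1}=\ord_{F_1}$ and $v_{e_2}=\ord_{F_2}$, the point $u_\xi\in P'$ you want to identify with $v_\xi$ has $u_\xi(y_1+y_2)=\min(\xi_1,\xi_2)$. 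So for the $y_j$ this property is exactly the content of the proposition, and deferring it to ``the toroidal structure as in \cite{XZ22}'' leaves the proof incomplete. (The paper gives no proof either; it only cites \cite[Lemma 4.8]{XZ22}.)

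The missing idea is to compare log discrepancies. The map $\xi\mapsto A_{X,\Delta}(v_\xi)$ is linear on the geodesic simplex; this fact is also used in the proof of Proposition~\ref{prop:max-log-disc}. On the cone of $P'$ one has $A_{X,\Delta}(u_\alpha)=\sum_j\alpha_jA_{X,\Delta}(F_j)$, and every $A_{X,\Delta}(F_j)>0$ because $(X,\Delta)$ is klt. Hence
\[
0=A_{X,\Delta}(v_\xi)-\sum_i\xi_iA_{X,\Delta}(v_{e_i})=\sum_j\Bigl(v_\xi(F_j)-\sum_i\xi_iv_{e_i}(F_j)\Bigr)A_{X,\Delta}(F_j).
\]
This is a sum of nonnegative terms with positive weights, so each term vanishes. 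Therefore $\xi\mapsto v_\xi$ is the linear map with integer matrix $(v_{e_i}(F_j))$, which gives the rational subsimplex. With this step supplied, your argument goes through.
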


\subsection{Valuations of maximal rational rank}\label{sec:max-rank}

In this subsection, we show the equivalence between specialness and finite generation for valuations of maximal rational rank.

\begin{thm}\label{thm:toric}
Let $(X,D;\Delta)$ be an lc-klt log CY-Fano triple of dimension $n$ such that $D\geq (1+\epsilon)\Delta$ for some $\epsilon>0$. Let $v\in \LCP(X,D)$ be a valuation of rational rank $n$. Then $v$ is special with respect to $(X,\Delta)$ if and only if it is  finitely generated with respect to $(X,\Delta)$.
\end{thm}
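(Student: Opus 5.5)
The direction ``special $\Rightarrow$ finitely generated'' holds by definition. So assume $v\in \LCP(X,D)$ has rational rank $n$ and is finitely generated with respect to $(X,\Delta)$; we must show that $(X_v,\Delta_v)$ is klt. The plan is to show that the central fiber of the associated multi-degeneration is a normal toric variety with boundary $D_0$ equal to its toric boundary. After that, the hypothesis $D\ge (1+\epsilon)\Delta$ forces the coefficients of $\Delta_0$ to be $<1$, which gives klt.

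\emph{Step 1 (the central fiber is toric).} By Theorem~\ref{thm:fg-multideg} there is a qdlt extraction $(Y,E_1+\cdots+E_n)$ with $v$ toroidal at a generic point $\eta$ of $\cap E_i$. Since $v$ has rank $n=\dim X$, $\eta$ is a closed point. The induced multi-degeneration $\pi:\fX\to\bA^n$ is integral, with central fiber $\fX_0\cong \Proj\,\gr_v R = X_v$. Because $v$ is Abhyankar of rational rank $n$, its residue field is $\bk$. Hence every nonzero graded piece $\gr_v^{\lambda}R_m$ is one-dimensional, so $\gr_v R$ is the semigroup algebra $\bk[\Gamma]$ of $\Gamma=\{(m,\lambda)\mid \gr^\lambda_v R_m\ne 0\}$ (using integrality of $\fX_0$). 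Thus $X_v$ is an integral projective variety of dimension $n$ with an effective action of the $n$-torus $\bT$, i.e.\ a (possibly non-normal) projective toric variety.

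\emph{Step 2 (normality and identification of $D_0$).} I will use that $(\fX,\fD+\sum_i\fX_{V_i})$ is lc by Proposition~\ref{prop:slc-family}, so that $(X_v,D_0)$ is slc with $K_{X_v}+D_0\sim_\bQ 0$.

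To get normality, I would argue that the non-normal locus of $X_v$ is $\bT$-invariant. If it had codimension one, it would be a torus-invariant prime divisor. On the other hand, the monomial valuations $\wt_\xi$ on $\fX$ restrict on $X_v$ to the toric valuations in the interior of the $n$-dimensional cone. These are lc places of $(X_v,D_0)$: by adjunction from the lc places $v_{\fX,\xi}$, which is inherited from $v$ being an lc place via \cite{BLX22,Xu21}.

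Having full-dimensional lc places forces every torus-invariant prime divisor to appear in $D_0$ with coefficient $1$, so $D_0\ge B_{\rm tor}$, the toric boundary. Since $\Supp D_0$ may not contain codimension-one singular points, $X_v$ is $R_1$. Together with $S_2$ this gives normality.

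Then $K_{X_v}+B_{\rm tor}\sim 0$ and $K_{X_v}+D_0\sim_\bQ 0$ give $D_0-B_{\rm tor}\ge 0$ and $D_0-B_{\rm tor}\sim_\bQ 0$ on a projective variety, hence $D_0=B_{\rm tor}$.

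\emph{Step 3 (klt).} The inequality $D\ge(1+\epsilon)\Delta$ passes to the closures $\fD\ge(1+\epsilon)\Delta_{\fX}$ and hence to the central fiber: $D_0\ge (1+\epsilon)\Delta_0$. So $\Delta_0$ is a torus-invariant divisor with all coefficients $\le 1/(1+\epsilon)<1$. A normal toric pair whose boundary is torus-invariant with coefficients $<1$ and with $K+\Delta_0$ $\bQ$-Cartier is klt. Indeed, it is dominated by the lc toric pair $(X_v,B_{\rm tor})$, whose lc places are only toric valuations, and every toric valuation has positive log discrepancy once all coefficients are $<1$. Hence $(X_v,\Delta_v)$ is klt and $v$ is special.

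The main obstacle I expect is Step 2, namely rigorously establishing normality of $X_v$ and the equality $D_0=B_{\rm tor}$. The risk is circularity between ``$D_0$ is a legitimate divisor on the demi-normal fiber'' and ``every codimension-one torus orbit lies in $D_0$''. I would first try to settle this using only that $\fX$ is normal and that the monomial valuations $\wt_\xi$, centered at the generic point of $\fX_0$, are lc places of $(\fX,\fD+\sum\fX_{V_i})$.
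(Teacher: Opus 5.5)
Your Steps 1 and 3 agree with the paper's argument. The central fiber of the integral multi-degeneration carries an effective $n$-torus action. Once $(\fX_0,\fD_0)$ is known to be a normal toric pair with $\fD_0$ the reduced toric boundary, the inequality $\fD_0\ge(1+\epsilon)\Delta_{\fX_0}$ gives klt.

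\textbf{The gap is Step 2.} The inference ``full-dimensional lc places force every torus-invariant prime divisor into $D_0$'' is false for non-normal toric slc pairs, so the step cannot be repaired as written.
\begin{itemize}
\item Take the nodal cubic $C$ with boundary $0$. It is an irreducible slc log CY curve with an effective $\bG_m$-action. Its normalization is $(\bP^1,\{0\}+\{\infty\})$, with the conductor as boundary. Hence every toric valuation is an lc place of $(C,0)$, yet the torus-invariant codimension-one locus (the node) is not in the boundary.
\item The same happens for $C\times\bP^1$ with boundary $C\times\{0,\infty\}$.
\item In general, let $\nu\colon X'\to X_v$ be the normalization with conductor $G'$. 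Then $K_{X'}+G'+D'=\nu^*(K_{X_v}+D_0)$, and your linearity argument only yields $G'+D'=B'_{\rm tor}$. This is compatible with $G'\neq 0$.
\item Your proposed fix does not help either. Normality of $\fX$ and lc-ness of $(\fX,\fD+\sum_i\fX_{V_i})$ hold equally for degenerations whose central fiber is slc but non-normal.
\end{itemize}
So the circularity you flagged is real: the codimension-one non-normal locus plays the role of boundary on the normalization.

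\textbf{The missing idea is the Fano half of the structure.} You never use it. Because $\Delta\le D$, the divisor $-K_{X_v}-\Delta_0\sim_{\bQ}D_0-\Delta_0$ is ample and supported on $D_0$. Its pullback to $X'$ is therefore an ample torus-invariant divisor with coefficient $0$ along every component of $G'$. This is exactly what rules out the examples above (there, no $\Delta\le D$ makes $D-\Delta$ ample). It is also how the paper argues (Lemma~\ref{lem:toric}):
\begin{enumerate}
\item The torus action is compatible with the normalization, so two conductor components glued to each other have the same generic stabilizer.
\item Hence their rays in the fan of $X'$ are antipodal, say $u$ and $-u$.
\item The strictly convex support function of $\nu^*(D_0-\Delta_0)$ vanishes at both $u$ and $-u$, contradicting strict convexity.
\end{enumerate}
Thus $G'=0$ and $X_v$ is normal. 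Only after this does your argument giving $D_0=B_{\rm tor}$ go through.
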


\begin{proof}
The ``only if'' part is trivial, so we may focus on the ``if'' part. Assume that $v\in \cD^{\rfg}(X,D;\Delta)$. By Theorem \ref{thm:fg-multideg}, there exists a multi-degeneration $\fX\to \bA^n$ of $(X,D;\Delta)$ such that $\fX_0$ is integral where $v$ corresponds to $\wt_{\xi}$ for some $\xi\in \bR_{>0}^n$. Since $v$ has maximal rational rank, we know that the log CY-Fano triple $(\fX_0, \fD_0;\Delta_{\fX_0})$ carries an effective $\bG_m^n$-action. Then by Lemma \ref{lem:toric} we know that $(\fX_0, \fD_0)$
 is a normal toric pair. Moreover, since $D\geq (1+\epsilon)\Delta$ we know that $\fD\geq (1+\epsilon)\Delta_{\fX}$ which implies $\fD_0\geq (1+\epsilon)\Delta_{\fX_0}$. In particular, we have that $(\fX_0, \Delta_{\fX_0})$ is klt. This implies $v\in \cD^{\SV}(X,D;\Delta)$ by Theorem \ref{thm:fg-multideg}.
\end{proof}

\begin{lem}\label{lem:toric}
Let $(X,D;\Delta)$ be an irreducible log CY-Fano triple of dimension $n$ with an effective $\bG_m^n$-action. Then $X$ is a normal projective toric variety and $D$ is the reduced toric boundary.
\end{lem}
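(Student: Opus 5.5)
The plan is to first establish normality and toricity of $X$, and then identify $D$ as the toric boundary. Since $(X,D;\Delta)$ is a log CY-Fano triple, $X$ is demi-normal, and by assumption it is irreducible. The effective $\bG_m^n$-action on an irreducible $n$-dimensional variety has a dense open orbit $O\cong \bT=\bG_m^n$, because the generic stabilizer of an effective torus action is trivial and the orbits have dimension at most $n$. So $X$ is an equivariant compactification of $\bT$. To see that $X$ is normal, I will look at its normalization $\nu\colon X^\nu\to X$, which is an isomorphism over the smooth open orbit. Demi-normality says $X$ is $S_2$ and has at worst nodes in codimension one. The double locus is $\bT$-invariant, so it is a union of boundary divisors. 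This leaves two steps: showing that $X$ has no nodal codimension-one points, and showing that $D$ is exactly the complement of $O$ (with all boundary divisors in $D$).

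For the boundary, I will use the log CY condition. Since $K_X+D\sim_\bQ 0$, pulling back along $\bT\hookrightarrow X$ gives $D|_O\sim_\bQ 0$ on $O\cong\bT$. I would rather argue directly, as follows. The connected group $\bT$ acts on the finite set of prime components of $D$, so each component is $\bT$-invariant. A $\bT$-invariant prime divisor cannot meet the open orbit, hence $\Supp D\subset X\setminus O$. Next, on $X^\nu$, which is a normal toric variety, write $\nu^*(K_X+D)=K_{X^\nu}+D^\nu$, where $D^\nu$ is the strict transform of $D$ plus the conductor. The conductor is reduced and supported on the toric boundary. Now $K_{X^\nu}+B_{\mathrm{tor}}\sim 0$ with $B_{\mathrm{tor}}$ the reduced toric boundary, and $D^\nu$ is effective and supported on $B_{\mathrm{tor}}$. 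So $D^\nu-B_{\mathrm{tor}}$ is a $\bT$-invariant divisor that is $\bQ$-linearly trivial. A $\bQ$-principal $\bT$-invariant divisor is the divisor of a character, and characters are units on $O$, so $D^\nu-B_{\mathrm{tor}}=\rdiv(\chi^m)$ for some $m$. Since $X^\nu$ is projective, the fan is complete. If $m\neq 0$, then $\langle m,u_\rho\rangle<0$ for some ray $\rho$, giving coefficient $<-1$ on that divisor. That is impossible because $D^\nu\geq 0$ forces $D^\nu-B_{\mathrm{tor}}\geq -B_{\mathrm{tor}}$. The lc condition (coefficients $\le 1$) rules out coefficients $>1$. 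Combining these, $m=0$, and so $D^\nu=B_{\mathrm{tor}}$.

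For normality, I expect the main obstacle to be excluding a nontrivial conductor. My approach is to use that $-K_X-\Delta$ is ample with $\Delta\le D$. Suppose $X$ is not normal. The double locus is then a $\bT$-invariant codimension-one locus $Z\subset X\setminus O$. Over $Z$ the normalization glues two toric boundary divisors $B_1,B_2$ of $X^\nu$ via an involution, and $D$ does not contain $Z$ by the definition of a pair. The first thing I would try is to show that the gluing involution of $B_1\cong B_2$ is compatible with the torus actions. Then $Z$ would be an invariant divisor in the closure of a codimension-one orbit of $X$. But near the generic point of a codimension-one orbit, $X$ is a $\bT$-equivariant partial compactification of $\bT$ by a single orbit $\bT/\bG_m$. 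That local model is a $\bT/\bG_m$-bundle over a one-dimensional $\bG_m$-equivariant curve germ. An irreducible curve germ containing $\bG_m$ as dense orbit with one added point is normal (the germ of $\bA^1$ at $0$), since a node would require two branches, i.e.\ two boundary points. This shows that codimension-one points are regular, and the $S_2$ property then gives normality. Once $X$ is normal, it is a normal projective toric variety by definition, and the previous paragraph with $\nu=\mathrm{id}$ shows that $D$ is the reduced toric boundary.
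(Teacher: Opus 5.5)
The gap is in your normality step. You claim that an irreducible curve germ with a dense $\bG_m$-orbit and one added point must be smooth, and this is false. The nodal plane cubic $C$ carries an effective $\bG_m$-action, since its smooth locus is the group $\bG_m$. Both ends $t\to 0$ and $t\to\infty$ of the open orbit converge to the node. Two branches require two points of the normalization, not two boundary points of $X$.

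In toric terms, this is the case of two boundary divisors $B_1,B_2$ of $X^\nu$ with rays $v_2=-v_1$, glued equivariantly. Showing that the gluing commutes with the torus does not exclude this case. It is exactly what forces $v_2=-v_1$, because the generic stabilizers of $B_1$ and $B_2$ must agree. Moreover, $C\times\bP^{n-1}$ with $D=C\times(\text{toric boundary})$ is an irreducible slc log CY pair with an effective $\bG_m^n$-action. So normality cannot follow from the torus action, demi-normality and the log CY condition alone. You announce that you will use ampleness of $-K_X-\Delta$, but the argument never does.

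Ampleness is the missing ingredient, and it is how the paper argues. Because $\Supp D$ contains no codimension-one singular point, $\nu^*(-K_X-\Delta)\sim_\bQ \nu^{-1}_*D-\nu^{-1}_*\Delta$ is ample on the toric variety $X^\nu$ with coefficient $0$ along $B_1$ and $B_2$. Its support function therefore vanishes at $v_1$, $-v_1$ and $0$, which contradicts strict convexity. Equivalently, this ample divisor has degree $0$ on the closure of a general orbit of the one-parameter subgroup $v_1$, since that curve meets the boundary only in $B_1\cup B_2$.

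Be aware that this only excludes gluing two \emph{distinct} components. A component glued to itself by a torus translation survives every hypothesis of the statement. For instance, glue the line $\{v=0\}\subset\bP^2_{[u:v:w]}$ to itself via $u\mapsto -u$. This gives the projective Whitney umbrella $X=\{y_3^2=y_0^2y_2\}\subset\bP(1,1,2,2)$, the image of $[u:v:w]\mapsto[v:w:u^2:uv]$, which is nodal along the image of the line and has two pinch points. Take $D$ to be the image of $\{u=0\}+\{w=0\}$ and $\Delta=0$. Then:
\begin{itemize}
\item $K_X+D\sim_\bQ 0$;
\item $\nu^*(-K_X)=\cO_{\bP^2}(2)$ is ample;
\item the normalization pair $(\bP^2,\{uvw=0\})$ is lc;
\item the torus of $\bP^2$ acts effectively on $X$.
\end{itemize}
So the statement as written needs an extra hypothesis. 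The paper's proof likewise tacitly assumes that the conductor components come in pairs.

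A minor point in your boundary step: for $m\neq 0$ the contradiction comes from a ray with $\langle m,u_\rho\rangle>0$, which forces a coefficient of $D^\nu$ above $1$. A ray with $\langle m,u_\rho\rangle<0$ only gives a coefficient in $[-1,0)$, so it yields no contradiction.
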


\begin{proof}
The statement is clear if $X$ is normal. 
Assume to the contrary that $X$ is not normal.
Let $\nu: (X', G'+ D')\to (X,D)$ be the normalization map where $G'$ is the conductor divisor on $X'$ and $D':= \nu_*^{-1} D$. Let $\Delta':=\nu_*^{-1}\Delta$. Then we have 
\[
K_{X'}+ G' + D' = \nu^*(K_X+D)\sim_{\bQ} 0, \quad -K_{X'}- G' - \Delta' = \nu^*(-K_X-\Delta)\sim_{\bQ} D'-\Delta'
\]
Thus $(X', G'+D')$ is a normal toric pair, i.e.\ $X'$ is an irreducible normal projective toric variety and $G'+D'$ is the reduced toric boundary divisor. Moreover, the $\bQ$-Cartier $\bQ$-divisor $D'-\Delta'$ is ample.

Next, let $\Sigma$ denote the toric fan of $X'$ supported on $N_{\bR} = \bR^n$. Denote by $\rho_1,\cdots, \rho_m$  the rays of $\Sigma$. Let $v_{i}\in N = \bZ^n$ be the primitive vector on the ray $\rho_i$. Let $D_{i}$ be the torus invariant prime divisor on $X'$ corresponding to the ray $\rho_i$. Since $(X,D)$ is slc, after permuting the indices of $\{\rho_i\}$ we may write $G'= \sum_{i=1}^{2k} D_i$ and $D'=\sum_{i=2k+1}^{m} D_i$ with $1\leq k\leq \lfloor\frac{m}{2}\rfloor$ such that $D_{2j-1}$ and $D_{2j}$ are identified under the normalization map $\nu$ for every $1\leq j\leq k$. Then we can write 
\[
D'-\Delta' = \sum_{i=2k+1}^m c_i D_i, \quad \textrm{ where } c_i \in [0,1]. 
\]
By \cite[P. 70, Proposition]{Ful93}, the ampleness of $D'-\Delta'$ implies that there exists a strictly convex piecewise-linear function $\psi_{D'-\Delta'}:|\Sigma|=\bR^n\to \bR$ such that $\psi_{D'-\Delta'}(v_i) = 0$ for $1\leq i \leq 2k$ and $\psi_{D'-\Delta'}(v_i) = c_i$ for $2k+1\leq i\leq m$. Since the torus action on $X'$ descends to $X$, we know that the stabilizer groups of $D_{2j-1}$ and $D_{2j}$ are the same for $1\leq j\leq k$, which implies that $\rho_{2j-1}$ and $\rho_{2j}$ are antipodal, i.e.\ $v_{2j-1} = -v_{2j}$. In particular, $\psi_{D'-\Delta'}(v_1) = \psi_{D'-\Delta'}(v_2) = 0$ while $v_1+v_2=0$. This contradicts the strict convexity of $\psi_{D'-\Delta'}$. The proof is finished.
\end{proof}

\subsection{Degeneration to normal cone and geodesic simplices}\label{sec:geodesic-normal-cone}

In this subsection, we provide a criterion for the existence of finitely generated geodesic simplices where one of the vertices comes from a divisor on $X$ proportional to $-K_X-\Delta$.

\begin{prop}\label{prop:deg-normal-cone}
Let $(X,D;\Delta)$ be an lc-klt log CY-Fano triple.  Let $v_1,\cdots, v_r\in \LCP(X,D)(\bZ)$ be divisorial lc places that form a finitely generated geodesic $(r-1)$-simplex in $\cD(X,D)$. Denote by $(\cX, D_{\cX};\Delta_{\cX})\to \bA^r$ the multi-degeneration of $(X,D;\Delta)$ induced by $v_1,\cdots, v_r$. Let $S\subset \lfloor D \rfloor$ be a prime divisor on $X$ such that $S\sim_{\bQ} \rho (-K_X-\Delta)$ for some $\rho\in \bQ_{>0}$.
Let $\cS$ be the closure of $S\times \bG_m^r$ in $\cX$. 
 Then $v_1,\cdots, v_r, \ord_S$ form a finitely generated geodesic $r$-simplex in $\cD(X,D)$ if and only if $\cS_0$ is irreducible in $\cX_0$.  
\end{prop}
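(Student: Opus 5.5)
The plan is to realize the multi-degeneration induced by $v_1,\dots,v_r,\ord_S$ as a deformation to the normal cone of $\cS\subset\cX$. Then integrality of its central fiber can be compared directly with irreducibility of $\cS_0$.

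\emph{Step 1: the $(r+1)$-fold Rees algebra.} By Theorem~\ref{thm:sa-tc} and Theorem~\ref{thm:fg-multideg}, the $r$-simplex spanned by $v_1,\dots,v_r,\ord_S$ is finitely generated geodesic if and only if the $\bZ^{r+1}$-graded Rees algebra
\[
\cR':=\bigoplus_{m,\lambda,k} t^{-\lambda}s^{-k}\,\cF^{\lambda}R_m\cap \cF^{k}_{\ord_S}R_m
\]
is finitely generated with integral $\Proj$ over $\bA^{r+1}$. Here we combine the filtration defining $\cX$ with the $\ord_S$-filtration, normalized by the log discrepancies; note $A_{X,\Delta}(\ord_S)=1-\coeff_S\Delta$. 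Since $S\sim_\bQ\rho(-K_X-\Delta)$, after replacing $l$ by a multiple we may take $S$ to be the zero locus of a section $\sigma\in R_{m_0}$, where $m_0=\rho^{-1}$ up to scaling. Then $\cF^k_{\ord_S}R_m=\sigma^kR_{m-km_0}$. Consequently $\cR'$ is the extended Rees algebra of the principal ideal generated by $\sigma$ inside $\Rees(\cF)$, taken with respect to the $\bG_m^r$-graded filtration $\mathcal G^k:=\Rees(\cF)\cap \sigma^k\,(\cdots)$.

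\emph{Step 2: identify $\mathcal G$ with the $\cS$-adic filtration on $\cX$.} The closure $\cS$ is a $\bT$-invariant Weil divisor on $\cX$. Since $\cX\setminus\cX_0\cong X\times\bT$ and $\cX$ is normal, sections of $\cO_{\cX}(m)$ vanishing to order $\ge k$ along $\cS$ are exactly those in $\Rees(\cF)$ divisible by $\sigma^k$ over $\bT$. Hence $\mathcal G^k=H^0(\cX,\cO(m)(-k\cS))$ in each degree. Moreover $\cS\sim_\bQ\rho(-K_{\cX}-\Delta_{\cX})$ on $\cX$, because the relation extends over $\bT$-invariant codimension-one loci (no components of $\cX_0$ are added, since $\cX_0$ is integral). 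So $\cS$ is $\bQ$-Cartier and ample, and $\mathcal G$ is the filtration by powers of the section $\sigma_{\cX}$ cutting out $\cS$. Therefore $\cR'\cong \Rees(\cF)[s,\,\sigma_{\cX}s^{-1}]$, which is finitely generated, and $\Proj\cR'$ is the deformation to the normal cone of $\cS$ in $\cX$. Its fiber over $s=0$ is the projective cone over $\cS$ with respect to $\cO(1)|_{\cS}$, glued as a family over $\bA^r$. The central fiber over $0\in\bA^{r+1}$ is therefore the projective cone $C_p(\cS_0,\cO(1)|_{\cS_0})$, with the caveat that $\cS_0$ here means the scheme-theoretic restriction $\cS|_{\cX_0}$.

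\emph{Step 3: conclude.} The projective cone over a projective scheme is integral if and only if the base is integral. So the new multi-degeneration is integral exactly when $\cS|_{\cX_0}$ is integral. Irreducibility of $\cS_0$ gives reducedness as well. The pair $(\cX,\cS+D'_\cX+\cX_0)$ is lc and $\cS\le \fD$, so $(\cX_0,\fD_0)$ is slc and $\cS_0\le\fD_0$ has coefficient one. A multiplicity along a component of $\cS|_{\cX_0}$ would then contradict slc-ness, since $\cX_0$ is $S_2$ and $\cS|_{\cX_0}$ is Cartier-ish $S_1$. Hence irreducible implies integral, and the converse implication is trivial. The resulting multi-degeneration is one of log CY-Fano triples, since crepancy is preserved under degeneration to the normal cone of an lc center (Proposition~\ref{prop:cbir-CY-family}). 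Its restriction to the axes recovers the $v_i$ and $\ord_S$. By Theorem~\ref{thm:fg-multideg} and Remark~\ref{rem:fg-multideg}, this yields the equivalence.

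\emph{Main obstacle.} The delicate point is Step 2 together with the reducedness in Step 3. We need $\cS$ to be $\bQ$-Cartier with $\cS|_{\cX_0}$ having no embedded or multiple structure, so that the $\cS$-adic filtration on $\Rees(\cF)$ agrees degreewise with divisibility by $\sigma$ and the normal-cone central fiber is the cone over $\cS_0$. I would first handle this by working on $\bT$-invariant affine charts of the normal variety $\cX$, using $S_2$-ness, and using adjunction to the lc center $\cS$ of $(\cX,\fD+\cX_{V_1}+\cdots+\cX_{V_r})$. Adjunction gives that $\cS$ is $S_2$ and that $\cS\to\bA^r$ is flat with reduced fibers.
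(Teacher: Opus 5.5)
\paragraph{Overall assessment.}
Your strategy is the paper's. You identify the multi-degeneration induced by $v_1,\dots,v_r,\ord_S$ with $\Proj_{\bA^r}$ of the Rees algebra of the filtration $\pi_*\cO_\cX(m\cL-k\cS)$ of $\pi_*\cO_\cX(m\cL)$, where $\cL=l(-K_\cX-\Delta_\cX)$, and read off its central fiber as a cone over $\cS_0$. However, two steps do not go through as written.

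\paragraph{First gap: $S$ is not the divisor of a single section.}
Replacing $l$ by a multiple does not let you write $S=\rdiv(\sigma)$ with $\sigma\in R_{m_0}$. The divisor $S$ need not be Cartier. Even when it is Cartier, it need not be linearly equivalent to a multiple of $L$: in the paper's own application $S=\oS$ is a cubic while $L=l\,\cO_{\bP^3}(4)$.

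In general you only get $\rdiv(\sigma)=\varrho S$ with $\varrho>1$. This has several consequences:
\begin{enumerate}
\item $\cF^k_{\ord_S}R_m=H^0(X,mL-kS)$ is not given by divisibility by powers of one section.
\item On $\cX$, $\cS$ is merely $\bQ$-Cartier with $\varrho\cS\sim\cL$.
\item Both $\cR'\cong\Rees(\cF)[s,\sigma_\cX s^{-1}]$ and ``the central fiber is the projective cone over $\cS_0$'' fail.
\end{enumerate}
What you actually get is that the central fiber $\fX_{(0,0)}$ is $\Proj\bigoplus_{m,i}H^0(\cX_0,m\cL_0-i\cS_0)/H^0(\cX_0,m\cL_0-(i+1)\cS_0)$.

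The missing idea is to use the $\bG_m$-action given by the grading in $i$. Its $\bmu_\varrho$-invariant part consists of the weights $i\in\varrho\bN$, where $m\cL_0-i\cS_0$ is Cartier. By vanishing, this invariant part is the section ring of the projective cone over $(\cS_0,\varrho\cS_0|_{\cS_0})$. Since $\bmu_\varrho\subset\bG_m$ and $\bG_m$ is connected, every irreducible component of $\fX_{(0,0)}$ is $\bmu_\varrho$-stable. Hence $\fX_{(0,0)}$ is irreducible if and only if $\fX_{(0,0)}/\bmu_\varrho$ is, if and only if $\cS_0$ is. Your ``cone over an integral base'' argument only covers the case where $S$ is the divisor of a single section, i.e.\ $\varrho=1$ after rescaling $L$.

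\paragraph{Second gap: base change to the central fiber.}
To pass from the filtration over $\bA^r$ to the central fiber, you need
\[
\pi_*\cO_\cX(m\cL-k\cS)\otimes\kappa(0)\cong H^0(\cX_0,m\cL_0-k\cS_0),
\]
and you need the graded pieces to restrict surjectively to sections on $\cS_0$. The paper obtains this from two inputs:
\begin{enumerate}
\item flatness of $\cO_\cX(m\cL-k\cS)$, with $\cO_\cX(m\cL-k\cS)\otimes\kappa(0)\cong\cO_{\cX_0}(m\cL_0-k\cS_0)$, for the $\bQ$-Cartier relative Mumford divisor $\cS$ \cite[Corollary~4.33]{Kol-modbook};
\item Fujino's vanishing \cite[Theorem~1.7]{Fuj14} together with Grauert's theorem.
\end{enumerate}
You assert the cone structure ``glued as a family over $\bA^r$'' without this input. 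The adjunction and $S_2$ considerations in your last paragraph do not supply it. What is needed is flatness of these divisorial sheaves and vanishing of their higher direct images.

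\paragraph{Smaller points.}
\begin{itemize}
\item $\cS\sim_{\bQ}\rho(-K_\cX-\Delta_\cX)$ holds because the difference is supported on $\pi^{-1}(V_1\cup\cdots\cup V_r)$. The components $\pi^*V_i$ are prime principal divisors, since the fibers over the generic points of the $V_i$ are integral. Your justification via ``$\cX_0$'' does not work: $\cX_0$ is not a divisor for $r\ge 2$, and $\cX\setminus\cX_0\not\cong X\times\bG_m^r$.
\item No appeal to Proposition~\ref{prop:cbir-CY-family} is needed. The multi-degeneration induced by $v_1,\dots,v_r,\ord_S$ already exists as a family of log CY-Fano triples by Theorem~\ref{thm:multideg}, so you only have to identify it with the Rees construction.
\item Its fibers are slc, hence reduced, so irreducibility of $\fX_{(0,0)}$ already gives integrality. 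Your reducedness argument for $\cS_0$ is therefore unnecessary.
\end{itemize}
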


\begin{proof}
First of all, we show that $\cS \sim_{\bQ} \rho(-K_{\cX} - \Delta_{\cX})$, in particular $\cS$ is $\bQ$-Cartier. It is clear that this holds over $\bT:= \bG_m^r$. Since $\pi:\cX\to \bA^r$ comes from a finitely generated geodesic simplex, we know that $\pi$ has geometrically integral fibers. As a result, we know that $\cS - \rho(-K_{\cX}-\Delta_{\cX})$ is $\bQ$-linearly equivalent to a $\bQ$-divisor supported in $\pi^* V$ where $V= V_1+\cdots + V_r$ and $V_i:= (t_i=0)\subset \bA^r$. Since each fiber $\cX_t$ is integral, we know that $\pi^* V_i$ is a prime principal divisor. Thus $\cS - \rho(-K_{\cX}-\Delta_{\cX})$ is $\bQ$-linearly equivalent to a $\bQ$-linear combination of $\pi^* V_i$, which is $\bQ$-linearly trivial. 

Next, choose $l\in \bZ_{>0}$ sufficiently divisible such that $l(-K_{\cX}-\Delta_{\cX}) \sim \rho^{-1} l \cS$ is   Cartier and $\pi$-ample, in particular $\varrho:=\rho^{-1} l\in \bZ_{>0}$. Let $L:= l(-K_X-\Delta)$ and $R=\oplus_{m\in \bN} R_m := R(X,L)$. Let $\cF_i$ be filtrations on $R$ induced by $v_i$, i.e. 
\[
\cF_i^\lambda R_m := \{s\in R_m\mid v_i(s)\geq \lambda + lm A_{X,\Delta}(v_i)\}.
\]
Then by \cite[Proposition 3.17]{BL26} we have that 
\[
\cX \cong \Proj\, \Rees(\cF_1, \cdots, \cF_r),
\]
where
\[
\Rees(\cF_1, \cdots, \cF_r):= \bigoplus_{m\in\bN}\bigoplus_{\lambda\in \bZ^r} \left(\bigcap_{i=1}^r \cF_i^{\lambda_i} R_m \right) t_1^{-\lambda_1}\cdots t_r^{-\lambda_r}.
\]
Let $\cF_S$ be the filtration on $R$ induced by $\ord_S$, i.e.
\[
\cF_S^\lambda R_m := \{s\in R_m\mid \ord_S(s)\geq \lambda + lm A_{X,\Delta}(\ord_S)\}.
\]
Then by Theorem \ref{thm:multideg} and \cite[Proposition 3.17]{BL26} we have a multi-degeneration $\varpi: (\fX,\fD;\Delta_{\fX}) \to \bA^{r+1}$ of $(X,D;\Delta)$ induced by $v_1,\cdots, v_r, \ord_S$ such that 
\[
\fX \cong \Proj_{\bA^{r+1}}\Rees(\cF_1, \cdots, \cF_r, \cF_S).
\]

Next, let $\cE_m:= \pi_* \cO_{\cX}(m\cL)$ where $\cL:= l(-K_{\cX}-\Delta_{\cX})$. Let $a:=lA_{X,\Delta}(\ord_S) \in \bZ_{>0}$. Define a filtration $\cG$ on $\cE_m$ by
\[
\cG^b \cE_m:= \begin{cases} \pi_* \cO_{\cX}(m\cL - (b+ma)\cS) & \textrm{ if }b\in \bZ_{\geq -ma},\\
\cE_m & \textrm{ if }b\in \bZ_{< -ma}.
\end{cases}
\]
We claim that $\fX \cong \Proj_{\bA^r} \Rees(\cG)$ where 
\[
\Rees(\cG) := \bigoplus_{m\in \bN}\bigoplus_{b\in \bZ} \cG^b \cE_m~ t_{r+1}^{-b}.
\]
Indeed, by setting $b=\lambda_{r+1}$ and $\fL:=l(-K_{\fX}-\Delta_{\fX})$, we have 
\[
H^0(\fX, m\fL) \cong \bigoplus_{b\in \bZ} H^0(\bA^r,\cE_m) \cap (\cF_S^b R_m \otimes \bk[t_1^{\pm}, \cdots, t_r^{\pm}])~ t_{r+1}^{-b}.
\]
For any $s\in R_m$ with $s t_1^{-\lambda_1}\cdots t_r^{-\lambda_r} \in H^0(\bA^r, \cE_m)$, by definition we have that $s\in \cF_S^b R_m$ if and only if $\ord_S(s) \geq b+ma$, which is equivalent to $s t_1^{-\lambda_1}\cdots t_r^{-\lambda_r} \in H^0(\bA^r, \cG^b \cE_m)$. Thus we have 
\[
H^0(\fX, m\fL)\cong \bigoplus_{b\in\bZ} H^0(\bA^r, \cG^b \cE_m) t_{r+1}^{-b},
\]
which implies the claim.

Next, we show that 
\begin{equation}\label{eq:filt-normal-cone}
    \cG^b \cE_m \otimes \kappa (0) \cong H^0(\cX_0, m\cL_0 - (b+ma) \cS_0).
\end{equation}
We may assume that $ml -(b+ma)\rho \geq 0$ as otherwise both sides vanish.  Since $(\cX,D_{\cX}) \to \bA^r$ is a family of slc pairs, we know that $\cS$ is a $\bQ$-Cartier relative Mumford divisor. Hence by \cite[Corollary 4.33]{Kol-modbook} we know that $\cO_{\cX}(m\cL - (b+ma) \cS)$ is flat over $\bA^r$ and 
\[
\cO_{\cX}(m\cL - (b+ma) \cS) \otimes \kappa(0)\cong \cO_{\cX_0}(m\cL_0 - (b+ma) \cS_0).
\]
Next by Fujino's generalization of Kawamata--Viehweg vanishing theorem \cite[Theorem 1.7]{Fuj14} we have $R^i \pi_* \cO_{\cX}(m\cL - (b+ma) \cS) = 0$ which implies \eqref{eq:filt-normal-cone} by Grauert's theorem. 

Finally, we show the main statement. By Theorem \ref{thm:multideg} and Definition \ref{def:geodesic}, we know that $v_1,\cdots, v_r, \ord_S$ form a finitely generated geodesic $r$-simplex in $\cD(X,D)$ if and only if the central fiber $\fX_{(0,0)}$ is irreducible. Since $\fX \cong \Proj_{\bA^r} \Rees(\cG)$, we know that $\fX_{(0,0)}$ is the central fiber of the test configuration $\fX\times_{\bA^{r+1}} \bA^1_{t_{r+1}}$. Hence by \eqref{eq:filt-normal-cone} we have
\begin{align*}
\fX_{(0,0)} &  \cong \Proj \bigoplus_{m\in \bN} \bigoplus_{b\in \bZ} \cG^b\cE_m\otimes \kappa(0)/\cG^{b+1}\cE_m\otimes \kappa(0)\\
& \cong \Proj \bigoplus_{m\in \bN} \bigoplus_{i\in \bN} H^0(\cX_0, m\cL_0 - i \cS_0)/H^0(\cX_0, m\cL_0 - (i+1) \cS_0).
\end{align*}
Thus we know that $\fX_{(0,0)}$ carries a $\bG_m$-action where each term indexed by $i$ has weight $i$. Since $\varrho \cS_0$ is Cartier, 
by Fujino's vanishing theorem as above for $i\in \varrho\bN$ we have 
\[
H^0(\cX_0, m\cL_0 - i \cS_0)/H^0(\cX_0, m\cL_0 - (i+1) \cS_0) \cong H^0(\cS_0, (m\cL_0-i\cS_0)|_{\cS_0}).
\]
Thus $\fX_{(0,0)}/\bmu_{\varrho} $ is isomorphic to the projective cone over $\cS_0$ with polarization $\varrho \cS_0|_{\cS_0}$. Therefore, $\fX_{(0,0)}$ is irreducible if and only if    $\fX_{(0,0)}/\bmu_{\varrho} $ is irreducible as $\bmu_{\varrho}\leq \bG_m$ and $\bG_m$ is connected, which is equivalent to $\cS_0$ being irreducible. This finishes the proof.
\end{proof}

\subsection{Analytifications, tropicalizations, and essential skeleta}\label{sec:an-trop}

In this subsection, we recall some non-Archimedean language which will only be used in Section \ref{sec:Markov} to relate the dual complex to tropical Markov dynamics.

Let $K = \bk(\!(t)\!)$ be the formal Laurent power series field over $\bk$. Then $K$ is a complete non-Archimedean field equipped with the standard norm $|\cdot |_K$ where $|f|_K= e^{-v_K(f)}$ where $v_K$ is the discrete valuation on $K$ defined as $v_K(f) := \ord_t(f)$. 
Let $U = \Spec\, A$ be an integral affine scheme of finite type over $K$. Then we can associate the Berkovich analytification $U^{\an}$ of $U$ in the sense of \cite{Ber90}. As a topological space, $U^{\an}$ consists of multiplicative seminorms $\|\cdot\|$ of $A$ extending $|\cdot|_K$ equipped with the weakest topology such that $\|\cdot\|\mapsto \|f\|$ is continuous for every $f\in A$. Let $U^{\val}\subset U^{\an}$ be the subset of multiplicative seminorms $\|\cdot \|_{v}:= e^{-v(\cdot)}$ where $v$ is a valuation on $\mathrm{Frac}(A)$ extending $v_K$. We often use the valuation notation for $U^{\val}$. Note that both definitions for the Berkovich analytification $X^{\an}$ and its subset $X^{\val}$ are extended to quasi-projective varieties $X$ over $K$.

Next, we fix a closed immersion $\varphi: U\hookrightarrow \bA_K^N$ into an affine space with coordinates $(x_1, \cdots, x_N)$. Following \cite{Gub13}, we define the \emph{tropicalization map} as 
\[
\Trop_{\varphi}: U^{\an} \to (\bR\cup\{\infty\})^N,\qquad \|\cdot\| \mapsto (-\log \|x_1\circ \varphi\|, \cdots, -\log \|x_N\circ \varphi\|). 
\]
We know that $\Trop_{\varphi}$ is continuous.
The image $\Trop_\varphi(U^{\an})$, denoted by $\Trop_\varphi(U)$ for simplicity, is called the \emph{tropicalization} of $U$ with respect to $\varphi$. It is the support of a polyhedral complex in $(\bR\cup\{\infty\})^N$.
We often omit the subscript $\varphi$ in $\Trop_\varphi$ when it is clear from context. 

Let $\oK: = \cup_{m\in \bZ_{>0}} \bk(\!(t^{1/m})\!)$ be the field of Puiseux series over $\bk$ equipped with the standard norm $|\cdot|_{\oK}$ as an  extension of $|\cdot|_K$. Then we know that $\oK$ is an algebraic closure of $K$. For a $\oK$-point $p\in U(\oK)$, 
we can associate a multiplicative seminorm $\|\cdot\|_p\in U^{\an}$ such that 
$\|f\|_p :=|f(p)|_{\oK}$.
In particular, there is a map $U(\oK) \to U^{\an}$ by $p\mapsto \|\cdot\|_p$ whose image is dense by \cite[2.6]{Gub13}. If we write
$\varphi(p) = (p_1,\cdots, p_N)$ where each $p_i\in \oK$, then it is clear that 
\[
\Trop_{\varphi}(\|\cdot\|_p) = (-\log |p_1|_{\oK},\cdots, -\log|p_N|_{\oK}).
\]
Moreover, by \cite[Proposition 3.8]{Gub13} we have that 
\[
\Trop_{\varphi}(U) = \overline{\{\Trop_{\varphi}(\|\cdot\|_p)\mid p\in U(\oK)\}}.
\]

Next, assume that $U$ is an affine log CY $K$-variety with a log CY compactification $(X,D)$. According to \cite{BM19} generalizing \cite{MN15, NX16} to the pair case which originated from \cite{KS06}, one has the \emph{essential skeleton} $\Sk(X,D)$ of $(X,D)$ as a (possibly open) polyhedral subcomplex of $X^{\val}$. Since the essential skeleton is invariant under crepant birational maps,  Lemma \ref{lem:cbir} implies that $\Sk(X,D)$ is independent of the choice of compactifications and thus we can write $\Sk(U):=\Sk(X,D) \subset X^{\val} = U^{\val}$ and call it the \emph{essential skeleton} of $U$.

Suppose $0\in T$ is a smooth pointed curve over $\bk$. Let $\pi:\sX\to T$ be a  flat projective morphism from a normal variety $\sX$ over $\bk$. Let $\sD$ be a reduced divisor on $\sX$ not containing any irreducible component of fibers of $\pi$. We fix an isomorphism $\widehat{\cO_{T,0}} \cong \bk\llbracket t\rrbracket$ over $\bk$ which induces a morphism $\Spec\,K \to T$. Then we say that $(\sX,\sD)/(0\in T)$ is a \emph{log CY model} of $(X,D)$ if $(\sX, \sD + \sX_{0,\red})$ is lc, $K_{\sX}+\sD+\sX_{0, \red} \sim_{\bQ, T} 0$, and $(\sX,\sD)\times_T \Spec\, K \cong (X,D)$. 

\begin{prop}[{cf.\ \cite[Proposition 5.1.7]{BM19}}]\label{prop:Sk-LCP}
Under the above assumptions, we have a PL homeomorphism from the space
$ \{v\in \LCP(\sX, \sD+ \sX_{0,\red})\mid v(t) = 1\}$ to $\Sk(U)$ induced by the restriction of valuations from $X^{\val}$ to $\{v\in \sX^{\val}\mid v(t)= 1\}$. 
\end{prop}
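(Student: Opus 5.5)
The plan is to reduce the statement to the general correspondence between lc places of a model and the essential skeleton, as in \cite{BM19} and \cite{MN15, NX16}, applied to the pair $(\sX,\sD+\sX_{0,\red})$ over the germ $0\in T$.

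\textbf{Step 1: construct the map.} A valuation $v\in\sX^{\val}$ with $v(t)=1$ restricts to a valuation on $K(X)=K(\sX)$ extending $v_K$, since $\widehat{\cO_{T,0}}\cong\bk\llbracket t\rrbracket$. More precisely, we first pass to the completion along $\sX_0$, so that $v$ extends to the generic fiber over $\Spec K$. If moreover $v$ is an lc place of $(\sX,\sD+\sX_{0,\red})$, then its center lies in $\sX_0$.

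\textbf{Step 2: reduce to a dlt model.} We take a dlt modification $(\sX',\sD'+\sX'_{0,\red})$ of $(\sX,\sD+\sX_{0,\red})$ over $T$. Since $(\sX,\sD+\sX_{0,\red})$ is lc and $K_{\sX}+\sD+\sX_{0,\red}\sim_{\bQ,T}0$, this modification is crepant. Hence both sides of the claimed homeomorphism are unchanged:
\begin{itemize}
\item the space of lc places is invariant under crepant birational modification;
\item $(\sX',\sD')_K\cong(X,D)$, since the modification is an isomorphism over the generic point of $T$ away from a locus where it is crepant, and $\Sk$ is crepant birationally invariant.
\end{itemize}
We may further take $\sX'$ to be a $\bQ$-factorial dlt model, or even an snc model after a log resolution, where \cite{BM19} directly identifies $\Sk(X,D)$ with the faces of the dual complex of $\sD'+\sX'_{0,\red}$ meeting $\sX'_0$. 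On those faces the normalized weight function vanishes.

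\textbf{Step 3: compare weight functions.} The key computation is that for a quasi-monomial valuation $v$ on an snc model normalized by $v(t)=1$, the Mustaţă--Nicaise weight function of $\omega$ is computed by the log discrepancy. Here $\omega$ is a section trivializing $m(K_X+D)$ and extending to $m(K_{\sX}+\sD+\sX_{0,\red})$. Concretely,
\[
\wt_\omega(v)=A_{\sX,\sD+\sX_{0,\red}}(v)+v(\mathrm{div}\,\omega)\cdot\tfrac1m,
\]
and the second term vanishes by the relative triviality, up to a multiple of $v(t)=1$ that is absorbed into the normalization. The essential skeleton is the locus where the weight is minimal, and this minimum is $0$ because the model is lc and has lc centers. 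It follows that the skeleton is exactly the set of normalized lc places whose center lies in $\sX_0$.

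\textbf{Step 4: PL structure and homeomorphism.} The restriction map is a bijection onto $\Sk(U)$. Injectivity holds because a valuation is determined by its values on $K(X)$ together with $v(t)=1$: here $K(\sX)=K(X)$ as fields over $\bk(T)$, and the extension to the completion is determined. Surjectivity holds because every point of $\Sk(X,D)$ is quasi-monomial on some snc model and therefore comes from such a face. Both sides are unions of simplices of the same dual complex, and the map is linear on each face, so it is a PL homeomorphism.

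\textbf{Main obstacle.} The hardest step is Step 3, specifically handling the pair (non-snc, $\sD$ horizontal) version and the fact that $X^{\val}$ here means valuations extending $v_K$. I would follow \cite[Proposition 5.1.7]{BM19} verbatim, checking that the lc hypothesis replaces their dlt hypothesis after the crepant dlt modification of Step 2.
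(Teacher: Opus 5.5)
\textbf{Verdict: the strategy is right, but the valuation-space comparison has a genuine gap.}

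Your overall strategy matches what the paper relies on: take a crepant dlt/log-smooth model over $T$, then invoke \cite[Proposition 5.1.7]{BM19} together with the weight-function computation. The paper gives no argument beyond that citation. However, the one point specific to this formulation is handled incorrectly. That point is the comparison between valuations on the $K$-variety $X$ and valuations on the $\bk$-variety $\sX$.

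\textbf{The error.} In Steps 1 and 4 you use $K(X)=K(\sX)$, which is false. Here $X=\sX\times_T\Spec K$ with $K=\bk(\!(t)\!)$, so $K(X)$ contains $\bk(\!(t)\!)$, whereas $K(\sX)$ is finitely generated over $\bk$. One only has $K(\sX)\subsetneq K(X)$. The map in the statement goes down, $w\mapsto w|_{K(\sX)}$, and a valuation on $K(\sX)$ has in general no canonical extension to $K(X)$. This breaks Step 4 in two ways:
\begin{enumerate}
\item Your injectivity argument rests on the false identity, so it proves nothing.
\item Your ``surjectivity'' argument (points of $\Sk(X,D)$ are quasi-monomial on a model, hence lie on a face) proves the other inclusion. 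It shows that restrictions of skeleton points are normalized lc places. It does not show that every normalized lc place of $(\sX,\sD+\sX_{0,\red})$ is the restriction of a point of $\Sk(U)$.
\end{enumerate}

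\textbf{The missing idea: compare via formal completions.}
\begin{enumerate}
\item Take a log resolution (or dlt model) $\mathscr{Y}\to\sX$ constructed over $T$, and set $\mathscr{Y}_R:=\mathscr{Y}\times_T\Spec\bk\llbracket t\rrbracket$. Since $\cO_{T,0}\to\bk\llbracket t\rrbracket$ is a regular morphism, $\mathscr{Y}_R$ is again log smooth (resp.\ dlt). It has the same special fiber $\mathscr{Y}_0$, the same boundary strata through points of $\mathscr{Y}_0$, and compatible log canonical divisors. This is the $R$-model to which \cite{BM19} applies.
\item For $\eta\in\mathscr{Y}_0$ one has $\widehat{\cO}_{\mathscr{Y},\eta}\cong\widehat{\cO}_{\mathscr{Y}_R,\eta}$. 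Both $K(\sX)=\mathrm{Frac}\,\cO_{\mathscr{Y},\eta}$ and $K(X)=\mathrm{Frac}\,\cO_{\mathscr{Y}_R,\eta}$ embed in the fraction field of this common completion.
\item Hence a monomial valuation at $\eta$ in snc coordinates, normalized by $v(t)=1$, defines at once a point of $X^{\val}$ and a point of $\sX^{\val}$. The latter is the restriction of the former, and the two have equal log discrepancies.
\item This identifies, face by face and linearly, the normalized faces of $\mathscr{Y}_R$ with the normalized lc places of $(\sX,\sD+\sX_{0,\red})$ realized on $\mathscr{Y}$. By \cite{BM19} and your Step 3, those faces make up $\Sk(U)$. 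This gives surjectivity and the PL structure.
\item Injectivity follows because the center and weights of a quasi-monomial valuation on $\mathscr{Y}$ are determined by its values on the local equations. These lie in $\cO_{\mathscr{Y},\eta}\subset K(\sX)$.
\end{enumerate}

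\textbf{A smaller inaccuracy in Step 2.} The dlt modification of $(\sX,\sD+\sX_{0,\red})$ may extract horizontal divisors. So $(\sX',\sD')_K$ is only a crepant model of $(X,D)$, not isomorphic to it. This is harmless, since $\Sk$ is crepant-birationally invariant, but the justification you gave is wrong.
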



\section{Intrinsic properties of special and finitely generated valuations}\label{sec:intrinsic}

In this section, we establish two intrinsic properties of special and finitely
generated valuations. First, we prove their compactification-independence,
and hence Theorem \ref{thm:main} and Corollary \ref{cor:aut}. We then extend these results to affine log CY pairs. Finally, we introduce a natural partial order induced by
regular functions and characterize special valuations among finitely generated
valuations as its maximal elements, hence proving Theorem \ref{thm:maximality-intro}.

Throughout this section, unless otherwise stated, let $(X,D;\Delta)$  be an lc-klt log CY-Fano triple. Denote by $B:=D-\Delta$ the polarizing boundary and set $U:= X\setminus \Supp(B)$. Note that $B\sim_{\bQ}-(K_X+\Delta)$ is ample, and hence $U$ is affine.
For a second lc-klt log CY-Fano triple $(X',D';\Delta')$, we use the analogous notation for $B':=D'-\Delta'$ and $U':= X'\setminus \Supp(B')$.

\subsection{Semiample test configurations}

Let $L= -l(K_X+\Delta)$ be an ample Cartier divisor for $l\in \bZ_{>0}$ and $R:= R(X,L)$. Let $\{v_1,\cdots, v_d\}\subset \LCP(X,D)(\bZ)$ be a finite set of $\bZ$-divisorial lc places of $(X,D)$. Then Theorem \ref{thm:sa-tc} implies that there exists a semiample test configuration $\tcX$ of $(X,D;\Delta)$ such that $\tcX_0$ has $d$ irreducible components $\{\tcX_0^{(i)}\}_{i=1}^d$ and $v_i = \ord_{\tcX_0^{(i)}}|_{K(X)}$. Let $\cX$ be the test configuration obtained as an ample model of $\tcX$. 

\begin{prop}\label{prop:sa-tc-integral}
Notation as above. Then $\cX_0$ is irreducible and is the birational transform of $\tcX_0^{(1)}$ if and only if we have $v_1(f) \leq v_i(f)$ for any $2\leq i\leq d$ and any $f\in \cO_U(U)$.
\end{prop}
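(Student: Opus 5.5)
The plan is to translate the condition on regular functions of $U$ into a comparison of filtrations on $R$, and then read off the central fiber of $\cX=\Proj\,\Rees(\cF)$ from Theorem \ref{thm:sa-tc}.

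\textbf{Step 1: regular functions as sections.} After replacing $l$ by a multiple, assume $lB$ is Cartier. Since $L=-l(K_X+\Delta)\sim lB$, there is a section $\sigma\in R_1$ with $\rdiv(\sigma)=lB$. Because $U=X\setminus\Supp(B)$ is affine, every $f\in\cO_U(U)$ can be written as $f=s/\sigma^m$ for some $m\in\bN$ and $s\in R_m$. Conversely, every such quotient is regular on $U$.

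\textbf{Step 2: the valuation of $\sigma$.} For each lc place $v_i$ of $(X,D)$ we have $A_{X,D}(v_i)=0$. Hence
\[
v_i(\sigma)=l\,v_i(B)=l\bigl(v_i(D)-v_i(\Delta)\bigr)=l\bigl(A_{X,\Delta}(v_i)-A_{X,D}(v_i)\bigr)=l\,A_{X,\Delta}(v_i).
\]
Therefore $v_i(s/\sigma^m)=v_i(s)-lmA_{X,\Delta}(v_i)$, which is exactly the normalized weight used in the filtration $\cF$ of Theorem \ref{thm:sa-tc}. Let $\cF_{v_i}$ denote the normalized filtration of $v_i$ alone. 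The hypothesis ``$v_1(f)\le v_i(f)$ for all $f\in\cO_U(U)$ and $2\le i\le d$'' is then equivalent to $\cF_{v_1}^\lambda R_m\subset \cF_{v_i}^\lambda R_m$ for all $i$, $m$, $\lambda$. In turn, this is equivalent to $\cF=\bigcap_i\cF_{v_i}=\cF_{v_1}$.

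\textbf{Step 3 ($\Leftarrow$).} Suppose $\cF=\cF_{v_1}$. Then $\Rees(\cF)=\Rees(\cF_{v_1})$. By Theorem \ref{thm:sa-tc}, $\cX=\Proj\,\Rees(\cF)$ is the ample model of $\tcX$. Its central fiber is $\Proj\,\gr_{v_1}R$. This is the central fiber of the test configuration with integral central fiber attached to $v_1$, as in the corollary following Theorem \ref{thm:sa-tc} and Proposition \ref{prop:div-fg}. Hence $\cX_0$ is irreducible with $v_{\cX}=v_1$. The divisor $\tcX_0^{(1)}$ and the birational transform of $\cX_0$ induce the same valuation $\ord$ on $K(X)(t)$, namely the Gauss extension of $v_1$. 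So $\cX_0$ is the birational transform of $\tcX_0^{(1)}$.

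\textbf{Step 4 ($\Rightarrow$).} Suppose $\cX_0$ is irreducible and is the birational transform of $\tcX_0^{(1)}$. Then $v_{\cX}=v_1$. The key claim is that for a test configuration $\cX$ with integral central fiber, the Rees algebra $\bigoplus_m H^0(\cX,m\cL)$ equals $\Rees(\cF_{v_{\cX}})$. The argument is as follows.

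\begin{itemize}
\item By normality of $\cX$, a section $s\,t^{-\lambda}$ of $m\cL$ on $\cX\setminus\cX_0$ extends over $\cX$ if and only if it has nonnegative order along the single prime divisor $\cX_0$.
\item This order equals $v_1(s)-\lambda$ plus the correction coming from $\cL$ versus $L_{\bA^1}$ along $\cX_0$.
\item That correction is exactly $-lmA_{X,\Delta}(v_1)$, because $K_\cX+\Delta_\cX+\cX_0$ is crepant to the trivial family and the log discrepancy of the Gauss extension is computed accordingly.
\end{itemize}

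Since Theorem \ref{thm:sa-tc} also gives $\cX=\Proj\,\Rees(\cF)$, and $\Rees(\cF)$ is integrally closed in the relevant sense as the section ring, we get $\cF=\cF_{v_1}$. For any $s\in R_m$, put $\lambda$ equal to the normalized $v_1$-weight of $s$. Then $s\in\cF^\lambda$, so the normalized $v_i$-weight of $s$ is at least $\lambda$ for every $i$. By Step 2, this gives $v_1(f)\le v_i(f)$ for all $f=s/\sigma^m\in\cO_U(U)$.

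\textbf{Main obstacle.} I expect the hardest step to be the identification in Step 4 of the section ring of $\cX$ with $\Rees(\cF_{v_1})$ (equivalently, $\cF=\cF_{v_1}$). This needs care with the twist between $\cL$ and the pullback of $L$ along $\cX_0$. I would handle it by computing on the common resolution $\tcX$, where $\ord_{\tcX_0^{(i)}}$ restricted to $s\,t^{-\lambda}$ is controlled by the formula defining $\cF$ in Theorem \ref{thm:sa-tc}.
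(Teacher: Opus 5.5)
Your proposal is correct and follows the paper's route. Both arguments use Theorem~\ref{thm:sa-tc} to reduce the statement to the equality $\cF=\cF_{v_1}$, i.e.\ $v_i(s)-lmA_{X,\Delta}(v_i)\ge v_1(s)-lmA_{X,\Delta}(v_1)$, and then use a section cutting out a multiple of $B$ together with $A_{X,\Delta}(v_i)=v_i(B)$ to rewrite this as $v_1(f)\le v_i(f)$ on $\cO_U(U)$. The paper handles divisibility by passing to powers of $s$ rather than rescaling $l$, and simply asserts the equivalence between ``$\cX_0$ is the transform of $\tcX_0^{(1)}$'' and $\cF=\cF_{v_1}$, which your Steps~3--4 spell out. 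One small correction in Step~4: it is $K_{\cX}+D_{\cX}+\cX_0$, not $K_{\cX}+\Delta_{\cX}+\cX_0$, that is crepant to the trivial family. The twist $lmA_{X,\Delta}(v_1)$ is exactly the difference of the log discrepancies of $\ord_{\cX_0}$ with respect to $(\cX,\Delta_{\cX}+\cX_0)$ and $(X_{\bA^1},\Delta_{X_{\bA^1}}+X\times\{0\})$.
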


\begin{proof}
By Theorem \ref{thm:sa-tc}, we know that $\cX\cong \Proj \,\Rees(\cF)$ where 
\[
\cF^{\lambda} R_m = \{s\in R_m \mid v_i(s) \geq \lambda + lm A_{X,\Delta}(v_i)\textrm{ for any }1\leq i \leq d\}.
\]
Thus $\cX_0$ is the birational transform of $\cX_0^{(1)}$ if and only if 
\[
\cF^{\lambda} R_m =  \{s\in R_m \mid v_1(s) \geq \lambda + lm A_{X,\Delta}(v_1)\}.
\]
This happens if and only if 
\begin{equation}\label{eq:sa-tc}
    v_i(s)-lm A_{X,\Delta}(v_i) \geq v_1(s) - lm A_{X,\Delta}(v_1)\quad \textrm{ for any }s\in R_m\textrm{ and any }2\leq i\leq d.
\end{equation}
Since \eqref{eq:sa-tc} is preserved after replacing $s$ by a power of $s$, we may assume that $m$ is sufficiently divisible. In particular, we may assume that $m$ is divisible by $m_0\in \bN$ such that $lm_0(B) = \rdiv(s_0)$ for some $s_0\in R_{m_0}$. Now every $s\in R_m$ can be written as $s = f\cdot s_0^{m/m_0}$ for some $f\in \cO_U(U)$, and every such $f$ comes from some $s\in R_m$ for $m$ sufficiently large. Since each $v_i$ is an lc place of $(X,D)$, we know that $A_{X,\Delta}(v_i) = v_i (B) = (lm_0)^{-1} v_i(s_0)$. Thus \eqref{eq:sa-tc} is equivalent to 
\[
v_i(f\cdot s_0^{m/m_0}) - lm A_{X,\Delta}(v_i) \geq v_1 (f\cdot s_0^{m/m_0}) - lm A_{X,\Delta} (v_1),
\]
which is then equivalent to 
\[
v_i(f) \geq v_1 (f) \quad\textrm{ for any }f\in \cO_U(U)\textrm{ and any }2\leq i \leq d.
\]
The proof is finished.
\end{proof}

\subsection{Compactification-independence of finitely generated valuations}

\begin{thm}\label{thm:fg}
Let $g: (X,D) \dashrightarrow (X',D')$ be a crepant birational map such that $g|_U: U\to U'$ is an isomorphism. Then $g_*$ induces a bijection
\[
g_*:\LCP^{\mathrm{fg}}(X,D;\Delta)
\xrightarrow{\sim}
\LCP^{\mathrm{fg}}(X',D';\Delta').
\]
\end{thm}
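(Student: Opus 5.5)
The plan is to find a criterion for finite generation that involves only the ring $\cO_U(U)$ and the integral PL structure of $\LCP(X,D)$. Both of these are preserved by $g$, and that gives the bijection. We have $\cO_U(U)\cong\cO_{U'}(U')$ via $g|_U$. Since $g$ is crepant birational, $g_*$ identifies $\LCP(X,D)$ with $\LCP(X',D')$ as cone complexes with integral PL structure. So $g_*$ is already a bijection $\LCP(X,D)\to\LCP(X',D')$, and it suffices to show that finite generation of $v$ with respect to $(X,\Delta)$ is equivalent to such an intrinsic condition. By symmetry (apply the same argument to $g^{-1}$), one implication is enough.

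\textbf{Step 1: reduce to a pointwise inequality on $\cO_U(U)$.} Let $v\in\LCP(X,D)$ have rational rank $r$. By Theorem~\ref{thm:fg-multideg} and Remark~\ref{rem:fg-multideg}, $v$ is finitely generated if and only if there is a rational simplicial cone $\sigma=\langle w_1,\dots,w_r\rangle\subset\LCP(X,D)$ with $w_i\in\LCP(X,D)(\bZ)$ and $v\in\sigma$, whose induced multi-degeneration is integral. By Proposition~\ref{prop:geodesic-simplex}, $\sigma$ may be taken to lie inside a cell of a dlt modification, so it is a rational subcone for the PL structure. Integrality of the multi-degeneration means that $\xi\mapsto v_{\fX,\xi}$ is linear on $\sigma$, i.e.\ $v_{\fX,\xi}=w_\xi$, the quasi-monomial valuation with weights $\xi$ in the toroidal chart. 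I would then show this is equivalent to the following condition:
\[
(\ast)\qquad w_{\xi}(f)\;\le\;\textstyle\sum_i \xi_i\, w_i(f)\qquad \text{for all } \xi\in\sigma\cap\bZ^r_{\ge0} \text{ and all } f\in \cO_U(U).
\]
The reverse inequality holds automatically, because quasi-monomial valuations are minima of linear forms, so $\xi\mapsto w_\xi(f)$ is concave. Hence $(\ast)$ says exactly that $\xi\mapsto w_\xi(f)$ is linear on $\sigma$ for every $f$.

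\textbf{Step 2: prove the equivalence in Step 1.} To establish this equivalence I would restrict the multi-degeneration to a line $\bA^1\to\bA^r$, $t\mapsto t^{\xi}$, for integral $\xi$. This gives the test configuration attached to the filtration $\cF_\xi$ defined by $s\mapsto\sum_i\xi_i(w_i(s)-lmA_{X,\Delta}(w_i))$. I would compare it with the semiample test configuration whose central fiber has components given by $w_\xi$ and the $w_i$. Proposition~\ref{prop:sa-tc-integral} says that its ample model is integral and equal to the component of $w_\xi$ exactly when $w_\xi(f)\le w_i(f)$ on $\cO_U(U)$. I would use the same computation as in that proof, but with $\cF_\xi$ in place of the single valuation $w_i$. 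Writing $s=f\cdot s_0^{m/m_0}$ and using $A_{X,\Delta}(w)=w(B)$ for lc places turns the comparison into one about functions in $\cO_U(U)$. The point is that $\Delta$ and the chosen compactification disappear from the resulting condition, leaving exactly $(\ast)$.

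\textbf{Step 3: transport.} The data in $(\ast)$ are intrinsic: the rational simplicial cone $\sigma$ together with its integral points $w_\xi$, and the values $w(f)$ for $f\in\cO_U(U)$. So $g_*\sigma$ is again a rational simplicial cone of lc places of $(X',D')$ containing $g_*v$, and it satisfies $(\ast)$ with respect to $\cO_{U'}(U')$. Hence $g_*v$ is finitely generated with respect to $(X',\Delta')$. I expect the main obstacle to be Step 2, and specifically the direction from $(\ast)$ to integrality of the central fiber $\fX_0$ itself, since the line restrictions only see fibers over points $t^\xi$. For this I would use the Rees-algebra description $\fX\cong\Proj\,\Rees(\cF_1,\dots,\cF_r)$ from \cite[Proposition~3.17]{BL26}. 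Under $(\ast)$, its initial algebra is the associated graded ring of the single valuation $w_\xi$ for $\xi$ in the interior of $\sigma$, which is an integral domain, and I would check this grading by grading.
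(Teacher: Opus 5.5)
There is a genuine gap: your criterion $(\ast)$ is not equivalent to integrality of $\fX_0$. In fact it fails for essentially every finitely generated valuation of rational rank $\geq 2$.

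Here is an example. Take $U=\bG_m^2$ with compactification $(\bP^2,D;0)$, where $D$ is the toric boundary, and let $w_1=\ord_{(x=0)}$ and $w_2=\ord_{(y=0)}$. Then $\sigma$ is a maximal cone of the fan, and the induced multi-degeneration is the product $\bP^2\times\bA^2$, which is certainly integral. For $f=x+y\in\cO_U(U)$ you have $w_1(f)=w_2(f)=0$ but $w_{(1,1)}(f)=1$, so $(\ast)$ fails. More generally, for any rational simplicial cone $\sigma$ of dimension $\geq 2$, the function $f=\chi^{m_1}+\chi^{m_2}$ with $m_1-m_2$ changing sign on $\sigma$ violates $(\ast)$. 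So your criterion would declare every toric valuation of rational rank $\geq 2$ not finitely generated, although all of them are special.

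The error is identifying two different statements:
\begin{itemize}
\item ``$v_{\fX,\xi}=w_\xi$'', meaning the geodesic simplex is the straight one, which is what Proposition~\ref{prop:geodesic-simplex} gives;
\item ``$\xi\mapsto w_\xi(f)$ is linear for every $f$''.
\end{itemize}
Linearity holds only for elements of a basis adapted simultaneously to $\cF_1,\dots,\cF_r$, which is how it is used in the proof of Proposition~\ref{prop:max-log-disc}. A general $f$ is a combination of such elements, so $w_\xi(f)$ is a minimum of linear functions and is only concave.

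Relatedly, in Step 2 the rule $s\mapsto\sum_i\xi_i\bigl(w_i(s)-lmA_{X,\Delta}(w_i)\bigr)$ does not define a filtration by subspaces, because its superlevel sets are not closed under addition. The restriction of $\fX$ to $t\mapsto t^{\xi}$ is given by $\sum_{\langle\xi,\mu\rangle\geq\lambda}\bigcap_i\cF_i^{\mu_i}R_m$. Proposition~\ref{prop:sa-tc-integral}, by contrast, concerns the \emph{intersection} filtration of a semiample test configuration, so it does not apply the way you intend.

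What is missing is an actual comparison of $\fX_0$ with the central fiber $\fX'_0$ of the multi-degeneration of $(X',D';\Delta')$ induced by the same divisors. The paper gets this in four steps.
\begin{enumerate}
\item Apply Proposition~\ref{prop:fg-lcp} to $g$ and $g^{-1}$ along the diagonal curve $C$. This gives $\LCP(\fX_C,\Delta_{\fX_C}+\fX_0)=\LCP(\fX'_C,\Delta_{\fX'_C}+\fX'_0)$.
\item Consequently the valuations of the components of $\fX'_0$ can be extracted over $\fX_C$.
\item Proposition~\ref{prop:sa-tc-integral} then yields $v_{\fX_C}\preceq v_{\fX'_C,i}$ on $\cO_U(U)\cong\cO_{U'}(U')$.
\item Applying Proposition~\ref{prop:sa-tc-integral} once more on the $X'$ side shows that $\fX'_0$ is irreducible.
\end{enumerate}

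If you want to keep your intrinsic strategy, the intrinsic datum has to be a filtration rather than pointwise values. $\fX_C$ is $\Proj$ of the Rees algebra of $\sum_{|\mu|\geq\lambda}\bigcap_i\cF_i^{\mu_i}$, and $\fX_0$ should be integral exactly when this is the filtration of a single valuation. Divide by powers of $s_0$, and saturate using that the closure of $B\times\bG_m^r$ contains no component of $\fX_0$. The condition then becomes that $\sum_{|\mu|\geq\lambda}\bigcap_i\{f\in\cO_U(U): w_i(f)\geq\mu_i\}$ is a valuation filtration of $\cO_U(U)$, which is visibly independent of the compactification. That would give a different proof from the paper's, but it is not what $(\ast)$ says.
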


To prove Theorem \ref{thm:fg}, we introduce a class of crepant birational maps for which lc places of the associated degenerations can be compared.

\begin{defn}
 A birational map $g:(X,D;\Delta)\dashrightarrow (X',D';\Delta')$ is called a \emph{crepant birational log-contraction} if $g:(X,D)\dashrightarrow (X',D')$ is crepant birational and 
 its inverse $g^{-1}$ extends to a birational morphism $g^{-1}|_{U'}: U'\to U$. 
\end{defn}

The following lemma gives some criteria for crepant birational log-contractions.

\begin{lem}\label{lem:log-cont}
Let $g: X\dashrightarrow X'$ be a birational map between normal projective varieties. Let $B$ and $B'$ be effective ample $\bQ$-Cartier $\bQ$-divisors on $X$ and $X'$ respectively. Denote by $U:= X\setminus \Supp(B)$ and $U':= X'\setminus \Supp(B')$. Then the following are equivalent.
\begin{enumerate}
    \item $g^{-1}$ extends to a birational morphism $g^{-1}|_{U'}: U'\to U$.
    \item For some (or equivalently, any) common resolution $p: W\to X$ and $q: W\to X'$, we have $\Supp(p^*B)\subset \Supp(q^* B')$.
    \item For any divisor $E$ over $X$, if $\ord_E(B)>0$ then $\ord_E(B')>0$.
\end{enumerate}
\end{lem}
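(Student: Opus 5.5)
The plan is to prove the cycle (1)$\Rightarrow$(2)$\Rightarrow$(3)$\Rightarrow$(1), with (2)$\Leftrightarrow$(3) essentially a translation between the supports of pulled-back divisors and divisorial valuations. Fix a common resolution $p\colon W\to X$, $q\colon W\to X'$ with $W$ normal; we may take $W$ to be the normalization of the graph of $g$. Independence of the choice of $W$ in (2) will follow once (2) is shown equivalent to (3), since (3) makes no reference to $W$.

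For (2)$\Leftrightarrow$(3), observe that $\Supp(p^*B)$ is a closed subset of $W$ of pure codimension one, since it is the support of an effective $\bQ$-Cartier divisor. Its irreducible components are exactly the prime divisors $E\subset W$ with $\ord_E(B)>0$, and the same holds for $q^*B'$. Hence (3) implies (2) immediately. Conversely, let $E$ be any divisor over $X$ and pass to a higher model $W'\to W$ on which $E$ appears. The center of $E$ on $W$ then lies in $\Supp(p^*B)$ exactly when $\ord_E(B)>0$, because $\ord_E(p^*B)>0$ if and only if the center lies in the support of the effective Cartier divisor $mp^*B$. So (2) gives the center inside $\Supp(q^*B')$, whence $\ord_E(B')>0$.

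For (3)$\Rightarrow$(1), apply (2) on the graph $W$. We have $q^{-1}(U')=W\setminus\Supp(q^*B')\subset W\setminus \Supp(p^*B)=p^{-1}(U)$. It suffices to show that $q|_{q^{-1}(U')}\colon q^{-1}(U')\to U'$ is an isomorphism, or at least that $p$ is constant on its fibers; then $p\circ q^{-1}$ gives a morphism $U'\to U$. The key step is that $q^{-1}(U')\subset p^{-1}(U)$ and $U$ is affine, since $B$ is ample. Any connected proper fiber $F$ of $q$ over a point of $U'$ lies in $p^{-1}(U)$, so $p(F)$ is a proper connected subvariety of the affine variety $U$ and is therefore a point. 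Since $U'$ is normal and $q$ is proper birational with connected fibers (Zariski's main theorem), $p$ factors through $q$ over $U'$ by the rigidity lemma. This yields the morphism $U'\to U$ extending $g^{-1}$.

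For (1)$\Rightarrow$(3), let $h\colon U'\to U$ be the extension of $g^{-1}$, and let $E$ be a divisor over $X$ with $\ord_E(B')=0$. We must show $\ord_E(B)=0$. If $\ord_E(B')=0$, the center of $E$ on $X'$ is not contained in $\Supp(B')$. This uses that $B'$ is effective $\bQ$-Cartier, so vanishing order zero means the center meets $U'$. Hence $\ord_E$ has a center $c'$ in $U'$, and $h(c')$ is the center of $\ord_E$ on $X$, which lies in $U$. Since the center misses $\Supp(B)$, we get $\ord_E(B)=0$. I expect the main obstacle to be the rigidity/factorization step in (3)$\Rightarrow$(1), which needs the fiber-connectedness and normality argument to be carried out carefully. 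Ampleness of $B'$ is not needed; only the affineness of $U$ coming from the ampleness of $B$ is used there.
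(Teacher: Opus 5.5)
Your proposal is correct. The implications (1)$\Rightarrow$(3) and (3)$\Rightarrow$(2) are essentially the paper's arguments. The route differs in two places.

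\textbf{Independence of $W$.} The paper proves this directly. For a higher model $\varphi\colon W'\to W$ it notes that $\Supp(p'^*B)=\varphi^{-1}(\Supp(p^*B))$ and $\Supp(q'^*B')=\varphi^{-1}(\Supp(q^*B'))$. You instead prove (2)$\Rightarrow$(3) directly for an arbitrary normal common model, by looking at the center of $E$ on $W$. Since (3) makes no reference to $W$, independence becomes automatic.

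\textbf{The implication to (1).} The paper uses a Hartogs argument. It takes an open $U''\subset U'$ with complement of codimension $\geq 2$ over which $q^{-1}$ is an open immersion, and composes with $p$ to get $U''\to U$. It then extends to $U'$ using that $U$ is affine and $\cO_{X'}(U'')=\cO_{X'}(U')$ by normality. You use Zariski's main theorem for connected fibers, the fact that complete connected subvarieties of the affine $U$ are points, and the rigidity lemma. This is valid, but heavier than necessary. Once you know $q_*\cO_W=\cO_{X'}$ and $q^{-1}(U')\subset p^{-1}(U)$ with $U$ affine, the composite
\[
\cO_U(U)\to\Gamma(p^{-1}(U),\cO_W)\to\Gamma(q^{-1}(U'),\cO_W)=\cO_{X'}(U')
\]
already defines the morphism $U'\to U$ and its compatibility with $p$. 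No fiber analysis or rigidity is needed, and this is the same Hartogs-type input the paper uses.
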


\begin{proof}
First of all, we show that (2) is independent of the choice of the common resolution $W$. Indeed, if $\varphi: W'\to W$ is a higher resolution with $p': W'\to X$ and $q': W'\to X'$, then $\Supp(p'^*B) = \varphi^{-1}(\Supp(p^*B)) $ and $\Supp(q'^*B') = \varphi^{-1}(\Supp(q^*B'))$. Thus $\Supp(p^*B)\subset \Supp(q^* B')$ if and only if $\Supp(p'^*B)\subset \Supp(q'^* B')$. Since any two resolutions admit a common higher resolution, (2) is independent of the choice of $W$. 

Next, we show that (1) implies (3). Since $g^{-1}|_{U'}:U'\to U$ is a morphism, we know that if a divisor $E$ is centered in $U'$ then it is centered in $U$. Taking the contrapositive yields that if $E$ is centered in $\Supp(B)$ (equivalently, $\ord_E(B)>0$) then it is centered in $\Supp(B')$ (equivalently, $\ord_E(B')>0$). Thus (3) holds.

Next, we show that (3) implies (2). It is clear that $\ord_E(B) = \ord_E(p^*B)$ and $\ord_E(B') = \ord_E(q^*B')$. If a prime divisor $E$ on $W$ is contained in $\Supp(p^*B)$, then $\ord_E(p^*B)>0$ which implies $\ord_E(q^*B')>0$ by (3), hence $E$ is contained in $\Supp(q^*B')$. Thus (2) holds.

Finally, we show that (2) implies (1). Clearly, (2) implies 
\[
q^{-1}(U') = W\setminus \Supp(q^*B') \subset  W\setminus \Supp(p^*B) = p^{-1}(U).  
\]
There exists an open subset $U''\subset U'$ such that $\codim_{U'} U'\setminus U''\geq 2$ and that $q^{-1}:U''\to q^{-1}(U')$ is an open immersion. Hence composing with $p:p^{-1}(U) \to U$ we obtain a birational morphism $g^{-1}= p\circ q^{-1}: U''\to U$. Since $U$ is affine, we know that $g^{-1}:U''\to U$ corresponds to a ring homomorphism $\cO_U(U)\to \cO_{X'}(U'')$. By normality we know that  $\cO_{X'}(U'')= \cO_{X'}(U')$, and hence $g^{-1}$ extends to a morphism from $U'$ to $U$. The proof is finished.
\end{proof}

\begin{prop}\label{prop:fg-lcp}
Let $g: (X,D;\Delta)\dashrightarrow (X', D';\Delta')$ be a crepant birational log-contraction. Let $E_1,\cdots, E_r$ be a collection of prime divisors over $X$ that are lc places of $(X,D)$. 
Let $f:(\fX,\fD;\Delta_{\fX})\to \bA^r$ (resp.\ $f':(\fX',\fD';\Delta_{\fX'})\to \bA^r$) be a multi-degeneration of $(X,D;\Delta)$ (resp.\ of $(X',D'; \Delta')$) induced by $E_1, \cdots, E_r$. 
Let $o\in C$ be a  smooth pointed curve in $\bA^r$ such that $C\setminus o\subset \bG_m^r$. Let
$(\fX_C,\fD_{C};\Delta_{\fX_C}):= (\fX, \fD;\Delta_{\fX})\times_{\bA^r} C$ and $(\fX_C',\fD_C';\Delta_{\fX_C'}):= (\fX', \fD';\Delta_{\fX'})\times_{\bA^r} C$.
Then $g$ induces a crepant birational map $(\fX_C,\fD_C)\dashrightarrow (\fX_C',\fD_C')$ over $C$ such that 
\[
\LCP(\fX_C, \Delta_{\fX_C}+ \fX_o) \supset \LCP(\fX_C', \Delta_{\fX_C'}+ \fX_o').
\]
\end{prop}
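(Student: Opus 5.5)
The plan is to construct the crepant birational map fiberwise-generically and then compare log discrepancies of the two pairs over $C$. Over $C\setminus o\subset \bG_m^r$, both $\fX_C$ and $\fX_C'$ are identified with $X\times(C\setminus o)$ and $X'\times(C\setminus o)$ by Definition~\ref{def:multideg}(ii). So $g\times \mathrm{id}$ defines a birational map $g_C:\fX_C\dashrightarrow\fX_C'$ over $C$. Since $g:(X,D)\dashrightarrow(X',D')$ is crepant, the map $g_C$ is crepant for $(\fD_C,\fD_C')$ over $C\setminus o$. Both $K_{\fX_C}+\fD_C$ and $K_{\fX_C'}+\fD_C'$ are $\bQ$-linearly trivial over $C$, because these are families of log CY pairs. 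Hence on a common resolution $p:\fW\to\fX_C$, $q:\fW\to\fX_C'$ the difference $p^*(K+\fD_C)-q^*(K+\fD_C')$ is $\bQ$-linearly equivalent over $C$ to a divisor supported over $o$. The fibers $\fX_o,\fX'_o$ are integral: they are irreducible components of the central fibers of the multi-degenerations, which are family fibers, so $\fX_o$ and $\fX_o'$ are the only divisors over $o$ on the models. I then want to show the difference vanishes, using the lc places $E_i$.

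Here is how I would pin down the coefficients. By Proposition~\ref{prop:slc-family}, both $(\fX_C,\fD_C+\fX_o)$ and $(\fX_C',\fD_C'+\fX_o')$ are lc. Both $\fX_o$ and $\fX_o'$ restrict on $K(X)$ to the same quasi-monomial valuation $v_{\fX,\xi}$, where $\xi$ is given by the tangency orders of $C$ with the coordinate hyperplanes at $o$. This holds because both multi-degenerations are built from the same $E_i$, and $g$ identifies valuations. So $\fX_o$ and $\fX'_o$ give the same divisorial valuation on $K(X)(t)$, namely the Gauss-type extension of $v_{\fX,\xi}$, and each is an lc place of the other pair, with coefficient $1$ in both. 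The difference above is then a multiple of the pullback of $o$, which is $\bQ$-trivial over $C$. Applying the negativity lemma over $C$ to $p$-exceptional and $q$-exceptional parts, as in the proof of Lemma~\ref{lem:cbir}, gives $p^*(K_{\fX_C}+\fD_C+\fX_o)=q^*(K_{\fX_C'}+\fD_C'+\fX'_o)$. This proves the crepant birationality claim.

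For the containment of lc places, I compare $\Delta$ instead of $\fD$. Write $\fD_C=\Delta_{\fX_C}+\mathfrak B_C$, where $\mathfrak B_C$ is the closure of $B\times(C\setminus o)$, and similarly $\fD'_C=\Delta_{\fX'_C}+\mathfrak B'_C$. By crepancy,
\[
A_{\fX_C,\Delta_{\fX_C}+\fX_o}(v)=A_{\fX_C,\fD_C+\fX_o}(v)+v(\mathfrak B_C)=A_{\fX'_C,\fD'_C+\fX'_o}(v)+v(\mathfrak B_C),
\]
and the analogous formula holds with primes. So for $v\in\LCP(\fX'_C,\Delta_{\fX'_C}+\fX'_o)$ we get $A_{\fX'_C,\fD'_C+\fX'_o}(v)=0$ and $v(\mathfrak B'_C)=0$. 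It remains to show $v(\mathfrak B_C)=0$ and that $v$ is centered on $\fX_C$. This is the main obstacle: we need the log-contraction property in families, i.e.\ that $\fX_C'\setminus\Supp\mathfrak B'_C\to\fX_C\setminus\Supp\mathfrak B_C$ extends to a morphism. The complements are affine over $C$, being $\Spec$ of the Rees-type algebras of regular functions on $U'$ and $U$ filtered by the $E_i$. Since $\cO(U)\subset\cO(U')$ via $g^{-1}|_{U'}$ and the filtrations are defined by the same valuations, the relative coordinate rings satisfy an inclusion, which yields the morphism. Then Lemma~\ref{lem:log-cont}(1)$\Rightarrow$(3), applied over $C$, gives $v(\mathfrak B_C)=0$ whenever $v(\mathfrak B'_C)=0$, and the center of $v$ maps into $\fX_C\setminus\Supp\mathfrak B_C$. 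Hence $v\in\LCP(\fX_C,\Delta_{\fX_C}+\fX_o)$.
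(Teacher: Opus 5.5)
Your crepancy step rests on a false claim. The fibers $\fX_o$ and $\fX'_o$ need not be integral. Integrality of the central fiber of a multi-degeneration induced by lc places is essentially a finite-generation condition (Theorem~\ref{thm:fg-multideg}), and it fails in general. The proposition is stated without that assumption, and it is needed in that generality. In the proof of Theorem~\ref{thm:fg} it is applied to $g$ and to $g^{-1}$ when only $\fX_0$ is known to be integral, and irreducibility of $\fX'_0$ is the \emph{output}. For the same reason, your assertion that $\fX_o$ and $\fX'_o$ define the same divisorial valuation is circular: it is what Theorem~\ref{thm:fg} deduces from this proposition.

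The crepancy statement is still true, and you can repair it without integrality. On a common resolution $p\colon\fY\to\fX_C$, $q\colon\fY\to\fX'_C$, set $\Theta:=p^*(K_{\fX_C}+\fD_C+\fX_o)-q^*(K_{\fX'_C}+\fD'_C+\fX'_o)$.
\begin{itemize}
\item $\Theta$ is supported over $o$. Since both log canonical divisors are $\bQ$-linearly trivial over $C$, $\Theta$ is $\bQ$-linearly equivalent to a pullback from either model.
\item Since $\fX_o$ is reduced, the coefficient of $\Theta$ along the strict transform of a component $P$ of $\fX_o$ is $A_{\fX'_C,\fD'_C+\fX'_o}(P)\ge 0$, because that pair is lc.
\item Symmetrically, along a component of $\fX'_o$ the coefficient is $-A_{\fX_C,\fD_C+\fX_o}(\,\cdot\,)\le 0$.
\end{itemize}
Hence $p_*\Theta\ge 0\ge q_*\Theta$. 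The negativity lemma then gives $\Theta=p^*p_*\Theta\ge 0$ and $\Theta=q^*q_*\Theta\le 0$, so $\Theta=0$.

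With this repair, the rest of your argument is correct and takes a genuinely different route from the paper. The paper proceeds as follows:
\begin{itemize}
\item it extracts the $g^{-1}$-exceptional divisors $\fF_i$ over $\fX$ via \cite{BCHM10} and checks that $\hfX\dashrightarrow\fX'$ is a birational contraction;
\item it builds the perturbed boundary $\Gamma=\Delta_{\fX,\epsilon}+\epsilon\fH$;
\item it shows that the pushforward $\Gamma'$ of $\hGamma$ satisfies $\Gamma'\ge\Delta_{\fX'}$, using $B'\ge\epsilon q_*p^*B$.
\end{itemize}
You instead show that the log-contraction property itself survives in the family, and then run the center argument of Lemma~\ref{lem:log-cont}.

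Your Rees-algebra step needs a line of proof. Put $\fB:=\fD-\Delta_{\fX}$ and take $s_0\in R_1$ cutting out $lB$. The lc-place condition $A_{X,\Delta}(E_i)=\ord_{E_i}(B)$ puts $s_0$ in weight $0$ of $\Rees(\cF_1,\ldots,\cF_r)$. So its extension vanishes exactly along $l\fB$, with no vertical part. Then \cite[Proposition~3.17]{BL26} gives $\cO(\fX\setminus\Supp\fB)=\bigoplus_{\lambda\in\bZ^r}\{f\in\cO_U(U)\mid \ord_{E_i}(f)\ge\lambda_i\ \text{for all } i\}\,t^{-\lambda}$. The inclusion $\cO_U(U)\subset\cO_{X'}(U')$ then yields a morphism $\fX'\setminus\Supp\fB'\to\fX\setminus\Supp\fB$ over all of $\bA^r$.

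Your route avoids the MMP extraction and the auxiliary general member $\fH$, and it proves a stronger statement that does not depend on the curve $C$. The paper's route stays within discrepancy bookkeeping and gets crepancy and the comparison boundary $\Gamma'\ge\Delta_{\fX'}$ in one stroke.
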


\begin{proof}
Denote by $\bT:=\bG_m^r$. By Definition \ref{def:multideg} we know that $(\fX, \fD;\Delta_{\fX}) \to \bA^r$  is a family of log CY-Fano triples. Let $V:=V(t_1\cdots t_r)$ be the snc toric boundary divisor of $\bA^r$ with affine coordinates $(t_1,\cdots, t_r)$.
Since $(\fX\setminus f^{-1}(V), \Delta_{\fX}|_{\fX \setminus f^{-1}(V)})\cong (X,\Delta)\times \bT$ is klt, by Proposition \ref{prop:slc-family} we know that $(\fX, \Delta_{\fX})$ (resp.\ $(\fX, \fD)$) is klt (resp.\ lc).

Let $F_1,\cdots, F_k$ be prime divisors on $X'$ that are $g^{-1}$-exceptional. Let $\fF_i$ be the birational transform of $F_i \times \bT$ as a prime divisor over $\fX$. Denote by $d_i:=\ord_{F_i}(B)\geq 0$ and 
\[
a_i:=A_{X,D}(F_i) = A_{X',D'}(F_i) = 1-\coeff_{F_i}(D')\in [0, 1].
\]
Thus for $0<\epsilon_1< 1$ we have that 
\[
A_{\fX,\Delta_{\fX,\epsilon_1}}(\fF_i)= A_{X, \epsilon_1\Delta + (1-\epsilon_1)D} (F_i) = a_i+\epsilon_1 d_i,
\]
where $\Delta_{\fX,a}:=a \Delta_{\fX} + (1-a)\fD$.
From the above discussion we know that the pair $(\fX, \Delta_{\fX,\epsilon_1})$ is klt. 
If $\ord_{F_i}(B') >0$, then we have $\coeff_{F_i}(D')>0$ which implies $a_i <1$. Hence we have $A_{\fX,\Delta_{\fX,\epsilon_1}}(\fF_i)<1$ for $0<\epsilon_1\ll 1$. If $\ord_{F_i}(B')=0$, then by Lemma \ref{lem:log-cont} we have $d_i = \ord_{F_i}(B) = 0$ which implies that $A_{\fX,\Delta_{\fX,\epsilon_1}}(\fF_i) = a_i \leq 1$. 
Thus by \cite{BCHM10} and \cite[Corollary 1.38]{Kol13} there exists a $\bT$-equivariant projective birational morphism $\phi: \hfX\to \fX$ where $\hfX$ is $\bQ$-factorial such that the exceptional divisors of $\phi$ are precisely $\fF_1,\cdots, \fF_k$.

Let $\hX$ be a general fiber of $\hf= f\circ \phi:\hfX\to \bA^r$. Then we have a projective birational morphism $p: \hX \to X$ such that $\hX$ is $\bQ$-factorial and the exceptional divisors of $p$ are precisely $F_1, \cdots, F_k$.
In addition, we have a birational contraction $q: \hX \dashrightarrow X'$. 
By Lemma~\ref{lem:log-cont}(3), we have
\[
\Supp(q_*p^*B)\subseteq \Supp(B').
\]
Indeed, if a prime divisor $F$ on $X'$ appears in $q_*p^*B$, then its
birational transform $q_*^{-1}F$ on $\hX$ satisfies
$\ord_F(B) = \ord_{q_*^{-1}F}(p^*B)>0$, hence $\ord_F(B')>0$. 
Therefore, for $0<\epsilon \ll \epsilon_1$ we have
\begin{equation}\label{eq:fg-epsilon}
    B'\geq \epsilon q_* p^* B. 
\end{equation}

Denote by $\hDelta_{\fX,\epsilon}:= \phi_*^{-1}\Delta_{\fX,\epsilon} + \sum_{i=1}^k (1-a_i-\epsilon d_i) \fF_i$. Thus $\phi: (\hfX, \hDelta_{\fX,\epsilon})\to (\fX, \Delta_{\fX,\epsilon})$ is crepant birational over $\bA^r$.
Since $f:(\fX, \Delta_{\fX,\epsilon})\to \bA^r$ is a family of log  Fano pairs, we know that $-K_{\hfX}- \hDelta_{\fX,\epsilon} = \phi^*(-K_{\fX}- \Delta_{\fX,\epsilon})$ is nef and big over $\bA^r$. Moreover, since $(\fX, \Delta_{\fX,\epsilon})$ is klt, so is $(\hfX, \hDelta_{\fX,\epsilon})$. Thus $\hf: \hfX\to \bA^r$ is of Fano type.

Let $\fH\in |-K_{\fX}-\Delta_{\fX}|_{\bQ}$ be a general  $\bQ$-divisor. Denote by $\Gamma:= \Delta_{\fX,\epsilon}+\epsilon \fH$. Then by Bertini's theorem we have that $(\fX, \Gamma)\to \bA^r$ is a family of log CY pairs, and 
\[
\LCP(\fX_C, \Gamma_C + \fX_0)=\LCP(\fX_C,\Delta_{\fX_C,\epsilon} + \fX_0),
\]
where $\Gamma_C:=\Gamma|_{\fX_C}$ and $\Delta_{\fX_C,\epsilon}:=\Delta_{\fX,\epsilon}|_{\fX_C}$.
Denote by $\hGamma:= \phi_*^{-1}\Gamma + \sum_{i=1}^k (1-a_i - \epsilon d_i) \fF_i$. 
Since $\fH$ is general, we may assume that $\ord_{\fF_i}(\fH)=0$ which implies that
 $\phi: (\hfX,  \hGamma)\to (\fX, \Gamma)$ is crepant birational.


Next, we show that the birational map $\psi:\hfX \dashrightarrow \fX'$ induced by $q$ is a birational contraction. Over $\bT$ this is clear as $q: \hX \dashrightarrow X'$ is a birational contraction. Let $\cX_i$ (resp.\ $\hcX_i, ~\cX_i'$) be the base change of $\fX$ (resp.\ $\hfX,~ \fX'$) to the line $C_i\subset \bA^r$  where $t_j=1$ for any $j\neq i$. Clearly, $\cX_i$ and $\cX_i'$ are test configurations induced by $E_i$ which implies that $\cX_i \dashrightarrow \cX_i'$ is birational on their central fibers. Since $\hcX_i \to \cX_i$ only extracts divisors dominating $C_i$, we know that $\hcX_i\dashrightarrow \cX_i'$ is also birational on their central fibers. Thus $\psi$ is birational on  fibers over every generic point of $V$. Since both $\hfX$ and $\fX'$ are flat over $\bA^r$, we conclude that $\psi$ is a birational contraction.


Let $\Gamma':=\psi_* \hGamma$. Since $\psi$ is a birational contraction, by the negativity lemma we know that $ (\fX', \Gamma')$ is crepant birational to $(\hfX, \hGamma)$ over $\bA^r$, and hence crepant birational to $(\fX, \Gamma)$ over $\bA^r$. Thus Proposition \ref{prop:cbir-CY-family} implies that both $(\fX',\Gamma')\to \bA^r$ and $(\hfX,\hGamma)\to \bA^r$ are families of log CY pairs.
Since $C$ is smooth, there exists an snc divisor $W:=W_1+\cdots+ W_{r-1}$ in a neighborhood of $o\in \bA^r$ such that $\cap_{i=1}^{r-1} W_i = C$. Since the birational map $\fX\dashrightarrow \fX'$ restricts to $g$ on every fiber over $\bT$, it induces a birational map $\fX_C \dashrightarrow \fX_C'$ over $C$. By applying adjunction inductively to   the successive intersections $\cap_{i=1}^j f^* W_i$ ($1\leq j\leq r-1$) for the crepant birational map $(\fX, \Gamma + f^*W)\dashrightarrow (\fX', \Gamma'+f'^* W)$, we obtain a crepant birational map  between $(\fX_C, \Gamma_C + \fX_o)$ and $(\fX_C', \Gamma_C' + \fX_o')$ where $\Gamma_C':=\Gamma'|_{\fX_C}$. Since $\Delta_{\fX_C,\epsilon} =\epsilon \Delta_{\fX_C} + (1-\epsilon) \fD_{C} $ where $\fD_{C}:= \fD|_{\fX_C}$, we have that 
\[
\LCP(\fX_C, \Delta_{\fX_C} + \fX_o) = \LCP(\fX_C, \Gamma_C + \fX_o)= \LCP(\fX_C', \Gamma_C' + \fX_o').
\]
Next, we claim that $\Gamma'\geq \Delta_{\fX'}$. 
Assuming this claim, we have the desired containment
\[
\LCP(\fX_C, \Delta_{\fX_C} + \fX_o) = \LCP(\fX_C', \Gamma_C' + \fX_o')\supset \LCP(\fX_C', \Delta_{\fX_C'} + \fX_o').
\]

Finally, we prove the claim. Indeed, since $\hGamma\geq \hDelta_{\fX,\epsilon}$ where both $ \hDelta_{\fX,\epsilon}$ and  $\Delta_{\fX'}$ are $\bT$-equivariant Mumford $\bQ$-divisors on $\hfX$ and $\fX'$ over $\bA^r$ respectively, it suffices to show that on a general fiber $ X'$ of $\fX'\to \bA^r$, we have 
\begin{equation}\label{eq:fg-Delta-ineq}
q_* (p_*^{-1}(\epsilon \Delta+ (1-\epsilon) D) + \sum_{i=1}^k (1-a_i -\epsilon d_i) F_i) \geq \Delta'.
\end{equation}
Let $\hD$ be the $\bQ$-divisor on $\hX$ such that $(\hX, \hD) \dashrightarrow (X,D)$ is crepant birational. Then we have 
\[
\hD = p_*^{-1} D + \sum_{i=1}^k (1-a_i) F_i. 
\]
Since $(X,D)\dashrightarrow (X', D')$ is crepant birational, we know that $q_* \hD = D'$. Hence by \eqref{eq:fg-epsilon}, we have 
\begin{align*}
\Delta' & \leq D' - \epsilon q_* p^*B = q_* (\hD - \epsilon p^*B) \\
& = q_* (p_*^{-1}D - \epsilon p_*^{-1}B + \sum_{i=1}^k (1-a_i -\epsilon d_i) F_i). 
\end{align*}
Simplifying the above inequality by $D-\epsilon B = \epsilon\Delta+ (1-\epsilon) D$ precisely gives \eqref{eq:fg-Delta-ineq}. 
\end{proof}






\begin{proof}[Proof of Theorem \ref{thm:fg}]
For simplicity, we identify the spaces of valuations of $X$ and $X'$ via $g_*$.

Let $v\in \LCP^{\rfg}(X,D;\Delta)$ with rational rank $r$. Then by Theorem \ref{thm:fg-multideg} there exists a birational model $Y \to X$ and prime divisors $E_1,\cdots, E_r\in \LCP(X,D)$  on $Y$ such that $(Y, E= E_1+\cdots+E_r)$ is qdlt, $v$ is toroidal over $(\eta\in Y, E)$, and the multi-degeneration of $\fX$ of $(X,D;\Delta)$ induced by $E_1,\cdots, E_r$ has integral central fiber $\fX_0$. 
After replacing \(Y\) by a higher qdlt model dominating both \(X\) and \(X'\), and choosing nearby divisorial generators as in Remark \ref{rem:fg-multideg}, which preserves integrality of the induced multi-degeneration, we may assume the \(E_i\) live on such a common model.
Let $\fX'$ be the multi-degeneration of $(X', D';\Delta')$ induced by $E_1,\cdots, E_r$. 

Since $g|_U: U\to U'$ is an isomorphism, both $g$ and $g^{-1}$ are crepant birational log-contractions. Now applying Proposition \ref{prop:fg-lcp} to both $g$ and $g^{-1}$ with $C$ being the diagonal line in $\bA^r$ and $o=0$, we get 
\[
\LCP(\fX_C, \Delta_{\fX_C}+ \fX_0) = \LCP(\fX_C', \Delta_{\fX_C'}+ \fX_0').
\]

Clearly,  $\fX_C$ and $\fX_C'$ are test configurations of $(X,D;\Delta)$ and $(X',D';\Delta')$ respectively where the $\bG_m$-action is induced by the $1$-parameter subgroup in $\bT$ of weight $(1,1,\cdots, 1)$.
Let $\fX_0':=\cup_{i=1}^d \fX_0'^{(i)}$ be the irreducible components. Then we know that each $\fX_0'^{(i)}$ (resp.\ $\fX_0$) is an lc place of $(\fX_C, \Delta_{\fX_C} + \fX_0)$ (resp.\ of $(\fX_C', \Delta_{\fX_C'} + \fX_0')$). In particular, each $v_{\fX_C', i}$ (resp.\ $v_{\fX_C}$) is an lc place of $(X,D)$ (resp.\ of $(X',D')$). By Theorem \ref{thm:sa-tc}, there exists a semiample test configuration $\tfX_C$ (resp.\ $\tfX_C'$) of $(X,D;\Delta)$ (resp.\ of $(X',D';\Delta')$) induced by $\{v_{\fX_C}\}\cup \{v_{\fX_C', i}\}_{i=1}^d$. Moreover, we know that both $(\tfX_C, \Delta_{\tfX_C} + \tfX_0)\dashrightarrow (\fX_C, \Delta_{\fX_C} + \fX_0)$ and $(\tfX_C', \Delta_{\tfX_C'} + \tfX_0')\dashrightarrow (\fX_C', \Delta_{\fX_C'} + \fX_0')$ are crepant birational contractions as they only extract lc places. Thus we conclude that both $\tfX_C\to\fX_C$ and $\tfX_C'\to \fX_C'$ are birational morphisms giving ample models of the semiample test configurations.

Applying Proposition \ref{prop:sa-tc-integral} to $\tfX_C\to \fX_C$, we have that $v_{\fX_C}(f) \leq v_{\fX_C', i} (f)$ for any $1\leq i\leq d$ and any $f\in \cO_U(U)$.
Since $g|_U:U\xrightarrow{\sim}U'$, pullback induces an isomorphism
$g^*:\cO_{X'}(U')\xrightarrow{\sim}\cO_U(U)$.
Thus the same inequalities hold for every
$f'\in\cO_{X'}(U')$, after identifying valuations via $g_*$.
Applying Proposition \ref{prop:sa-tc-integral} again to $\tfX_C'\to \fX_C'$ implies that $\fX_0'$ is a birational transform of $\fX_0$ and hence irreducible. Thus $v\in\LCP^{\rfg}(X',D';\Delta')$ by Theorem \ref{thm:fg-multideg}, i.e.\ $\LCP^{\rfg}(X,D;\Delta)\subset \LCP^{\rfg}(X',D';\Delta')$. By symmetry we have $\LCP^{\rfg}(X,D;\Delta)\supset \LCP^{\rfg}(X',D';\Delta')$ and the proof is finished.
\end{proof}

\subsection{Compactification-independence of special valuations}

\begin{thm}\label{thm:sv}
Let $g: (X,D) \dashrightarrow (X',D')$ be a crepant birational map such that $g|_U: U\to U'$ is an isomorphism. Then $g_*$ induces a bijection
\[
g_*:\LCP^{\SV}(X,D;\Delta)\xrightarrow{\sim}\LCP^{\SV}(X',D';\Delta').
\]
\end{thm}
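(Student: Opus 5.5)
By Theorem~\ref{thm:fg} we already know that $g_*$ identifies $\LCP^{\rfg}(X,D;\Delta)$ with $\LCP^{\rfg}(X',D';\Delta')$. Since special valuations are finitely generated by definition, it remains to show one thing: for $v$ finitely generated, the degeneration $(X_v,\Delta_v)$ is klt if and only if $(X'_v,\Delta'_v)$ is klt. We follow the proof of Theorem~\ref{thm:fg}. Let $v$ have rational rank $r$. Choose a common qdlt model $Y$ and divisorial lc places $E_1,\dots,E_r$ on $Y$ as there, using Remark~\ref{rem:fg-multideg}. The induced multi-degenerations $\fX\to\bA^r$ of $(X,D;\Delta)$ and $\fX'\to\bA^r$ of $(X',D';\Delta')$ then both have integral central fibers. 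By Theorem~\ref{thm:fg-multideg}, $v$ is special for $(X,\Delta)$ exactly when $(\fX_0,\Delta_{\fX_0})$ is klt, and similarly for $\fX'$. By symmetry between $g$ and $g^{-1}$, it suffices to prove one implication: if $(\fX_0,\Delta_{\fX_0})$ is klt, then $(\fX'_0,\Delta_{\fX'_0})$ is klt.

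Let $C$ be the diagonal line through $o=0$. Then $\fX_C$ and $\fX'_C$ are test configurations with integral central fibers, so they are the test configurations attached to the same $\bZ$-divisorial lc place $w:=v_{\fX_C}=v_{\fX'_C}$. By inversion of adjunction, $(\fX_0,\Delta_{\fX_0})$ is klt if and only if $(\fX_C,\Delta_{\fX_C}+\fX_0)$ is plt, and likewise on the primed side. Equivalently, the only lc place of $(\fX_C,\Delta_{\fX_C}+\fX_0)$ is $\ord_{\fX_0}$ up to scaling. Now apply Proposition~\ref{prop:fg-lcp} to $g^{-1}$, which is a crepant birational log-contraction because $g|_U$ is an isomorphism. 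It gives
\[
\LCP(\fX'_C,\Delta_{\fX'_C}+\fX'_0)\subset \LCP(\fX_C,\Delta_{\fX_C}+\fX_0).
\]
If the right-hand side is the ray spanned by $\ord_{\fX_0}$, then so is the left-hand side. Under the identification of $K(\fX_C)$ with $K(\fX'_C)$, the valuation $\ord_{\fX_0}$ corresponds to $\ord_{\fX'_0}$, since both restrict to $w$ with the same Gauss extension. Hence $(\fX'_C,\Delta_{\fX'_C}+\fX'_0)$ has $\fX'_0$ as its unique lc place. Since this pair is lc by Proposition~\ref{prop:slc-family}, it is plt, and adjunction shows that $(\fX'_0,\Delta_{\fX'_0})$ is klt.

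The main point to check is the use of Proposition~\ref{prop:fg-lcp} in the direction that actually transfers plt-ness. That proposition needs the $E_i$ to be lc places of both compactifications and the multi-degenerations to exist on both sides. This holds because $\LCP(X,D)=\LCP(X',D')$ under the crepant map, and Theorem~\ref{thm:multideg} applies to both triples. One also needs that pltness of $(\fX_C,\Delta_{\fX_C}+\fX_0)$ is detected by lc places alone. This is fine because the pair is already lc, so it is plt exactly when every divisor over $\fX_C$ other than $\fX_0$ has log discrepancy strictly greater than $0$, that is, when $\fX_0$ is the only lc place. Alternatively, one could run the argument with all of $\bA^r$ in place of $C$. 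The curve version suffices, however, because $\fX_0\cong(\fX_C)_0$ and Theorem~\ref{thm:fg-multideg} characterizes specialness through this central fiber.
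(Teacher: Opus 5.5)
Your proof is correct and is essentially the paper's argument: the paper packages the key step as Proposition~\ref{prop:SV-contraction}, which applies Proposition~\ref{prop:fg-lcp} on the diagonal line so that pltness of $(\fX_C,\Delta_{\fX_C}+\fX_0)$ forces pltness of $(\fX'_C,\Delta_{\fX'_C}+\fX'_0)$, and then applies that proposition to both $g$ and $g^{-1}$. One small slip: the inclusion $\LCP(\fX'_C,\Delta_{\fX'_C}+\fX'_0)\subset\LCP(\fX_C,\Delta_{\fX_C}+\fX_0)$ comes from applying Proposition~\ref{prop:fg-lcp} to $g$, not to $g^{-1}$; this is harmless here because both maps are crepant birational log-contractions, and your preliminary appeal to Theorem~\ref{thm:fg} is not strictly needed, since the inclusion already forces every component of $\fX'_0$ to coincide with $\ord_{\fX_0}$.
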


\begin{prop}\label{prop:SV-contraction}

Let
$g:(X,D;\Delta)\dashrightarrow (X',D';\Delta')$
be a crepant birational log-contraction. Then
\[
g_*\bigl(\LCP^{\SV}(X,D;\Delta)\bigr)
\subseteq
\LCP^{\SV}(X',D';\Delta').
\]
\end{prop}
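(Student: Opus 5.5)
Let $v\in \LCP^{\SV}(X,D;\Delta)$ have rational rank $r$. The plan is to reduce specialness to a statement about lc places of a one-parameter test configuration, and then transport that statement through $g$ using Proposition~\ref{prop:fg-lcp}.

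First, by Theorem~\ref{thm:fg-multideg}, there is a qdlt model $(Y,E_1+\cdots+E_r)$ over which $v$ is toroidal, such that the multi-degeneration $\fX$ induced by the $E_i$ is special. By Remark~\ref{rem:fg-multideg}, we may pass to a higher qdlt model dominating both $X$ and $X'$ and choose nearby divisorial generators without losing specialness. We then need to know that $v$ is also finitely generated for $(X',\Delta')$, with $\fX'$ (the multi-degeneration of $(X',D';\Delta')$ induced by the same $E_i$) integral. Theorem~\ref{thm:fg} gives this only when $g|_U$ is an isomorphism, so for a log-contraction I would redo its argument one-sidedly. Take $C$ to be the diagonal line and apply Proposition~\ref{prop:fg-lcp} to get
\[
\LCP(\fX_C,\Delta_{\fX_C}+\fX_0)\supset \LCP(\fX_C',\Delta_{\fX_C'}+\fX_0').
\]
Since $\fX$ is special, $(\fX_C,\Delta_{\fX_C}+\fX_0)$ is plt by inversion of adjunction, and its only lc place is $\ord_{\fX_0}$.

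Every component $\fX_0'^{(i)}$ is an lc place of $(\fX_C',\Delta_{\fX_C'}+\fX_0')$, because $(\fX_C',\fD_C'+\fX_0')$ is slc with $\Delta'\le D'$ and $\fX_0'$ is reduced, so each component has log discrepancy zero. By the containment above, each $\fX_0'^{(i)}$ is therefore an lc place of the plt pair, hence equals $\ord_{\fX_0}$ as a valuation on $K(X)(t)$. So $\fX_0'$ is irreducible and is the birational transform of $\fX_0$, and $v$ is finitely generated for $(X',\Delta')$. This bypasses Proposition~\ref{prop:sa-tc-integral} entirely.

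Next, $\LCP(\fX_C',\Delta_{\fX_C'}+\fX_0')$ is contained in a set consisting only of the ray of $\ord_{\fX_0}$. Hence the pair $(\fX_C',\Delta_{\fX_C'}+\fX_0')$, which is lc by the containment, has $\fX_0'$ as its unique lc place, so it is plt. Adjunction then gives that $(\fX_0',\Delta_{\fX_0'})$ is klt. I still need to check that $K+\Delta_{\fX_C'}+\fX_0'$ is $\bQ$-Cartier; this holds because $\fX'$ is a family of log Fano pairs in the sense of Definition~\ref{def:multideg}. Consequently $\fX'$ is a special multi-degeneration, and Theorem~\ref{thm:fg-multideg} gives $g_*v\in\LCP^{\SV}(X',D';\Delta')$.

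The main obstacle is justifying that $\fX_C'$ has $\Delta_{\fX_C'}$ restricting to $\Delta_{\fX_0'}$, and that the plt property of the one-dimensional base change detects kltness of the central fiber of the $r$-parameter family. For the second point I would use the fact that $\fX_0'$ is the central fiber of both families, together with Proposition~\ref{prop:slc-family} applied to the snc divisor cutting out $C$.
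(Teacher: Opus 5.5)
Your proposal is correct and follows essentially the same route as the paper: apply Proposition~\ref{prop:fg-lcp} along the diagonal line, use that the plt pair $(\fX_C,\Delta_{\fX_C}+\fX_0)$ has only the ray of $\ord_{\fX_0}$ as lc places to force $(\fX_C',\Delta_{\fX_C'}+\fX_0')$ to be plt, and conclude by adjunction and Theorem~\ref{thm:fg-multideg}. Your explicit check that each component of $\fX_0'$ is an lc place, so that $\fX_0'$ is irreducible, spells out what the paper leaves implicit in the words ``plt as well''; note only that lc-ness of the primed pair comes from the family structure (or from the proof of Proposition~\ref{prop:fg-lcp}), not from the containment itself.
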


\begin{proof}
Let $v\in\LCP^{\SV}(X,D;\Delta)$ with rational rank $r$. Similar to the proof of Theorem \ref{thm:fg}, we choose a qdlt model $(Y, E_1+\cdots + E_r)$ over which $v$ is toroidal such that $Y\to X$ and $Y\to X'$ are both birational morphisms.
By Theorem \ref{thm:fg-multideg} we have a multi-degeneration $\fX$ (resp.\ $\fX'$) of $(X,D;\Delta)$ (resp.\ of $(X',D';\Delta')$) induced by $E_1,\cdots, E_r$. Applying Proposition \ref{prop:fg-lcp} to $g$ with $C$ being the diagonal line in $\bA^r$ and $o = 0$, we get
\[
\LCP(\fX_C, \Delta_{\fX_C}+ \fX_0) \supset \LCP(\fX_C', \Delta_{\fX_C'}+ \fX_0').
\]
Since  $v$ is special with respect to $(X,\Delta)$, we know that $(\fX_0, \Delta_0)$ is klt which implies that $(\fX_C, \Delta_{\fX_C} + \fX_0)$ is plt. Thus $(\fX_C', \Delta_{\fX_C'}+ \fX_0') $ is plt as well, which implies that $(\fX_0', \Delta_{\fX_0'})$ is klt. Thus $v$ is special with respect to $(X',\Delta')$ by Theorem \ref{thm:fg-multideg}.
\end{proof}

\begin{proof}[Proof of Theorem \ref{thm:sv}]
Since $g|_U: U\to U'$ is an isomorphism, both $g$ and $g^{-1}$ are crepant birational log-contractions.
Thus the statement follows from applying Proposition \ref{prop:SV-contraction} to both $g$ and $g^{-1}$.
\end{proof}

\subsection{Affine log CY pairs}\label{sec:affine-CY}

\begin{defn}
An \emph{affine log CY pair} is a klt pair $(U,\Delta_U)$ such that $U$
is affine and $K_U+\Delta_U\sim_{\bQ}0$, which admits an lc-klt
log CY-Fano triple $(X,D;\Delta)$ such that, writing $B:=D-\Delta$, we have $U=X\setminus\Supp(B)$, $B_{\red}\leq D$, and $\Delta_U=\Delta|_U$.
We call $(X,D;\Delta)$ a \emph{log CY-Fano compactification} of
$(U,\Delta_U)$.

By Lemma \ref{lem:cbir}, if \((X,D;\Delta)\) and \((X',D';\Delta')\) are two log CY-Fano compactifications of \((U,\Delta_U)\), the induced birational map \((X,D)\dashrightarrow(X',D')\) is crepant. Hence the dual complex \(\mathcal D(X,D)\) is independent of the compactification. We denote it by
$\cD(U,\Delta_U)$ and call it the \emph{dual complex} of $(U,\Delta_U)$.
\end{defn}

The above definition generalizes Definition \ref{def:affine-CY} to pairs as an affine log CY variety $U$ precisely corresponds to an affine log CY pair $(U,0)$.

\begin{thm}\label{thm:independence}
Let $(U,\Delta_U)$ be an affine log CY pair. Then the cones 
\[
\LCP^{\SV}(X,D;\Delta)\subset\LCP^{\rfg}(X,D;\Delta)
\subset \LCP(X,D)
\]
are independent of the choice of log CY-Fano compactification
$(X,D;\Delta)$. Consequently, the corresponding skeleta
\[
\cD^{\rm SV}(X,D;\Delta) \subset \cD^{\rm fg}(X,D;\Delta)\subset
\cD(U,\Delta_U)
\]
are also independent of the compactification.
\end{thm}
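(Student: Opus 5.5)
Given two log CY-Fano compactifications $(X,D;\Delta)$ and $(X',D';\Delta')$ of the same affine log CY pair $(U,\Delta_U)$, the identity of $U$ induces a birational map $g:X\dashrightarrow X'$. The plan is to check that $g$ satisfies the hypotheses of Theorems \ref{thm:fg} and \ref{thm:sv}, and then read off the statement from them.

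\textbf{Step 1: the hypotheses hold.} Write $B=D-\Delta$ and $B'=D'-\Delta'$. These are effective and ample with $B_{\red}\le D$ and $B'_{\red}\le D'$. Their complements are $U=X\setminus\Supp(B)$ and $U'=X'\setminus \Supp(B')$, and both are identified with $U$. The isomorphism $g|_U:U\to U'$ carries $\Delta|_U=\Delta_U$ to $\Delta'|_{U'}$. On $U$ we have $D|_U=\Delta|_U$, because $D-\Delta=B$ is supported off $U$, and likewise for $D'$. Hence $g|_U$ is an isomorphism of pairs $(U,D|_U)\cong(U',D'|_{U'})$. Lemma \ref{lem:cbir} then shows that $g:(X,D)\dashrightarrow(X',D')$ is crepant birational. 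In particular $g_*$ identifies $\LCP(X,D)$ with $\LCP(X',D')$, which recovers the definition of $\cD(U,\Delta_U)$.

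\textbf{Step 2: apply the theorems.} The setup of this section is exactly an lc-klt log CY-Fano triple with $U=X\setminus\Supp(B)$. Theorems \ref{thm:fg} and \ref{thm:sv} were stated in this generality: they assume only that $g$ is crepant birational and that $g|_U$ is an isomorphism onto $U'$. Their proofs never used $D$ reduced or $\Delta_U=0$. They pass through Proposition \ref{prop:fg-lcp}, Proposition \ref{prop:sa-tc-integral} and Lemma \ref{lem:log-cont}, and all of these work with $B$, $B'$ and $\cO_U(U)$. So $g_*$ maps $\LCP^{\rfg}(X,D;\Delta)$ bijectively onto $\LCP^{\rfg}(X',D';\Delta')$, and likewise maps $\LCP^{\SV}(X,D;\Delta)$ bijectively onto $\LCP^{\SV}(X',D';\Delta')$. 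These maps are compatible with the identification of $\LCP$ from Step 1.

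\textbf{Step 3: conclude.} The inclusions $\LCP^{\SV}\subset\LCP^{\rfg}\subset\LCP$ are Proposition \ref{prop:div-fg}. All of the maps above commute with scaling by $\bR_{>0}$, so passing to the quotient gives the statement for the skeleta in $\cD(U,\Delta_U)$.

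\textbf{Main obstacle.} I expect the only real point to be Step 2. One must confirm that nothing in the proofs of Theorems \ref{thm:fg} and \ref{thm:sv} used reducedness of $D$. In particular, Proposition \ref{prop:fg-lcp} computes $a_i=1-\coeff_{F_i}(D')\in[0,1]$ and uses inequality \eqref{eq:fg-epsilon}. Both need only that $(X',D')$ is lc and that Lemma \ref{lem:log-cont} holds, and the latter only involves $\Supp(B)$ and $\Supp(B')$. If some step in these proofs did secretly need $D$ reduced, I would replace $D$ by $\Delta+B_{\red}$ to recover reducedness along $\Supp(B)$. That replacement would break the log CY condition, so I would instead rely only on the condition $B_{\red}\le D$, which is exactly what Lemma \ref{lem:cbir} requires.
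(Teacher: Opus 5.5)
Your proposal is correct and follows the paper's proof: Lemma \ref{lem:cbir} makes the map induced by the identity on $U$ crepant birational, and Theorems \ref{thm:fg} and \ref{thm:sv} then give the independence. Those two theorems are already stated for arbitrary lc-klt log CY-Fano triples, so your worry about reducedness of $D$ is moot, and your Step~1 helpfully spells out the check $D|_U=\Delta|_U=\Delta_U$ that the paper leaves implicit.
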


\begin{proof}
Let $(X,D;\Delta)$ and $(X',D';\Delta')$ be two such
compactifications. By Lemma~\ref{lem:cbir},
the birational map
$g:(X,D)\dashrightarrow(X',D')$
induced by the identity on $U$ is crepant birational. 
Thus Theorems \ref{thm:fg} and \ref{thm:sv} give the desired independence.
\end{proof}

\begin{proof}[Proof of Theorem \ref{thm:main}]
It follows directly from Theorem \ref{thm:independence} by setting $\Delta_U = 0$.
\end{proof}

\begin{defn}
Let $(U,\Delta_U)$ be an affine log CY pair. Let $(X,D;\Delta)$ be a log CY-Fano compactification. 

We define the \emph{space of tropical points} $(U,\Delta_U)^{\trop}$  as 
\[
(U,\Delta_U)^{\trop}:= \LCP(X,D).
\]

We define  the \emph{cone of finitely generated valuations} $(U,\Delta_U)^{\rfg}$ and the \emph{cone of special valuations} $(U,\Delta_U)^{\SV}$ in the space of tropical points $(U,\Delta_U)^{\trop}$ as
\[
(U,\Delta_U)^{\rfg} := \LCP^{\rfg}(X,D;\Delta)\quad \textrm{ and } \quad (U,\Delta_U)^{\SV}:=\LCP^{\SV}(X,D;\Delta).
\]

Similarly, we define the \emph{finitely generated skeleton} $\cD^{\rfg}(U,\Delta_U)$ and the \emph{special skeleton} $\cD^{\SV}(U,\Delta_U)$ in the dual complex $\cD(U,\Delta_U)$ as
\[
\cD^{\rfg}(U,\Delta_U) := \cD^{\rfg}(X,D;\Delta)\quad \textrm{ and } \quad \cD^{\SV}(U,\Delta_U):=\cD^{\SV}(X,D;\Delta).
\]

By Theorem \ref{thm:independence}, these cones and skeleta are independent of the choice of the log CY-Fano compactification $(X,D;\Delta)$.
It is clear that 
\[
\cD^{\rfg}(U,\Delta_U) = ((U,\Delta_U)^{\rfg}\setminus \{v_{\triv}\})/\bR_{>0}\quad \textrm{ and }\quad \cD^{\SV}(U,\Delta_U) = ((U,\Delta_U)^{\SV}\setminus \{v_{\triv}\})/\bR_{>0}.
\]

For an affine log CY variety $U$, we simplify the above notation by $U^{\rfg}, U^{\SV}, \cD^{\rfg}(U), \cD^{\SV}(U)$.  
\end{defn}

\begin{cor}\label{cor:aut-pair}
    Let $(U,\Delta_U)$ be an affine log CY pair. Then the cones
\[
(U,\Delta_U)^{\SV}
\subset
(U,\Delta_U)^{\rfg}
\subset
(U,\Delta_U)^{\trop}
\]
are invariant under the $\Aut(U,\Delta_U)$-action. Consequently, the skeleta
\[
\cD^{\SV}(U,\Delta_U)
\subset
\cD^{\rfg}(U,\Delta_U)
\subset
\cD(U,\Delta_U)
\]
are also invariant.
\end{cor}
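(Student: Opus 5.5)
The plan is to reduce the statement to Theorem \ref{thm:independence} by transporting a log CY-Fano compactification along an automorphism. Fix $\sigma\in\Aut(U,\Delta_U)$ and a log CY-Fano compactification $(X,D;\Delta)$ of $(U,\Delta_U)$, with $B=D-\Delta$ and $U=X\setminus\Supp(B)$. We regard $(X,D;\Delta)$ as a second compactification via the open immersion $\iota\circ\sigma\colon U\hookrightarrow X$, where $\iota\colon U\hookrightarrow X$ is the given inclusion. Concretely, this is the same triple $(X,D;\Delta)$, but $U$ is identified with $X\setminus\Supp(B)$ through $\sigma$. Since $\sigma^*\Delta_U=\Delta_U$, the condition $\Delta|_U=\Delta_U$ still holds, so this is again a log CY-Fano compactification of $(U,\Delta_U)$.

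Next we identify the action. The birational map $g\colon X\dashrightarrow X$ induced by the identity of $U$ between these two compactifications is exactly the extension $\bar\sigma$ of $\sigma$ (or its inverse, depending on convention). The action of $\sigma$ on $(U,\Delta_U)^{\trop}=\LCP(X,D)$ is by definition $v\mapsto\bar\sigma_*v$. This is well defined and preserves $\LCP(X,D)$ because $\bar\sigma\colon(X,D)\dashrightarrow(X,D)$ is crepant birational, by Lemma \ref{lem:cbir} applied with $B=B'$ and $g=\bar\sigma$.

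The key step is to apply Theorems \ref{thm:fg} and \ref{thm:sv} directly to $g=\bar\sigma\colon(X,D;\Delta)\dashrightarrow(X,D;\Delta)$. Their hypotheses hold: $g$ is crepant birational, and $g|_U=\sigma$ is an isomorphism $U\to U$. The theorems give bijections
\[
\bar\sigma_*\colon \LCP^{\rfg}(X,D;\Delta)\xrightarrow{\sim}\LCP^{\rfg}(X,D;\Delta),\qquad \bar\sigma_*\colon \LCP^{\SV}(X,D;\Delta)\xrightarrow{\sim}\LCP^{\SV}(X,D;\Delta).
\]
Hence $(U,\Delta_U)^{\rfg}$ and $(U,\Delta_U)^{\SV}$ are $\sigma$-invariant, and $(U,\Delta_U)^{\trop}$ is invariant by crepancy.

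Finally, $\bar\sigma_*$ commutes with scaling by $\bR_{>0}$ and fixes $v_{\triv}$. The invariance therefore descends to the quotients $\cD^{\SV}(U,\Delta_U)\subset\cD^{\rfg}(U,\Delta_U)\subset\cD(U,\Delta_U)$.

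The only point requiring care is the bookkeeping in the middle step. One must check that Theorems \ref{thm:fg} and \ref{thm:sv} allow the target triple to coincide with the source, so that $\Delta'=\Delta$ and $U'=U$. This is harmless, since the proofs only use that $g$ is crepant and that $g|_U$ is an isomorphism onto $U'=X'\setminus\Supp(B')$. One must also confirm that ``the $\Aut(U,\Delta_U)$-action'' is precisely pushforward of valuations along $\bar\sigma$, which holds because valuations on $U$ and on $X$ are identified through the function field $K(U)=K(X)$.
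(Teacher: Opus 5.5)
Your proposal is correct and is essentially the paper's argument. The paper transports the compactification along $g$ to get an isomorphic triple $(X',D';\Delta')$, uses that finite generation and specialness are invariant under isomorphisms, and then invokes Theorem~\ref{thm:independence}; you instead fold that isomorphism into the birational self-map $\bar\sigma$ and apply Theorems~\ref{thm:fg} and~\ref{thm:sv} directly, which differs only in bookkeeping since Theorem~\ref{thm:independence} is itself proved from those two theorems.
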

\begin{proof}
Let $g\in\Aut(U,\Delta_U)$ and choose a log CY-Fano compactification
$(X,D;\Delta)$ of $(U,\Delta_U)$. Transporting the compactification by $g$
gives another log CY-Fano compactification $(X',D';\Delta')$, and $g$ extends to an isomorphism
$(X,D;\Delta)\xrightarrow{\sim}(X',D';\Delta')$ which we also denote by $g$.
Since finite generation and specialness are preserved under isomorphisms, 
we have $g_*\bigl(\LCP^{\rfg}(X,D;\Delta)\bigr)
 =\LCP^{\rfg}(X',D';\Delta')$ and
$g_*\bigl(\LCP^{\SV}(X,D;\Delta)\bigr)
 =\LCP^{\SV}(X',D';\Delta')$.
By Theorem \ref{thm:independence}, the right-hand sides are respectively
$(U,\Delta_U)^{\rfg}$ and $(U,\Delta_U)^{\SV}$.
Hence both cones are $\Aut(U,\Delta_U)$-invariant.
The corresponding statement for the skeleta follows by projectivization.
\end{proof}

\begin{proof}[Proof of Corollary \ref{cor:aut}]
It follows directly from Corollary \ref{cor:aut-pair} by setting $\Delta_U = 0$.
\end{proof}

\begin{prop}\label{prop:geod-indep}
Let $(U,\Delta_U)$ be an affine log CY pair. Then finitely generated (resp.\ special) geodesic simplices in $\cD(U,\Delta_U)$ are independent of the choice of log CY-Fano compactifications. In particular, the image of a finitely generated (resp.\ special) geodesic simplex under an automorphism of $(U,\Delta_U)$ is still a finitely generated (resp.\ special) geodesic simplex.
\end{prop}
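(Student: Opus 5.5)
We reduce to the argument already used for Theorems \ref{thm:fg} and \ref{thm:sv}; the only new ingredient is bookkeeping of the vertices. Fix two log CY-Fano compactifications $(X,D;\Delta)$ and $(X',D';\Delta')$ of $(U,\Delta_U)$. By Lemma \ref{lem:cbir}, the map $g:(X,D)\dashrightarrow(X',D')$ induced by the identity of $U$ is crepant birational. Since $g|_U$ is an isomorphism, both $g$ and $g^{-1}$ are crepant birational log-contractions, and $g_*$ identifies $\LCP(X,D)$ with $\LCP(X',D')$.

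Let $P$ be a finitely generated geodesic $(r-1)$-simplex for $(X,D;\Delta)$. By Definition \ref{def:geodesic}, it comes from an integral multi-degeneration $\fX\to\bA^r$. The vertices are $v_i:=v_{\fX,e_i}$, the divisorial lc places of the test configurations obtained by restricting $\fX$ to the coordinate axes. By Theorem \ref{thm:multideg} and the uniqueness of test configurations with integral central fibre, $\fX$ is the multi-degeneration induced by divisors $E_1,\dots,E_r$ over $X$ with $\ord_{E_i}=v_i$. As in the proof of Theorem \ref{thm:fg}, we take $\fX'$ to be the multi-degeneration of $(X',D';\Delta')$ induced by the same $E_1,\dots,E_r$ (Theorem \ref{thm:multideg} applies because $v_i\in\LCP(X',D')$). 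The goal is to show that $\fX'_0$ is integral and that the valuations $v_{\fX',\xi}$ coincide with $v_{\fX,\xi}$ for all $\xi$. Then $P$ is also a finitely generated geodesic simplex for $(X',D';\Delta')$, and symmetry gives the reverse inclusion.

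For integrality we copy the proof of Theorem \ref{thm:fg}. Apply Proposition \ref{prop:fg-lcp} to $g$ and $g^{-1}$ with $C$ the diagonal line and $o=0$; this gives $\LCP(\fX_C,\Delta_{\fX_C}+\fX_0)=\LCP(\fX'_C,\Delta_{\fX'_C}+\fX'_0)$. Next, build semiample test configurations from $v_{\fX_C}$ and the components $v_{\fX'_C,i}$, and apply Proposition \ref{prop:sa-tc-integral} on both sides. Because $\cO_U(U)=\cO_{U'}(U')$, this forces $\fX'_0$ to be irreducible and a birational transform of $\fX_0$.

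To identify the simplices, the step I expect to be the main obstacle, I would avoid working directly on $\fX$. For rational $\xi\in\bZ_{>0}^r$ we use toric base change along the one-parameter subgroup of weight $\xi$, as in Remark \ref{rem:fg-multideg}. This gives test configurations $\fX_{C_\xi}$ and $\fX'_{C_\xi}$ with integral central fibres whose associated divisorial valuations are $v_{\fX,\xi}$ and $v_{\fX',\xi}$. Running the argument above with the curve $C_\xi$ instead of the diagonal shows that the central fibres are birational transforms of each other, so $v_{\fX,\xi}=v_{\fX',\xi}$ for all rational $\xi$. By continuity of $\xi\mapsto v_{\fX,\xi}$ (these are quasi-monomial valuations in fixed coordinates), the equality extends to all $\xi$. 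One could instead argue via Proposition \ref{prop:geodesic-simplex} that both simplices are rational subsimplices with the same vertices in a common cell, but that needs the vertices to lie in a common cell, so the base-change argument seems cleaner.

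For the special case we additionally need $(\fX'_0,\Delta_{\fX'_0})$ to be klt. We argue as in Proposition \ref{prop:SV-contraction}: $(\fX_C,\Delta_{\fX_C}+\fX_0)$ is plt, the lc places on the two sides agree, hence $(\fX'_C,\Delta_{\fX'_C}+\fX'_0)$ is plt and inversion of adjunction gives the klt property. The last assertion follows as in Corollary \ref{cor:aut-pair}. Transport a compactification by $h\in\Aut(U,\Delta_U)$: $h$ then becomes an isomorphism of triples, so it maps geodesic simplices to geodesic simplices of the transported compactification, and by the independence just proved these are geodesic simplices for the original one.
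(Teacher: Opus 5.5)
Your proposal is correct and follows the paper's route. The paper's proof is only the remark that the statement follows from the proofs of Theorems \ref{thm:fg} and \ref{thm:sv}; you spell that out, and you also make explicit the identification $v_{\fX',\xi}=v_{\fX,\xi}$ for all $\xi$, which the paper leaves implicit. One point needs tightening: Proposition \ref{prop:fg-lcp} requires $C$ to be smooth at $o$, but the orbit closure of weight $\xi$ is singular at $0$ unless some $\xi_i=1$. To fix this, first perform the Kummer base change $(u_1,\dots,u_r)\mapsto(u_1^{\xi_1},\dots,u_r^{\xi_r})$ of $\bA^r$. This gives multi-degenerations of both triples with the same axis valuations $\xi_i v_i$, and you can then run your argument on the diagonal of the new $\bA^r$.
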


\begin{proof}
This follows directly from the proofs of Theorems \ref{thm:fg} and \ref{thm:sv}.
\end{proof}

\subsection{A maximality characterization of special valuations}

In this subsection, we return to the general setting of lc-klt log CY-Fano triples from the beginning of this section. The main application is to prove Corollary \ref{cor:maximal-affineCY}, which yields Theorem \ref{thm:maximality-intro} for affine log CY varieties, while the key discrepancy computation holds in the general setting.

\begin{defn}
We define a partial order $\preceq$ on $\LCP(X,D)$ as follows. For  two valuations
 $v,w\in \LCP(X,D)$, we define  
\[
w\preceq v \quad\textrm{if and only if} \quad w(f) \leq v(f)  \textrm{ for every }f\in \cO_U(U).
\]
This is a partial order as $\mathrm{Frac}(\cO_U(U)) = K(X)$ by the affineness of $U$. 
\end{defn}

For $w\in X^{\val}$ and
$\xi=(\xi_1,\ldots,\xi_r)\in\bR_{\geq 0}^r$, we denote by
$w_\xi$ the Gauss extension of $w$ to
$K(X)(t_1,\ldots,t_r)$ satisfying
\[
w_\xi\left(\sum_\beta f_\beta t^\beta\right)
:=
\min_\beta\{w(f_\beta)+\langle\beta,\xi\rangle\}.
\]

\begin{prop}\label{prop:max-log-disc}
Let $\pi:(\fX,\fD;\Delta_{\fX})\to \bA^r$ be a multi-degeneration of $(X,D;\Delta)$ with integral central fiber. Let $v:= v_{\fX,\xi}$ for $\xi\in \bR_{>0}^r$. Let $V:=(t_1\cdots t_r=0)\subset \bA^r$. 
Then for $w\in \LCP(X,D)$ we have 
\[
A_{\fX, \Delta_{\fX}+\fX_V}(w_{\xi}) = 0 \quad \textrm{if and only if}\quad v\preceq w.
\]
\end{prop}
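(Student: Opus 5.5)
The plan is to reduce the vanishing of $A_{\fX,\Delta_{\fX}+\fX_V}(w_\xi)$ to a statement about the center of $w_\xi$ on $\fX$. That center condition will then be read off from an explicit affine chart of $\fX$ described by regular functions on $U$. Throughout, write $\fB:=\fD-\Delta_{\fX}$, which is the closure of $B\times\bT$; it is $\bQ$-Cartier and $\bQ$-linearly equivalent to $-K_{\fX}-\Delta_{\fX}$, hence $\pi$-ample.

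\textbf{Step 1: reduce to a center condition.} The first claim is that $A_{\fX,\fD+\fX_V}(w_\xi)=0$ for every $w\in\LCP(X,D)$. On the trivial family one computes directly
\[
A_{X\times\bA^r,\,D\times\bA^r+X\times V}(w_\xi)=A_{X,D}(w)+\textstyle\sum_i\xi_i(1-1)=0 .
\]
The pair $(\fX,\fD+\fX_V)$ is crepant birational over $\bA^r$ to $(X\times\bA^r, D\times\bA^r+X\times V)$. I expect to prove this exactly as in the proof of Theorem~\ref{thm:sa-tc}, via \cite[Lemma 2.10]{BABWILD} and Proposition~\ref{prop:slc-family}, applied axis by axis or by the inductive adjunction argument of Proposition~\ref{prop:fg-lcp}. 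Granting this, we get
\[
A_{\fX,\Delta_{\fX}+\fX_V}(w_\xi)=w_\xi(\fB).
\]
The valuation $w_\xi$ has a center on $\fX$ because $\pi$ is proper and $w_\xi(t_i)=\xi_i>0$. Hence the log discrepancy vanishes if and only if the center of $w_\xi$ lies in the affine open set $\fX\setminus\Supp\fB$.

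\textbf{Step 2: describe the chart $\fX\setminus\Supp\fB$.} Use $\fX\cong\Proj\,\Rees(\cF_1,\dots,\cF_r)$, as in the proof of Proposition~\ref{prop:deg-normal-cone} via \cite[Proposition 3.17]{BL26}, where $\cF_i$ is induced by $v_i:=v_{\fX,e_i}$. Take $s_0\in R_{m_0}$ with $\rdiv(s_0)=lm_0B$. Since each $v_i$ is an lc place of $(X,D)$, we have $v_i(s_0)=lm_0A_{X,\Delta}(v_i)$. So $s_0$ sits in Rees degree $\lambda=0$ and its divisor on $\fX$ is $lm_0\fB$. Dehomogenizing by $s_0$ exactly as in Proposition~\ref{prop:sa-tc-integral} (writing $s=f s_0^{m/m_0}$) gives
\[
\fX\setminus\Supp\fB=\Spec A,\qquad A=\bigoplus_{\lambda\in\bZ^r}\{f\in\cO_U(U)\mid v_i(f)\geq\lambda_i\ \forall i\}\,t^{-\lambda}.
\]
The center of $w_\xi$ lies in this chart if and only if $w_\xi\geq0$ on $A$. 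Equivalently,
\[
w(f)\geq\langle\lambda,\xi\rangle\quad\text{whenever } v_i(f)\geq\lambda_i \text{ for all } i .
\]

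\textbf{Step 3: identify the supremum with $v(f)$ (main obstacle).} It remains to show that for $f\in\cO_U(U)$,
\[
v(f)=v_{\fX,\xi}(f)=\max\{\langle\lambda,\xi\rangle \mid \lambda\in\bZ^r,\ v_i(f)\geq\lambda_i\ \forall i\}.
\]
Granting this, the condition in Step 2 is exactly $v\preceq w$, which proves the proposition.

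For the identity, the approach is to use integrality of $\fX_0$. Then $\fX_{V_1},\dots,\fX_{V_r}$ are prime and $\fX$ is snc along them at the generic point of $\fX_0$, so $\wt_\xi$ is the monomial valuation in $t_1,\dots,t_r$ there. On the Rees algebra, an element $f t^{-\lambda}$ with $\lambda$ maximal, i.e.\ $\lambda_i=v_i(f)$, is a unit at the generic point of $\fX_0$. Consequently
\[
\wt_\xi(f)=\wt_\xi\bigl(t^{\lambda}\cdot f t^{-\lambda}\bigr)=\langle\lambda,\xi\rangle .
\]
If $v_i(f)\notin\bZ$, the same argument applies after passing to $\bQ$-divisorial weights and a toric base change, as in Remark~\ref{rem:fg-multideg}.

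The delicate point, and the step I expect to be hardest, is verifying that this initial form is nonzero on the integral central fiber. This is where integrality is essential: it says the multi-filtration is the "geodesic" one, so the $\lambda=(v_i(f))_i$ graded piece of $f$ survives in $\fX_0$.
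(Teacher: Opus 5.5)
Your Steps 1 and 2 are sound. They reduce $A_{\fX,\Delta_{\fX}+\fX_V}(w_\xi)=0$ to the center of $w_\xi$ lying in $\fX\setminus\Supp\fB=\Spec A$, that is, to
\[
w(f)\geq \textstyle\sum_i \xi_i v_i(f)\quad\text{for all } f\in\cO_U(U).
\]
Here $v_i=\ord_{\fX_{V_i}}|_{K(X)}$ is $\bZ$-valued, so your maximum over $\lambda$ is just $\sum_i\xi_iv_i(f)$, and the remark about $v_i(f)\notin\bZ$ is moot.

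The gap is Step 3. The identity $v(f)=\sum_i\xi_iv_i(f)$ for \emph{all} $f\in\cO_U(U)$ is false. Take $X=\bP^2$, $D$ the toric boundary and $\Delta=(1-\epsilon)D$, so that $U=\bG_m^2$. Let $v_1,v_2$ be the toric valuations of weights $(1,0)$ and $(0,1)$ in coordinates $x,y$. The induced multi-degeneration is a product with integral central fiber, and $v$ is monomial of weight $\xi$. For $f=x+y$ one gets $v(f)=\min(\xi_1,\xi_2)>0=\xi_1v_1(f)+\xi_2v_2(f)$. Your unit argument fails for the same reason: $(x+y)t^{0}=t_1\cdot xt_1^{-1}+t_2\cdot yt_2^{-1}\in(t_1,t_2)A$. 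So this element vanishes on $\fX_0$ and is not a unit at its generic point; not being divisible by any single $t_i$ does not make an element a unit once $r\geq 2$. In general only $v(f)\geq\sum_i\xi_iv_i(f)$ holds. That gives $v\preceq w\Rightarrow A_{\fX,\Delta_{\fX}+\fX_V}(w_\xi)=0$, but the converse is left unproved.

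What is missing is a spanning set on which linearity does hold. Take the valuatively independent bases $\Theta_m$ of $R_m$ with respect to $\LCP(X,D)$ from \cite{BL26} and divide by powers of $s_0$.
\begin{itemize}
\item Valuative independence with respect to each $v_i$ shows that every graded piece $\{f\mid v_i(f)\geq\lambda_i\ \forall i\}$ of $A$ is spanned by the basis elements $\theta$ it contains.
\item For these $\theta$, the paper's \eqref{eq:max-2} (coming from \cite[Proposition 3.17]{BL26}) says exactly $v(\theta)=\sum_i\xi_iv_i(\theta)$. This is the one place your unit argument is valid, because $t^{-a(s)}\os_{\fX}$ does not vanish on $\fX_0$.
\item Hence $w_\xi\geq 0$ on $A$ if and only if $w(\theta)\geq v(\theta)$ for all basis elements $\theta$.
\item By valuative independence of both $v$ and $w$ on the basis, as in Proposition~\ref{prop:theta-order}, this is equivalent to $v\preceq w$.
\end{itemize}

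With this input your center-in-the-affine-chart argument becomes a valid alternative to the paper's route. The paper instead computes $w_\xi(\fG)$ on a graph resolution using global generation, which also yields the formula $A_{\fX,\Delta_{\fX}+\fX_V}(w_\xi)=\sup_{m,f}\frac{v(f)-w(f)}{ml}$.
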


\begin{proof}
Choose $l\in \bN$ sufficiently divisible so that both $\fL:=l(-K_{\fX}-\Delta_{\fX})\sim l \fB$ and $L_{\bA^r}:=l (-K_{X_{\bA^r}}-\Delta_{X_{\bA^r}})\sim l B_{\bA^r}$ are Cartier. Hence $L:= l(-K_X-\Delta)\sim lB$ is also Cartier. 
For $m\in \bN$ and $0\neq s\in R_m = H^0(X, mL)$, let $\os \in H^0(X_{\bA^r}, m L_{\bA^r})$ denote the pullback of $s$ under the projection $X_{\bA^r}\to X$. Let $\os_{\fX}$ be the rational section of the line bundle $m\fL$ such that $\os_{\fX}|_{\fX\times_{\bA^r} \bT} = \os|_{X_{\bT}}$ under the identification $\fX\times_{\bA^r} \bT\cong X_{\bT}$. 
Then by \cite[Proposition 3.17]{BL26} we have $t^{-a(s)}\os_{\fX} \in H^0(\fX, m\fL)$ where 
\[
a(s) = (a_i(s))_{1\leq i\leq r} := (v_i(s)-ml A_{X,\Delta}(v_i))_{1\leq i\leq r}.
\]

Let $\fY$ be a resolution of the graph of the birational map $\fX\dashrightarrow X_{\bA^r}$. Hence we have proper birational morphisms $p: \fY \to \fX$ and $q:\fY \to X_{\bA^r}$. Denote by $\fG:=p^*\fL - q^* L_{\bA^r}$. Then we have 
\begin{equation}\label{eq:max-1}
    0 \leq w_{\xi}(t^{-a(s)}\os_{\fX}) = w_{\xi}(t^{-a(s)} \os) + m w_{\xi} (\fG) = - \langle a(s), \xi \rangle + w(s) + m w_{\xi} (\fG). 
\end{equation}

Next, by \cite{BL26} there exists a valuatively independent basis $\Theta_m:=\{s_1, \cdots, s_{N_m}\}$ for $H^0(X, mL)$ with respect to $\LCP(X,D)$. If $s\in \Theta_m$, then we have 
\begin{equation}\label{eq:max-2}
    \langle a(s), \xi\rangle = \sum_{i=1}^r \xi_i v_i(s) - ml \sum_{i=1}^r \xi_i A_{X,\Delta}(v_i)  = v(s) - ml A_{X,\Delta}(v). 
\end{equation}
Here the last equality follows from the linearity of $\zeta\mapsto v_{\fX,\zeta}(s)$ for $s$ in a valuatively independent basis, together with the linearity of $\zeta\mapsto A_{X,\Delta}(v_{\fX,\zeta})$.
Moreover, by \cite[Proposition 3.17]{BL26} we know that $H^0(\fX, m\fL)$ is generated by $\{t^{-a(s)} \os_{\fX}\mid s\in \Theta_m\}$ as a $\bk[t_1,\cdots, t_r]$-module. After replacing $l$ by a sufficiently divisible multiple, we may assume that $\fL$ (and hence $m\fL$ for every $m\in \bN$) is globally generated. 
Thus there exists $s\in \Theta_m$ such that $w_{\xi}(t^{-a(s)} \os_{\fX}) = 0$. Combining this with \eqref{eq:max-1} and \eqref{eq:max-2} yields
\begin{equation}\label{eq:max-3}
    w_{\xi}(\fG) = \sup_{m,\,s\in \Theta_m} \frac{v(s) - w(s)}{m} - l A_{X,\Delta}(v)
    = \sup_{m,\,s\in R_m\setminus\{0\}} \frac{v(s) - w(s)}{m} - l A_{X,\Delta}(v).
\end{equation}
Here the last equality follows from the fact that for every $s\in R_m$, if we write $s = \sum_{j} c_j s_j$, then valuative independence implies that 
\[
v(s) - w(s) = \min_{c_j\neq 0} v(s_j) - \min_{c_j\neq 0} w(s_j) \leq  \max_{c_j\neq 0} (v(s_j) - w(s_j)).
\]

Next, we compute $A_{\fX, \Delta_{\fX}+\fX_{V}}(w_{\xi})$. It is clear that 
\[
\fG = l (p^*(-K_{\fX}-\Delta_{\fX})- q^*(-K_{\bA^r} - \Delta_{X_{\bA^r}})) = l(q^*(K_{\bA^r} + \Delta_{X_{\bA^r}} +X_{V})-p^*(K_{\fX}+\Delta_{\fX}+\fX_V)).
\]
Let $s_0\in H^0(X,L)$ be the section satisfying $(s_0=0) = lB$. Then we have $A_{X,\Delta}(u) = u(B) = u(s_0)/l$ for any $u\in \LCP(X,D)$. Combining this with \eqref{eq:max-3} we obtain
\begin{align*}
A_{\fX, \Delta_{\fX}+\fX_{V}}(w_{\xi})&  = A_{X_{\bA^r}, \Delta_{X_{\bA^r}} +X_{V}}(w_{\xi}) + w_{\xi}(l^{-1} \fG) = A_{X,\Delta}(w) + l^{-1} w_{\xi}(\fG) \\
& = \sup_{m,\,s\in R_m\setminus\{0\}} \left(A_{X,\Delta}(w) - A_{X,\Delta}(v) + \frac{v(s)-w(s)}{ml}\right)\\
& = \sup_{m,\,s\in R_m\setminus\{0\}} \frac{v(s/s_0^m) - w(s/s_0^m)}{ml}\\
& = \sup_{m,\, f\in A_m\setminus\{0\}} \frac{v(f) - w(f)}{ml}.
\end{align*}
Here we write $A_m := \{ s/s_0^m \mid s\in R_m\} \subset \cO_U(U)$, and we have $\cup_{m\in \bN} A_m = \cO_U(U)$ as $U = X\setminus \Supp(B)$. 
Since $v(1) = w(1) =0$, we always have $\sup_{m,\, f\in A_m\setminus\{0\}} \frac{v(f) - w(f)}{ml}\geq 0$. Thus 
$A_{\fX, \Delta_{\fX}+\fX_{V}}(w_{\xi}) = 0$ if and only if $\sup_{m,\, f\in A_m\setminus\{0\}} \frac{v(f) - w(f)}{ml} \leq 0$, which is equivalent to $v(f) \leq w(f)$ for every $f\in \cO_U(U)$. The proof is finished.
\end{proof}

\begin{thm}\label{thm:maximal-special}
Let $v\in \LCP^{\mathrm{fg}}(X,D;\Delta)$. Then $v$ is special
with respect to $(X,\Delta)$ if and only if $v$ is maximal in
$\LCP(X,D)$ with respect to $\preceq$.
\end{thm}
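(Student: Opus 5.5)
We argue through the multi-degeneration attached to $v$ and use Proposition~\ref{prop:max-log-disc} to turn the order $\preceq$ into a statement about log discrepancies. Since $v\in\LCP^{\rfg}(X,D;\Delta)$ has some rational rank $r$, Theorem~\ref{thm:fg-multideg} gives a qdlt model $(Y,E_1+\cdots+E_r)$ over which $v$ is toroidal. The induced multi-degeneration $\pi:(\fX,\fD;\Delta_{\fX})\to\bA^r$ has integral central fiber, and $v=v_{\fX,\xi}$ for some $\xi\in\bR^r_{>0}$. Put $V=(t_1\cdots t_r=0)$. By Proposition~\ref{prop:slc-family}, applied to the family of lc pairs $(\fX,\fD)\to\bA^r$, the pair $(\fX,\fD+\fX_V)$ is lc. Since $\Delta_{\fX}\le\fD$, the pair $(\fX,\Delta_{\fX}+\fX_V)$ is lc as well. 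By inversion of adjunction along the snc strata $\fX_{V_i}$ (applied inductively, as in the proof of Proposition~\ref{prop:fg-lcp}), $v$ is special if and only if $(\fX,\Delta_{\fX}+\fX_V)$ is plt near $\fX_0$. Here plt means toroidal-plt: the only lc places with center in $\fX_0$ are the monomial valuations $\wt_\zeta$, $\zeta\in\bR^r_{\ge0}\setminus\{0\}$, in the $\fX_{V_i}$. By Proposition~\ref{prop:max-log-disc}, $v\preceq w$ for $w\in\LCP(X,D)$ exactly when $w_\xi$ is an lc place of $(\fX,\Delta_{\fX}+\fX_V)$.

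\emph{Special $\Rightarrow$ maximal.} Suppose $v$ is special and $v\preceq w$ with $w\in\LCP(X,D)$. Then $w_\xi$ is an lc place of $(\fX,\Delta_{\fX}+\fX_V)$, and its center lies in $\fX_0$ because $w_\xi(t_i)=\xi_i>0$ for all $i$. By the plt description above, $w_\xi=\wt_\zeta$ for some $\zeta$. Evaluating on $t_i$ gives $\zeta=\xi$, and restricting to $K(X)$ gives $w=\wt_\xi|_{K(X)}=v_{\fX,\xi}=v$. So $v$ is maximal.

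\emph{Maximal $\Rightarrow$ special.} Suppose $v$ is not special. Then $(\fX_0,\Delta_{\fX_0})$ is not klt, so $(\fX,\Delta_{\fX}+\fX_V)$ has an lc center $Z\subsetneq\fX_0$ that is not a stratum of $\fX_V$. Take a $\bT$-equivariant dlt modification $(\fY,\Gamma)\to(\fX,\Delta_{\fX}+\fX_V)$. Over the generic point of $Z$ there is a stratum of $\Gamma^{=1}$ that contains the birational transforms of the $\fX_{V_i}$ together with at least one further $\bT$-invariant lc place $F$ centered over $Z$. The quasi-monomial valuations on this simplicial face are all lc places. Among them we choose one, $u$, with $u(t_i)=\xi_i$ for all $i$ that is not equal to $\wt_\xi$: take $\wt_\xi$ on the stratum and perturb it by a small positive weight on $F$, rescaling the $t$-coordinates back to $\xi$. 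This is possible because $F$ has center in $\fX_0$, so its weights on the $t_i$ are nonnegative and can be absorbed.

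Next we identify $u$ with a Gauss extension. A $\bT$-invariant valuation on $K(\fX)=K(X)(t_1,\dots,t_r)$ with $u(t)=\xi$ is the Gauss extension $w_\xi$ of $w:=u|_{K(X)}$. We then check that $w\in\LCP(X,D)$. For this we compare $A_{\fX,\Delta_{\fX}+\fX_V}(w_\xi)=0$ with $A_{\fX,\fD+\fX_V}\ge0$ and with the identity $A_{X_{\bA^r},\Delta_{X_{\bA^r}}+X_V}(w_\xi)=A_{X,\Delta}(w)$, which is used in the proof of Proposition~\ref{prop:max-log-disc}. This should force $w(B)=A_{X,\Delta}(w)$, that is, $A_{X,D}(w)=0$. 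Proposition~\ref{prop:max-log-disc} then gives $v\preceq w$. Moreover $w\ne v$, since $w_\xi\ne\wt_\xi$ and both have $t$-weights $\xi$ (a Gauss extension is determined by its restriction to $K(X)$). This contradicts maximality.

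The main obstacle is the maximal $\Rightarrow$ special direction: producing a $\bT$-invariant lc place with $t$-weights exactly $\xi$ that is not $\wt_\xi$. It needs both the non-klt center of the central fiber to lie in a dlt stratum that also contains all the $\fX_{V_i}$, and a perturbation argument inside that face. If this fails, the fallback is Remark~\ref{rem:fg-multideg}: move $\xi$ inside the open cone, since the properties of $\fX$ do not depend on $\xi$, and then compare maximality of $v_{\fX,\zeta}$ for nearby $\zeta$.
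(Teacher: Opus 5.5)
Your proposal is correct and follows the paper's proof essentially step by step. You reduce via Proposition~\ref{prop:max-log-disc} to showing that an lc place $w_\xi$ of $(\fX,\Delta_{\fX}+\fX_V)$ must equal $\wt_\xi$. When $v$ is special you use that $(\fX,\Delta_{\fX}+\fX_V)$ is dlt with minimal lc center $\fX_0$; this is what your ``toroidal-plt'' amounts to, and note that the pair is never actually plt for $r\geq 2$. When $v$ is not special you perturb $\wt_\xi$ along an extra lc place $F$ on a $\bT$-equivariant dlt modification, identify the result as a Gauss extension $w_\xi$, and conclude via $0\le A_{X,D}(w)=A_{\fX,\fD+\fX_V}(w_\xi)\le A_{\fX,\Delta_{\fX}+\fX_V}(w_\xi)=0$.

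The step you flag as the main obstacle is handled in the paper just as you propose, so no fallback is needed. One takes the dlt modification $\mu$ to be an isomorphism over the generic point of $\fX_0$, and after passing to a higher model assumes there is an lc place $F$ over a non-klt center with $E_1\cap\cdots\cap E_r\cap F\neq\emptyset$. This follows from adjunction to the stratum $\mu_*^{-1}\fX_0$: a dlt pair crepant birational to the non-klt $(\fX_0,\Delta_{\fX_0})$ must have nonzero reduced boundary, and that boundary is cut out by such an $F$.
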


\begin{proof}
    First of all, let $(\fX, \fD; \Delta_{\fX}) \to \bA^r$ be a multi-degeneration induced by a finitely generated geodesic $(r-1)$-simplex containing $v$ in the interior where $r$ is the rational rank of $v$. Then we know that $\fX_0$ is integral, and we can write $v= v_{\fX, \xi}$ for some $\xi\in \bR_{>0}^r$. By Proposition \ref{prop:max-log-disc}, we know that $v$ is maximal with respect to $\preceq$ if and only if $A_{\fX, \Delta_{\fX}+\fX_V}(w_{\xi}) = 0$ for $w\in \LCP(X,D)$ implies $w = v$, or equivalently, $w_{\xi} = \wt_{\xi}$. 

    For the ``only if'' direction, suppose $v$ is special, i.e.\ $(\fX_0, \Delta_{\fX_0})$ is klt. Then by \cite[Proposition 34]{dFKX17} we know that $(\fX, \Delta_{\fX}+\fX_V)$ is dlt with minimal lc center $\fX_0$. Thus $A_{\fX, \Delta_{\fX}+\fX_V}(w_{\xi}) = 0$ implies that $w_{\xi}$ is a monomial valuation of $(\fX,\fX_{V_1}+\cdots + \fX_{V_r})$ centered at $\fX_0$. Since $w_{\xi}(\fX_{V_i}) = \xi_i$, we conclude that $w_{\xi} = \wt_{\xi}$.

    For the ``if'' direction, we show the contrapositive. Suppose $v$ is not special. Then we know that $(\fX_0, \Delta_{\fX_0})$ is integral but not klt. By \cite[Proposition 34]{dFKX17} we know that $(\fX, \Delta_{\fX}+\fX_V)$ is snc in a neighborhood of the generic point of $\fX_0$.
    Let $\mu:\fY \to (\fX, \Delta_{\fX}+\fX_V)$ be a $\bT$-equivariant dlt modification such that $\mu$ is isomorphic over the generic point of $\fX_0$; such a dlt modification exists by \cite{BCHM10} and \cite[Corollary 1.36]{Kol13}. Let $E_i:=\mu_*^{-1} \fX_{V_i}$ and  $Z:=\mu_*^{-1}\fX_0$. Then  $Z$ is an irreducible component of $\cap_{i=1}^r E_i$. Moreover, since $(\fX_0, \Delta_{\fX_0})$ is not klt, by inversion of adjunction, there exists a minimal lc center $W$ of $(\fX, \Delta_{\fX}+\fX_V)$ properly contained in $\fX_0$. Thus after replacing $\fY$ with a higher $\bT$-equivariant dlt modification we may assume that there is a prime divisor $F\subset \fY$ centered over $W$ that is an lc place of  $(\fX, \Delta_{\fX}+\fX_V)$ and $E_1\cap \cdots \cap E_r\cap F = Z\cap F\neq \emptyset$. Let $\tW$ be an irreducible component of $Z\cap F$.

    Next, let $\tw$ be the monomial valuation of $(\fY, E_1+\cdots +E_r+F)$ centered at $\tW$ of weight $(\zeta_1,\cdots,\zeta_r, c)$. Then for each $1\leq i\leq r$ we have 
    \[
    \tw(t_i) = \zeta_i + c\, \ord_F(t_i). 
    \]
    Since each $\xi_i>0$, we may choose $0<c\ll 1$ such that $\zeta_i = \xi_i - c\, \ord_F(t_i)>0$. Hence we have $\tw(t_i) = \xi_i$ for each $1\leq i\leq r$. Since $\tw$ is $\bT$-equivariant, we know that there exists a quasi-monomial valuation $w\in X^{\qm}$ such that $\tw = w_{\xi}$. Moreover, we have 
    \[
    0\leq A_{X,D}(w) = A_{\fX, \fD + \fX_V}(\tw) \leq A_{\fX, \Delta_{\fX} + \fX_V}(\tw) = 0,
    \]
    where the last equality follows from the fact that $\tw$ is an lc place of $(\fY, E_1+\cdots + E_r + F)$. Therefore, we have $w\in \LCP(X,D)$ such that $A_{\fX, \Delta_{\fX}+ \fX_V}(w_{\xi})=0$. On the other hand, we know that the center of $w_{\xi}= \tw$ is $W$ which is properly contained in $\fX_0$ as the center of $\wt_{\xi}$. Thus $w_{\xi}\neq \wt_{\xi}$. The proof is finished. 
\end{proof}

\begin{cor}\label{cor:maximal-affineCY}
Let $(U,\Delta_U)$ be an affine log CY pair. Every special valuation in $(U,\Delta_U)^{\trop}$ is maximal with respect to $\preceq$. Conversely, every finitely generated maximal valuation in $(U,\Delta_U)^{\trop}$ is special.
In particular, a divisorial valuation $v\in (U,\Delta_U)^{\trop}$ is special if and only if it is maximal with respect to $\preceq$.
\end{cor}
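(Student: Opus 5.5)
The plan is to reduce Corollary~\ref{cor:maximal-affineCY} to Theorem~\ref{thm:maximal-special}. First fix a log CY-Fano compactification $(X,D;\Delta)$ of $(U,\Delta_U)$, so that $U=X\setminus\Supp(B)$ with $B=D-\Delta$. By definition, $(U,\Delta_U)^{\trop}=\LCP(X,D)$, $(U,\Delta_U)^{\rfg}=\LCP^{\rfg}(X,D;\Delta)$ and $(U,\Delta_U)^{\SV}=\LCP^{\SV}(X,D;\Delta)$. Moreover, the partial order $\preceq$ on $\LCP(X,D)$ is defined through $\cO_U(U)$, so it is exactly the order appearing in the statement. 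By Theorem~\ref{thm:independence} none of these objects depends on the compactification, so it suffices to argue for this fixed one.

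For the first assertion, let $v$ be special. Then $v$ is finitely generated, by Proposition~\ref{prop:div-fg} or simply by definition. The ``only if'' direction of Theorem~\ref{thm:maximal-special} then gives that $v$ is maximal in $\LCP(X,D)$ with respect to $\preceq$. For the converse, let $v$ be finitely generated and maximal. The ``if'' direction of Theorem~\ref{thm:maximal-special} gives that $v$ is special.

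For the final assertion, let $v\in(U,\Delta_U)^{\trop}$ be divisorial. Proposition~\ref{prop:div-fg} gives $\LCP^{\rdiv}(X,D)\subset\LCP^{\rfg}(X,D;\Delta)$, so $v$ is automatically finitely generated. Hence the two previous assertions combine into ``special $\iff$ maximal''. One point to check is the trivial valuation: $v_{\triv}$ is divisorial with $c=0$, and it is special (its degeneration is $X$ itself, which is klt). Whether $v_{\triv}$ is maximal is then settled by Theorem~\ref{thm:maximal-special} in the same way, so no separate treatment is needed.

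There is no substantial obstacle, since all the content sits in Theorem~\ref{thm:maximal-special} and Theorem~\ref{thm:independence}. The only step that needs care is making sure that the lc-klt triple hypotheses of Section~\ref{sec:intrinsic} hold for the chosen compactification. This is exactly the defining property of a log CY-Fano compactification of $(U,\Delta_U)$.
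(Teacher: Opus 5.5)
Your proposal is correct and follows the paper's own argument: fix a log CY-Fano compactification $(X,D;\Delta)$, apply both directions of Theorem~\ref{thm:maximal-special}, and use Proposition~\ref{prop:div-fg} to see that divisorial valuations are finitely generated. The appeal to Theorem~\ref{thm:independence} is harmless but not needed, since $\preceq$ is defined through $\cO_U(U)$ and the statement can be checked on any one compactification.
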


\begin{proof}
    This follows directly from Theorem \ref{thm:maximal-special} by choosing a log CY-Fano compactification $(X,D;\Delta)$ of $(U,\Delta_U)$ and the fact that divisorial valuations are finitely generated by Proposition \ref{prop:div-fg}.
\end{proof}

By \cite[Lemma~3.20 and Theorem~3.22]{BL26}, there is a
canonical collection of tropical theta functions
$\{\theta_i^{\trop}:(U,\Delta_U)^{\trop}\to\bR\}_{i\in I}$,
obtained from any valuatively independent basis
$\{\theta_i\}_{i\in I}$ of $\cO_U(U)$ by
$\theta_i^{\trop}(v):=v(\theta_i)$.
The collection $\{\theta_i^{\trop}\}_{i\in I}$ is independent of the choice of the
valuatively independent basis, up to reordering.

Using the tropical theta functions of \cite{BL26}, the partial
order $\preceq$ admits the following equivalent description.

\begin{prop}\label{prop:theta-order}
For $v,w\in (U,\Delta_U)^{\trop}$, we have
\[
v\preceq w
\quad\textrm{ if and only if }\quad
\theta_i^{\trop}(v)
\leq
\theta_i^{\trop}(w)
\quad\text{for every }i\in I.
\]
\end{prop}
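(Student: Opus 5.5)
The plan is to use the defining property of a valuatively independent basis. Recall from \cite{BL26} that $\{\theta_i\}_{i\in I}$ is a $\bk$-basis of $\cO_U(U)$ such that, for every $u\in(U,\Delta_U)^{\trop}$ and every finite sum $f=\sum_i c_i\theta_i$, we have
\[
u(f)=\min_{c_i\neq 0} u(\theta_i).
\]
Since by \cite[Lemma~3.20 and Theorem~3.22]{BL26} the collection $\{\theta_i^{\trop}\}$ does not depend on the basis, we may fix one such basis. This is the same mechanism already used in the proof of Proposition~\ref{prop:max-log-disc}, where valuative independence of $\Theta_m\subset R_m$ was exploited.

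The forward direction is immediate. If $v\preceq w$, then $v(f)\le w(f)$ for every $f\in\cO_U(U)$. Taking $f=\theta_i$ gives $\theta_i^{\trop}(v)\le\theta_i^{\trop}(w)$ for every $i\in I$.

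For the converse, suppose $v(\theta_i)\le w(\theta_i)$ for all $i$, and let $0\ne f\in\cO_U(U)$. Write $f=\sum_{i\in J}c_i\theta_i$, where $J\subset I$ is finite and $c_i\neq0$ for $i\in J$. Valuative independence applied to both $v$ and $w$ gives
\[
v(f)=\min_{i\in J}v(\theta_i)\le\min_{i\in J}w(\theta_i)=w(f),
\]
since a termwise inequality is preserved under taking minima over the same index set. For $f=0$ both sides equal $\infty$. Hence $v\preceq w$.

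The only step needing care is confirming that the valuative independence statement in \cite{BL26} applies to a basis of all of $\cO_U(U)$ and to all $u\in(U,\Delta_U)^{\trop}$, rather than only to bases of a single $R_m$. If it is only available degreewise, I would reduce to that case as follows. Fix $s_0$ with $(s_0=0)=lB$, so that $\cO_U(U)=\bigcup_m A_m$ with $A_m=R_m/s_0^m$, as in the proof of Proposition~\ref{prop:max-log-disc}. Each $f$ then lies in some $A_m$, and the bases of \cite{BL26} are compatible under multiplication by $s_0$. Because $u(s_0)=lA_{X,\Delta}(u)$ for lc places $u$, this normalization shifts $u(\theta_i)$ uniformly for all $i$ in that degree. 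Consequently the minimum formula in $R_m$ transfers directly to $A_m$.
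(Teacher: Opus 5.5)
Your proof is correct and follows the paper's argument. The forward direction comes from specializing to $f=\theta_i$, and the converse comes from writing $f=\sum a_i\theta_i$ and applying valuative independence to both $v$ and $w$, then comparing minima over the same index set. Your degreewise fallback in the last paragraph is not needed: the paper already takes $\{\theta_i\}_{i\in I}$ to be a valuatively independent basis of $\cO_U(U)$ itself, as supplied by \cite[Lemma~3.20 and Theorem~3.22]{BL26}.
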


\begin{proof}
The forward direction is clear as each $\theta_i\in \cO_U(U)$. For the backward direction, any regular function $f = \sum_{i\in I} a_i \theta_i$. Thus the valuative independence of $\{\theta_i\}_{i\in I}$ implies
\[
v(f) = \min_{a_i\neq 0} v(\theta_i)= \min_{a_i\neq 0} \theta_i^{\trop}(v)\leq  \min_{a_i\neq 0} \theta_i^{\trop}(w)=\min_{a_i\neq 0} w(\theta_i) = w(f).
\]
Thus we have $v\preceq w$. The proof is finished.
\end{proof}

\begin{proof}[Proof of Theorem \ref{thm:maximality-intro}]
This follows directly from Corollary \ref{cor:maximal-affineCY} by setting $\Delta_U=0$.
\end{proof}

\begin{expl}[The algebraic torus]\label{ex:torus}
Let $\bT=\bG_m^n$. Let $N:=\Hom(\bG_m,\bT)\cong \bZ^n$ and $M:=N^\vee$. Then 
$\bT^{\trop}\cong N_{\bR}$
and every valuation in $\bT^{\trop}$ is toric. Hence
\[
\bT^{\SV}=\bT^{\rfg}=\bT^{\trop}.
\]

The full automorphism group of $\bT$ is
\[
\Aut(\bT)=\bT\rtimes \GL(N)
       \cong \bG_m^n\rtimes \GL_n(\bZ).
\]
Under the identification $\bT^{\trop}\cong N_{\bR}$, the translation subgroup $\bT$ acts trivially, while $\GL(N)$ acts by the standard integral linear action. Thus the action of $\Aut(\bT)$ on $\bT^{\trop}$ factors through the action of $\GL(N)$ on $N_{\bR}$.

Moreover, the partial order $\preceq$ on $\bT^{\trop}$ is trivial. Indeed, if
$v_\xi\preceq v_\eta$, then
$\langle m,\xi\rangle\le \langle m,\eta\rangle$
for every $m\in M$. Applying the same inequality to $-m$ gives
$\xi=\eta$. Thus every valuation in $\bT^{\trop}$ is maximal, in accordance with Theorem \ref{thm:maximality-intro}.
\end{expl}

\section{Surfaces with infinite discrete automorphism groups}\label{sec:surfaces}

In this section, we collect three examples of affine log CY surfaces admitting
infinite discrete groups of automorphisms. The induced dynamics on the dual
complexes, as well as the structures of the special and finitely generated
skeleta, exhibit different behaviors: a nonempty open special skeleton
strictly smaller than the dual complex, a special skeleton obtained by deleting
one point, and an empty special skeleton. For further results on  special skeleta of affine log CY surfaces, see \cite{Pen25}.

\begin{expl}[The nodal cubic complement]\label{ex:nodal-cubic}
Let $(X,D):=\bigl(\bP^2,(xyz+x^3+y^3=0)\bigr)$ and $U:=X\setminus D$.
Then $D$ is an irreducible nodal cubic curve, so $(X,D)$ is an lc log CY pair
and $U$ is an affine log CY surface. Let $p:=[0,0,1]\in D$ be the node. Choose
analytic coordinates $(u_1,u_2)$ at $p$ such that $D=(u_1 u_2=0)$, and denote by
$v_{a,b}$ the corresponding monomial valuation of weight
$(a,b)\in\bR_{\geq0}^2$. Since the two local branches at $p$ belong to
the same irreducible component $D$, the two boundary rays are identified as $
v_{0,b}=v_{b,0}=b\,\ord_D$.
Thus
\[
U^{\trop}\cong
\bR_{\geq0}^2/\sim, \qquad (0,b)\sim (b,0),
\]
and hence $\cD(U)\cong_{\PL}\bS^1$.

By \cite[Section~6]{LXZ22}, the special and finitely generated cones are
\[
U^{\SV}
=
\{v_{\triv}\}\cup
\left\{
v_{a,b}\ \middle|\ a,b>0,\ 
\frac{7-3\sqrt5}{2}<\frac{b}{a}<
\frac{7+3\sqrt5}{2}
\right\},
\]
and
\[
U^{\rfg}
=
U^{\SV}\cup \bR_{>0} \cdot U^{\trop}(\bQ).
\]
After projectivizing, every point can be represented by $[v_{1,t}]$ with
$t\in [0,+\infty)$, while $t=+\infty$ denotes the class $[v_{0,1}]$.
Thus
\[
\cD(U)\cong [0,+\infty]/(0\sim+\infty),
\]
and
\[
\cD^{\SV}(U)
=
\left(
\frac{7-3\sqrt5}{2},
\frac{7+3\sqrt5}{2}
\right),
\qquad
\cD^{\rfg}(U)
=
\cD^{\SV}(U)\cup \cD(U)(\bQ).
\]

Next, we construct two involutions $\tau_1,\tau_2\in\Aut(U)$ and later study
their induced action on $U^{\SV}$. We follow Koll\'ar's reinterpretation of
this construction in terms of the Geiser involution on a degree $2$ del
Pezzo surface \cite[Paragraphs~21--23]{Kol24}; see also
\cite{Yos85,Ore02} for earlier work.

Choose an analytic branch $D_1=(u_1=0)$ of $D$.
Let $\pi_1:Y_1\to X$
be the composition of seven successive blow-ups, starting at $p$ and then
repeatedly blowing up the intersection of the strict transform of $D_1$
with the newest exceptional divisor. Denote the exceptional divisors by
$E_1,\ldots,E_7$. Then $Y_1$ is a weak del Pezzo surface of degree $2$.
Its anticanonical model $Y_1\to Y_1'$
contracts the chain $E_1,\ldots,E_6$, and $Y_1'$ is a del Pezzo surface
of degree $2$ with an $A_6$-singularity
\cite[Example~24]{Kol24}. The anticanonical linear system on $Y_1'$ gives  a double cover
$Y_1'\to\bP^2$, whose covering involution lifts to an involution $\widetilde\tau_1$ of $Y_1$. Moreover,
$\widetilde\tau_1$ exchanges the strict transform of $D$ and $E_7$;
see \cite[(23.4)--(23.5)]{Kol24}. Hence
$\tau_1:=\pi_1\circ\widetilde\tau_1\circ\pi_1^{-1}$
is a birational involution of $X$ whose restriction to $U$ is an
automorphism. This is the Geiser involution associated to the branch
$D_1$.

Equivalently, the birational morphism $Y_1'\to X$
is the $(7,1)$-weighted blow-up at $p$. Thus the ray
$\bR_{\geq0}v_{7,1}$ subdivides $U^{\trop}$ into the two cones
\[
\sigma_1^-:=\Cone((1,0),(7,1)),
\qquad
\sigma_1^+:=\Cone((7,1),(0,1)).
\]
The induced action of $\tau_1$ is integral linear on each of these cones.
Translating the transformation formula of
\cite[Proposition~30]{Kol24} (with $r=7$) into our monomial coordinates,
we obtain
\[
\tau_{1,*}(v_{a,b})
=
\begin{cases}
v_{\,7a-48b,\;a-7b} & \textrm {if } a\geq 7b,\\
v_{\,7b-a,\;b} & \textrm {if } a\leq 7b.
\end{cases}
\]
Equivalently,
\[
\left.\tau_{1,*}\right|_{\sigma_1^-}
=
\begin{pmatrix}
7&-48\\
1&-7
\end{pmatrix},
\qquad
\left.\tau_{1,*}\right|_{\sigma_1^+}
=
\begin{pmatrix}
-1&7\\
0&1
\end{pmatrix}.
\]
Indeed, the first matrix exchanges the vectors $(1,0)$ and $(7,1)$,
while the second exchanges $(7,1)$ and $(0,1)$. Along the common ray
$(7,1)$, the two formulas give $(1,0)$ and $(0,1)$,
which are identified in $U^{\trop}$.

The second involution $\tau_2$ is obtained by applying the same construction
to the other analytic branch $D_2=(u_2=0)$. Thus the corresponding wall is
the ray $\bR_{\geq 0}v_{1,7}$. Equivalently, $\tau_2$ is conjugate to
$\tau_1$ by interchanging the two branches, so
\[
\tau_{2,*}(v_{a,b})
=
\begin{cases}
v_{\,a,\;7a-b} & \textrm {if }b\leq 7a,\\
v_{\,-7a+b,\;-48a+7b} & \textrm {if } b\geq 7a.
\end{cases}
\]
In particular, $\tau_{2,*}$ is integral linear on the two cones separated
by the ray $(1,7)$.

We next show that the entire special skeleton $\cD^{\SV}(U)$ is generated, up to
automorphisms of $U$, by two simple special geodesic lines. Let
\[
s([x,y,z])=([y,x,z]),
\qquad
G:=\langle\tau_1,\tau_2\rangle,
\qquad
\Gamma:=\langle G,s\rangle<\Aut(U),
\]
so that $s_*(t)=1/t$ in the coordinate $t=b/a$. Consider
\[
\sigma:=\Cone((1,2),(2,1)),
\qquad
\sigma':=\Cone((1,2),(1,5)).
\]
The cone $\sigma$ consists of toric valuations on $\bP^2$, while
$\sigma'$ induces the degeneration
$\bP^2\rightsquigarrow\bP(1,1,4)$.

We compare these two cones with the wall decomposition of
\cite[Section~6]{LXZ22}. Let
\[
1=t_0<t_1=2<t_2=5<t_3=\frac{13}{2}<\cdots
\longrightarrow \frac{7+3\sqrt5}{2}
\]
be the walls appearing there; in particular, we have the recurrence formula $t_{k+2}=7-\frac1{t_k}$ for $k\geq 1$.
For $k\in\bZ$, define
\[
\Pi_k:=
\begin{cases}
[t_k,t_{k+1}] & \textrm{ if }k\geq 1,\\
[1/t_1, t_1] & \textrm{ if }k = 0,\\
[1/t_{1-k},1/t_{-k}] & \textrm{ if }k\leq -1.
\end{cases}
\]
Then we have $
\Pi_0=\bP(\sigma)$ and $\Pi_1=\bP(\sigma')$.
By \cite[Section~6]{LXZ22}, each $\Pi_k$ is a special geodesic line and
\[
\cD^{\SV}(U)=\bigcup_{k\in\bZ}\Pi_k.
\]
Indeed, let $F_\bullet$ denote the Fibonacci sequence, with $F_0=0$ and $F_1=1$. Then for $k\geq 1$, the interior of $\Pi_{\pm k}$ corresponds to the special degeneration
\[
\bP^2 \rightsquigarrow
\bP\bigl(1,F_{2k-1}^2,F_{2k+1}^2\bigr),
\]
where $(1,F_{2k-1},F_{2k+1})$ is a Markov triple. These weighted projective planes form the top infinite chain in the Markov tree and appear among the
Hacking--Prokhorov degenerations of $\bP^2$ \cite{HP10}.

The action of $
\Gamma=\langle\tau_1,\tau_2,s\rangle$
on these geodesic lines is particularly simple. On $\cD^{\SV}(U)$, the above piecewise-linear formulas reduce to 
\[
\tau_{1,*}(t)=\frac{t}{7t-1},
\qquad
\tau_{2,*}(t)=7-t,
\qquad
s_*(t)=\frac1t.
\]
Together with the recurrence for $t_n$, these formulas
give
\[
\tau_{1,*}(\Pi_k)=\Pi_{-k-2},
\qquad
\tau_{2,*}(\Pi_k)=\Pi_{2-k},
\qquad
s_*(\Pi_k)=\Pi_{-k}.
\]
Consequently,
\[
\Gamma\cdot\Pi_0=\bigcup_{k\in2\bZ}\Pi_k,
\qquad
\Gamma\cdot\Pi_1=\bigcup_{k\in2\bZ+1}\Pi_k.
\]
Combining this with the above description of $\cD^{\SV}(U)$ gives
\[
\cD^{\SV}(U)
=
\Gamma\cdot(\Pi_0\cup\Pi_1).
\]
Thus, up to the $\Gamma$-action, the entire special skeleton is generated
by the toric geodesic line $\Pi_0$ and the geodesic line $\Pi_1$
corresponding to the degeneration
$\bP^2\rightsquigarrow\bP(1,1,4)$.

\end{expl}

\begin{expl}[A line-parabola complement on $\bP^1\times\bP^1$]\label{ex:line-parabola}
Let $X := \bP^1_{[x,y]}\times \bP^1_{[z,t]}$, $D :=  C+\ell$ where $C:=(xyz + x^2 t + y^2 t =0)$ and $\ell:=(t=0)$. It is clear that $(X,D)$ is snc, $D$ is ample, and $K_X+D\sim 0$. Hence $U :=X\setminus D$ is an affine log CY surface. Then $C$ and $\ell$ intersect transversely at two points $p_1:=([1,0],[1,0])$ and $p_2:=([0,1],[1,0])$. 

We will show that
\[
U^{\SV}=U^{\trop}\setminus \bR_{>0}\ord_C,
\qquad
U^{\rfg}=U^{\trop}.
\]
Equivalently,
\[
\cD^{\SV}(U)=\cD(U)\setminus\{[\ord_C]\},
\qquad
\cD^{\rfg}(U)=\cD(U).
\]
Moreover, $\cD^{\SV}(U)$ contains infinitely many geodesic chambers, obtained from the action of the two Vieta involutions described below.


In the affine chart $t=1$, we have 
\[
U = (\bP^1_{[x,y]}\times \bA^1_z)\setminus (xyz+ x^2+y^2=0).
\]
Thus there are two Vieta involutions on $\bP^1\times \bA^1$:
\[
\tau_1([x,y],z) := ([-yz-x, y],z), \qquad \tau_2([x,y],z) := ([x, -xz-y], z).
\]
It is clear that $\tau_1, \tau_2\in \Aut(\bP^1\times \bA^1)$ and they preserve $C^\circ:= C\setminus \ell$. Since $U= (\bP^1\times\bA^1)\setminus C^\circ$, the Vieta involutions generate a subgroup $G:=\langle \tau_1, \tau_2\rangle < \Aut(U)$.

The birational extensions of $\tau_1$ and $\tau_2$ to $X$ have unique
indeterminacy points $p_1$ and $p_2$, respectively, and each contracts
$\ell$ to the corresponding point. We regularize $\tau_1$ as follows.
Let $\pi_1:\widehat X_1:=\Bl_{p_1}X\to  X$
be the blow-up at $p_1$, with exceptional divisor $E_1$, and let
$\widehat\ell_1$ be the strict transform of $\ell$. In the affine chart $x=z=1$, with local coordinates $(y,t)$
centered at $p_1$, we have
$\tau_1(y, t) =(-\frac{yt}{y+t},
t)$.
On the blow-up chart $(y_1,t)$ with $y=y_1 t$, the induced map sends  $(y_1,t)\mapsto (-\frac{y_1}{1+y_1},t)$ where $E_1=(t=0)$. Thus 
we have $\tau_{1,*}\ord_{E_1}=\ord_{E_1}$.

Since the strict transform
$\widehat\ell_1$ has self-intersection $-1$, contracting it gives
$\widehat X_1\to X_1\cong\bF_1$.
The induced birational self-map $\overline{\tau}_1$ of $X_1$ is an isomorphism in
codimension one, hence an automorphism. Writing
$D_1=C_1+E_1$ where $C_1$ is the birational transform of $C$,
we have that $(X_1,D_1)$ is another snc log CY compactification of $U$. The same construction regularizes
$\tau_2$ on a second copy $X_2\cong\bF_1$.

We identify
\[
U^{\trop}\cong
\{(a,b)\in\bR^2\mid a\geq0\}/\sim,
\qquad
(0,b)\sim(0,-b),
\]
where the first and fourth quadrants correspond to the strata $p_1$
and $p_2$, respectively, and
$\ord_\ell=(1,0)$, $\ord_C=(0,1)\sim(0,-1)$.
Since $E_1$ is obtained by blowing up the transverse intersection
$p_1 \in C\cap\ell$, we have
$\ord_{E_1}=(1,1)$.
With respect to the subdivision determined by the compactification
$(X_1,C_1+E_1)$, the two maximal cones are
\[
\sigma_1^+:=\Cone((0,1),(1,1)),
\qquad
\sigma_1^-:=\Cone((1,1),(0,-1)).
\]

Since $\overline\tau_1$ preserves $C_1$ and $E_1$ and exchanges their
two intersection points, $\tau_{1,*}$ fixes $(1,1)$ and exchanges
$(0,1)$ and $(0,-1)$. Hence by the integral linearity of $\tau_{1,*}$ on $\sigma^{\pm}_1$,
\[
\tau_{1,*}
=
\begin{pmatrix}
1&0\\
2&-1
\end{pmatrix}.
\]
Interchanging $x$ and $y$ acts on $U^{\trop}$ by
$(a,b)\mapsto(a,-b)$ and conjugates $\tau_1$ to $\tau_2$, so
\[
\tau_{2,*}
=
\begin{pmatrix}
1&0\\
-2&-1
\end{pmatrix}.
\]

Finally, let
\[
\sigma:=\Cone((1,1),(1,-1)).
\]
Every valuation in $\sigma$ is toric with respect to
$X=\bP^1\times\bP^1$, and hence
$\sigma\subset U^{\SV}$.

We now describe the $G$-orbit of $\sigma$. Any ray in
$U^{\trop}$ other than $\bR_{\geq 0}\ord_C$ has a unique representative
of the form $(1,s)$ for some $s\in\bR$. In the slope coordinate $s$,
the above formulas become
\[
\tau_{1,*}(s)=2-s,
\qquad
\tau_{2,*}(s)=-2-s.
\]
Thus $\tau_1$ and $\tau_2$ act as the reflections of $\bR$ about
$1$ and $-1$, respectively. In particular, $[-1,1]$ is a fundamental
domain for the action of $
G=\langle\tau_1,\tau_2\rangle\cong D_\infty$ 
on $\bR$. Since $\sigma$ is precisely the cone over $[-1,1]$, we obtain
\[
\bigcup_{g\in G}g_*\sigma
=
U^{\trop}\setminus\bR_{>0}\ord_C \subset U^{\SV},
\]
where the inclusion follows from the $G$-invariance of $U^{\SV}$ by Corollary \ref{cor:aut}.
Hence the only possible non-special ray is
$\bR_{>0}\ord_C$.

It remains to show that $\ord_C$ is not special. We claim that
\[
\ord_C(f)\leq 0=v_{\triv}(f)
\]
for every $f\in\cO_U(U)\setminus\{0\}$. Suppose otherwise that
$\ord_C(f)>0$ for some such $f$.
Since $f$ is regular on $U=(\bP^1\times\bA^1)\setminus C^\circ$ and does not have a pole along $C^\circ$, it is regular at every codimension-one
point of the normal variety $\bP^1\times\bA^1$, and hence $f\in \Gamma(\bP^1\times\bA^1,\cO)
=\bk[z]$.
But $C^\circ$ dominates $\bA^1_z$, so for every nonzero
$f\in\bk[z]$ we have
$\ord_C(f)=0$,
a contradiction. Thus
$\ord_C\preceq v_{\triv}$.
Since $\ord_C\neq v_{\triv}$, the valuation $\ord_C$ is not maximal.
By Theorem~\ref{thm:maximality-intro}, it is therefore not special.

Combining this with the preceding fundamental-domain argument gives
\[
U^{\SV}
=
U^{\trop}\setminus\bR_{>0}\ord_C.
\]
Since $\ord_C$ is divisorial and hence finitely generated,  we obtain
$U^{\rfg}=U^{\trop}$.
After projectivizing, we have $
\cD^{\SV}(U)=\cD(U)\setminus\{[\ord_C]\}$ and $\cD^{\rfg}(U)=\cD(U)$.

Finally, the cones $g_*\sigma$ for $g\in G$ give infinitely many distinct maximal special geodesic cones. Indeed, $\sigma$ is precisely the maximal
cone in $U^{\trop}$ consisting of toric valuations with respect to
$X=\bP^1\times\bP^1$. If $\sigma$ were properly contained in a special
geodesic cone $\sigma'$, then any  valuation in the interior of $\sigma'$ would induce isomorphic degenerations of $X$ by Theorem \ref{thm:fg-multideg}. Since every valuation in $\sigma$ induces a product $\bR$-test configuration of $X$, so does every valuation 
in the interior of $\sigma'$. This implies that $\Int(\sigma')$ only consists of toric valuations with respect to $X$, contradicting the
maximality of $\sigma$. Thus $\sigma$ is a maximal special geodesic cone.
By Proposition \ref{prop:geod-indep}, so is every $g_*\sigma$. Since their projectivizations
correspond to the distinct intervals
$[2k-1,2k+1]$ for $ k\in\bZ$,
they give infinitely many geodesic chambers in
$\cD^{\SV}(U)$.
\end{expl}

\begin{expl}[The Cayley cubic]\label{ex:cayley-cubic}
Consider the affine Cayley cubic surface
\[
U:=\{x^2+y^2+z^2+xyz-4=0\}\subset \bA^3.
\]
The Cayley cubic admits the quotient presentation $U\cong \bT/\langle\iota\rangle$ where $\bT= \bG_m^2$ and $\iota$ is the involution on $\bT$ given by $\iota(t_1,t_2):=(t_1^{-1},t_2^{-1})$,
via
\[
(x,y,z)=(t_1+t_1^{-1}, t_2+t_2^{-1}, -t_1 t_2-t_1^{-1}t_2^{-1}).
\]
We take the projective compactification
\[
X=
(xyz+w(x^2+y^2+z^2)-4w^3=0)
\subset\bP^3
\]
with boundary $D = X\cap (w=0)$. The surface $U$ has four $A_1$-singularities, corresponding to the
fixed points $\bT[2]$ of $\iota$. Moreover, $X$ is smooth along $D$
and $D$ is an snc triangle. Thus $(X,D)$ is lc, and by adjunction $K_X+D\sim 0$.
Since $D$ is an ample hyperplane section and $U$ is klt, $(X,D)$ is
a log CY compactification of the affine log CY surface $U$.

We will show that 
\[
\cD(U) \cong \bP^1(\bR), \qquad \cD^{\SV}(U) = \emptyset, \qquad \cD^{\rfg}(U) = \cD(U)(\bQ).
\]

We start by describing $U^{\trop}$. By \cite[Remark~1.1]{GHKS22}, the involution $\iota$ on $\bT$
extends to the toric del Pezzo surface $\widetilde X$ of degree $6$, and
the quotient $\widetilde X/\langle\iota\rangle$ is the projective Cayley cubic surface $X$. 
Moreover, the quotient of the toric boundary $\tD$ is the triangle
$D=X\setminus U$. 
Since $\iota$ is fixed-point-free away from $\bT[2]$, we know that the quotient map $(\tX,\tD) \to (X,D)$ is \'etale in a neighborhood of the boundary.  Hence the corresponding
lc places are identified modulo $\iota_*$, and therefore
\[
U^{\trop}\cong \bT^{\trop}/\langle \iota_*\rangle \cong \bR^2/\{\pm1\}.
\]
After projectivizing, we
obtain
\[
\cD(U)\cong
(\bR^2\setminus\{0\})/
\bigl(\bR_{>0}\times\{\pm1\}\bigr)
\cong \bP^1(\bR).
\]

We next exhibit an infinite discrete group acting on $U$.
The natural
$\GL_2(\bZ)$-action on $\bT$ commutes with the involution
$\iota=-I$. Hence it descends to  an action of $\PGL_2(\bZ)=\GL_2(\bZ)/\langle -I\rangle$ on  $U$. 
Under the identification $U^{\trop}\cong \bR^2/\{\pm1\}$, 
the induced $\PGL_2(\bZ)$-action on $U^{\trop}$ is the standard linear action. After projectivizing, this gives the standard $\PGL_2(\bZ)$-action on $\bP^1(\bR)\cong\cD(U)$.
In particular, $\PGL_2(\bZ)$ acts transitively on $\cD(U)(\bQ)\cong\bP^1(\bQ)$.

Similar to the Markov cubic in Section \ref{sec:Markov}, the Cayley cubic $U$ carries three natural Vieta involutions
\[
\tau_1(x,y,z)=(-yz-x,y,z),\quad
\tau_2(x,y,z)=(x,-xz-y,z),\quad
\tau_3(x,y,z)=(x,y,-xy-z).
\]
The subgroup
$G:=\langle \tau_1,\tau_2,\tau_3\rangle
< \Aut(U)$
generated by the three Vieta involutions
is identified with $\ker\bigl(\PGL_2(\bZ)\to\PGL_2(\bF_2)\bigr)$.
Thus $G\cong \bmu_2 *\bmu_2 * \bmu_2$ is the $(\infty,\infty,\infty)$-triangle reflection group. The full $\PGL_2(\bZ)$-action is obtained from $G$ by adjoining the
coordinate permutations of the Cayley cubic.

We now determine the special and finitely generated skeleta of $U$.
For the special skeleton, let $v_{\triv}\neq v = \wt_{\xi}|_{K(U)} \in U^{\trop}$ for $\xi\in N_{\bR}\setminus \{0\}$. For
\[
f=\sum_{m\in M}c_m\chi^m\in\cO_U(U)\setminus \{0\},
\]
we have $c_m=c_{-m}$ as $\cO_U(U) = \cO_{\bT}(\bT)^{\iota} = \bk[M]^{\iota}$. Hence
\[
v(f) = \wt_{\xi}(f) = \min_{c_m\neq0}\langle m,\xi\rangle
\leq 0 = v_{\triv}(f).
\]
Thus $v\preceq v_{\triv}$ which implies that no nontrivial valuation in $U^{\trop}$
is maximal. It follows from Theorem \ref{thm:maximality-intro} that $\cD^{\SV}(U)=\emptyset$.

Finally, every rational point of $\cD(U)$ is divisorial and hence finitely generated, so
$\cD(U)(\bQ)\subset \cD^{\rfg}(U)$.
On the other hand, every irrational point of $\cD(U)$ has rational rank
$2=\dim U$. If such a valuation were
finitely generated, then Theorem \ref{thm:toric} would imply that it is special, a contradiction to $\cD^{\SV}(U)=\emptyset$. Thus
$\cD^{\rfg}(U)=\cD(U)(\bQ)$.
\end{expl}




\section{Skeleta and dynamics of the Markov cubic complement}\label{sec:Markov}

In this section, we shall focus on a specific affine log CY threefold $U$ as the affine complement of the Markov cubic surface. Let $X = \bP^3$ with coordinates $[x,y,z,w]$, and $D: = \oS+H$ where $H := (w=0)$ and $\oS := (xyz+(x^2  + y^2 +z^2)w = 0)$. Let $U := X\setminus D = \bA^3 \setminus S$, where $\bA^3=\bP^3\setminus H$, and   $S := \oS\setminus H$ is the affine Markov cubic surface.
We shall write $f(x,y,z):= xyz + x^2 + y^2 + z^2$ and $S=(f=0)$ in the affine chart $w = 1$.

Since $H$ is smooth, $\oS$ has a single $A_1$-singularity, and $(H, \oS|_H)\cong (\bP^2, (xyz=0))$ is snc, by inversion of adjunction we know that $(X, D; 0)$ is an lc-klt log CY-Fano triple, and $U$ is an affine log CY variety.

The goal of this section is to describe the special and finitely generated skeleta of $U$ by analyzing the $\Aut(U)$-action on $\cD(U)$ and relating it to results on tropical dynamics from \cite{Jan23}\footnote{We are grateful to Simion Filip who brought this paper to our attention.} (see Theorems \ref{thm:markov-special}, \ref{thm:markov-fg}, \ref{thm:satake}), and finally prove Theorems \ref{thm:intro-markov} and \ref{thm:counterex-LX24}.

\begin{rem}[Relation with \cite{Jan23}]
The tropical-dynamical input in this section comes from Jang's study of
Vieta involutions on tropical Markov surfaces. In particular,
\cite{Jan23} introduces the invariant tropical skeleton, computes the
tropical Vieta involutions, describes its decomposition into the orbit
of the central ``ping-pong'' triangle and exceptional rays, and identifies
the resulting dense open dynamics with the $(\infty,\infty,\infty)$
triangle reflection group acting on $\bH^2$.

Our contribution is to relate this tropical picture to the dual complex
and to special and finitely generated skeleta. We identify the two parts $\cD^\pm$ of
$\cD(U)$ equivariantly with the corresponding tropical skeleta, determine $\cD^{\SV}(U)$ and $\cD^{\rfg}(U)$, and interpret the exceptional
rays in terms of finitely generated and special valuations. We also extend
the hyperbolic description to the boundary of the special skeleton and
use this structure to prove the failure of simplexwise local closedness.
\end{rem}

\subsection{Dual complex}
\begin{defn}\label{def:monomial-val}
Consider the closed simplicial cone
\[
\sigma_0:= \Cone\bigl((1,0,1),(0,1,1),(0,0,1)\bigr) \subset \bR^3.
\]
In other words, $\sigma_0$ consists of $\alpha=(\alpha_1, \alpha_2, \alpha_3)\in \bR^3_{\geq 0}$ satisfying $\alpha_1 + \alpha_2 \leq \alpha_3$. Let $p_0:=[0,0,0,1]$, $p_1:=[1,0,0,0]$, $p_2:=[0,1,0,0]$, and $p_3:=[0,0,1,0]$.
Let $v_{\alpha,i}$ and $v_{\gamma,i}'$ for $i\in \{1,2,3\}$ be monomial valuations at $p_i$ on $\bP^3$ of weight $\alpha,\gamma\in \sigma_0$ respectively in the following local coordinates:
\begin{itemize}
    \item $v_{\alpha,1}: (y,z,w)$ in the affine chart $x=1$;
    \item $v_{\alpha,2}: (x,z,w)$ in the affine chart $y=1$;
    \item $v_{\alpha,3}: (x,y,w)$ in the affine chart $z=1$;
    \item $v_{\gamma,1}': (y,z,w')$ in the affine chart $x=1$ where $w':=w(y^2+z^2 + 1)+yz$;
    \item $v_{\gamma,2}': (x,z,w')$ in the affine chart $y=1$ where $w':=w(x^2+z^2 + 1)+xz$;
    \item $v_{\gamma,3}': (x,y,w')$ in the affine chart $z=1$ where $w':=w(x^2+y^2 + 1)+xy$.
\end{itemize}
Note that $w'$ is simply the dehomogenized polynomial defining the Markov cubic surface in the corresponding affine chart.
\end{defn}

\begin{lem}\label{lem:dltmodel}
Let $\mu_i:\hX_i \to X$ be the $(1,1,2)$-weighted blow-up at $p_i$ in the corresponding affine coordinates of $v_{\alpha, i}$ from Definition \ref{def:monomial-val}. Denote by $E_i$ the exceptional divisor of $\mu_i$. Let $\mu: \hX \to X$ be the composition of these three weighted blow-ups. Then $(\hX, \mu_*^{-1} D + E_1+E_2+E_3)$ is a dlt modification of $(X,D)$. 
\end{lem}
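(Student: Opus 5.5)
The plan is to verify the three defining properties of a dlt modification directly: crepancy, the coefficient-one condition on the exceptional divisors, and dlt-ness of the transformed pair. All of these are local, so I would first locate the points where $(X,D)$ fails to be snc and then compute in the weighted blow-up charts at those points.

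\textbf{Step 1: where $D$ fails to be snc.} On $H$ the curve $\oS|_H$ is the triangle $xyz=0$, whose vertices are $p_1,p_2,p_3$. The only singular point of $\oS$ is the $A_1$-point $p_0$, and $p_0\in U$. Along a line such as $\{x=w=0\}$, away from its vertices, the local equation of $\oS$ is $w\cdot(\textrm{unit})+xyz$. Hence $\oS$ is smooth there and meets $H$ transversally. So $D$ is snc away from $\{p_1,p_2,p_3\}$, and $(X,D)$ is already dlt there. By the symmetry permuting $x,y,z$, it suffices to analyze $p_1$, in the chart $x=1$ with coordinates $(y,z,w)$. There $H=(w=0)$ and $\oS=(w'=0)$ with $w'=yz+w(1+y^2+z^2)$.

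\textbf{Step 2: crepancy.} Let $E_1$ be the exceptional divisor of the $(1,1,2)$-weighted blow-up. Then
\[
A_X(E_1)=1+1+2=4,\qquad \ord_{E_1}(w)=2,\qquad \ord_{E_1}(w')=\min\{\wt(yz),\wt(w)\}=2.
\]
Hence $A_{X,D}(E_1)=4-2-2=0$. Therefore $\mu^*(K_X+D)=K_{\hX}+\mu_*^{-1}D+E_1+E_2+E_3$, so $\mu$ is crepant and each $E_i$ enters with coefficient $1$. The variety $\hX$ is $\bQ$-factorial, since it is built from a smooth variety by weighted blow-ups with Picard number increasing by one each time.

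\textbf{Step 3: dlt near $E_1$.} This is the main step, and I would do it by checking the three standard charts of the weighted blow-up.
\begin{enumerate}
\item In the $w$-chart, $\hX$ has a $\frac12(1,1,1)$ quotient singularity at the origin. The chart is $y=y_1w_1$, $z=z_1w_1$, $w=w_1^2$. Here $\hat H$ does not appear, and $\hat S=(y_1z_1+1+\cdots=0)$ misses the origin. So near the singular point the boundary is just $E_1=(w_1=0)$. This pair is plt because the quotient singularity is klt, and the point is not an lc center.
\item In the $y$-chart, the chart is $y=y_1$, $z=z_1y_1$, $w=w_1y_1^2$, and $\hX$ is smooth. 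We get $E_1=(y_1=0)$, $\hat H=(w_1=0)$, and $\hat S=(z_1+w_1(1+\cdots)=0)$. These are three smooth divisors with independent linear parts $y_1,w_1,z_1+w_1$ at the origin, so they are snc. The analogous statement holds at the other points of the chart.
\item The $z$-chart is symmetric to the $y$-chart.
\end{enumerate}
On $E_1\cong\bP(1,1,2)$, the traces are $\hat H\cap E_1=(w=0)$ and $\hat S\cap E_1=(yz+w=0)$. These meet $E_1$ in curves avoiding the singular point. The strict transforms of the three boundary lines through $p_1$ meet $E_1$ transversally in the smooth charts.

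Combining the steps: every lc center of the pair $(\hX,\mu_*^{-1}D+E_1+E_2+E_3)$ lies in its snc locus. So the pair is dlt, and $\mu$ is a dlt modification of $(X,D)$.

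The main obstacle I expect is the bookkeeping in Step 3. One must be careful that $\hat S$ is smooth and transverse to $E_1$ and $\hat H$ at every point of $E_1$, including the points over the three lines. One must also confirm that the quotient point is disjoint from the other boundary components.
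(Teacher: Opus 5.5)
Your route is the paper's: crepancy from $A_X(E_1)=\ord_{E_1}(D)=4$, then a chart-by-chart check that the transformed pair is snc along the smooth locus of $E_1$ and plt at the $\frac12(1,1,1)$ point. Your computations in Steps 2 and 3 are correct.

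There is, however, a factual error in Step 1 that leaves one point of $D$ unchecked. The point $p_0=[0,0,0,1]$ is the origin of $\bA^3$, and $f(0,0,0)=0$. So the $A_1$-point of $\oS$ lies on $S\subset D$, not in $U$. Consequently $D$ is \emph{not} snc at $p_0$. Your closing claim that every lc center lies in the snc locus is therefore unjustified there: nothing in your proposal rules out $p_0$ being an lc center of $(X,D)$. The blow-ups do not help, since $\mu$ is an isomorphism over a neighborhood of $p_0$.

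The fix is short and is exactly what the paper does. Since $p_0\notin H$, near $p_0$ one has $D=\oS$. The surface $\oS$ is normal with a klt ($A_1$) singularity at $p_0$, so by inversion of adjunction $(X,\oS)$ is plt near $p_0$. Plt implies dlt, and $p_0$ is not an lc center. With this added, your argument is complete.

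Three small points do not affect the conclusion.
\begin{enumerate}
\item Along $\{x=w=0\}$, the coefficient $x^2+y^2+z^2$ of $w$ vanishes at the two points where $y^2+z^2=0$, so it is not a unit there. Smoothness of $\oS$ and transversality to $H$ come instead from $yz$ being a unit. The local equation is then $x\cdot(\textrm{unit})+w\cdot(\cdots)$, and $\oS\cap H=(x=w=0)$ scheme-theoretically.
\item Only two boundary lines, $\{y=w=0\}$ and $\{z=w=0\}$, pass through $p_1$, not three.
\item At the quotient point, ``the point is not an lc center'' is what needs proof rather than a reason. One clean justification: the $\mu_2$-cover is \'etale in codimension one and pulls back $(\hX,E_1)$ to the plt pair $(\bA^3,(w_1=0))$.
\end{enumerate}
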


\begin{proof}
From the equation of $D$ we know that $(X,D)$ is snc away from $\{p_0,p_1,p_2,p_3\}$ and plt at $p_0$. Moreover, we have $A_X(E_i) = \ord_{E_i}(D) = 4$ which implies that each $E_i$ is an lc place of $(X,D)$. 
Thus it suffices to show that $(\hX, \mu_*^{-1}D+E_1+E_2+E_3)$ is dlt in a neighborhood of each $E_i$, or equivalently,  $(\hX_i, \mu_{i,*}^{-1} D+E_i)$ is dlt in a neighborhood of $E_i$ for each $i$. By symmetry we assume $i = 1$. Then we can work in the affine chart $x = 1$ and perform the $(1,1,2)$-weighted blow-up at $p_1=(0,0,0)\in \bA^3_{(y,z,w)}$ where $D$ has equation $(yz+ (1+y^2+z^2) w) w  = 0$. Since the problem is local near $p_1$, after replacing $w$ by $(1+y^2+z^2) w$ we may assume that $D$ has equation $(yz+w) w = 0$. 

Next, we choose three affine charts $(y_j, z_j, w_j)$ for $j\in \{1,2,3\}$ to cover the weighted blow-up, where 
\[
(y, z, w)  = (y_1, y_1 z_1, y_1^2 w_1) = (y_2 z_2, z_2, z_2^2 w_2) = (y_3 w_3, z_3 w_3,w_3^2). 
\]
Here $(y_j,z_j,w_j)$ represents a smooth $\bA^3$ for $j\in \{1,2\}$ and the $\bmu_2$-quotient $\bA^3/\{\pm 1\}$ for $j = 3$. In each coordinate, we can write
\[
\mu_{1,*}^{-1}D + E_1 = \begin{cases}
V((z_1 + w_1)w_1) + V(y_1) & \textrm{ if }j=1;\\
V((y_2 + w_2)w_2) + V(z_2) & \textrm{ if }j=2;\\
V(y_3 z_3 + 1) + V(w_3) & \textrm{ if }j=3.
\end{cases}
\]
Thus we see that $(\hX_1, \mu_{1,*}^{-1}D+E_1)$ is snc along the smooth locus of $E_1$ and plt at the unique singular point of $E_1$, which implies that it is dlt along $E_1$. 

Finally, we present a geometric interpretation of the above computation. It is clear that $E_1\cong \bP(1,1,2)$ with projective coordinates $[y,z,w]$ by abuse of notation. Then we know that 
\[
\mu_{1,*}\oS|_{E_1} = V(yz + w)\quad \textrm{ and }\quad \mu_{1,*}H|_{E_1}= V(w).
\]
In particular, both  $\mu_{1,*}\oS|_{E_1}$ and $\mu_{1,*}H|_{E_1}$ are smooth curves in the smooth locus of $E_1$ that intersect transversely at two points $q_1:=[1,0,0]$ and  $q_1':=[0,1,0]$. Since both $\hX_1$ and $E_1$ are smooth away from the unique singular point $[0,0,1]$ of $E_1$, we know that $(\hX_1, \mu_{1,*} D + E_1)$ is snc along the smooth locus of $E_1$ and plt at $[0,0,1]$, leading to the same conclusion as above.
\end{proof}

\begin{prop}\label{prop:LCP-Markov}
A valuation $v$ is an lc place of $(X,D)$ if and only if $v$ is of the form $v_{\alpha,i}$ or $v_{\gamma,i}'$ for some $i \in \{1,2,3\}$ and $\alpha,\gamma\in \sigma_0$. 
\end{prop}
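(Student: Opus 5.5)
We will read off $\LCP(X,D)$ from the dlt modification of Lemma~\ref{lem:dltmodel}. For a dlt pair, the lc places are exactly the quasi-monomial valuations over the strata of the reduced boundary (i.e.\ the cone over its dual complex), with the snc coordinates chosen at the generic points of the strata; this is the standard description underlying \cite{dFKX17}. So set $\hD:=\mu_*^{-1}D+E_1+E_2+E_3$. By the crepant relation $\mu^*(K_X+D)=K_{\hX}+\hD$, which holds because $A_{X,D}(E_i)=0$, we have $\LCP(X,D)=\LCP(\hX,\hD)$. It therefore suffices to list the strata of $\hD$ and identify the monomial valuations on each stratum with the $v_{\alpha,i}$ and $v'_{\gamma,i}$.

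\textbf{Step 1: list the strata.} The components of $\hD$ are $\widehat S:=\mu_*^{-1}\oS$, $\widehat H:=\mu_*^{-1}H$, and $E_1,E_2,E_3$. The $E_i$ are disjoint from one another, since they lie over distinct points. The curve $\oS\cap H=(xyz=0)\subset H\cong\bP^2$ consists of three lines meeting pairwise at $p_1,p_2,p_3$. After the blow-ups, $\widehat S\cap\widehat H$ is the union of the strict transforms of these three lines, and it no longer passes through $p_1,p_2,p_3$. The proof of Lemma~\ref{lem:dltmodel} shows that $\widehat S|_{E_i}=V(yz+w)$ and $\widehat H|_{E_i}=V(w)$ meet at the two points $q_i,q_i'$. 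So the zero-dimensional strata are the six points $q_i,q_i'$, each of which is the intersection of $E_i$, $\widehat S$ and $\widehat H$.

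The dual complex is therefore two $2$-simplices $[E_i,\widehat S,\widehat H]$ for each $i$, all glued along the common edge $[\widehat S,\widehat H]$. (The three components of $\widehat S\cap\widehat H$ also contribute, but their edges are identified with faces of the triangles.) Any point of $\cD$ lies in one of the six closed triangles, so every lc place is monomial at some $q_i$ or $q_i'$ in the snc coordinates (equation of $\widehat S$, equation of $\widehat H$, equation of $E_i$).

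\textbf{Step 2: identify the valuations.} By symmetry take $i=1$ and the chart $x=1$. A monomial valuation at $q_1$ or $q_1'$, with weights $(a,b,c)$ on $(\widehat S,\widehat H,E_1)$, restricts to $K(X)$ as the valuation computed from the weighted blow-up charts of Lemma~\ref{lem:dltmodel}.

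At $q_1$ we are in chart $j=1$, where $y=y_1$, $z=y_1z_1$, $w=y_1^2w_1$, with $E_1=(y_1)$, $\widehat H=(w_1)$ and $\widehat S=(z_1+w_1)$. Consider the coordinates $(y,z,w)$ downstairs, with $w$ replaced by $\tilde w:=(1+y^2+z^2)w$ as in the lemma. The key observation is that $z$ and $\tilde w$ can be swapped against $z_1$ and $z_1+w_1$. I expect the valuation to be monomial in $(y,z,w)$ if we use the pair $(z_1,w_1)$, and monomial in $(y,z,w')$, with $w'=yz+\tilde w$, if we use the pair $(z_1+w_1,w_1)$. A cleaner route is to note that the triangle at $q_1$ has vertices $E_1=\ord$ of weight $(1,1,2)$, $H$, and $S$. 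In coordinates $(y,z,w)$ these become $(1,1,2)$, $(0,0,1)$, and $(1,0,1)$ (the last since $\widehat S$ is locally $z=0$ direction up to change). In the $\sigma_0$ parametrization these are exactly the vertices $(1,0,1),(0,0,1)$ together with the $E_1$ vertex. Since both valuations are monomial in compatible toric coordinates, linearity on the simplex gives the match. Concretely, I will check that the triangle at $q_1$ gives $\{v_{\alpha,1}\}$ and the triangle at $q_1'$ gives $\{v'_{\gamma,1}\}$, or vice versa. In one of the two charts, $S$ is a coordinate hyperplane of $(y,z,w)$ only after the substitution $w\mapsto w'$, and this is what produces the primed family.

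Checking that the cone $\sigma_0=\{\alpha_1+\alpha_2\le\alpha_3\}$ is exactly the image of $\bR_{\ge0}^3$ under these weight changes is routine. Its rays $(1,0,1),(0,1,1),(0,0,1)$ correspond to two of the boundary divisors plus one further vertex, and I will verify this by evaluating on $y,z,w$.

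\textbf{Main obstacle.} The delicate step is Step 2. One has to verify that monomiality in the upstairs snc coordinates is equivalent to monomiality downstairs in $(y,z,w)$, respectively $(y,z,w')$, with the weights related by the linear map above, and that the change from $w$ to $(1+y^2+z^2)w$ does not affect this since it multiplies by a unit. I will handle this by computing $v$ on a general polynomial through the monomial substitution $y^{\beta_1}z^{\beta_2}w^{\beta_3}\mapsto y_1^{\beta_1+\beta_2+2\beta_3}z_1^{\beta_2}w_1^{\beta_3}$, which shows that the initial terms are preserved. The converse direction, that each $v_{\alpha,i}$ and $v'_{\gamma,i}$ is an lc place, then follows from the same identification, or directly from $A_{X,D}(v_{\alpha,i})=\alpha_1+\alpha_2+\alpha_3-(\text{value on }D)=0$ whenever $\alpha\in\sigma_0$.
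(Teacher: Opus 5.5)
Your framework is essentially the paper's: the dlt model of Lemma~\ref{lem:dltmodel}, every nontrivial lc place monomial at some $q_i$ or $q_i'$ in the equations of $E_i,\widehat H,\widehat S$, and the direct discrepancy check for the converse. However, Step~2, which carries the entire ``only if'' direction, rests on a false identification. The dlt triangle at $q_1$ is \emph{not} $\{v_{\alpha,1}\}$, and the one at $q_1'$ is not $\{v'_{\gamma,1}\}$.

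In chart $j=1$ put $s_1:=z_1+w_1$, and give $y_1,w_1,s_1$ (i.e.\ $E_1,\widehat H,\widehat S$) weights $a,b,c$.
\begin{itemize}
\item If $c\le b$, the change $z_1=s_1-w_1$ preserves the weight filtration. So $v$ is monomial in $(y_1,z_1,w_1)$ with weights $(a,c,b)$, and your monomial substitution gives $v=v_{\alpha,1}$ with $\alpha=(a,a+c,2a+b)$.
\item If $b\le c$, it is $w_1=s_1-z_1$ that preserves the filtration. So $v$ is monomial in $(y_1,z_1,s_1)$ with weights $(a,b,c)$. Since $w'=yz+(1+y^2+z^2)w=y_1^2s_1$, one gets $v=v'_{\gamma,1}$ with $\gamma=(a,a+b,2a+c)$. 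The relevant pair is therefore $(z_1,z_1+w_1)$, not $(z_1+w_1,w_1)$.
\end{itemize}
At $q_1'$ the same holds with the first two entries swapped.

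Thus every dlt triangle is cut by the wall $b=c$ into a $v$-half and a $v'$-half. Likewise every $\sigma_0$-triangle is cut by $\alpha_1=\alpha_2$ (resp.\ $\gamma_1=\gamma_2$) into halves lying over $q_1$ and $q_1'$. The correspondence is only piecewise linear, so matching vertices and invoking linearity cannot work. For example, $(a,b,c)\mapsto(a,a+c,2a+b)$ sends $(0,0,1)$ to $(0,1,0)\notin\sigma_0$.

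Your vertex list is also wrong. $\ord_{\widehat S}$ equals $v'_{(0,0,1),1}$ and is not any $v_{\alpha,1}$: those satisfy $v(w)=\alpha_3\ge\alpha_1+\alpha_2$, while $\ord_S(w)=0$. The point $(1,0,1)$ is instead the lc place over the line $V(y,w)$. It has weight $0$ on $E_1$ and weight $1$ on both $\widehat H$ and $\widehat S$, so it is an interior point of an $[\widehat S,\widehat H]$ edge. The missing idea is precisely the case split according to whether $v(\widehat S)\le v(\widehat H)$.

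There is a lesser slip in Step~1. The three components of $\widehat S\cap\widehat H$ give three \emph{distinct} edges $[\widehat S,\widehat H]$, each shared only by the two triangles at its endpoints on the $E_i$'s. With one common edge you would get three $2$-spheres glued along an edge instead of $\bS^2$. This does not affect the only thing you use, namely that each lc place is monomial at some $q_i$ or $q_i'$.

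For comparison, the paper sidesteps the coordinate changes. It computes the values of $v_{\alpha,1}$ on $E_1,\widehat H,\widehat S$ as $\min\{\alpha_1,\alpha_2\}$, $\alpha_3-2\min\{\alpha_1,\alpha_2\}$ and $\alpha_1+\alpha_2-2\min\{\alpha_1,\alpha_2\}$; for $v'_{\gamma,1}$ the roles of $\widehat H$ and $\widehat S$ are exchanged. So for $c\le b$, the two choices $\alpha=(a,a+c,2a+b)$ and $(a+c,a,2a+b)$ are two distinct lc places with boundary weights $(a,b,c)$. They must therefore be the two monomial valuations at $q_1$ and $q_1'$. For $b\le c$ the paper uses $\gamma$ instead. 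Either route works once the wall $b=c$ is built in.
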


\begin{proof}
We first show that each $v_{\alpha,i}$ or $v_{\gamma,i'}$ is an lc place of $(X,D)$. By symmetry, we may assume that $i = 1$ and work in the affine chart $x = 1$. Then we have
\[
A_{X}(v_{\alpha,1}) = \alpha_1 + \alpha_2 + \alpha_3, \qquad A_{X}(v_{\gamma,1}') = \gamma_1+\gamma_2+\gamma_3.
\]
Recall that $D= (w w'=0)$ where $w'=w(y^2 + z^2 + 1) + yz$.
Moreover, we have 
\[
v_{\alpha,1}(D) = v_{\alpha,1}(w) + v_{\alpha,1}(w') = \alpha_3 + \min \{\alpha_3, \alpha_1+\alpha_2\}= \alpha_3 + \alpha_1 + \alpha_2.
\]
Thus $A_{(X,D)}(v_{\alpha, 1}) = 0$. Since $ w= (y^2+z^2+1)^{-1}( w' - yz)$ where $y^2 + z^2 + 1$ does not vanish at the origin, we have 
\[
v_{\gamma,1}' (D) = v_{\gamma,1}'(w) + v_{\gamma,1}'(w') = v_{\gamma,1}'(w' - yz) + v_{\gamma,1}'(w')  = \min\{\gamma_3, \gamma_1+\gamma_2 \} + \gamma_3 =  \gamma_1 + \gamma_2 + \gamma_3.
\]
Thus $A_{(X,D)}(v_{\gamma, 1}') = 0$.

Next, we show that every lc place $v$ is of the form $v_{\alpha,i}$ or $v_{\gamma, i}'$. If the center of $v$ is a surface, then $v=c\ord_{\oS}$ or $v = c\ord_H$, where we can write $v = v_{\alpha,1}$ or $v=v_{\gamma,1}'$ with $\alpha = (0,0,c)$. If the center of $v$ is a curve $C$, then it has to be a line in the intersection $\oS\cap H = V(xyz, w)$. By symmetry, assume $C= V(y,w)$. Then $v$ is monomial in the coordinate $(w,w')$ of weight $(a,b)\in \bR_{>0}^2$ along $C$. If $a\geq b$, then we have $v = v_{\alpha, 1}$ with $\alpha = (b,0,a)$. If $a \leq b$, then we have $v = v_{\gamma, 1}'$ with $\gamma = (a,0,b)$. 

Next, assume that the center of $v$ is a point.
Since $(X,D)$ is plt at $p_0$, no lc place has center
$p_0$. Thus, by symmetry, we may assume that the center
of $v$ is $p_1$.
From the proof of Lemma \ref{lem:dltmodel}, we know that $v$ is monomial in $(E_1,\mu_{1,*}^{-1} H , \mu_{1,*}^{-1}\oS)$ of weight $(a,b,c)\in \bR_{>0}\times\bR_{\geq 0}^{2}$ at $q_1$ or $q_1'$. If $0<c\leq b$, then there are precisely two monomial valuations $v$, one centered at $q_1$ and the other centered at $q_1'$. It is clear that 
\begin{align*}
v_{\alpha,1}(E_1) &= \min \{v_{\alpha,1}(y), v_{\alpha,1}(z), \tfrac{1}{2}v_{\alpha,1}(w)\} = \min \{\alpha_1, \alpha_2\}, \\
v_{\alpha,1}(\mu_{1,*}^{-1} H )& = v_{\alpha,1}(H)- 2 v_{\alpha,1}(E_1) = \alpha_3 - 2\min \{\alpha_1, \alpha_2\},\\
v_{\alpha,1}(\mu_{1,*}^{-1} \oS ) &= v_{\alpha,1}(\oS)- 2 v_{\alpha,1}(E_1) = \alpha_1 + \alpha_2 - 2\min \{\alpha_1, \alpha_2\}.
\end{align*}
Thus we can take $\alpha = (a, a+c, 2a + b)$ or $(a+c,a, 2a+b)$ which gives us two valuations of the form $v_{\alpha,1}$ with the same weight as $v$. Since $v_{\alpha,1}$ is an lc place, we conclude that we must have $ v=v_{\alpha,1}$ for some $\alpha$ from the two choices. If $0<b\leq c$, then similarly we have 
\begin{align*}
v_{\gamma,1}'(E_1) &= \min \{v_{\gamma,1}'(y), v_{\gamma,1}'(z), \tfrac{1}{2}v_{\gamma,1}'(w')\} = \min \{\gamma_1, \gamma_2\}, \\
v_{\gamma,1}'(\mu_{1,*}^{-1} H )& = v_{\gamma,1}'(H)- 2 v_{\gamma,1}'(E_1) = \gamma_1+\gamma_2 - 2\min \{\gamma_1, \gamma_2\},\\
v_{\gamma,1}'(\mu_{1,*}^{-1} \oS ) &= v_{\gamma,1}'(\oS)- 2 v_{\gamma,1}'(E_1) = \gamma_3 - 2\min \{\gamma_1, \gamma_2\}.
\end{align*}
Thus we can take $\gamma = (a, a+b, 2a + c)$ or $(a+b,a, 2a+c)$ such that $v = v_{\gamma,1}'$ for some $\gamma$ from the two choices. If either $b=0$ or $c=0$, then there is only one valuation $v$ of given weight $(a,b,c)$ whose center on $\hX_1$ is a curve. Then the above two choices of $\alpha$ or $\gamma$ coincide and the same argument works. 
\end{proof}

\begin{cor}
The dual complex $\cD(U)$ is PL homeomorphic to $\bS^2$.  The space of tropical points $U^{\trop}$ is PL homeomorphic to $\bR^3$.
\end{cor}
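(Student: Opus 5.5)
The plan is to read off the cell structure of $\cD(X,D)$ from the dlt modification of Lemma \ref{lem:dltmodel}, using the parametrization of lc places in Proposition \ref{prop:LCP-Markov}. By \cite{dFKX17}, $\cD(U)$ is the dual complex of $(\hX, \mu_*^{-1}D+E_1+E_2+E_3)$. Its vertices are $\ord_H$, $\ord_{\oS}$, $\ord_{E_1}$, $\ord_{E_2}$, $\ord_{E_3}$. The strata are as follows.
\begin{enumerate}
\item The curve $\oS\cap H$ consists of the three lines $V(x,w),V(y,w),V(z,w)$.
\item On each $E_i\cong\bP(1,1,2)$, the curves $E_i\cap \mu_*^{-1}H$ and $E_i\cap\mu_*^{-1}\oS$ are smooth, since they are $V(w)$ and $V(yz+w)$.
\item The triple points are $q_i,q_i'$ for $i=1,2,3$.
\end{enumerate}

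Next I determine the incidences. The lines of $\oS\cap H$ are pairwise joined: the line $V(y,w)$ passes through $p_1=[1,0,0,0]$ and $p_3$. Hence after blowing up, $H$ and $\oS$ meet along three curves, giving three edges between the vertices $H$ and $\oS$. Each $E_i$ is joined to $H$ and to $\oS$ by one edge each. Each $E_i$ contributes two $2$-simplices with vertices $\{E_i,H,\oS\}$, one at $q_i$ and one at $q_i'$. The edge $H$–$\oS$ lying on the line through $p_j,p_k$ bounds exactly one triangle at each of its endpoints $E_j$ and $E_k$.

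So the complex has $5$ vertices, $3+3+3=9$ edges and $6$ triangles, and its Euler characteristic is $5-9+6=2$. To check that it is a closed surface I verify the link of each vertex.
\begin{enumerate}
\item The link of $E_i$ consists of two edges $H$–$\oS$ glued at both ends, which is a circle.
\item The link of $H$ is the hexagon $\oS$-edge, $E_1$, $\oS$-edge, $E_2$, and so on. Cyclically, the $E_j$ and the three $H$–$\oS$ edges alternate, and each $E_j$ is adjacent to the two lines through $p_j$. This is a $6$-cycle, hence a circle. The link of $\oS$ is the same.
\end{enumerate}
Each edge lies in exactly two triangles. A connected closed surface with $\chi=2$ is $\bS^2$, with its PL structure given by the triangulation.

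For the second claim, $U^{\trop}\setminus\{v_{\triv}\}\to\cD(U)$ is an $\bR_{>0}$-bundle with a PL section, the slice of the cones where $A_X=1$. Hence $U^{\trop}$ is the cone over $\bS^2$, which is PL homeomorphic to $\bR^3$.

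The main obstacle is making sure the triangle incidences are right, namely that at $q_i$ and $q_i'$ the edges $H$–$\oS$ used are the two distinct lines through $p_i$. I would check this via the explicit charts in Lemma \ref{lem:dltmodel}: $q_1=[1,0,0]$ lies on the $y$-direction and so on the line $V(z,w)$, while $q_1'$ lies on $V(y,w)$.
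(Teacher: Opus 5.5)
Your argument is correct, but it uses a different triangulation from the paper's.

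\textbf{The paper's route.} The paper reads the sphere off the six $\sigma_0$-parametrized cells $\{v_{\alpha,i}\}$ and $\{v'_{\gamma,i}\}$ of Proposition~\ref{prop:LCP-Markov} and their gluing relations. The three cells $v_{\alpha,i}$ form a disk around $\ord_H$, and the cells $v'_{\gamma,i}$ are attached along its boundary. The remaining outer edges, which run from $\ord_{\oS}$ to the weight-$(1,1)$ valuations along the lines $V(x,w)$, $V(y,w)$, $V(z,w)$, are then folded together in pairs. This visibly gives $\bS^2$ (Figure~\ref{fig:dualcx}).

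\textbf{Your route.} You use the strata of the dlt model of Lemma~\ref{lem:dltmodel}, with vertices $\ord_H,\ord_{\oS},\ord_{E_i}$, and recognize the sphere via links and $\chi=2$. Your incidence checks are right: $q_1=[1,0,0]$ lies on the strict transform of $V(z,w)$, $q_1'$ lies on that of $V(y,w)$, and both lie on $E_1\cap\mu_*^{-1}H$ and $E_1\cap\mu_*^{-1}\oS$.

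\textbf{How the two triangulations relate.} Both have face numbers $(5,9,6)$, and they have a common refinement. The paper's three vertices other than $\ord_H,\ord_{\oS}$ are the midpoints of your three $H$--$\oS$ edges. Conversely, $\ord_{E_i}$ is the midpoint of the edge shared by $v_{\alpha,i}$ and $v'_{\gamma,i}$.

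\textbf{What each route buys.} Your route needs only Lemma~\ref{lem:dltmodel} and \cite{dFKX17}, and bypasses the case analysis in Proposition~\ref{prop:LCP-Markov}. The paper's route costs that proposition, but it yields the coordinates used in the rest of Section~\ref{sec:Markov}. In those coordinates the central triangle $\Pi$ is a union of cells. In your triangulation, $\Pi$ instead cuts across each triangle along the condition that the weight on $\mu_*^{-1}H$ is at least the weight on $\mu_*^{-1}\oS$.
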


\begin{proof}
This follows from the gluing relations of $v_{\alpha,i}$ and $v_{\gamma, i}'$ which we summarize below.
\begin{align*}
v_{(0, \alpha_2, \alpha_3),1} = v_{(0, \alpha_2, \alpha_3), 2}, \quad v_{(\alpha_1, 0, \alpha_3),1} = v_{(0, \alpha_1, \alpha_3), 3}, \quad v_{(\alpha_1,0, \alpha_3),2} = v_{(\alpha_1, 0, \alpha_3), 3};\\
v_{(0, \gamma_2, \gamma_3),1}' = v_{(0, \gamma_2, \gamma_3), 2}', \quad v_{(\gamma_1, 0, \gamma_3),1}' = v_{(0, \gamma_1, \gamma_3), 3}', \quad v_{(\gamma_1,0, \gamma_3),2}' = v_{(\gamma_1, 0, \gamma_3), 3}';
\end{align*}
\[
v_{(\alpha_1, \alpha_2, \alpha_1+\alpha_2), i} = v_{(\alpha_1, \alpha_2, \alpha_1+\alpha_2), i}'\quad \textrm{ for each } i\in \{1,2,3\}.
\]
See Figure \ref{fig:dualcx} for an illustration of the gluing relations. 
\end{proof}

\begin{figure}[htbp] 
    \centering
        \begin{tikzpicture}[scale=6,
    single/.style={postaction={decorate},decoration={markings, mark=at position 0.5 with {\arrow[scale=1.5]{>}}}},
    double/.style={postaction={decorate},decoration={markings, 
        mark=at position 0.48 with {\arrow[scale=1.5]{>}},
        mark=at position 0.52 with {\arrow[scale=1.5]{>}}}},
    triple/.style={postaction={decorate},decoration={markings, 
        mark=at position 0.47 with {\arrow[scale=1.5]{>}},
        mark=at position 0.5  with {\arrow[scale=1.5]{>}},
        mark=at position 0.53 with {\arrow[scale=1.5]{>}}}}
]

\coordinate (A) at (0,0);
\coordinate (B) at (1,0);
\coordinate (C) at (0.5,{sqrt(3)/2});

\coordinate (D) at ($(A)!0.5!(B)$);
\coordinate (E) at ($(B)!0.5!(C)$);
\coordinate (F) at ($(C)!0.5!(A)$);

\coordinate (G) at (barycentric cs:D=1,E=1,F=1);

\draw (A) -- (B) -- (C) -- cycle;
\draw (D) -- (E) -- (F) -- cycle;
\draw (G) -- (D);
\draw (G) -- (E);
\draw (G) -- (F);

\node at (barycentric cs:A=1,D=1,F=1) {$v_{\gamma,1}'$};
\node at (barycentric cs:B=1,D=1,E=1) {$v_{\gamma,2}'$};
\node at (barycentric cs:C=1,E=1,F=1) {$v_{\gamma,3}'$};

\node[xshift=0.05cm] at (barycentric cs:G=1,D=1,F=1) {$v_{\alpha,1}$}; 
\node[xshift=-0.05cm] at (barycentric cs:G=1,D=1,E=1) {$v_{\alpha,2}$}; 
\node[yshift=-0.04cm] at (barycentric cs:G=1,E=1,F=1) {$v_{\alpha,3}$}; 

\draw[single] (F) -- (A);
\draw[single] (F) -- (C);

\draw[double] (D) -- (A);
\draw[double] (D) -- (B);

\draw[triple] (E) -- (B);
\draw[triple] (E) -- (C);

\end{tikzpicture}        
    \caption{The dual complex $\cD(U)$}
    \label{fig:dualcx}
\end{figure}


\subsection{Vieta involutions}\label{sec:Vieta}
We have the following involutions on $\bA^3$ preserving $S$:
\begin{align*}
\tau_1: (x,y,z)\mapsto (-yz-x, y,z)\\
\tau_2: (x,y,z) \mapsto (x, -xz-y, z)\\
\tau_3: (x,y,z) \mapsto (x, y, -xy-z)
\end{align*}
In particular, we have $\tau_1, \tau_2, \tau_3\in \Aut(U)$. 
Let $G:=\bmu_2 * \bmu_2 * \bmu_2$, then we have an injective group homomorphism $G\hookrightarrow\Aut(U)$ whose image is the subgroup $\langle\tau_1, \tau_2, \tau_3\rangle$ by \cite[Theorem 1]{EH74} and \cite[Theorem 3.1]{CL09}. We shall describe the $G$-action on $\cD(U)\cong \bS^2$. 

\begin{lem}
Let $\phi_1:\bP^3 \dashrightarrow \bP(1,1,1,2)$ be the birational map given by $[x,y,z, w]\mapsto [w, y, z, xw]$. Then $\phi_1$ can be resolved by $\mu_1:\hX_1\to\bP^3$ and a birational morphism $\hat{\phi}_1: \hX_1\to \bP(1,1,1,2)$ that contracts the strict transform of $H$ to a rational curve. Moreover, $\tau_1$ is regularized on $\bP(1,1,1,2)$ by $\phi_1$. 
\end{lem}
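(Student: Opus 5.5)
The plan is to write $\phi_1$ in weighted-homogeneous coordinates, check by hand that the $(1,1,2)$-weighted blow-up $\mu_1$ of Lemma~\ref{lem:dltmodel} resolves it, and then read off the conjugate of $\tau_1$ directly.

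\textbf{Coordinates and indeterminacy.} Let $[a,b,c,d]$ be the coordinates on $\bP(1,1,1,2)$, with $\deg d=2$. Then $\phi_1$ is $[x,y,z,w]\mapsto[a,b,c,d]=[w,y,z,xw]$. This is well defined because each entry has the correct weighted degree in the homogeneous coordinates of $\bP^3$.

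On the chart $w=1$, the map is $(x,y,z)\mapsto[1,y,z,x]$. This is an isomorphism from $\bA^3$ onto the chart $\{a\neq0\}\cong\bA^3_{(b,c,d)}$. Its inverse is $[a,b,c,d]\mapsto[d,ab,ac,a^2]$, so $\phi_1$ is birational.

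On $H=(w=0)$, the map becomes $[x,y,z,0]\mapsto[0,y,z,0]$. Hence the indeterminacy locus of $\phi_1$ is exactly $p_1=[1,0,0,0]$.

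\textbf{Resolution by $\mu_1$.} Work in the chart $x=1$ with coordinates $(y,z,w)$, where $\phi_1=[w,y,z,w]$. Cover $\hX_1$ by the three charts from the proof of Lemma~\ref{lem:dltmodel} and use the weighted rescaling $[a,b,c,d]=[\lambda a,\lambda b,\lambda c,\lambda^2 d]$.

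\begin{itemize}
\item In chart $j=1$, where $(y,z,w)=(y_1,y_1z_1,y_1^2w_1)$, rescaling by $y_1$ gives $[y_1w_1,1,z_1,w_1]$.
\item In chart $j=2$, the symmetric computation gives $[z_2w_2,y_2,1,w_2]$.
\item In chart $j=3$, where $(y,z,w)=(y_3w_3,z_3w_3,w_3^2)$, rescaling by $w_3$ gives $[w_3,y_3,z_3,1]$. This is $\bmu_2$-invariant, so it descends to the quotient chart.
\end{itemize}

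All three expressions are regular, so $\hat\phi_1:=\phi_1\circ\mu_1$ is a morphism. It is proper and birational because $\hX_1$ is projective.

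\textbf{Exceptional locus of $\hat\phi_1$.} The strict transform $\mu_{1,*}^{-1}H$ is $(w_1=0)$ in chart $1$ and $(w_2=0)$ in chart $2$. Its image is $\{[0,1,z_1,0]\}\cup\{[0,y_2,1,0]\}$, which is the rational curve $\{a=d=0\}\cong\bP^1$. So $\hat\phi_1$ contracts $\mu_{1,*}^{-1}H$ to this curve.

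Meanwhile $E_1$, given by $y_1=0$, $z_2=0$ or $w_3=0$, maps isomorphically onto the divisor $\{a=0\}\cong\bP(1,1,2)$. Combined with the isomorphism $\bA^3\cong\{a\neq0\}$, this shows $\hat\phi_1$ is an isomorphism off $\mu_{1,*}^{-1}H$. This step is the main bookkeeping obstacle: one has to check that the images of the three charts glue to the whole of $\{a=0\}$ and that no other divisor is contracted. The explicit formulas above make this transparent.

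\textbf{Regularization of $\tau_1$.} On the chart $a=1$, $\tau_1$ reads $(b,c,d)\mapsto(b,c,-bc-d)$. This homogenizes to
\[
[a,b,c,d]\mapsto[a,b,c,-bc-d],
\]
which has weighted degree $2$ in the last entry and is its own inverse. It is therefore an automorphism of $\bP(1,1,1,2)$, and it agrees with $\phi_1\circ\tau_1\circ\phi_1^{-1}$ on a dense open set. Hence $\tau_1$ is regularized on $\bP(1,1,1,2)$ via $\phi_1$.

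It also preserves the boundary: it fixes $\{a=0\}$, which is the image of $E_1$, and it preserves the transform of $\oS$, which is $V(bcd/a\cdot\ldots)$ after clearing denominators.
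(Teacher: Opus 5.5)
Your proof is correct, but it reaches the resolution by a different route than the paper.

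\textbf{The paper's route.} The paper observes that $\phi_1$ is induced by the linear subsystem $\langle w^2,y^2,z^2,wy,wz,yz,xw\rangle\subset|\cO_{\bP^3}(2)|$. In the chart $x=1$, its base ideal is $\cI=(w,y^2,z^2,yz)$. Blowing up $\cI$ resolves $\phi_1$ automatically. The identity $\cI^m=\fa_{2m}(E_1)$ then identifies $\Bl_{\cI}\bP^3$ with the $(1,1,2)$-weighted blow-up $\hX_1$.

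\textbf{Your route.} You take $\hX_1$ as given and check directly that $\phi_1\circ\mu_1$ is regular on the three weighted charts from Lemma~\ref{lem:dltmodel}. This includes the $\bmu_2$-invariance on the singular chart, where that chart maps onto $\{d\neq 0\}$.

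\textbf{What each approach gives.} The paper's argument is shorter and explains conceptually why the weighted blow-up is the right model. Your computation instead gives an explicit description of $\hat\phi_1$. It shows that the strict transform of $H$ collapses onto $\{a=d=0\}\cong\bP^1$, and that $E_1$ maps isomorphically onto $\{a=0\}\cong\bP(1,1,2)$. The paper's proof leaves the contraction of $H$ implicit. Both arguments compute the conjugate of $\tau_1$ the same way, as $[a,b,c,d]\mapsto[a,b,c,-bc-d]$.

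\textbf{Your final sentence.} The remark about the boundary is not needed for the statement, and the formula is garbled as written. Substituting $[x,y,z,w]=[d,ab,ac,a^2]$ gives $\phi_{1,*}\oS=V\bigl(d^2+bcd+a^2(b^2+c^2)\bigr)$, which is indeed invariant under $d\mapsto -bc-d$. Also, the automorphism preserves $\{a=0\}$ as a set but does not fix it pointwise.
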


\begin{proof}
It is clear that $\phi_1$ is induced by the  linear subsystem $\langle w^2, y^2, z^2, wy, wz, yz, xw \rangle\subset |\cO_{\bP^3}(2)|$. The base ideal $\cI$ of this linear subsystem is cosupported at $[1,0,0,0]$. In the affine coordinate $x=1$, we have $\cI = (w, y^2, z^2, yz)$. Thus by blowing up $\cI$ we resolve $\phi_1$ and get a morphism $\Bl_{\cI} \bP^3 \to \bP(1,1,1,2)$. Clearly, we have $\cI^m = \fa_2m(E_1)$ for any positive integer $m$. Thus we have $\hX_1\cong \Bl_{\cI}\bP^3$ which implies the first statement. Let $[u_0, u_1, u_2, u_3]$ be the projective coordinates of $\bP(1,1,1,2)$. Then we have $[x, y, z, w]= [\frac{u_3}{u_0},  u_1, u_2,u_0]$. Thus the conjugation of $\tau_1$ under $\phi_1$ is given by $[u_0, u_1, u_2, u_3]\mapsto [u_0 ,u_1, u_2, -u_1u_2-u_3]$ which is clearly an automorphism of $\bP(1,1,1,2)$.
\end{proof}

\begin{prop}\label{prop:tau_1-expression}
We have the following description of $\tau_{1,*}$-action on $\cD(U)$. 
\begin{enumerate}
    \item We have $\tau_{1,*}(v_{\alpha, i}) = v_{\gamma,1}'$ for $i \in \{1,2,3\}$ where 
    \[
    (\gamma_1, \gamma_2, \gamma_3)= \begin{cases}
    (\alpha_3-\alpha_2, \alpha_3-\alpha_1, 3\alpha_3 - 2\alpha_1-2\alpha_2) & \textrm{ if } i = 1,\\
    (\alpha_3-\alpha_2, \alpha_3, 3\alpha_3 - 2\alpha_2+ \alpha_1) & \textrm{ if } i = 2,\\
    (\alpha_3, \alpha_3-\alpha_2, 3\alpha_3 - 2\alpha_2+ \alpha_1) & \textrm{ if } i =3.
    \end{cases}
    \]
    \item We have $\tau_{1,*}(v_{\alpha,i}') = v_{\gamma,1}'$ for $i\in \{2,3\}$ where 
    \[
    (\gamma_1, \gamma_2, \gamma_3) =\begin{cases}
    (\alpha_1, \alpha_1+\alpha_2, \alpha_3 + 3\alpha_1) & \textrm{ if } i = 2,\\
    (\alpha_1+\alpha_2, \alpha_1, \alpha_3 + 3\alpha_1) & \textrm{ if } i = 3.
    \end{cases}
    \]
\end{enumerate}
Since \(\tau_1\) is an involution, part (1) also determines $\tau_{1,*}(v'_{\alpha,1})$.
\end{prop}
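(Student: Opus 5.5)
The plan is to compute $\tau_{1,*}$ directly on valuations. We push forward each monomial valuation by evaluating on a generating set of coordinate functions, and then identify the image through Proposition \ref{prop:LCP-Markov}. Concretely, $\tau_{1,*}v$ is the valuation $f\mapsto v(f\circ\tau_1)$. Since $\tau_{1,*}$ preserves $\LCP(X,D)$ (it is crepant birational on $(X,D)$ by Lemma \ref{lem:cbir}), the image is automatically of the form $v_{\beta,j}$ or $v'_{\gamma,j}$. It therefore suffices to determine its center and then its values on the relevant local coordinates. Both sides are monomial, so two lc places with the same center and the same weights on a coordinate system adapted to the dlt model of Lemma \ref{lem:dltmodel} coincide. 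This is the same uniqueness argument used in the proof of Proposition \ref{prop:LCP-Markov}.

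For part (1), first treat $i=1$ in the chart $x=1$, with homogeneous $\tau_1[x,y,z,w]=[-yz-xw,\,yw,\,zw,\,w^2]$. Let $v=v_{\alpha,1}$, so that $v(y)=\alpha_1$, $v(z)=\alpha_2$, $v(w)=\alpha_3$ and $\alpha_1+\alpha_2\le\alpha_3$. The leading coordinate of the image is $-yz-w$. Its value is $\min\{\alpha_1+\alpha_2,\alpha_3\}=\alpha_1+\alpha_2$, achieved without cancellation except on the boundary, which is handled by continuity.

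Dehomogenizing by this coordinate, the image point has coordinates
\[
y\mapsto \frac{-yw}{yz+w},\qquad z\mapsto\frac{-zw}{yz+w},\qquad w\mapsto\frac{w^2}{yz+w}.
\]
Their valuations are $\alpha_3-\alpha_2$, $\alpha_3-\alpha_1$ and $2\alpha_3-\alpha_1-\alpha_2$. All are nonnegative, so the center is $p_1$.

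To detect whether the image is a $v$- or a $v'$-type valuation, compare the value of the image $w$ with that of the image $w'=w(y^2+z^2+1)+yz$. Substituting, $w'\circ\tau_1$ should simplify, via the Markov relation, to essentially $w\cdot w'/(yz+w)^2$ times a unit. This gives value $3\alpha_3-2\alpha_1-2\alpha_2$, which exceeds the sum of the first two weights. The image is therefore $v'_{\gamma,1}$ with the stated $\gamma$. The cases $i=2,3$ are the same computation in the charts $y=1$ and $z=1$. There the leading image coordinate is $-yz-xw$ divided by $y$ (respectively $z$). The extra $\alpha_1$ in $\gamma_3$ comes from $v(x)=\alpha_1$ entering the Markov expression.

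Part (2) is handled identically in the charts $y=1$ and $z=1$, now using coordinates $(x,z,w')$. Here $\tau_1$ fixes $S$ and hence $w'$ up to a unit factor coming from the homogenization. The claim about $v'_{\alpha,1}$ follows from part (1) by involutivity together with surjectivity of $v_{\alpha,i}$ onto the appropriate $\gamma$-region.

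The main obstacle is the algebraic simplification of $w'\circ\tau_1$ and the verification that no cancellation occurs among leading monomials. For this I would rewrite everything in the coordinates $(y,z,w,w')$ and use $w=(w'-yz)/(1+y^2+z^2)$. Wherever equality of minima could cause cancellation, I would restrict to the interior of $\sigma_0$ and extend to the boundary by continuity and piecewise linearity of $\tau_{1,*}$ on cones.
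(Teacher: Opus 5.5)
Your strategy is sound, and the center and the first two coordinates of $\gamma$ you compute for $i=1$ are correct; the same method gives them in the other cases. The problem is the step you single out as the main obstacle: it is stated incorrectly in both parts.

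\textbf{Part (1).} $w'\circ\tau_1$ is not $w\,w'/(yz+w)^2$ times a unit. That expression has value $\alpha_3-\alpha_1-\alpha_2$, not the $3\alpha_3-2\alpha_1-2\alpha_2$ you then assert.

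\textbf{Part (2).} The factor relating $w'\circ\tau_1$ to the local equation of $\oS$ at $p_2$ is not a unit either. If it were, you would get $\gamma_3=\alpha_3$ rather than $\alpha_3+3\alpha_1$.

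\textbf{The correct identity.} In the chart $x=1$ one has $w'=f\cdot(w/x)^3$ as rational functions, with $f$ the affine Markov polynomial. Since $f\circ\tau_1=f$, this gives $w'\circ\tau_1=f/(x\circ\tau_1)^3$, i.e.
\[
(\tau_{1,*}v)(w')=v(f)-3\,v(yz+x)
\]
in the affine coordinates of $\bA^3$. Written in local coordinates:
\begin{itemize}
\item In the chart $x=1$ this reads $w'\circ\tau_1=-w^3w'/(yz+w)^3$.
\item In the coordinates $(x,z,w)$ of the chart $y=1$ it reads $w'\circ\tau_1=-w^3w'_{(2)}/(z+xw)^3$, with $w'_{(2)}=w(x^2+z^2+1)+xz$. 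For $v=v'_{\alpha,2}$ the prefactor has value $3(\alpha_1+\alpha_2)-3\alpha_2=3\alpha_1$, which is exactly the source of the $+3\alpha_1$.
\end{itemize}
With $v(f)=\alpha_1+\alpha_2-3\alpha_3$ (resp.\ $\gamma_3-3\gamma_1-3\gamma_2$), every case becomes a one-line check. No cancellation issue arises: distinct monomials never cancel for a monomial valuation, and in the $v'$-charts one substitutes $w=(w'-yz)/(1+y^2+z^2)$ as you suggest.

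\textbf{Involutivity.} The images in part (1) only cover the $v'_{\gamma,1}$ with $\gamma_3\le\gamma_1+\gamma_2+2\min(\gamma_1,\gamma_2)$. The remaining $v'_{\gamma,1}$ are images in part (2), so involutivity must be applied to both parts to determine $\tau_{1,*}$ on all of them.

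\textbf{Comparison with the paper.} Granting this fix, your argument is a genuinely different proof from the paper's, which never computes in charts. The paper's Lemma~\ref{lem:xyz+x^2} proves $v(xyz+x^2)=\min(v(y^2),v(z^2),v(f))$ for all $v\in U^{\trop}$. This says that $\tau_{1,*}$ acts on $(v(x),v(y),v(z))$ by Jang's tropical Vieta formula. Since $\Trop$ is injective on the slices $\{v(f)=\pm1\}$ (Propositions~\ref{prop:Sk-Markov} and~\ref{prop:trop-Sk}), the formulas are then read off from the explicit values of $\Trop(v_{\alpha,i})$ and $\Trop(v'_{\gamma,i})$. That route treats all cases through one identity and simultaneously yields the $G$-equivariant homeomorphisms $\Phi^\pm$ used throughout Section~\ref{sec:Markov}.

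Your route has different advantages:
\begin{itemize}
\item It is more elementary and self-contained, needing only Lemma~\ref{lem:cbir} and Proposition~\ref{prop:LCP-Markov}, with the $w$- versus $w'$-value deciding between $v_{\beta,1}$ and $v'_{\gamma,1}$.
\item It avoids the lengthy regularization on $\bP(1,1,1,2)$ that the paper mentions.
\item It handles the valuations with $v(f)=0$ (the locus $\cD^0$) directly. These lie outside both tropical slices, so the paper's argument needs a continuity step there.
\end{itemize}
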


A proof of Proposition \ref{prop:tau_1-expression} can be given by explicitly writing down the map $\phi_{1,*}:U^{\trop}\to U_1^{\trop}$ where $U_1:= \bP(1,1,1,2) \setminus \phi_{1,*}S$, as $\phi_1: (X,D) \dashrightarrow (\bP(1,1,1,2), \phi_{1,*}S)$ is a crepant birational map inducing an isomorphism between $U$ and $U_1$. Since the computation is quite lengthy, in the next subsection we will give an alternative proof using essential skeleta and computations from \cite{Jan23}.

\subsection{Essential skeleton and tropicalization}

In this subsection, we review the work of Jang \cite{Jan23} on the action of Vieta involutions on the tropicalized Markov cubic surfaces. Our main result (Theorem \ref{thm:trop-iso-action}) shows the dual complex $\cD(U)$ can be divided into two parts $\cD^{-}$ and $\cD^+$ where each part resembles a $G$-action on the tropical skeleton of certain Markov cubic surfaces from \cite{Jan23}. We  follow the setup from Section \ref{sec:an-trop}.

Recall that $U = \bA^3_{(x,y,z)}\setminus (f=0)$ where $f(x,y,z)=xyz+x^2 + y^2 + z^2$. Let $\varphi^{\pm}: U \hookrightarrow \bA^3 \times (\bA^1_t\setminus \{0\})$ be the graph of maps $U\to \bA^1_t \setminus \{0\}$ given by $(x,y,z) \mapsto t = f(x,y,z)^{\pm 1}$. Then $\varphi^{\pm}$ are closed immersions whose images $U^{\pm}:=\varphi^{\pm}(U)$ are  hypersurfaces $(f(x,y,z) - t^{\pm 1} = 0) \subset \bA^3 \times (\bA^1_{t}\setminus \{0\})$. Let $K:= \bk(\!(t)\!)$. Then the base change $U^{\pm}_K:= U^{\pm}\times_{\bA^1_t \setminus \{0\}} \Spec\, K$ are hypersurfaces in $\bA^3_K$ defined by $(f(x,y,z) - t^{\pm 1} =0)$. According to the notation $S_{ABCD}$ from \cite{Jan23}, we have $U_K^{+} = S_{000t}$ and $U_K^{-} = S_{000t^{-1}}$. 

Next, notice that $U_K^{\pm}$ is an affine log CY $K$-variety where a log CY compactification $(X_K^{\pm}, D_K^{\pm})$ is given by 
\[
X_K^{\pm}  := (xyz+(x^2+y^2+z^2) w - t^{\pm 1} w^3 = 0) \subset \bP^3_K, \quad D_K^{\pm} := (w=0)|_{X_K^{\pm}}.
\]
Here we use $[x,y,z,w]$ as the projective coordinates for $\bP^3$ as abuse of notation.
Let us describe the essential skeleta $\Sk(U_K^{\pm})$. We start from constructing log CY models of $(X_K^{\pm}, D_K^{\pm})$. Consider the hypersurface 
\[
\tX:= (t_0(xyz + (x^2+y^2+z^2)w) - t_1 w^3=0) \subset \bP^3\times \bP^1_{[t_0, t_1]}. 
\]
It follows from direct Jacobian computations that $\tX$ is a normal projective threefold and the first projection $\pr_1: \tX\to X=\bP^3$ is birational.
Let $\tD:= (xyz=w=0)\subset \tX$. Denote by $0:=[1,0]$ and $\infty:=[0,1]$ in $\bP^1$. 

\begin{prop}\label{prop:CY-model}
The pair $(\tX,\tD+\tX_0 +\tX_{\infty, \red})$ is an lc log CY pair that is crepant birational to $(X,D)$ under $\pr_1$. Moreover, $(\tX,\tD)/(0\in \bP^1)$ (resp.\ $(\tX,\tD)/(\infty\in \bP^1)$) is a log CY model of $(X_K^{+}, D_K^{+})$ (resp.\ of  $(X_K^{-}, D_K^{-})$) as in Section \ref{sec:an-trop}.
\end{prop}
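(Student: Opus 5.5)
The plan is to compare $K_{\tX}$ with $\pr_1^*K_X$ directly, using that $\tX$ is a divisor of bidegree $(3,1)$ in $\bP^3\times\bP^1$. Adjunction gives
\[
K_{\tX}\sim (K_{\bP^3\times\bP^1}+\tX)|_{\tX}=\cO(-1,-1)|_{\tX}.
\]
The boundary divisors are computed as follows. The divisor $(w=0)|_{\tX}$ equals $\tD+\tX_\infty$ as divisors. Indeed, on $w=0$ the equation reduces to $t_0xyz=0$, so its components are $(xyz=w=0)$ together with the fiber part $(t_0=0,w=0)$. Now $\tX_\infty=(t_0=0)\cap\tX=(w^3=0)$ scheme-theoretically, so $\tX_{\infty,\red}=(t_0=w=0)$ with $\tX_\infty=3\tX_{\infty,\red}$. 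We should check that $(w=0)|_{\tX}=\tD+\tX_{\infty,\red}$ with multiplicity one along $\tX_{\infty,\red}$. This is a local computation: near a general point of $t_0=w=0$, $t_0$ is determined by $w$ via $t_0=t_1w^3/(xyz+\dots)$, so $w$ is a local parameter there. Hence
\[
\tD+\tX_0+\tX_{\infty,\red}\sim \cO(1,0)+\cO(0,1)|_{\tX},
\]
so $K_{\tX}+\tD+\tX_0+\tX_{\infty,\red}\sim0$. Moreover $\tX_0+\tX_{\infty,\red}\sim_{\bQ,\bP^1}0$, which gives the relative log CY condition needed at both $0$ and $\infty$.

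For crepant birationality with $(X,D)$, the plan is to identify $\pr_{1,*}$ of the boundary. The projection $\pr_1$ is birational, with inverse $[x:y:z:w]\mapsto([x:y:z:w],[w^3:f_h])$ where $f_h=xyz+(x^2+y^2+z^2)w$. Hence $\pr_{1,*}\tX_0=\oS$, $\pr_{1,*}\tD=0$ (these components lie over lines in $H$), and $\pr_{1,*}\tX_{\infty,\red}=H$. So $\pr_{1,*}(\tD+\tX_0+\tX_{\infty,\red})=D$. Both pairs are log CY, i.e.\ $\bQ$-linearly trivial, so $K_{\tX}+\tD+\tX_0+\tX_{\infty,\red}-\pr_1^*(K_X+D)$ is $\pr_1$-exceptional and $\bQ$-trivial. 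The negativity lemma then forces it to vanish, giving crepant birationality. Once this holds, lc-ness of the pair follows from lc-ness of $(X,D)$, since log discrepancies agree. Normality of $\tX$ is stated earlier via the Jacobian computation.

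For the log CY model statement, flatness and projectivity of $\tX\to\bP^1$ are clear for a hypersurface not containing fibers. The generic fiber over $K$, with $t=t_1/t_0$ near $0$ and $t=t_0/t_1$ near $\infty$, is $X_K^{\pm}$. Restricted to it, $\tD$ becomes $D_K^\pm=(w=0)$; indeed $(w=0)$ on the generic fiber is $xyz=0$ by the equation. The divisor $\tD$ contains no fiber component, since its components dominate $\bP^1$. The lc condition on $(\tX,\tD+\tX_{0,\red})$ near $0$, and similarly near $\infty$, is inherited from the global lc pair: restricting to a neighborhood $T$ of the point removes the other fiber. The relative condition $K+\tD+\tX_{p,\red}\sim_{\bQ,T}0$ was shown above.

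The main obstacle is the multiplicity bookkeeping at $\infty$. Concretely, one must verify that $\tX_\infty$ is irreducible (it is $3(t_0=w=0)$) and that $(w=0)|_{\tX}$ contains $\tX_{\infty,\red}$ with coefficient exactly one. It is also necessary to check that $\tX$ is not $\bQ$-factorially problematic, so that the negativity lemma applies to Weil divisors; I would apply it on a common resolution.
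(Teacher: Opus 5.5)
Your proposal is correct and follows essentially the same route as the paper: adjunction on the $(3,1)$-divisor $\tX$, together with $(w=0)|_{\tX}=\tD+\tX_{\infty,\red}$, gives $K_{\tX}+\tD+\tX_0+\tX_{\infty,\red}\sim 0$, and crepancy with $(X,D)$ follows because the discrepancy divisor is $\pr_1$-exceptional and $\bQ$-linearly trivial. The paper phrases this last step as ``the crepant pullback is $(\tX,a\tD+\tX_0+\tX_{\infty,\red})$, so $(a-1)\tD\sim 0$ and $a=1$,'' whereas your negativity-lemma version handles the three components of $\tD$ without appealing to symmetry; your $\bQ$-factoriality worry is also moot, since all divisors involved are Cartier on the hypersurface $\tX$.
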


\begin{proof}
By analyzing the projection $\pr_1$, we know that $\pr_1: \tX \to X$ is birational with $\tD$ the only exceptional divisor. Since $\tX$ is a $(3,1)$-divisor in $\bP^3\times \bP^1$, we have $-K_{\tX} \sim \cO(1,1)$ by adjunction. Thus we have $K_{\tX} + \tX_0 + (w=0)|_{\tX} \sim 0$. Then it is easy to see that $(w=0)|_{\tX}  = \tD + \tX_{\infty, \red}$ which  implies $K_{\tX} + \tD+ \tX_0 + \tX_{\infty, \red}  \sim 0$. Moreover, since $(X,D)$ is an lc log CY pair with $\pr_{1,*}^{-1}D = \tX_0 + \tX_{\infty, \red}$, we know that its crepant pull-back is an lc log CY pair $(\tX, a \tD + \tX_0 + \tX_{\infty, \red})$ for some coefficient $a$ which implies $K_{\tX} + a\tD+ \tX_0 + \tX_{\infty, \red}  \sim 0$. Thus we have $(a-1)\tD\sim 0$ and hence $a=1$. The last statement follows from the definition and adjunction.
\end{proof}

\begin{prop}\label{prop:Sk-Markov}
The essential skeleton
$\Sk(U_K^{\pm})$ is PL homeomorphic to the subset $\{v\in U^{\trop}\mid v(f) =  \pm 1\}$ of $U^{\trop}$. Under this identification, we have
\begin{align*}
\Sk(U_K^-) &= 
\cup_{i=1}^3(\{v_{\alpha,i}\mid \alpha\in \sigma_0,~3\alpha_3 -\alpha_1 - \alpha_2  =  1\} \cup \{v_{\gamma, i}'\mid \gamma\in \sigma_0,~3\gamma_1+3\gamma_2 - \gamma_3 = 1\}),\\
\Sk(U_K^+) &= \cup_{i=1}^3\{v_{\gamma, i}'\mid \gamma\in\sigma_0,~\gamma_3 - 3\gamma_1-3\gamma_2 = 1\}.
\end{align*}
\end{prop}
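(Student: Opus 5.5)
The plan is to use Proposition \ref{prop:Sk-LCP} together with the log CY model from Proposition \ref{prop:CY-model}. The model $(\tX,\tD)/(0\in\bP^1)$ is a log CY model of $(X_K^+,D_K^+)$. Here the uniformizer $t$ corresponds to $t_1/t_0$, which on $\tX$ pulls back to $f(x,y,z)/w^3$ in homogeneous terms, i.e.\ to the regular function $f$ on $U$. Proposition \ref{prop:Sk-LCP} identifies $\Sk(U_K^+)$ with
\[
\{v\in \LCP(\tX,\tD+\tX_{0,\red})\mid v(t)=1\}.
\]
Similarly, $\Sk(U_K^-)$ is identified with $\{v\in\LCP(\tX,\tD+\tX_{\infty,\red})\mid v(t^{-1})=1\}$.

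The first step is to compare these cones with $U^{\trop}$. Proposition \ref{prop:CY-model} shows that $(\tX,\tD+\tX_0+\tX_{\infty,\red})$ is crepant birational to $(X,D)$, so its lc places are exactly $U^{\trop}=\LCP(X,D)$.

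\emph{Claim:} a valuation $v\in U^{\trop}$ with $v(f)>0$ is automatically an lc place of $(\tX,\tD+\tX_{0})$.

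To see this, note that the center of $v$ on $\tX$ then lies over $0\in\bP^1$. Near $\tX_0$, the divisor $\tX_{\infty,\red}$ is absent. So the log discrepancies with respect to the two boundaries $\tD+\tX_0$ and $\tD+\tX_0+\tX_{\infty,\red}$ agree at $v$.

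Conversely, an lc place of $(\tX,\tD+\tX_0)$ with $v(t)=1$ has center in $\tX_0$. Away from $\tX_\infty$ the two boundaries coincide, so such a $v$ is an lc place of the full boundary, and hence lies in $U^{\trop}$. Restricting to $K(X)=K(\tX)$, the condition $v(t)=1$ becomes $v(f)=1$. The same argument with $\infty$ and $t^{-1}$ gives $v(f)=-1$. This proves the PL homeomorphism. Linearity of the identification on cones follows from Proposition \ref{prop:Sk-LCP} together with the integral PL structure on $U^{\trop}$.

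The second step is to compute $v(f)$ on each cone of Proposition \ref{prop:LCP-Markov}, using the symmetric charts (say $i=1$, $x=1$). In that chart,
\[
f = (yz+(1+y^2+z^2)w)/w^3 = w'/w^3.
\]
Hence
\[
v_{\alpha,1}(f)=\min\{\alpha_3,\alpha_1+\alpha_2\}-3\alpha_3=\alpha_1+\alpha_2-3\alpha_3,
\]
using $\alpha_1+\alpha_2\le\alpha_3$. Since $w=(1+y^2+z^2)^{-1}(w'-yz)$, we also get
\[
v'_{\gamma,1}(f)=\gamma_3-3\min\{\gamma_3,\gamma_1+\gamma_2\}=\gamma_3-3\gamma_1-3\gamma_2.
\]

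Setting these values equal to $\pm1$ gives the stated descriptions. On $v_{\alpha,i}$ the value is always $\le -2(\alpha_1+\alpha_2)-\ldots<0$ unless $v$ is trivial. So the $\alpha$-cones contribute only to $\Sk(U_K^-)$, with equation $3\alpha_3-\alpha_1-\alpha_2=1$. The $\gamma$-cones contribute to both, via $\gamma_3-3\gamma_1-3\gamma_2=\pm1$. This matches the displayed formulas.

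The main obstacle I expect is the first step. There one must check carefully that being an lc place of $(\tX,\tD+\tX_{0,\red})$ with center in the special fiber matches being an lc place of $(X,D)$ with $v(f)>0$. In particular, the relevant centers must avoid $\tX_\infty$, and the normalization $v(t)=1$ must correspond to $v(f)=1$ rather than some multiple of it. I would handle this by noting that $\tX_0$ and $\tX_{\infty}$ are disjoint fibers, so $\tX_{\infty}$ cannot meet any center lying in $\tX_0$.
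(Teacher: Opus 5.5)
Your proposal is correct and follows the paper's proof. Both identify $\Sk(U_K^{\pm})$ via Proposition~\ref{prop:Sk-LCP} applied to the model of Proposition~\ref{prop:CY-model}, pass to $\LCP(X,D)=U^{\trop}$ by crepant birationality, and then evaluate $v(f)$ on the cones of Proposition~\ref{prop:LCP-Markov}; your chart formula $f=w'/w^3$ is the same computation as the paper's $\mathrm{div}(f)=\oS-3H$. Your check that centers with $v(f)>0$ (resp.\ $v(f)<0$) avoid the opposite fiber, so that the boundaries $\tD+\tX_0$ and $\tD+\tX_0+\tX_{\infty,\red}$ have the same lc places there, fills in a step the paper leaves implicit.
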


\begin{proof}
By Propositions \ref{prop:Sk-LCP} and \ref{prop:CY-model}, we know that the restriction map $U_K^{\pm,\val} \to \{v\in U^{\val}\mid v(f) = \pm 1\}$ induces a PL homeomorphism 
\[
\Sk(U_K^{\pm}) \to \{v\in \LCP(X,D)\mid v(f) = \pm 1\}.
\]
Here we use the fact that crepant birational log CY pairs $(X,D)$ and  $(\tX,\tD+\tX_0 +\tX_{\infty, \red})$ have the same $\LCP$. 
Then the first statement follows from the identification that $\LCP(X,D) = U^{\trop}$.
Since $\mathrm{div}(f) = \oS - 3H$ on $X$,  the proof of Proposition \ref{prop:LCP-Markov} implies 
\begin{align*}
v_{\alpha,i}(f) &  = v_{\alpha,i}(\oS) -  3v_{\alpha,i}(H) = (\alpha_1+\alpha_2) - 3 \alpha_3, \\
v_{\gamma,i}'(f) &  = v_{\gamma,i}'(\oS) -  3v_{\gamma,i}'(H) = \gamma_3 - 3(\gamma_1+\gamma_2).
\end{align*}
Thus the second statement follows from Proposition \ref{prop:LCP-Markov} and the above computations.
\end{proof}

Following Section \ref{sec:an-trop}, we have the tropicalization maps 
\begin{equation}\label{eq:tropicalization}
    \Trop: U_K^{\pm,\an} \to (\bR\cup \{\infty\})^3,
\end{equation}
whose restrictions on $U_K^{\pm,\val}$ are precisely given by $v\mapsto (v(x), v(y), v(z))$. By \cite[Section 2]{Jan23}, we know that $\Trop(U_K^{\pm})$ are the closure of the loci in $\bR^3_{\ux}$ with $\ux = (x_1,x_2,x_3)$ where the tropicalized polynomials
\[
f^{\pm}(x_1,x_2,x_3):=\min(x_1+x_2+x_3, 2x_1, 2x_2, 2x_3, \pm 1)
\]
are not differentiable.

\begin{defn}[{\cite[Definition 3.2]{Jan23} and \cite{Fil19}}]\label{def:Sk-trop}
The \emph{tropical skeleton} $\Sk^{\trop}(U_K^{\pm})$ of $U_K^{\pm}$ is defined as 
\[
\Sk^{\trop}(U_K^{\pm}):= \{\ux\in \Trop(U_K^{\pm})\cap \bR^3\mid x_1+x_2+x_3=f^{\pm}(x_1,x_2,x_3)\}.
\]
\end{defn}

We note that $\Sk^{\trop}(U_K^{\pm})$ were denoted by $\Sk(\infty,\infty,\infty, \pm 1)$ in \cite{Jan23}. 

\begin{prop}\label{prop:trop-Sk}
The tropicalization maps \eqref{eq:tropicalization} induce PL homeomorphisms between the essential skeleton $\Sk(U_K^{\pm})$ and the tropical skeleton $\Sk^{\trop}(U_K^{\pm})$.
\end{prop}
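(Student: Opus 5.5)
The plan is to use the explicit description of $\Sk(U_K^{\pm})$ from Proposition \ref{prop:Sk-Markov}. By that result, the essential skeleton is identified with the slice $\{v\in U^{\trop}\mid v(f)=\pm1\}$. Under this identification the tropicalization map is just $v\mapsto (v(x),v(y),v(z))$. So it suffices to check three things. First, this map sends each piece $\{v_{\alpha,i}\}$ and $\{v'_{\gamma,i}\}$ of the slice linearly and injectively into $\bR^3$. Second, the images land in $\Sk^{\trop}(U_K^{\pm})$ and cover it. Third, the pieces glue compatibly.

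For the computation, work in the chart $x=1$ for $i=1$, where $x=1/w$, $y=Y/w$, $z=Z/w$ with $Y,Z$ the local coordinates. Then
\[
v_{\alpha,1}(x,y,z)=(-\alpha_3,\ \alpha_1-\alpha_3,\ \alpha_2-\alpha_3).
\]
Since $w=(w'-yz)/(1+y^2+z^2)$ locally and $v'_{\gamma,1}(w)=\gamma_1+\gamma_2$ on $\sigma_0$, we get
\[
v'_{\gamma,1}(x,y,z)=\bigl(-(\gamma_1+\gamma_2),\ -\gamma_2,\ -\gamma_1\bigr).
\]
The cases $i=2,3$ follow by permuting coordinates. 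Each of these maps is linear and injective on the cone $\sigma_0$; the determinant is nonzero, as a direct check shows. Next, substitute into $f^{\pm}$.

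For $v_{\alpha,1}$, the sum is $x_1+x_2+x_3=\alpha_1+\alpha_2-3\alpha_3$. Using $\alpha_1+\alpha_2\le\alpha_3$, one checks this is $\le 2x_j$ for all $j$. So the skeleton condition $x_1+x_2+x_3=f^{\pm}$ reduces to $\alpha_1+\alpha_2-3\alpha_3\le\pm1$. On the slice $v(f)=\alpha_1+\alpha_2-3\alpha_3=-1$, this holds for the sign $-$. Hence the image lies in $\Sk^{\trop}(U_K^-)$.

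For $v'_{\gamma,1}$, the sum is $-2(\gamma_1+\gamma_2)-\gamma_1-\gamma_2=-3(\gamma_1+\gamma_2)$, and again it is the minimum of the monomial terms. The slice condition $\gamma_3-3(\gamma_1+\gamma_2)=\pm1$ together with $\gamma_3\ge\gamma_1+\gamma_2\ge0$ gives $-3(\gamma_1+\gamma_2)\le\pm1$. Thus these points are in the tropical skeleton, possibly on the corner where the constant $\pm1$ ties.

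I also need the images to lie in $\Trop(U_K^\pm)$. This is automatic, because valuations in $U_K^{\pm,\val}$ tropicalize into $\Trop(U_K^\pm)$.

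Continuity and injectivity across pieces then follow from the gluing relations in the Corollary after Proposition \ref{prop:LCP-Markov}: the linear formulas agree on shared faces, and distinct pieces map to distinct chambers of $\bR^3$, distinguished by the sign pattern or by which coordinate is maximal. Since $\Sk(U_K^{\pm})$ is compact in the $-$ case, a continuous bijection onto a Hausdorff space is a homeomorphism. In the $+$ case I would instead argue properness directly, using the piecewise-linear inverse.

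The main obstacle is surjectivity onto $\Sk^{\trop}(U_K^{\pm})$. I would take an arbitrary $\ux$ with $x_1+x_2+x_3=f^{\pm}(\ux)$ that lies in $\Trop(U_K^\pm)$, and use non-differentiability of $f^\pm$ to force $\ux$ into one of the explicit chambers. The skeleton condition gives $x_1+x_2+x_3\le 2x_j$, so $x_j\ge x_k+x_l$. Non-differentiability forces the minimum to be attained at least twice. Splitting into cases by which terms attain the minimum, I would solve for $\alpha$ or $\gamma$ via the inverse linear formulas above and verify $\alpha\in\sigma_0$ or $\gamma\in\sigma_0$ and the slice equation. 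As an alternative, I could cite Jang's explicit description of $\Sk(\infty,\infty,\infty,\pm1)$ in \cite{Jan23} as a union of triangles and compare vertices.
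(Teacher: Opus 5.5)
Your route is the same as the paper's. You identify $\Sk(U_K^{\pm})$ with the slice $\{v(f)=\pm1\}$ via Proposition~\ref{prop:Sk-Markov}, compute $\Trop$ chart by chart, and match the images with an explicit description of $\Sk^{\trop}(U_K^{\pm})$. Your formulas for $\Trop(v_{\alpha,1})$ and $\Trop(v'_{\gamma,1})$ agree with the paper's. The paper handles your ``surjectivity'' step in two moves. It first rewrites $\Sk^{\trop}(U_K^{\pm})=\{x_1+x_2+x_3=\min(2x_1,2x_2,2x_3,\pm1)\}$, so membership in $\Trop(U_K^{\pm})$ is automatic. It then recognizes this set as the boundary of the cone spanned by $(-1,-1,0),(-1,0,-1),(0,-1,-1)$, truncated by $x_1+x_2+x_3\le -1$ in the $-$ case. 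Three statements in your sketch are wrong and need repair:
\begin{enumerate}
\item The map $\gamma\mapsto(-(\gamma_1+\gamma_2),-\gamma_2,-\gamma_1)$ does not involve $\gamma_3$. So it is not injective on $\sigma_0$ and its determinant is $0$; for instance, all multiples of $\ord_S$ go to the origin. Injectivity holds only on the slice, where $\gamma_3=3(\gamma_1+\gamma_2)\pm1$ is determined by $(\gamma_1,\gamma_2)$.
\item For $v'_{\gamma,1}$ the coordinate sum is $-2(\gamma_1+\gamma_2)=2x_1$, not $-3(\gamma_1+\gamma_2)$. Your value would make it strictly smaller than every other term of $f^{\pm}$ when $\gamma_1+\gamma_2>0$, contradicting membership in $\Trop(U_K^{\pm})$.
\begin{itemize}
\item With the correct value, the condition $x_1+x_2+x_3\le -1$ on the $-$ slice reads $\gamma_1+\gamma_2\ge\tfrac12$. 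This is exactly the constraint $\gamma_3\ge\gamma_1+\gamma_2$ from $\sigma_0$.
\item That exact match is what makes the $\gamma$-pieces surject onto the faces $\{x\in\bR^3_{\le 0}\mid x_i=x_{i+1}+x_{i+2}\le-\tfrac12\}$.
\item Also, the chart index $i$ is detected by the \emph{minimal} coordinate $x_i$, not the maximal one. In the $-$ case, the $\alpha$- and $\gamma$-pieces are separated by $x_i\ge-\tfrac12$ versus $x_i\le-\tfrac12$.
\end{itemize}
\item $\Sk(U_K^-)$ is not compact: the three $\gamma$-faces are unbounded (take $\gamma=(n,0,3n-1)$ with $n\to\infty$). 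Correspondingly, $\cD^-$ is open but not closed in $\cD(U)$. So the compact-to-Hausdorff argument fails. Use in both cases the argument you proposed for $+$:
\begin{itemize}
\item the map is affine and injective on each of finitely many closed polyhedral pieces;
\item the images are closed polyhedra covering $\Sk^{\trop}(U_K^{\pm})$;
\item the pieces glue compatibly by the gluing relations.
\end{itemize}
Hence the inverse is continuous by the pasting lemma, and it is PL.
\end{enumerate}
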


\begin{proof} 
From the definition we know that $\Trop:\Sk(U_K^{\pm}) \to \bR^3$ is a continuous PL map. Thus it suffices to show that it is bijective onto $\Sk^{\trop}(U_K^{\pm})$. From Definition \ref{def:Sk-trop} we have 
\[
\Sk^{\trop}(U_K^{\pm}) = \{\ux\in  \bR^3\mid x_1+x_2+x_3=\min(2x_1, 2x_2, 2x_3, \pm 1)\}.
\]

We start from $U_K^+$. Clearly, if $\min(2x_1, 2x_2, 2x_3) \geq 1$ then each $x_i\geq \frac{1}{2}$ which implies that $x_1+x_2+x_3 >1$ so $(x_1,x_2,x_3)\not\in \Sk^{\trop}(U_K^{\pm})$. Thus we have 
\begin{align*}
\Sk^{\trop}(U_K^+) & = \{\ux\in  \bR^3\mid x_1+x_2+x_3 =  \min(2x_1,2x_2,2x_3)\}\\
& = \cup_{i=1}^3 \{\ux\in  \bR_{\leq 0}^3\mid x_i = x_{i+1}+x_{i+2}\}.
\end{align*}
Here we adopt the mod $3$ convention for the subscript. Geometrically, $\Sk^{\trop}(U_K^+)$ is the boundary of the simplicial cone generated by $(-1,-1,0)$, $(-1,0,-1)$, and $(0,-1,-1)$.
By Proposition \ref{prop:Sk-Markov}, we have
\[
\Sk(U_K^{+}) = \cup_{i=1}^3 \{v_{\gamma,i}'\mid \gamma_1,\gamma_2\in \bR_{\geq 0}\textrm{ and }\gamma_3=3\gamma_1+3\gamma_2+1\}.
\]
Then computations give
\[
\Trop(v_{\gamma,i}') =\begin{cases}
(-\gamma_1 - \gamma_2, -\gamma_2, -\gamma_1)& i = 1\\
(-\gamma_2, -\gamma_1-\gamma_2, -\gamma_1) & i = 2\\
(-\gamma_2, -\gamma_1, -\gamma_1-\gamma_2) & i = 3
\end{cases}
\]
Thus by checking each $1\leq i\leq 3$ we conclude that $\Trop$ maps $\Sk(U_K^+)$ bijectively onto $\Sk^{\trop}(U_K^+)$.

Next, we consider $U_K^-$. Similar analysis shows that 
\[
\Sk^{\trop}(U_K^-) =\{\ux\in \bR_{\geq -\tfrac{1}{2}}^3\mid x_1+x_2+x_3 = -1\}\cup \cup_{i=1}^3 \{\ux\in  \bR_{\leq 0}^3\mid x_i = x_{i+1}+x_{i+2}\leq -\tfrac{1}{2}\}.
\]
Geometrically, $\Sk^{\trop}(U_K^-)$ is the boundary of the truncated simplicial cone generated by $(-1,-1,0)$, $(-1,0,-1)$, and $(0,-1,-1)$ satisfying $x_1+x_2+x_3\leq -1$. Hence it consists of a bounded central triangle and three unbounded triangular faces.
For $v_{\alpha,i}\in \Sk(U_K^{-})$, computations give
\[
\Trop(v_{\alpha,i}) =\begin{cases}
(-\alpha_3, \alpha_1 - \alpha_3, \alpha_2 - \alpha_3) & i = 1\\
(\alpha_1-\alpha_3, -\alpha_3, \alpha_2 - \alpha_3) & i = 2\\
(\alpha_1-\alpha_3, \alpha_2 - \alpha_3, -\alpha_3) & i = 3
\end{cases}
\]
The formula for $\Trop(v_{\gamma,i}')$ is the same as before. Thus from the constraints on $\alpha$ and $\gamma$ from Proposition \ref{prop:Sk-Markov} we conclude that $\Trop:\Sk(U_K^-) \to \Sk^{\trop}(U_K^-)$ is bijective and maps the set of $v_{\alpha,i}$'s to the central triangle and the set of $v_{\gamma,i}'$'s to the three unbounded triangular faces.
\end{proof}

Next, we relate the action of three Vieta involutions on the dual complex $\cD(U)$ to the tropical Vieta involutions on $\Sk^{\trop}(U_K^{\pm})$. 

\begin{defn}\label{def:D^pm}
We define two open subsets $\cD^{\pm}$ and a closed subset $\cD^0$ of $\cD(U)$ as 
\begin{align*}
\cD^- &:= \{v_{\alpha,i}\mid \alpha\in \sigma_0, ~1\leq i\leq 3\} \cup \{v_{\gamma, i}'\mid \gamma\in \sigma_0, ~\gamma_3<3\gamma_1+3\gamma_2 ,~1\leq i\leq 3\},\\
\cD^+ &:= \{v_{\gamma, i}'\mid \gamma\in \sigma_0,~\gamma_3 > 3\gamma_1+3\gamma_2,~1\leq i\leq 3\},\\
\cD^0 & := \{v_{\gamma, i}'\mid \gamma\in \sigma_0,~\gamma_3 = 3\gamma_1+3\gamma_2,~1\leq i\leq 3\}.
\end{align*}
\end{defn}

It is straightforward to see 
\[
\cD(U) = \cD^+\sqcup \cD^0 \sqcup \cD^-, \qquad \cD^0 = \partial \cD^+ = \partial \cD^-.
\]



Next, denote by $\tau_i^{\pm}$ the induced involutions on $U_K^{\pm}$ by $\tau_i$. Recall from \cite[(3.1)--(3.3)]{Jan23} that the tropical Vieta involutions $\Trop(\tau_i^{\pm})$ on $\Sk^{\trop}(U_K^{\pm})$ are defined as 
\begin{align*}
\Trop(\tau_1^{\pm})(\ux) & := (\min(2x_2, 2x_3, \pm 1)-x_1, x_2, x_3), \\
\Trop(\tau_2^{\pm})(\ux) & := (x_1, \min(2x_1, 2x_3, \pm 1)-x_2, x_3), \\
\Trop(\tau_3^{\pm})(\ux) & := (x_1, x_2, \min(2x_1, 2x_2, \pm 1)-x_3).
\end{align*}
Thus the group $\langle \Trop(\tau_1^{\pm}), \Trop(\tau_2^{\pm}), \Trop(\tau_3^{\pm})\rangle$ provides a $G$-action on $\Sk^{\trop}(U_K^{\pm})$. The following result matches up the $G$-actions on the dual complex with the tropical skeleta.

\begin{thm}\label{thm:trop-iso-action}
Both open subsets $\cD^{\pm}\subset \cD(U)$ are $G$-invariant. Moreover, the tropicalization maps \eqref{eq:tropicalization} induce $G$-equivariant PL homeomorphisms  $\Phi^{\pm}:\cD^{\pm}\to \Sk^{\trop}(U_K^{\pm})$. 
\end{thm}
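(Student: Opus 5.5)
The plan is to work entirely through the regular function $f=xyz+x^2+y^2+z^2\in\cO_U(U)^\times$ and the sign of $v(f)$.

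\textbf{Step 1: describe $\cD^\pm$ and $\cD^0$ by the sign of $v(f)$.} From the formulas in the proof of Proposition~\ref{prop:Sk-Markov},
\[
v_{\alpha,i}(f)=\alpha_1+\alpha_2-3\alpha_3, \qquad v'_{\gamma,i}(f)=\gamma_3-3\gamma_1-3\gamma_2 .
\]
For $\alpha\in\sigma_0\setminus\{0\}$ we have $\alpha_1+\alpha_2\le\alpha_3$, so $v_{\alpha,i}(f)\le -2\alpha_3<0$. Comparing with Definition~\ref{def:D^pm}, we get
\[
\cD^{\pm}=\{[v]\mid \pm v(f)>0\}, \qquad \cD^0=\{[v]\mid v(f)=0\}.
\]

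\textbf{Step 2: $G$-invariance.} Each $\tau_i$ preserves $S$, so $\tau_i^*f$ is a unit on $U$. Since $f$ is irreducible and $\cO(U)^\times=\bk^\times f^{\bZ}$, it follows that $\tau_i^*f=cf$. Comparing degrees in the coordinate being replaced, $f(-yz-x,y,z)=f(x,y,z)$. Indeed, the Vieta involution swaps the two roots of the quadratic $x^2+yzx+(y^2+z^2)$, and $f$ is that quadratic. Hence
\[
(\tau_{i,*}v)(f)=v(\tau_i^*f)=v(f),
\]
so the sign of $v(f)$ is $G$-invariant, and $\cD^+$, $\cD^-$, $\cD^0$ are all $G$-invariant.

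\textbf{Step 3: construct $\Phi^\pm$.} Each ray in $\cD^\pm$ has a unique representative with $v(f)=\pm1$. Via Proposition~\ref{prop:Sk-Markov} this identifies $\cD^\pm$ PL-homeomorphically with $\Sk(U_K^\pm)$; scaling is the chart change, and it is PL because $v\mapsto v(f)$ is linear on the cones of Proposition~\ref{prop:LCP-Markov}. Composing with Proposition~\ref{prop:trop-Sk} gives
\[
\Phi^\pm([v])=\Bigl(\tfrac{v(x)}{|v(f)|},\tfrac{v(y)}{|v(f)|},\tfrac{v(z)}{|v(f)|}\Bigr),
\]
and $\Phi^\pm$ is a PL homeomorphism.

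\textbf{Step 4: equivariance.} Since $v(f)$ is $\tau_1$-invariant, the normalization commutes with $\tau_{1,*}$. It therefore suffices to show that for $v$ with $v(f)=\pm1$,
\[
v(\tau_1^*x)=v(-yz-x)=\min\bigl(2v(y),2v(z),\pm1\bigr)-v(x), \qquad v(\tau_1^*y)=v(y),\quad v(\tau_1^*z)=v(z).
\]
The last two equalities are trivial. For the first, use the identity $x\cdot(-yz-x)=y^2+z^2-f$. This reduces the claim to
\[
v(y^2+z^2-f)=\min\bigl(2v(y),2v(z),v(f)\bigr),
\]
that is, to the absence of cancellation among $y^2$, $z^2$, $f$.

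This is the main obstacle. Cancellation can occur, for example when $2v(y)=2v(z)$ and the initial forms of $y^2$ and $z^2$ cancel. I would check the identity directly on each cone $v_{\alpha,i}$, $v'_{\gamma,i}$ using the explicit monomial coordinates of Definition~\ref{def:monomial-val}. Alternatively, I would invoke the standard fact from \cite{Jan23} that for points of the essential skeleton (equivalently, by density, for $\oK$-points of $U_K^\pm$ tropicalizing into $\Sk^{\trop}$) the tropicalization intertwines $\tau_i^\pm$ with $\Trop(\tau_i^\pm)$, using that skeleton valuations are determined by their tropicalization (Proposition~\ref{prop:trop-Sk}). In the monomial check I expect cancellation to occur only on a set of $v$ mapping outside the skeleton, so the cone-by-cone verification should close. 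The cases $i=2,3$ follow by symmetry.
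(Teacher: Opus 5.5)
Your route is the paper's, with one gap. You identify $\cD^\pm$ with $\{v(f)=\pm1\}$ via Proposition~\ref{prop:Sk-Markov}, compose with $\Trop$, and use $x\cdot\tau_1^*x=y^2+z^2-f$ to reduce $\tau_1$-equivariance to $v(y^2+z^2-f)=\min(2v(y),2v(z),v(f))$, which is exactly Lemma~\ref{lem:xyz+x^2}. Steps 1--3 are fine, and Step 2 usefully makes explicit the invariance $\tau_i^*f=f$ that the paper leaves implicit.

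The gap is that this identity, the only non-formal step, is left as an expectation, and your heuristic for it points the wrong way. There is no exceptional locus: the identity holds for every $v\in U^{\trop}$, including $\cD^0$. The cancellation you worry about, between $y^2$ and $z^2$, never occurs for lc places. In the chart $x=1$ (resp.\ $y=1$, $z=1$) of $\bP^3$, the affine function $y^2+z^2$ equals $w^{-2}(y^2+z^2)$ (resp.\ $w^{-2}(1+z^2)$, $w^{-2}(y^2+1)$), a polynomial in monomial coordinates of $v$. The genuine cancellation is instead between $yz$ and $x$ in $\tau_1^*x=-yz-x$; for example $v(x)=v(yz)$ for $v'_{\gamma,1}$ with $\gamma_3>\gamma_1+\gamma_2$. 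The tropical formula exists precisely to account for this.

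Your fallback through \cite{Jan23} does not work as stated, because the ``equivalently, by density'' step is false. On $U_K^+$, take $y=\sqrt{-1}\,z$ with $\ord_t z(p)=a<0$, and let $x(p)$ be the root of $x^2+\sqrt{-1}\,z(p)^2x-t$ with $\ord_t x(p)=2a$. Then $\Trop(\|\cdot\|_p)=(2a,a,a)\in\Sk^{\trop}(U_K^+)$, but $\ord_t\bigl(-y(p)z(p)-x(p)\bigr)=1-2a$, whereas $\min(2a,2a,1)-2a=0$. So pointwise statements do not transfer to skeleton valuations without the valuation-level computation.

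To close the gap, check four cases: $v_{\alpha,1},v'_{\gamma,1}$ in the chart $x=1$, and $v_{\alpha,2},v'_{\gamma,2}$ in the chart $y=1$. The case $i=3$ follows from the symmetry $y\leftrightarrow z$.

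In the chart $x=1$, after multiplying by $w^3$, the identity reads $v(yz+w)=\min\bigl(v(y^2w),v(z^2w),v(w')\bigr)$ with $w'=yz+(1+y^2+z^2)w$.
\begin{itemize}
\item For $v_{\alpha,1}$, both sides equal $\alpha_1+\alpha_2$, since $\alpha_1+\alpha_2\le\alpha_3$.
\item For $v'_{\gamma,1}$, substitute $w=(w'-yz)/(1+y^2+z^2)$. This gives $v(w)=\gamma_1+\gamma_2$ and $(1+y^2+z^2)(yz+w)=yz(y^2+z^2)+w'$, with no cancellation in the coordinates $(y,z,w')$. Both sides then equal $\min(3\gamma_1+\gamma_2,\gamma_1+3\gamma_2,\gamma_3)$.
\end{itemize}

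In the chart $y=1$, the identity reads $v(xz+x^2w)=\min\bigl(v(w),v(z^2w),v(xz+(1+x^2+z^2)w)\bigr)$. Both sides equal $\alpha_1+\alpha_2$, resp.\ $\gamma_1+\gamma_2$. In the primed case use $(1+x^2+z^2)(xz+x^2w)=xz(1+z^2)+x^2w'$.

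With this lemma in place, your Step~4 is complete.
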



\begin{proof}

First of all, by composing the identification of $\Sk(U_K^{\pm})$ with the subset $\{v\in U^{\trop}\mid v(f) = \pm 1\}$ from Proposition \ref{prop:Sk-Markov} and the projection $U^{\trop}\setminus \{v_{\triv}\} \to \cD(U)$, it is straightforward to see that $\Sk(U_K^{\pm})$ is $G$-equivariantly PL homeomorphic to $\cD^{\pm}$. In particular, the open subsets $\cD^{\pm}$ are both $G$-invariant. This defines the PL homeomorphism $\Phi^{\pm}$ as the composition $\cD^{\pm} \to \Sk(U_K^{\pm}) \xrightarrow{\Trop} \Sk^{\trop}(U_K^{\pm})$. Thus it suffices to show that $\Phi^{\pm}$ is $G$-equivariant. Since the $G$-action on $U$ is generated by $\tau_1, \tau_2, \tau_3$, this reduces  to showing $\Phi^{\pm}\circ \tau_{i,*} = \Trop(\tau_i^{\pm})\circ \Phi^{\pm}$ for $1\leq i\leq 3$ as maps from $\cD^{\pm}$ to $\Sk^{\trop}(U_K^{\pm})$. By symmetry, we may assume $i=1$. 

Next, for any $[v]\in \cD^{\pm}$ we may choose a representative $v$ as the restriction of a valuation in $\Sk(U_K^{\pm})$. In particular, we may assume $v(f) = \pm 1$. Then we have 
\[
\Phi^{\pm}(\tau_{1,*}(v)) = (v(x\circ \tau_1), v(y\circ \tau_1), v(z\circ \tau_1)) = (v(-yz-x), v(y), v(z)).
\]
Meanwhile, we have
\[
\Trop(\tau_1^{\pm})(\Phi^{\pm}(v)) = (\min(2v(y), 2v(z), \pm 1)-v(x), v(y), v(z)).
\]
Thus it suffices to show that $v(-yz-x) = \min(2v(y), 2v(z), \pm 1)-v(x)$, or equivalently, $v(xyz+x^2) =  \min(v(y^2), v(z^2), v(f))$. This follows directly from Lemma \ref{lem:xyz+x^2}. Thus the proof is finished.
\end{proof}

\begin{lem}\label{lem:xyz+x^2}
For every valuation $v\in U^{\trop}$, we have 
\begin{equation}\label{eq:xyz+x^2}
v(xyz+x^2) = \min(v(y^2), v(z^2), v(f)).
\end{equation}
\end{lem}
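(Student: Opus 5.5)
Since both sides of \eqref{eq:xyz+x^2} are continuous in $v$ and homogeneous of degree one under scaling, it suffices to verify the identity on each cone of the explicit parametrization of $U^{\trop}$ given by Proposition \ref{prop:LCP-Markov}. That is, we check it for $v=v_{\alpha,i}$ and $v=v'_{\gamma,i}$ with $\alpha,\gamma\in\sigma_0$ and $i\in\{1,2,3\}$, where the trivial valuation is harmless. The key algebraic identity is
\[
xyz+x^2 = f - y^2 - z^2 .
\]
This immediately gives the inequality $v(xyz+x^2)\ge \min(v(y^2),v(z^2),v(f))$. Equality holds unless the minimum is attained at least twice and the leading terms cancel. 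So the whole task is to rule out cancellation, or to show that cancellation does not change the minimum, by a direct monomial computation in the local coordinates of Definition \ref{def:monomial-val}.

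We work in the affine chart $w=1$ viewpoint but use the homogeneous coordinates of each chart.

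For $v_{\alpha,1}$, the chart is $x=1$ with coordinates $(y,z,w)$, and the affine functions are $x=1/w$, $y/w$, $z/w$. Thus
\[
xyz+x^2 = \frac{yz+w}{w^3}.
\]
Since $v_{\alpha,1}$ is monomial in $(y,z,w)$, we get $v(xyz+x^2)=\min(\alpha_1+\alpha_2,\alpha_3)-3\alpha_3=\alpha_1+\alpha_2-3\alpha_3$, using $\alpha\in\sigma_0$. On the right-hand side, $v(y^2)=2\alpha_1-2\alpha_3$ and $v(z^2)=2\alpha_2-2\alpha_3$. From the proof of Proposition \ref{prop:Sk-Markov}, $v(f)=\alpha_1+\alpha_2-3\alpha_3$. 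Because $\alpha_1+\alpha_2\le\alpha_3$, we have $\alpha_1+\alpha_2-3\alpha_3\le 2\alpha_1-2\alpha_3$, so the minimum is $v(f)$ and the identity holds.

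For $v_{\alpha,2}$ (chart $y=1$), the function becomes $(xz+x^2w)/w^3$, with value $\alpha_1+\alpha_2-3\alpha_3$. The right-hand side is $\min(-2\alpha_3,\,2\alpha_2-2\alpha_3,\,\alpha_1+\alpha_2-3\alpha_3)$, which is again $v(f)$ by the same inequality. The case $i=3$ is symmetric.

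For $v'_{\gamma,1}$ we substitute $w=(w'-yz)u^{-1}$ with $u=1+y^2+z^2$ a unit. Then
\[
xyz+x^2 = \frac{w'u^2\cdot(\text{unit})}{(w'-yz)^3},
\]
since $yz+w = (w'+yz(y^2+z^2))/u$, whose valuation is $\min(\gamma_3,\gamma_1+\gamma_2)$. This computation must be done carefully, and it is the step where cancellation could occur. We compare the result with $v(y^2)=2\gamma_1-2\min(\gamma_3,\gamma_1+\gamma_2)$ and with $v(f)=\gamma_3-3(\gamma_1+\gamma_2)$. For $v'_{\gamma,2}$ and $v'_{\gamma,3}$ the leading term $x^2w$ or $xz$ can be compared similarly, again eliminating $w$ via $w'$.

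The main obstacle is the $v'_{\gamma,i}$ cases. There $w$ is no longer a coordinate, and the minimum on the right may be attained by $v(f)$ together with $v(y^2)$ or $v(z^2)$, for instance when $\gamma_3=3(\gamma_1+\gamma_2)$, the wall $\cD^0$. I would handle this by expressing every quantity as a Laurent polynomial in the chart coordinates and reading off the minimal-weight monomials explicitly. Equality with the minimum then follows because each relevant leading monomial (such as $w'$, $y^2$ or $z^2$ after clearing the common denominator) appears with nonzero coefficient in exactly one term, so no cancellation occurs.
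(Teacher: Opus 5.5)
\textbf{Verdict: same strategy as the paper, but the one case with real content is computed incorrectly.} Like the paper, you go through the cones $v_{\alpha,i}$, $v'_{\gamma,i}$ of Proposition~\ref{prop:LCP-Markov} and compute both sides as monomial valuations in chart coordinates. (The continuity/homogeneity reduction is unnecessary, since that proposition already lists every lc place.) Your $v_{\alpha,1}$ and $v_{\alpha,2}$ computations are correct and match the paper's, and the $y\leftrightarrow z$ symmetry handles $i=3$.

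\textbf{The error in the $v'_{\gamma,1}$ case.} The displayed identity $xyz+x^2=w'u^2\cdot(\text{unit})/(w'-yz)^3$ is false. From your own correct formula $yz+w=(w'+yz(y^2+z^2))/u$ one gets
\[
xyz+x^2=\frac{u^2\,(w'+y^3z+yz^3)}{(w'-yz)^3},
\]
and $w'+y^3z+yz^3$ is not $w'$ times a unit. The claim $v(yz+w)=\min(\gamma_3,\gamma_1+\gamma_2)$ is also false; that quantity is $v(w)=v(w'-yz)$, not $v(yz+w)$. Take $\gamma=(1,1,10)$. The right-hand side of \eqref{eq:xyz+x^2} is $\min(-2,-2,4)=-2$. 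Your ``unit'' formula gives $v(f)=4$, and your stated valuation gives $2-6=-4$, so either would contradict the lemma. The cases $v'_{\gamma,2}$, $v'_{\gamma,3}$ are only asserted to go ``similarly.'' As written, the $v'_{\gamma,i}$ cases are not established.

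\textbf{The repair (this is the paper's argument).} The three terms $w'$, $y^3z$, $yz^3$ are distinct monomials in the coordinates $(y,z,w')$ in which $v'_{\gamma,1}$ is monomial, so no cancellation is possible. Hence
\[
v(yz+w)=\min(\gamma_3,\,3\gamma_1+\gamma_2,\,\gamma_1+3\gamma_2)=\min\bigl(v(w'),v(y^2w),v(z^2w)\bigr),
\]
using $v(w)=\gamma_1+\gamma_2$, which holds because $\gamma_3\ge\gamma_1+\gamma_2$. Dividing by $w^3$ gives
\[
\min(\gamma_3,3\gamma_1+\gamma_2,\gamma_1+3\gamma_2)-3(\gamma_1+\gamma_2)=\min\bigl(v(f),v(y^2),v(z^2)\bigr).
\]
For $v'_{\gamma,2}$ the same substitution gives $xz+x^2w=(xz+xz^3+x^2w')/u$, again a sum of distinct monomials, with valuation $\gamma_1+\gamma_2$. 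So the left side equals $-2(\gamma_1+\gamma_2)=v(y^2)$. This is the minimum, since $v(z^2)=-2\gamma_1$ and $v(f)=\gamma_3-3(\gamma_1+\gamma_2)\ge -2(\gamma_1+\gamma_2)$.

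\textbf{On the no-cancellation mechanism.} It is not that ``leading monomials $w'$, $y^2$, $z^2$ each appear in exactly one term.'' It is simply that the left side, computed directly, is a sum of distinct monomials in the chart coordinates. Ties on the right-hand side then do not matter. Such ties do occur: for $i=1$, when $\gamma_3=3\gamma_1+\gamma_2\le\gamma_1+3\gamma_2$. They do not occur along $\cD^0$, as you suggest, because there $v(f)=0$ exceeds $\min(v(y^2),v(z^2))<0$.
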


\begin{proof}
We work in the projective coordinates $[x,y,z,w]$ of $\bP^3$ where \eqref{eq:xyz+x^2} becomes 
\begin{equation}\label{eq:proj-xyz+x^2}
    v(\tfrac{xyz+x^2 w}{w^3})  = \min(v(\tfrac{y^2}{w^2}), v(\tfrac{z^2}{w^2}), v(\tfrac{xyz+(x^2+y^2+z^2)w}{w^3})).
\end{equation}

By Proposition \ref{prop:LCP-Markov}, the valuation $v$ has the form $v_{\alpha,i}$ or $v_{\gamma,i}'$ for $\alpha,\gamma\in \sigma_0$ and $1\leq i\leq 3$. By symmetry in $(y,z)$ we may assume that $i = 1$ or $2$. Below we shall split into four cases.

If $v$ has the form $v_{\alpha,1}$, then in the affine chart $x= 1$ the equation \eqref{eq:proj-xyz+x^2} becomes 
\begin{equation}\label{eq:xyz+x^2-chart-x}
    v(yz+ w) = \min(v(y^2 w) , v(z^2 w), v(yz + (1+y^2+z^2)w).
\end{equation}
Since $v$ is monomial in $(y,z,w)$, we have 
\begin{align*}
v(yz+w) & = \min(\alpha_1+\alpha_2, \alpha_3) = \alpha_1+\alpha_2,\\
v(y^2w) & = 2\alpha_1+\alpha_3\geq \alpha_1+\alpha_2,\\
v(z^2w) & = 2\alpha_2+\alpha_3\geq \alpha_1+\alpha_2,\\
v(yz + (1+y^2+z^2)w) & =\min (\alpha_1+\alpha_2, \alpha_3) = \alpha_1+\alpha_2.
\end{align*}
Thus \eqref{eq:xyz+x^2-chart-x} holds.

If $v$ has the form $v_{\gamma,1}'$, then similarly in the affine chart $x=1$ it suffices to show \eqref{eq:xyz+x^2-chart-x}. Since $v$ is monomial in $(y,z,w')$ with $w' = yz + (1+y^2+z^2)w$, we have $v(w) = v(w'-yz) = v(yz)$ as $v(w') = \gamma_3 \geq \gamma_1+\gamma_2 = v(yz)$. Thus we have 
\begin{align*}
v(yz+w)&  = v(yz + \tfrac{w'-yz}{1+y^2+z^2}) = v(yz(1+y^2+z^2) + w' - yz) = v(yz(y^2+z^2) + w') \\
& = \min(v(y^2\cdot yz), v(z^2\cdot yz), v(w')) = \min (v(y^2 w), v(z^2 w), v(w')).
\end{align*}
Thus \eqref{eq:xyz+x^2-chart-x} holds.

If $v$ has the form $v_{\alpha,2}$, then in the affine chart $y=1$ the equation \eqref{eq:proj-xyz+x^2} becomes
\begin{equation}\label{eq:xyz+x^2-chart-y}
    v(xz+x^2 w) = \min(v(w), v(z^2 w), v(xz + (1+x^2+z^2)w)). 
\end{equation}
Since $v$ is monomial in $(x,z,w)$, we have
\begin{align*}
v(xz+x^2 w) & = \min(\alpha_1+\alpha_2, 2\alpha_1 + \alpha_3) = \alpha_1+\alpha_2,\\
v(w) & = \alpha_3\geq \alpha_1+\alpha_2,\\
v(z^2w) & = 2\alpha_2+\alpha_3\geq \alpha_1+\alpha_2,\\
v(xz + (1+x^2+z^2)w) & = \min(\alpha_1+\alpha_2, \alpha_3) = \alpha_1+\alpha_2.
\end{align*}
Thus \eqref{eq:xyz+x^2-chart-y} holds.

If $v$ has the form $v_{\gamma,2}'$, then similarly in the affine chart $y=1$ it suffices to show \eqref{eq:xyz+x^2-chart-y}.
Since $v$ is monomial in $(x,z,w')$ with $w' = xz+(1+x^2+z^2)w$, we have $v(w) = v(w'-xz) = v(xz)$ as $v(w') = \gamma_3\geq \gamma_1+\gamma_2 = v(xz)$. Thus we have 
\begin{align*}
v(xz+x^2 w)&  = v(xz + \tfrac{x^2(w'-xz)}{1+x^2+z^2}) = v(xz(1+x^2+z^2) + x^2(w' - xz) )\\ 
& = v(xz(1+z^2) + x^2 w')  = \min( v(xz), v(z^2\cdot xz), v(x^2 w'))\\& = v(xz) = \gamma_1+\gamma_2.
\end{align*}
On the other hand, we have 
\[
\min(v(w), v(z^2 w), v(w')) = \min(v(w), v(w')) = \min (\gamma_1+\gamma_2, \gamma_3) = \gamma_1+\gamma_2.
\]
Thus \eqref{eq:xyz+x^2-chart-y} holds. The proof is finished.
\end{proof}

\begin{proof}[Proof of Proposition \ref{prop:tau_1-expression}]
This follows directly from the $G$-equivariant homeomorphisms $\Phi^{\pm}:\fD^{\pm}\to \Sk^{\trop}(U_K^{\pm})$ from Theorem \ref{thm:trop-iso-action}, the formula for tropical Vieta involutions from \cite[(3.1)--(3.3)]{Jan23}, and the description of $\Phi^{\pm}$ from the proof of Proposition \ref{prop:trop-Sk}.
\end{proof}

\subsection{Special and finitely generated skeleta}

In this subsection, we completely describe the special skeleton $\cD^{\SV}(U)$ and the finitely generated skeleton $\cD^{\rfg}(U)$.

\begin{defn}
    The \emph{central triangle} $\Pi\subset \cD(U)$  is defined as 
    \[
    \Pi:= \{[v_{\alpha,i}]\mid \alpha\in \sigma_0, ~1\leq i \leq 3\}.
    \]
\end{defn}

From the definition, one can see that $\Pi$ precisely consists of all valuations in $\cD(U)$ that are toric (or equivalently, monomial) with respect to the affine coordinates $(x,y,z)$ of $\bA^3$. Since toric valuations on $\bA^3$ are naturally identified with $\bR^3$ under the tropicalization map $\Phi^-$, we know that  $\Pi$ is identified with the triangle in $\bR^3$ with vertices at $(-\frac{1}{2},-\frac{1}{2},0), (-\frac{1}{2},0,-\frac{1}{2}), (0, -\frac{1}{2}, -\frac{1}{2})$. This justifies calling $\Pi$  a triangle.

\begin{defn}
    Let $p,q\in \bZ_{\geq 0}$ be coprime non-negative integers. Define an \emph{open exceptional line} $\ell_{p,q,i}\subset \cD(U)$ for $1\leq i\leq 3$ as
    \[
    \ell_{p,q,i} = \{v_{\gamma,i}'\mid \gamma_1= p, ~\gamma_2 = q,~ \gamma_3> 3p+3q -2\}. 
    \]
    A \emph{closed exceptional line} $\oell_{p,q,i}$ is the closure of $\ell_{p,q,i}$ in $ \cD(U)$. We also define an \emph{open (resp.\ a closed) $\pm$-exceptional ray} as 
    \[
    \ell_{p,q,i}^{\pm} := \ell_{p,q,i}\cap \cD^{\pm}\quad (\textrm{resp.}~\oell_{p,q,i}^{\pm} := \oell_{p,q,i}\cap \overline{\cD^{\pm}}).
    \]
\end{defn}

See Figure~\ref{fig:exc-lines} for an illustration of the closed exceptional
lines in $\cD(U)$.

Next, we state the two main results of this subsection.

\begin{thm}\label{thm:markov-special}
The special skeleton $\cD^{\SV}(U)$ is the disjoint union of $\{\ord_S\}$ and $\cD^{\SV, -} := \cD^{\SV}(U)\cap \cD^-$ where $\cD^{\SV,-}$ is a path-connected component of $\cD^{\SV}(U)$. Moreover, we have the following description of $\cD^{\SV,-}$.
\begin{enumerate}
    \item We have $\cD^{\SV,-} = G\cdot \Pi$. Moreover, the central triangle $\Pi$ is a fundamental domain for the $G$-action on $\cD^{\SV,-}$.
    \item $\cD^{\SV,-}$ is dense in $\cD^{-}$. The complement  $\cD^-\setminus \cD^{\SV,-}$ is the union of all open $-$-exceptional rays $\ell_{p,q,i}^{-}$.
\end{enumerate}
\end{thm}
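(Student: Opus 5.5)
The plan is to split $\cD(U)=\cD^+\sqcup\cD^0\sqcup\cD^-$ and treat each piece separately, using the maximality criterion (Theorem~\ref{thm:maximality-intro}) to rule out specialness and explicit degenerations to prove it.

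\emph{The central triangle.} Every $[v]\in\Pi$ is monomial in the coordinates $(x,y,z)$ of $\bA^3$. Hence $v$ is a toric valuation on $(\bP^3,H)$ and induces a product-type degeneration of $\bP^3$. Its associated graded ring is that of $\bP^3$ under a torus weight, so $(X_v,\Delta_v)\cong(\bP^3,0)$ is klt and $\Pi\subset\cD^{\SV}(U)$. By Corollary~\ref{cor:aut}, $G\cdot\Pi\subset\cD^{\SV}(U)$, and $G\cdot\Pi\subset\cD^-$ by Theorem~\ref{thm:trop-iso-action}. Through $\Phi^-$, Jang's ping-pong description \cite{Jan23} of the $G$-action on $\Sk^{\trop}(U_K^-)$ gives the rest of the structure. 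The orbit of the central triangle tiles an open dense subset. $\Pi$ is a fundamental domain, with interior translates disjoint. The complement in $\Sk^{\trop}(U_K^-)$ is the union of the exceptional rays, which I expect to match exactly the images of the $\ell^-_{p,q,i}$ under $\Phi^-$; this is checked using the formulas in the proof of Proposition~\ref{prop:trop-Sk}. This yields (1) apart from the claim that nothing else in $\cD^-$ is special, and it yields the density statement in (2).

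\emph{Exceptional rays are not special.} For $[v]\in\ell^-_{p,q,i}$ I would exhibit $w\neq v$ in $U^{\trop}$ with $v\preceq w$, and conclude by Theorem~\ref{thm:maximality-intro}; this also covers irrational points, since maximality is necessary for specialness. By symmetry take $i=1$, so $v=v'_{\gamma,1}$ with $(\gamma_1,\gamma_2)=(p,q)$. The natural candidate is the nearby $w=v'_{\gamma',1}$ with $\gamma'_3$ slightly larger, moving toward $\cD^0$ along the ray. The required inequality $v(f)\le w(f)$ for all $f\in\bk[x,y,z,f^{-1}]$ is checked through Proposition~\ref{prop:theta-order}, using a valuatively independent basis, or more concretely by showing $w$ dominates $v$ on generators together with $f^{-1}$. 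I expect this step to be the main obstacle. Since $v(f)$ changes along the ray, the comparison is not a plain monotonicity, so I would instead use $G$-equivariance. The idea is to move $v$ by an element of $G$ to a ray $\ell_{0,1,i}$ or $\ell_{1,0,i}$, where the structure is simplest and a direct computation in the chart $x=1$ is feasible. Jang's dynamics should show every exceptional ray is a $G$-translate of these basic ones.

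\emph{$\cD^0$ and $\cD^+$.} For $\cD^+$, I would use $\Phi^+$ and the description of $\Sk^{\trop}(U_K^+)$ as the boundary of a simplicial cone. On it the tropical Vieta involutions act with the apex $\Phi^+(\ord_S)$ fixed. The aim is to show that every $[v]\in\cD^+\cup\cD^0$ other than $\ord_S$ is non-maximal: it should be dominated by some $w$ obtained by sliding toward $\cD^-$ along the $\gamma_3$ direction, or by comparing with a $G$-translate. Meanwhile $\ord_S$ is special, because it is divisorial and $S$ is a Cartier divisor proportional to $-K_X$. Proposition~\ref{prop:deg-normal-cone} gives the degeneration to the projective cone over $\oS$, which is klt since $\oS$ is a cubic with an $A_1$ singularity. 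Since $\ord_S$ is isolated from $\cD^-$, $\cD^{\SV,-}$ is a path-connected component: path-connectedness follows because the tiles $g\Pi$ share edges along the reflection walls and $G$ is generated by these reflections. This gives the decomposition $\{\ord_S\}\sqcup\cD^{\SV,-}$.
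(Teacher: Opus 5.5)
The overall structure of your plan is sound and parallels the paper:
\begin{itemize}
\item $\Pi$ is special because it consists of toric valuations;
\item $G$-invariance comes from Corollary~\ref{cor:aut};
\item Jang's decomposition of $\cD^-$ transfers through $\Phi^-$;
\item $\ord_S$ is special via the cone over $\oS$;
\item path-connectedness follows because adjacent tiles share the edges fixed by the $\tau_i$.
\end{itemize}

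\textbf{The witnesses you propose for non-specialness cannot work.} Since $f$ is a unit of $\cO_U(U)=\bk[x,y,z,f^{-1}]$, any relation $v\preceq w$ forces $w(f)=v(f)$.
\begin{itemize}
\item Your candidate $w=v'_{(p,q,\gamma_3'),1}$ with $\gamma_3'>\gamma_3$ has $w(f)=\gamma_3'-3(p+q)>v(f)$, hence $w(f^{-1})<v(f^{-1})$. This happens on every ray, so reducing by $G$ to a basic ray does not help.
\item Rescaling to $w(f)=v(f)=-1$ does not fix it. On the normalized ray (for $i=1$) one has $v(x)=-s(p+q)$ with $s\to\infty$ toward $\cD^0$, so moving toward $\cD^0$ decreases $w(x)$.
\item For $v\in\cD^+$, ``sliding toward $\cD^-$'' is ruled out for the same reason: any $w\succeq v$ must satisfy $w(f)=v(f)>0$.
\end{itemize}
On the $\cD^-$ rays the idea can be repaired by moving the other way. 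Write $\ell^-_{1,0,1}=\{[v_a]\mid a>1/2\}$, where $v_a$ is the monomial valuation along $V(y,w)$ in the chart $x=1$ with weights $(a,3a-1)$ on $(w,w')$. One can check that $v_a\preceq v_{1/2}$, the special vertex. The reason is that for every polynomial $g$, the function $a\mapsto v_a(g)$ is concave with nonpositive slope at $a=1/2$. None of this is in your proposal, however.

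\textbf{The more serious gap is the irrational points of $\overline{\cD^+}$.} These are the points $v'_{\gamma,i}$ with $\gamma_1/\gamma_2\notin\bQ$. They turn out not to be finitely generated, so Theorem~\ref{thm:maximal-special} says nothing about their maximality. Showing they are non-maximal is the kind of statement predicted by the open Conjecture~\ref{conj:max-fg}, and your plan offers no mechanism for it.

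The paper does not use maximality here. It proceeds in two steps.
\begin{enumerate}
\item It proves Proposition~\ref{prop:exc-lines-fg}. After reducing by $\Aut(U)$ to $\oell_{1,0,1}$, the closed exceptional line is a finitely generated geodesic line from a toric vertex to $\ord_S$ (Proposition~\ref{prop:deg-normal-cone}). Its interior points degenerate to a $\bmu_\varrho$-cover of the projective cone over the non-normal surface $V(xyz+wx^2+wz^2)$, so they are not special. This handles the $\cD^-$, $\cD^0$ and $\cD^+$ parts of every open exceptional line at once.
\item For irrational $v\in\overline{\cD^+}$, suppose $v$ were special. Then it would lie in the relative interior of a small rational special geodesic simplex (Theorem~\ref{thm:fg-multideg} and Proposition~\ref{prop:geodesic-simplex}). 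Such a simplex must cross some closed $+$-exceptional ray away from $\ord_S$. That produces a special point on an open exceptional line, contradicting step 1.
\end{enumerate}
You need an argument of this kind, or a genuinely new non-maximality argument, to finish the proof.
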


\begin{thm}\label{thm:markov-fg}
The finitely generated skeleton $\cD^{\rfg}(U)$ is the union of $\cD^-$ and all closed exceptional lines $\oell_{p,q,i}$. In other words, every valuation in $\cD^-$ is finitely generated, and a valuation $v_{\gamma,i}'\in \overline{\cD^+}$ is finitely generated if and only if $\gamma_1$ and $\gamma_2$ are linearly dependent over $\bQ$. 
\end{thm}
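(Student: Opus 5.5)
We must prove Theorem \ref{thm:markov-fg}: $\cD^{\rfg}(U)=\cD^-\cup\bigcup\oell_{p,q,i}$. The plan splits along $\cD(U)=\cD^-\sqcup\cD^0\sqcup\cD^+$ and uses that $\cD^{\rfg}(U)$ is $G$-invariant (Corollary \ref{cor:aut}) and that finitely generated geodesic simplices are intrinsic (Proposition \ref{prop:geod-indep}).

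\emph{Step 1: $\cD^-\subset\cD^{\rfg}(U)$.} By Theorem \ref{thm:markov-special}, $\cD^-=G\cdot\Pi\cup\bigcup\ell^-_{p,q,i}$, and $G\cdot\Pi\subset\cD^{\SV}\subset\cD^{\rfg}$. It remains to treat the open $-$-exceptional rays, which Step 2 covers.

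\emph{Step 2: closed exceptional lines are finitely generated.} Each point of $\oell_{p,q,i}$ is $v'_{\gamma,i}$ with $(\gamma_1,\gamma_2)=(p,q)$ up to scaling. Its rational rank is at most $2$, and its center is an lc place lying on the $2$-dimensional cone spanned by the rays $(p,q,3p+3q-2)$ and $(0,0,1)=\ord_S$. I would show this whole cone is a finitely generated geodesic cone, so that every valuation on it has integral central fiber and is finitely generated by Theorem \ref{thm:fg-multideg}. To do this, apply Proposition \ref{prop:deg-normal-cone} with $S=\oS\sim 3H=3(-K_X)/4\cdot\ldots$, which is proportional to $-K_X$ (take $\Delta=0$, so $\oS\sim_{\bQ}\tfrac34(-K_X)$). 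The criterion reduces to checking that for the divisorial valuation $v=v'_{(p,q,c),i}$ (any rational $c$ on the line), the test configuration $\cX$ has irreducible degeneration $\cS_0$ of $\oS$. For the base case $v$ in $\Pi$ or at the endpoint $\gamma_3=3p+3q-2$, this valuation lies in $\overline{\cD^-}$. I expect the endpoint to be $G$-equivalent to a vertex or edge of $\Pi$, where the degeneration is toric or explicit, so irreducibility of $\cS_0$ can be checked there. Translating by $G$ then moves $(p,q)$ along the Farey-type orbit. Jang's description of exceptional rays as $G$-images of the rays over the edges of the central triangle should make this orbit exhaust all coprime $(p,q)$. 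Transporting the cone by $G$ via Proposition \ref{prop:geod-indep} gives all lines.

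\emph{Step 3: the remaining points of $\overline{\cD^+}$ are not finitely generated.} Take $v=v'_{\gamma,i}$ with $\gamma_1/\gamma_2$ irrational. Then $v$ has rational rank $3$ if $\gamma_3$ is independent of them, and rank $2$ otherwise.

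In the rank $3$ case, Theorem \ref{thm:toric} applies: finite generation would force specialness, contradicting Theorem \ref{thm:markov-special}, since $\cD^{\SV}\cap\overline{\cD^+}=\{\ord_S\}$. This requires $D\ge(1+\epsilon)\Delta$, which holds with $\Delta=0$.

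In the rank $2$ case, finite generation gives, by Theorem \ref{thm:fg-multideg} and Proposition \ref{prop:geodesic-simplex}, a rational finitely generated geodesic segment through $v$ inside a cell. Its divisorial endpoints $w_1,w_2$ are finitely generated lc places. The Gauss/Rees description then yields the valuations near $v$ in the segment. The key obstruction is that $v'(x),v'(y),v'(z)$, namely the tropical coordinates, are linear in the segment. However, $\Trop(\tau_i^+)$ acts as an infinite-order PL map near irrational-slope directions, as Jang's ping-pong shows for $\cD^+$. I would argue that the degeneration's central fiber carries a $\bG_m^2$-action, which makes the $G$-orbit of $v$ contain infinitely many non-collinear segments accumulating. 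Alternatively, and this is my first attempt, I would show directly that $\gr_v R$ is not finitely generated because the initial degeneration of $\oS$ along such a slope is not determined by finitely many monomials, in the spirit of Example \ref{ex:nodal-cubic} at the irrational limit $t_\infty$. This rank-$2$ case is the main obstacle. The fallback is to use the maximality characterization, Theorem \ref{thm:maximality-intro}, together with $v\preceq\ord_S$, plus a perturbation argument showing that a finitely generated $v$ of rank $2$ inside $\cD^+$ must be special.
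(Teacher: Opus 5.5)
Your Steps 1 and 2 and the rank-$3$ case of Step 3 are fine. Step 2 is exactly Proposition \ref{prop:exc-lines-fg}, so you can cite it rather than reprove it. In the rank-$3$ case, applying Theorem \ref{thm:toric} directly to $v'_{\gamma,i}$ is correct. The gap is the rank-$2$ case of Step 3, which you leave open, and none of your sketched routes closes it.

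\begin{itemize}
\item Accumulation of $G$-translates of finitely generated segments gives no contradiction. Nothing prevents finitely generated valuations from accumulating; rational points are finitely generated and dense.
\item The maximality route cannot work on its own. Theorem \ref{thm:maximality-intro} only says that, among finitely generated valuations, the special ones are the maximal ones. So showing $v$ is not maximal rules out specialness but not finite generation. Interior points of exceptional lines are finitely generated and non-maximal.
\item A ``perturbation'' of $v$ inside a finitely generated segment only produces valuations of rational rank at most $2$. Theorem \ref{thm:toric} does not apply to these.
\end{itemize}

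The missing idea is to enlarge the segment to a triangle whose third vertex is $\ord_S$, using Proposition \ref{prop:deg-normal-cone}.

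\begin{itemize}
\item By \cite[Lemma 2.10]{LX18} and \cite[Lemma 4.4]{LXZ22}, a finitely generated $v'_{\gamma,i}$ of rational rank $2$ lies in the interior of a finitely generated geodesic line joining divisorial valuations $v'_{\alpha,i}$ and $v'_{\beta,i}$. The induced multi-degeneration $(\fX,\overline{\fS}+\fH)\to\bA^2$ is isotrivial over $\bA^2\setminus\bG_m^2$.
\item Being divisorial, the two endpoints lie on closed exceptional lines. Hence each is joined to $\ord_S$ by a finitely generated geodesic line.
\item Proposition \ref{prop:deg-normal-cone} then shows that $\overline{\fS}_{(1,0)}$ and $\overline{\fS}_{(0,1)}$ are irreducible. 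By isotriviality, $\overline{\fS}_{(0,0)}$ is irreducible as well.
\item A second application of Proposition \ref{prop:deg-normal-cone} shows that $v'_{\alpha,i}$, $v'_{\beta,i}$ and $\ord_S$ span a finitely generated geodesic triangle. It lies in $\overline{\cD^+}$ by Proposition \ref{prop:geodesic-simplex}.
\item This triangle contains finitely generated valuations of rational rank $3$. These are special by Theorem \ref{thm:toric}, contradicting $\cD^{\SV}(U)\cap\overline{\cD^+}=\{\ord_S\}$.
\end{itemize}

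So the rank-$2$ case reduces to the rank-$3$ case you already handled. The reduction relies on the isotriviality of the two-parameter degeneration, which lets irreducibility of the degenerations of $\overline{S}$ pass from the axes to the origin.
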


See Figures~\ref{fig:SV-Markov} and~\ref{fig:hyp-reflection} for the geometry of the component
$G\cdot\Pi$ and its relation with the
$(\infty,\infty,\infty)$-triangle reflection group.

\begin{figure}[htbp] 
    \centering
        \include{dualcx-sv.tex}
    \caption{The connected component $G\cdot \Pi$ of the special skeleton}
    \label{fig:SV-Markov}
\end{figure}

\begin{figure}[htbp] 
    \centering
        \include{hyp-reflection.tex}        
    \caption{The $(\infty,\infty,\infty)$-triangle reflection group action on the Poincar\'e disk}
    \label{fig:hyp-reflection}
\end{figure}

\begin{figure}[htbp] 
    \centering
        \include{dualcx_exceptional.tex}
    \caption{Closed exceptional lines in the dual complex}
    \label{fig:exc-lines}
\end{figure}

We need some preparation before proving the above two theorems.

\begin{prop}\label{prop:exc-rays}
The set $\cD^- \setminus G\cdot \Pi$ is the union of all open $-$-exceptional rays $\ell^-_{p,q,i}$. 
\end{prop}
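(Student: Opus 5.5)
Via Theorem \ref{thm:trop-iso-action}, I will transport the statement along the $G$-equivariant PL homeomorphism $\Phi^-:\cD^-\to\Sk^{\trop}(U_K^-)$ and work with the tropical Vieta involutions on $\Sk^{\trop}(U_K^-)$. This skeleton is the boundary of the truncated cone: the central triangle $\Phi^-(\Pi)=\{x_1+x_2+x_3=-1,\ x_i\ge -\tfrac12\}$ together with three unbounded faces $F_i=\{x_i=x_{i+1}+x_{i+2}\le-\tfrac12,\ x_{i+1},x_{i+2}\le 0\}$. Using the formula for $\Phi^-(v'_{\gamma,i})$ from the proof of Proposition \ref{prop:trop-Sk}, normalized by $v(f)=-1$, i.e.\ $\gamma_3=3\gamma_1+3\gamma_2-1$, the open exceptional ray $\ell^-_{p,q,i}$ is the set of points of $F_i$ whose coordinates $(x_{i+1},x_{i+2})$ are proportional to $(-q,-p)$, after adjusting for the index convention, and which lie beyond an explicit cutoff. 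The condition $\gamma_3>3p+3q-2$ on the unnormalized ray should correspond, after rescaling, to the portion of the ray that is not yet reached by the orbit. So the claim is Jang's description \cite{Jan23} of the complement of the orbit of the ping-pong triangle as a union of exceptional rays, and I will reprove it in this language.

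The first step is a ping-pong computation. On $F_1$ the involution $\Trop(\tau_1^-)$ acts by $x_1\mapsto \min(2x_2,2x_3,-1)-x_1$, and $\tau_{2}$, $\tau_{3}$ act similarly on $F_2$, $F_3$. I will show that $\tau_1$ maps $\Phi^-(\Pi)$ onto the adjacent triangle in $F_1$ bounded by the edge $x_1=-\tfrac12$ of $\Pi$, and that reduced words $g=\tau_{i_1}\cdots\tau_{i_k}$ send $\Pi$ to triangles $g\Pi$ lying deeper in $F_{i_1}$. This gives a tiling of $G\cdot\Pi$ by triangles whose vertices are rational points, and it shows that $\Pi$ is a fundamental domain, which also serves Theorem \ref{thm:markov-special}(1). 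The natural tool is the linear action of each $\tau_i$ on the cones where the minimum is attained by a fixed term. In the projectivized coordinates $(x_{i+1}:x_{i+2})$ on $F_i$, this action should be the Farey and continued-fraction action: each face $F_i$ is a cone over an interval, and the triangles of the orbit subdivide it along the Farey tessellation.

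The second step identifies the complement. A point of $F_i$ with direction $(x_{i+1}:x_{i+2})$ that has irrational slope lies in infinitely many nested Farey intervals, but the triangles shrink only in the angular direction while extending outward. I expect every such point to lie in the orbit, because the triangles $g\Pi$ exhaust a neighbourhood of each irrational direction out to infinity. For a rational direction $p/q$, the triangles adjacent to that direction accumulate along a single ray, since infinitely many Farey neighbours share the vertex $p/q$. The union of these triangles covers the ray only up to the boundary point, which is a vertex of some $g\Pi$, and everything beyond it is missed. This missed part is exactly $\ell^-_{p,q,i}$, and the inequality $\gamma_3>3p+3q-2$ will be verified by computing that vertex explicitly by induction along the Euclidean algorithm from $(1,0)$ and $(0,1)$.

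The main obstacle is this covering and exhaustion statement: showing that the orbit triangles cover all of $\cD^-$ except the rational rays. I would first control the radial extent of $g\Pi$ by an explicit formula for the images of the vertices $(-\tfrac12,-\tfrac12,0)$ and its permutations under reduced words. By induction on word length, these images should be of the form $-\tfrac12(\text{Markov-like integers})$ with direction given by Farey fractions. I would then use the resulting Farey structure to show that every bounded region avoiding rational rays meets only finitely many triangles and is covered by them. Where possible I would cite the corresponding decomposition in \cite{Jan23} directly and only verify the matching of the cutoff.
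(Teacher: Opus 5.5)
Your proposal is correct and follows the paper's route. You transport along the $G$-equivariant homeomorphism $\Phi^-$ and invoke Jang's decomposition \cite[Theorem~9.3 and Proposition~9.4]{Jan23} of $\Sk^{\trop}(U_K^-)$ into $G\cdot\Phi^-(\Pi^\circ)$ and the closed rays $\bR_{\le -1/2}\cdot(p+q,q,p)$ and their permutations; you then check that these rays are the $\Phi^-$-images of the $\oell^-_{p,q,i}$, whose endpoints (your cutoff $\gamma_3=3p+3q-2$, i.e.\ scaling factor $\tfrac12$) are exactly the points of $G\cdot\VPi$, so only the open rays are removed.

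The Farey/ping-pong reproof you sketch is extra work the paper does not do. If you carry it out, the orbit vertices are precisely $-\tfrac12(p+q,q,p)$ up to permutation, i.e.\ Stern--Brocot rather than Markov-type integers.
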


\begin{proof}
Under the $G$-equivariant PL homeomorphism $\Phi^{-}:\cD^-\to \Sk^{\trop}(U_K^-)$ by Theorem \ref{thm:trop-iso-action}, this follows directly from \cite{Jan23}. More precisely, let $\Pi^\circ:=\Pi\setminus \VPi$ where $\VPi$ consists of the three vertices of $\Pi$. Then by \cite[Theorem 9.3 and Proposition 9.4]{Jan23} (see also \cite[Section 1.2]{Jan23}) we have that $\Sk^{\trop}(U_K^-)$ is the disjoint union of $G\cdot \Phi^-(\Pi^\circ)$ and these closed exceptional rays $\bR_{\leq - \frac{1}{2}}\cdot (q, p, p+q) $, $\bR_{\leq -  \frac{1}{2}}\cdot (p, p+q,q) $ or $\bR_{\leq -  \frac{1}{2}}\cdot (p+q, q, p) $  where $p,q$ are coprime nonnegative integers. Note that by the definition of $\Phi^-$ these closed exceptional rays are precisely the $\Phi^-$-images of closed $-$-exceptional rays $\oell_{p,q,i}^-$. Moreover, the union of the three $G$-orbits of vertices of $\Pi$ is precisely the endpoints of these closed $-$-exceptional rays. Thus the proof is finished. 
\end{proof}

\begin{prop}\label{prop:exc-lines-fg}
Every closed exceptional line $\oell_{p,q,i}$ is a geodesic line. In particular, we have $\oell_{p,q,i}\subset \cD^{\rfg}(U)$. Moreover, a valuation $v\in \oell_{p,q,i}$ is special if and only if $v\in \oell_{p,q,i}\setminus \ell_{p,q,i}$, i.e.\ it is one of the two endpoints $\ord_S$ and $v_{(p,q,3p+3q-2),i}'$.
\end{prop}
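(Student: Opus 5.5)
The plan is to prove everything for one model exceptional line with explicit computations, and then transport it to all the others by $G$. The transport uses Corollary~\ref{cor:aut} together with Proposition~\ref{prop:geod-indep}: specialness, finite generation and geodesic lines are all $\Aut(U)$-invariant. Each $\tau_j$ preserves $S$, so $G$ fixes $\ord_S$. Note also that in the coordinates of Definition~\ref{def:monomial-val} we have $v'_{(0,0,c),i}=c\,\ord_{\oS}$. Hence $\ord_S$ is the common endpoint at $\gamma_3\to\infty$ of every $\oell_{p,q,i}$.

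\emph{Step 1: reduction to a model line.} I would use the formulas of Proposition~\ref{prop:tau_1-expression}(2) and their permutations. On each cell of $\overline{\cD^+}$, $\tau_{1,*}$ is integral linear and sends $v'_{(\gamma_1,\gamma_2,\gamma_3),2}$ to $v'_{(\gamma_1,\gamma_1+\gamma_2,\gamma_3+3\gamma_1),1}$. Thus it maps the line with fixed $(\gamma_1,\gamma_2)=(p,q)$ linearly onto the line with fixed $(p,p+q)$. Since $3(p+q)-2+3p=3(p+p+q)-2$, the lower endpoint $\gamma_3=3p+3q-2$ goes to the lower endpoint. Together with the symmetric formulas, this realizes the Euclidean algorithm on coprime pairs. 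Therefore every $\oell_{p,q,i}$ is a $G$-translate of a model line $\oell_{1,0,i}$. The lower endpoint of $\oell_{1,0,1}$ is $v'_{(1,0,1),1}=v_{(1,0,1),1}$, a vertex of $\Pi$. It is the monomial valuation at $p_1$ with weights $(1,0,1)$ on $(y,z,w)$, i.e.\ the toric valuation along the line $V(y,w)$. It is toric for $\bP^3$, hence special. So all lower endpoints are special, being $G$-translates of a special valuation.

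\emph{Step 2: the model line is a geodesic line.} I would apply Proposition~\ref{prop:deg-normal-cone} with $r=1$ and $v_1=v_{(1,0,1),1}$, using $\oS\sim\tfrac34(-K_{\bP^3})$, so $\rho=3/4$ and $\oS\subset\lfloor D\rfloor$. The test configuration of $v_1$ is the toric degeneration. In the chart $x=1$, the initial form of $yz+(1+y^2+z^2)w$ for weights $(1,0,1)$ on $(y,z,w)$ is $yz+(1+z^2)w$. So $\cS_0$ is the cubic $xyz+w(x^2+z^2)=0$, which I would check is irreducible. Proposition~\ref{prop:deg-normal-cone} then says $v_1,\ord_S$ span a finitely generated geodesic line. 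By Proposition~\ref{prop:geodesic-simplex} it is the straight segment in the cell of $v'_{\gamma,1}$, namely $\oell_{1,0,1}$, so every point on it is finitely generated. Transporting by $G$ gives the same for all $\oell_{p,q,i}$. The endpoint $\ord_S$ is special by the explicit $G$-fixed computation, or via Theorem~\ref{thm:maximal-special}. The exact mechanism I would verify here is that the central fiber of the test configuration of $\ord_S$ is the projective cone over $\oS$. Since $\oS$ has only an $A_1$ point, this cone is klt.

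\emph{Step 3: interior points are not special; this is the main obstacle.} For $v$ in the open segment, Theorem~\ref{thm:fg-multideg} together with the proof of Proposition~\ref{prop:deg-normal-cone} says that the central fiber of the two-parameter multi-degeneration is (up to $\bmu_\varrho$) the projective cone over $\cS_0$. The polarization is $\cO(1)|_{\cS_0}$, which is $\bQ$-proportional to $-K_{\cS_0}$ by adjunction with $H$, and $\Delta=0$. Hence this cone is klt iff $\cS_0$ is klt. But $\cS_0=V(xyz+w(x^2+z^2))$ is singular along the whole line $\{x=z=0\}$: all partials $yz+2xw$, $xz$, $xy+2zw$, $x^2+z^2$ vanish there. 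So $\cS_0$ is non-normal, hence not klt. Thus interior points of $\oell_{1,0,1}$ are not special, and by $G$-invariance neither are interior points of any $\oell_{p,q,i}$. The delicate part is justifying that the klt property of the quotient cone $\fX_{(0,0)}/\bmu_\varrho$ is equivalent to that of $\fX_{(0,0)}$ (a finite quotient by a group acting freely in codimension one, up to the boundary). As a backup, I would instead exhibit a non-klt lc center and use Proposition~\ref{prop:max-log-disc} to show the interior valuations are not $\preceq$-maximal.
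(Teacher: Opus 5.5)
Your proposal is correct, and Steps 2 and 3 are essentially the paper's proof. Both apply Proposition~\ref{prop:deg-normal-cone} to the toric vertex $v_{(1,0,1),1}$ and $\oS$, check that $\cS_0=V(xyz+w(x^2+z^2))$ is irreducible, and use Proposition~\ref{prop:geodesic-simplex} to identify the geodesic with the straight segment $\oell_{1,0,1}$. Both rule out the interior points because $\cS_0$ is singular along $x=z=0$. The endpoints are special because the lower vertex is toric and $\ord_S$ degenerates $\bP^3$ to the cone over $\oS$.

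The only real difference is Step 1. The paper cites \cite[Theorem 9.3 and Proposition 9.4]{Jan23} to place all lower endpoints in $G\cdot\VPi$. It then uses coordinate permutations to put them in a single $\Aut(U)$-orbit and transports $\oell_{1,0,1}$ via Proposition~\ref{prop:geod-indep}. Your Euclidean algorithm on the formulas of Proposition~\ref{prop:tau_1-expression}(2) is self-contained and maps lines onto lines directly, which is a nice feature.

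You need two small repairs.

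First, $G$ alone cannot reduce everything to $\oell_{1,0,1}$. Record the direction of an exceptional line as a primitive triple $(a,b,c)$ in which one entry is the sum of the other two. The formulas then show that $\tau_1$ acts by $a\mapsto 2\max(b,c)-a$, so $(a,b,c)\bmod 2$ is $G$-invariant. Hence there are three $G$-orbits of exceptional lines, represented by $\oell_{1,0,1}=\oell_{0,1,3}$, $\oell_{0,1,1}=\oell_{0,1,2}$ and $\oell_{1,0,2}=\oell_{1,0,3}$. Note that your family of model lines $\oell_{1,0,i}$ misses the second of these. As the paper does, you must also use the coordinate permutations. These lie in $\Aut(U)$, fix $\ord_S$, and permute the three vertices of $\Pi$ transitively.

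Second, the step you flag as delicate goes away if you transport normality instead of the klt property. A quotient of a normal variety by the finite group $\bmu_\varrho$ is normal. The projective cone over the non-normal surface $\cS_0$ is non-normal. Therefore $\fX_{(0,0)}$ is non-normal, and hence not klt. This is exactly the paper's argument, so the backup via Proposition~\ref{prop:max-log-disc} is unnecessary.
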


\begin{proof}
By \cite[Theorem 9.3 and Proposition 9.4]{Jan23}, every endpoint $v_{(p,q,3p+3q-2),i}'$ of an exceptional line belongs to the $G$-orbit of one of the three vertices of $\Pi$. Since coordinate permutations belong to $\Aut(U)$ and act transitively on these three vertices, all such endpoints belong to a single $\Aut(U)$-orbit. The common endpoint $\ord_S$ is fixed by $\Aut(U)$. Thus by Proposition \ref{prop:geod-indep} it suffices to prove the statements for a specific closed exceptional line  $\oell_{1,0,1}$. 

Let $(\cX, \ocS + \cH)\to \bA^1_t$ be the test configuration of $(\bP^3,\oS + H)$ induced by the valuation $v:=v_{(1,0,1),1}' = v_{(1,0,1),1}$ by Theorem \ref{thm:sa-tc}. Since $v$ is a vertex of $\Pi$ and hence a toric valuation on $\bP^3$ with $v(x) = v(z) = 0$ and $v(y)=v(w)=1$ in the projective coordinates $[x,y,z,w]$, we have $\cX = \bP^3 \times \bA^1$, $\cH = H \times \bA^1$, and $\ocS = V(xyz+wx^2+ wz^2+t^2 wy^2)$. Moreover, by adjunction and computations we have that the central fiber $\oS_0 = V(xyz+wx^2+wz^2) $ of $\ocS \to \bA^1$ is irreducible and slc, but not klt as it is not normal along $(x=z=0)$.

Next, by applying Proposition \ref{prop:deg-normal-cone} to $v_1:=v$ and $S:= \oS$ we have that there is a finitely generated geodesic line connecting $v$ and $\ord_{S}$. Since in the simplex $\{v_{\gamma, 1}'\}$ we can identify $v= v_{(1,0,1),1}'$ and $\ord_S = v_{(0,0,1),1}'$, the straight line  $\oell_{1,0,1}$ connecting them in this simplex is precisely this geodesic line by Proposition \ref{prop:geodesic-simplex}. Moreover, for any valuation in the open line
$\ell_{1,0,1}$, the central fiber of its induced test
configuration is isomorphic to $\fX_{(0,0)}$. By
Proposition~\ref{prop:deg-normal-cone}, for some $\varrho\in\bN$ the quotient
$\fX_{(0,0)}/\bmu_{\varrho}$
is a projective cone over $\oS$. Since $\oS$ is not normal,
this quotient is not normal. Hence $\fX_{(0,0)}$ is not
normal, and therefore not klt. It follows that no valuation
in the open line $\ell_{1,0,1}$ is special. On the other hand, both vertices $v$ and $\ord_S$ are special as they induce klt degenerations of $\bP^3$ to $\bP^3$ and the projective cone over $\oS$ with polarization $\cO_{\oS}(3)$ respectively. This finishes the proof of the statements for $\oell_{1,0,1}$ and hence for all $\oell_{p,q,i}$.
\end{proof}

\begin{proof}[Proof of Theorem \ref{thm:markov-special}]

(1) We first show that $G\cdot \Pi\subset \cD^{\SV, -}$. By Corollary \ref{cor:aut} we know that $\cD^{\SV, -}$ is $G$-invariant. Thus it suffices to show that $\Pi\subset \cD^{\SV,-}$. This is clear as $\Pi$ only consists of toric valuations on $X=\bP^3$ which are automatically special. 

Next, by Proposition \ref{prop:exc-rays} the complement $\cD^-\setminus G\cdot \Pi$ is the disjoint union of all open $-$-exceptional rays, which contain no special valuations by Proposition \ref{prop:exc-lines-fg}. Thus we have $\cD^{\SV, -}\subset G\cdot \Pi$.

Under $\Phi^-$, the fundamental-domain assertion is exactly
the ping-pong description of the central triangle in
\cite[Theorem 9.3]{Jan23}.

(2) This follows directly from (1) and Proposition \ref{prop:exc-rays}.

For the main statement, it suffices to show that $\cD^{\SV}(U)\cap \overline{\cD^+} = \{\ord_S\}$ and $\cD^{\SV, -}$ is path-connected. Path-connectedness follows from \cite[Theorem A]{Jan23} that $\Int(\cD^{\SV,-}) = G\cdot \Pi^{\circ}$ is homeomorphic to the hyperbolic plane $\bH^2$ which is path-connected, and the fact that $\Pi$ is path-connected. For a valuation $v\in \cD^{\SV}(U)\cap \overline{\cD^+}$, we can write $v = v_{\gamma,i}'$ where $1\leq i\leq 3 $,  $\gamma\in \sigma_0$ and $\gamma_3 \geq 3 \gamma_1+3\gamma_2$. 

If $\gamma_1$ and $\gamma_2$ are $\bQ$-linearly dependent, then we know that $v\in \oell_{p,q,i}$ where $p,q$ are coprime non-negative integers such that $p\gamma_2 = q\gamma_1$. Thus by Proposition \ref{prop:exc-lines-fg} we know that $v$ is either $\ord_S$ or $v_{(p,q,3p+3q-2),i}'$. The latter case cannot happen as $v_{(p,q,3p+3q-2),i}'\not\in \overline{\cD^+}$ by $3p+3q -2 < 3p + 3q$. 

If $\gamma_1$ and $\gamma_2$ are $\bQ$-linearly independent, then by Theorem \ref{thm:fg-multideg} and Definition \ref{def:geodesic} we know that $v$ is contained in a special geodesic simplex (line or triangle) denoted by $P_v$. After shrinking $P_v$, we may assume that it is contained
in a single cell. Hence Proposition~\ref{prop:geodesic-simplex} shows that $P_v$
is a rational line or triangle.
By assumption on $\gamma_1, \gamma_2$ we know that $P_v$ is not contained in any exceptional line. Thus it must intersect a closed $+$-exceptional ray at a point $v'$ as the union of all such rays is dense in $\overline{\cD^+}$ whose complement contains no rational line or triangle. Thus $v'$ is special which implies that either $v'=\ord_S$ or $v'\in \cD^-$ by Proposition \ref{prop:exc-lines-fg}. But both cases cannot happen as $P_v$ is very small so $\ord_S\not\in P_v$, and $v'$ lies in a closed $+$-exceptional ray which is disjoint from $\cD^-$. Thus the proof is finished. 
\end{proof}

\begin{proof}[Proof of Theorem \ref{thm:markov-fg}]
By Theorem \ref{thm:markov-special} and Proposition \ref{prop:exc-lines-fg} we know that $\cD^{\rfg}(U)$ contains $\cD^{\SV,-}$ and all closed exceptional lines $\oell_{p,q,i}$, and that $\cD^-\setminus \cD^{\SV,-}$ is contained in the union of all $\oell_{p,q,i}$. Thus $\cD^{\rfg}(U)$ contains $\cD^-$. To show the reverse containment, let $v_{\gamma,i}'\in \overline{\cD^+} = \cD(U)\setminus \cD^-$. Clearly, $v_{\gamma,i}'$ lies on a closed exceptional line if and only if $\gamma_1$ and $\gamma_2$ are linearly dependent over $\bQ$. Thus it suffices to show that $v_{\gamma,i}'$ is not finitely generated assuming $\gamma_1$ and $\gamma_2$ are linearly independent over $\bQ$.

Assume to the contrary that $v_{\gamma,i}'$ is finitely generated. After rescaling we may assume $\gamma_3 = 1$. If $\gamma_1,\gamma_2, 1$ are linearly independent over $\bQ$, then by \cite[Lemma 2.10]{LX18} and \cite[Lemma 4.4]{LXZ22} there exists a finitely generated geodesic triangle $P_\gamma\subset \overline{\cD^+}$ containing $v_{\gamma,i}'$. Thus we can choose some $v\in P_{\gamma}$ of rational rank $3$ such that $v$ is finitely generated. By Theorem \ref{thm:toric} we know that $v$ is special, i.e. $v\in \cD^{\SV}(U)\cap \overline{\cD^+}$. However, $\cD^{\SV}(U)\cap \overline{\cD^+}$ is contained in the union of all closed exceptional lines $\oell_{p,q,i}$, hence no valuation in $\cD^{\SV}(U)\cap \overline{\cD^+}$ has rational rank $3$, a contradiction. 

Next, we may assume that $\gamma_1, \gamma_2,1$ are linearly dependent over $\bQ$. Again by \cite[Lemma 2.10]{LX18} and \cite[Lemma 4.4]{LXZ22}, there exists a finitely generated geodesic line connecting $v_{\alpha,i}'$ and $v_{\beta,i}'$ that contains $v_{\gamma,i}'$ in the interior, and that the multi-degeneration $(\fX, \overline{\fS}+\fH)\to \bA^2$ induced by $v_{\alpha,i}'$ and $v_{\beta,i}'$ is  isotrivial over $\bA^2\setminus \bG_m^2$. Since $v_{\alpha,i}'$ and $v_{\beta,i}'$ are both divisorial, they belong to the union of all closed exceptional lines. In particular, there are finitely generated geodesic lines connecting $v_{\alpha,i}'$ and $v_{\beta,i'}$ to $\ord_S$ respectively. By Proposition \ref{prop:deg-normal-cone} we conclude that $\overline{\fS}_{(1,0)}$ and $\overline{\fS}_{(0,1)}$ are both irreducible, which implies that $\overline{\fS}_{(0,0)}$ is irreducible as well by the isotriviality of $\overline{\fS}$ over $\bA^2\setminus \bG_m^2$. Then applying Proposition \ref{prop:deg-normal-cone} again to $v_{\alpha,i}',v_{\beta,i'},\ord_S$ yields that they form a finitely generated geodesic triangle $P_{\alpha,\beta}$ which is contained in $\overline{\cD^+}$ by Proposition \ref{prop:geodesic-simplex}. As a result, there exists a valuation $v\in P_{\alpha,\beta}$ of rational rank $3$ that is finitely generated. By the same argument as in the previous paragraph we get a contradiction. Thus the proof is finished.
\end{proof}

\subsection{Topology of the special skeleton}
\label{sec:topology}
In this subsection, we study the topology of $\cD^{\SV}(U)$ and prove Theorems \ref{thm:intro-markov} and \ref{thm:counterex-LX24}.

\begin{thm}\label{thm:satake}
    There is a $G$-equivariant continuous bijection 
    \[
    \Psi: \cD^{\SV,-}\to \bH^2\cup \bP^1(\bQ)
    \]
    where the target is endowed with the Satake topology, and the $G$-action on the target is the $(\infty, \infty,\infty)$-triangle reflection group action. Moreover, the restriction $
\Psi^\circ:
\Int(\cD^{\SV,-})
\xrightarrow{\sim}
\bH^2$
is a $G$-equivariant homeomorphism.
\end{thm}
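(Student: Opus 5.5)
The plan is to build $\Psi$ by matching fundamental domains and extending equivariantly. We identify $\Pi$ with a closed ideal triangle $T\subset\bH^2\cup\bP^1(\bQ)$ whose three ideal vertices are $0,1,\infty$. The three sides of $\Pi$ are the fixed loci of $\tau_1,\tau_2,\tau_3$ acting on $\Pi$; this can be read off from the tropical formulas in Theorem~\ref{thm:trop-iso-action}, since $\Trop(\tau_i^-)$ fixes the face where $x_i=\min(2x_{i+1},2x_{i+2},-1)-x_i$. The three sides of $T$ are the mirrors of the three generating reflections of the $(\infty,\infty,\infty)$-triangle group.

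The steps are as follows.

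\begin{enumerate}
\item \textbf{Map on the fundamental domain.} Choose a homeomorphism $\psi_0:\Pi\to T$. It should send the vertices of $\Pi$ to the cusps of $T$ and the side fixed by $\tau_i$ to the side fixed by the $i$-th reflection. Since $\Pi$ is a compact triangle and $T$ is compact in the Satake topology (closed ideal triangle plus its cusps), any boundary-compatible PL-type homeomorphism works.
\item \textbf{Equivariant extension.} Set $\Psi(g\cdot v):=g\cdot\psi_0(v)$. By Theorem~\ref{thm:markov-special}(1), $\Pi$ is a fundamental domain for $G$ acting on $\cD^{\SV,-}=G\cdot\Pi$, and $T$ is a fundamental domain on the target. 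So we only need to compare stabilizers of points of $\Pi$:
\begin{itemize}
\item interior points have trivial stabilizer on both sides;
\item side points have stabilizer $\langle\tau_i\rangle$ on both sides;
\item each vertex of $\Pi$ has stabilizer the infinite dihedral group generated by the two involutions fixing the adjacent sides, and the same holds at a cusp of $T$.
\end{itemize}
The vertex case needs checking against Jang's ping-pong description \cite[Theorem 9.3]{Jan23}. There, the $G$-translates of $\Pi$ around a vertex are infinitely many and accumulate along the exceptional ray $\ell^-_{p,q,i}$ emanating from that vertex, as in Proposition~\ref{prop:exc-rays}. Matching stabilizers gives that $\Psi$ is well defined and bijective. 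The orbits of the cusps $0,1,\infty$ exhaust $\bP^1(\bQ)$ for the level-2 group, and the vertex orbits are the endpoints of the exceptional rays.
\item \textbf{Interior homeomorphism.} On $\Int(\cD^{\SV,-})=G\cdot\Pi^\circ$, we use \cite[Theorem A]{Jan23}, which identifies this set with $\bH^2$ equivariantly. Alternatively, $G\cdot\Pi^\circ$ and $\bH^2$ are both locally finite tessellations by the cells $g\Pi^\circ$ and $gT^\circ$ with the same gluing pattern, and $\Psi^\circ$ is a local homeomorphism near each interior point and each side point. Bijectivity then makes $\Psi^\circ$ a homeomorphism.
\end{enumerate}

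The main obstacle is continuity of $\Psi$ at the vertex orbits. In $\cD^{\SV,-}$ with the subspace topology from $\cD(U)\cong\bS^2$, a neighborhood of a vertex $v$ meets the infinitely many translates $h^k\Pi$, where $h$ is the parabolic-type product of the two involutions fixing $v$. These translates converge to $v$ in a fan-like fashion but never fill a full disk, since the exceptional ray is removed. A basic Satake neighborhood of a cusp $c$ is $\{c\}$ together with a horoball at $c$. So we must show that every such horoball pulls back to a set containing $v$ together with a tail of the translates $h^k\Pi$ near $v$.

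For this we will use the explicit PL formulas of Proposition~\ref{prop:tau_1-expression}, restricted to a neighborhood of $v$ inside the simplex containing it. The first thing to try is to compute $h$ near $v$ as a piecewise-linear map fixing $v$. We want to show that $h$ acts near $v$ like a unipotent "shear" toward the exceptional ray, so that $h^k\Pi$ shrinks to $v$ uniformly. Dually, the parabolic element $\Psi h\Psi^{-1}$ fixing $c$ pushes $T$ deeper into any horoball. Matching these two behaviors gives continuity at $v$, and by equivariance at every point of $G\cdot\VPi$.

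Continuity of $\Psi^{-1}$ fails there: a sequence approaching the exceptional ray from the far side converges in $\cD(U)$ but not in the Satake topology. This is why the statement asserts only a continuous bijection.
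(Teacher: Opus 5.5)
Your Steps 1--3 follow the paper's proof. The paper also builds a side-matching homeomorphism $\Pi\to T$, extends it $G$-equivariantly by matching $\Stab_G(v_i)=\langle\tau_j,\tau_k\rangle$ with the cusp stabilizers, and uses Jang's theorem on $G\cdot\Pi^\circ$. You should still justify $\Int(\cD^{\SV,-})=G\cdot\Pi^\circ$. The paper gets this from Proposition~\ref{prop:exc-rays}: each point of $G\cdot\VPi$ is the endpoint of an open exceptional ray disjoint from $\cD^{\SV,-}$.

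\textbf{The gap is in Step~4, exactly at the obstacle you single out.} The mechanism you propose is false on both sides.

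\emph{Source side.} Work in $\Sk^{\trop}(U^-_K)$ via Theorem~\ref{thm:trop-iso-action}, with $v=(0,-\tfrac12,-\tfrac12)$ and $h=\tau_2\tau_3$. The tropical Vieta formulas show that $\tau_2,\tau_3$ preserve $x_1$, and $h$ slides each level curve $\{x_1=-s\}$ along itself by an amount proportional to $s$. For $y_k:=(-\tfrac1k,-\tfrac12,-\tfrac12+\tfrac1k)\in\Pi$ one computes $h^m(y_k)=(-\tfrac1k,-\tfrac12-\tfrac{2m}{k},-\tfrac12-\tfrac{2m-1}{k})$, so $h^k(y_k)\to(0,-\tfrac52,-\tfrac52)\in\ell^-_v$. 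Likewise, the $h$-orbit of any fixed point of $\Pi$ at height $-s<0$ runs off towards $\cD^0$. So $h^k\Pi$ is a long thin wedge with apex $v$ that converges to the closed exceptional ray $\oell^-_v$, not to $v$.

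\emph{Target side.} Let $c=\Psi(v)$ and let $\rho(h)$ be the parabolic fixing $c$ (your $\Psi h\Psi^{-1}$). It preserves every horoball $\mathbb{B}$ based at $c$, so $\rho(h)^kT\not\subset\mathbb{B}$ for all $k$. Translates of $T$ shrink to $c$ only in the Euclidean topology of the closed disk, not in the Satake topology.

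\emph{The two behaviors would in fact contradict the theorem.} Suppose $h^k\Pi\to v$. Take $t_0\in(T\cap\bH^2)\setminus\mathbb{B}$ and set $z_k:=h^k\psi_0^{-1}(t_0)$. Then $z_k\to v$, while $\Psi(z_k)=\rho(h)^kt_0\notin\mathbb{B}^*:=\mathbb{B}\cup\{c\}$, so $\Psi$ would be discontinuous at $v$. For the same reason, a horoball never pulls back to a set containing whole translates $h^k\Pi$.

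\textbf{What is missing is a uniform, stabilizer-invariant neighborhood of $v$ playing the role of a horoball.} The invariant height $x_1$ supplies it.
\begin{enumerate}
\item The $\Stab_G(v)$-translates of the edge from $(-\tfrac12,-\tfrac12,0)$ to $(-\tfrac12,0,-\tfrac12)$ cover $\Sigma_v:=\Sk^{\trop}(U^-_K)\cap\{x_1=-\tfrac12\}$. Hence $\Stab_G(v)\cdot\Pi$ is the join of $v$ with $\Sigma_v$, and $(\Stab_G(v)\cdot\Pi)\cup\ell^-_v=\Sk^{\trop}(U^-_K)\cap\{x_1\ge-\tfrac12\}$.
\item Since $G\cdot\Pi$ misses $\ell^-_v$, this gives $(G\cdot\Pi)\cap\{x_1>-\epsilon\}=\Stab_G(v)\cdot(\Pi\cap\{x_1>-\epsilon\})$. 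It also shows that no translate $g\Pi$ with $g\notin\Stab_G(v)$ approaches $v$, a point your sketch leaves open.
\item On the target, small horoballs satisfy $\mathbb{B}^*=\Stab(c)\cdot(\mathbb{B}^*\cap T)$.
\item Choose $\epsilon$ with $\Pi\cap\{x_1>-\epsilon\}\subset\psi_0^{-1}(\mathbb{B}^*\cap T)$. Equivariance then gives $\Psi((G\cdot\Pi)\cap\{x_1>-\epsilon\})\subset\mathbb{B}^*$, which is continuity at $v$.
\end{enumerate}
Your shear picture is the right local model. What it yields is invariance of the height, parallel to the invariance of horoballs under the parabolic, not contraction of the translates.

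Your closing remark on $\Psi^{-1}$ is correspondingly reversed. The bad sequence $\rho(h)^k\Psi(y_k)$ converges to $c$ in the Satake topology, while its preimages $h^k(y_k)$ converge in $\cD(U)$ to a point of $\ell^-_v$ rather than to $v$.
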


\begin{proof}
Recall that $\VPi$ is the set of vertices of $\Pi$ and $\Pi^\circ:=\Pi\setminus \VPi$.  We begin by recalling Jang's
$G$-equivariant homeomorphism between $G\cdot\Pi^\circ$ and $\bH^2$.

By Theorem \ref{thm:markov-special}, we have
$\cD^{\SV,-}=G\cdot\Pi$.
Moreover, by Proposition \ref{prop:exc-rays},
$G\cdot\Pi^\circ
=
\cD^-\setminus \bigcup_{p,q,i}\overline{\ell}^{\, -}_{p,q,i}.$
Hence $G\cdot\Pi^\circ$ is open in $\cD^-$. On the other hand, every point
of $G\cdot\VPi$ is the endpoint of an open exceptional ray contained in
$\cD^-\setminus\cD^{\SV,-}$, and therefore is not an interior point of
$\cD^{\SV,-}$. Thus we have $
\Int(\cD^{\SV,-})
=
G\cdot\Pi^\circ$.

Under the $G$-equivariant identification
$\Phi^-:\cD^-\xrightarrow{\sim}\Sk^{\trop}(U^-_K)$
of Theorem \ref{thm:trop-iso-action}, the subset $\Pi^\circ$ corresponds to the ping-pong table in \cite[Section~9]{Jan23}. By \cite[Theorem~9.3]{Jan23}, there is
an isomorphism
$\rho:G\xrightarrow{\sim}\langle r_1,r_2,r_3\rangle$
onto the $(\infty,\infty,\infty)$-triangle reflection group and a
$\rho$-equivariant homeomorphism
\[    \Psi^\circ:\Int(\cD^{\SV,-})=G\cdot\Pi^\circ\xrightarrow{\sim}\bH^2.
\]
Let
$T\subset \bH^2\cup \bP^1(\bQ)$
be the closed ideal triangle corresponding to $\Pi$, and set $T^\circ:=T\setminus \mathrm{Vert}(T)$.
More concretely, one may choose a homeomorphism $\Pi\xrightarrow{\sim}T$ matching the three
sides fixed by the Vieta involutions with the corresponding reflection
walls, and then extend it $G$-equivariantly.

Next, we extend $\Psi^\circ$ to the $G$-orbit of $\VPi$.
Let $v_1,v_2,v_3$ be the vertices of $\Pi$, and let
$\xi_1,\xi_2,\xi_3\in\bP^1(\bQ)$ be the corresponding ideal vertices
of $T$. 
We extend $\Psi^\circ$ by
\[
    \Psi(g\cdot v_i):=\rho(g)\cdot\xi_i \qquad \textrm{for }
     g\in G\textrm{ and } 1\le i\le3.
\]
This is well-defined. Indeed, if $v_i$ is the common fixed point of
$\tau_j$ and $\tau_k$, then the ping-pong description gives
$\Stab_G(v_i)=\langle\tau_j,\tau_k\rangle$,
while $\Stab_{\rho(G)}(\xi_i)=\langle r_j,r_k\rangle$.
These two stabilizers are identified by $\rho$. Thus we obtain a
$G$-equivariant map
\[
    \Psi:\cD^{\SV,-}=G\cdot\Pi
    \to \bH^2\cup\bP^1(\bQ).
\]

Next, we show that $\Psi$ is bijective. 
The restriction of $\Psi$ to $G\cdot\Pi^\circ$ is already a
homeomorphism onto $\bH^2$. It therefore remains only to consider the
vertices. In the hyperbolic model of \cite[Section~9]{Jan23}, the
$G$-translates of $T$ form the Farey ideal triangulation.
Hence its set of ideal vertices is precisely
$    \rho(G)\cdot\{\xi_1,\xi_2,\xi_3\}=\bP^1(\bQ)$.
Moreover, the equality of the stabilizers above shows that
\[
   \Psi|_{G\cdot \VPi}: G\cdot \VPi\to\bP^1(\bQ),
    \qquad g\cdot v_i\mapsto\rho(g)\cdot\xi_i,
\]
is injective. Hence $\Psi$ is a $G$-equivariant bijection.

Finally, it remains to prove continuity of $\Psi$ at the points of
$G\cdot\VPi$, since $\Psi^\circ$ is already a homeomorphism on
$G\cdot\Pi^\circ$. By $G$-equivariance, it suffices to consider a
vertex $v=v_1\in\VPi$. By Theorem \ref{thm:trop-iso-action} we have a $G$-equivariant homeomorphism $\Phi^-:\cD^-\xrightarrow{\sim}\Sk^{\trop}(U^-_K)$. Thus  we may equivalently work in the
tropical skeleton. After relabeling the coordinates, we may assume that
$v$ corresponds to $
(0,-\tfrac12,-\tfrac12)\in\Sk^{\trop}(U^-_K)$.
Then $\Stab_G(v)=\langle\tau_2,\tau_3\rangle$.
Let $\xi:=\Psi(v)\in\bP^1(\bQ)$ be the corresponding ideal vertex of
$T$.

Choose a sufficiently small horoball $\mathbb{B}\subset\bH^2$ based at $\xi$,
and set
$\mathbb{B}^*:=\mathbb{B}\cup\{\xi\}$.
Such sets form a neighborhood basis of $\xi$ for the Satake topology.
Moreover, after taking $\mathbb{B}$ sufficiently small, the cusp stabilizer
$\Stab_G(\xi)$ satisfies
$\mathbb{B}^*=\Stab_G(\xi)\cdot(\mathbb{B}^*\cap T)$.
By our choice of the homeomorphism $\Pi\to T$, the inverse image $W:=\Psi^{-1}(\mathbb{B}^*\cap T)\cap\Pi$
is a neighborhood of $v$ in $\Pi$. By equivariance, we have $\Stab_G(v)\cdot W\subset \Psi^{-1}(\mathbb{B}^*)$.
Thus it remains to show that $\Stab_G(v)\cdot W$ contains a neighborhood
of $v$ in $G\cdot\Pi$.

We verify this directly in the tropical skeleton.
Since $W$ is a neighborhood of $v$ in $\Pi$, there exists
$0<\epsilon<\frac12$ such that
$W_\epsilon:=\Pi\cap\{x_1>-\epsilon\}\subset W$.
The tropical Vieta formulas show that both $\tau_2$ and $\tau_3$
preserve the coordinate $x_1$. Hence
\[
\Stab_G(v)\cdot W_\epsilon
=
\bigl(\Stab_G(v)\cdot\Pi\bigr)\cap\{x_1>-\epsilon\}.
\]
Write $v_2=\left(-\frac12,-\frac12,0\right)$ and $
v_3=\left(-\frac12,0,-\frac12\right)$,
so that $\Pi=\operatorname{Conv}(v,v_2,v_3)$. On the set
\[
\Sigma_v:=\Sk^{\trop}(U^-_K)\cap\left\{x_1=-\tfrac12\right\},
\]
the involutions $\tau_2$ and $\tau_3$ act as the two reflections fixing 
$v_2$ and $v_3$, respectively. It follows that the
$\Stab_G(v)$-translates of the edge $\overline{v_2 v_3}$ cover the entire $\Sigma_v$. Thus 
$\Stab_G(v)\cdot\Pi$ is the join of $\{v\}$ and $\Sigma_v$.
Let $\ell_v^-:=\left\{(0,t,t)\mid t<-\frac12\right\}$
denote the open exceptional ray emanating from $v$. Then
\[
\bigl(\Stab_G(v)\cdot\Pi\bigr)\cup\ell_v^-
=
\Sk^{\trop}(U^-_K)\cap\left\{x_1\ge-\tfrac12\right\}.
\]
Therefore
\[
\bigl(\Stab_G(v)\cdot W_\epsilon\bigr)\cup\ell_v^-
=
\Sk^{\trop}(U^-_K)\cap\{x_1>-\epsilon\}.
\]
Since $G\cdot \Pi \subset \Sk^{\trop}(U_K^-)\setminus \ell_v^-$,
we conclude that
$\Stab_G(v)\cdot W_\epsilon$ contains the open neighborhood $(G\cdot\Pi)\cap\{x_1>-\epsilon\}$ of $v$ in $G\cdot\Pi$.
Hence $\Psi^{-1}(\mathbb{B}^*)$ contains an open neighborhood of $v$,
which proves that $\Psi$ is continuous at $v$. By $G$-equivariance,
$\Psi$ is continuous at every point of $G\cdot\VPi$.
\end{proof}

\begin{rem}
The continuous bijection $\Psi$ in Theorem~\ref{thm:satake} is
not a homeomorphism. Indeed, using the $G$-equivariant homeomorphism
$\Phi^-$, we work in $\Sk^{\trop}(U^-_K)$. Let $v=(0,-\tfrac12,-\tfrac12)\in\VPi$
and set $g:=\tau_2\tau_3\in\Stab_G(v)$. For $k\geq 3$, consider
$\ux_k:=(-\tfrac{1}{k},-\tfrac12,-\tfrac12+\tfrac{1}{k})\in\Pi^\circ$.
A direct computation from the tropical Vieta formulas gives
\[
g^m(\ux_k)
=
(-\tfrac{1}{k},-\tfrac12-\tfrac{2m}{k},
-\tfrac12-\tfrac{2m-1}{k})
\qquad \textrm{for }m\ge1.
\]
Thus we have $\ux_{k}\to v$ as $k\to\infty$, while
$g^k(\ux_{k})
\to
(0,-\tfrac52,-\tfrac52)\neq v$.
On the other hand, if $\xi:=\Psi(v)$, then
$\Psi(\ux_{k})\to\xi$. Since $g$ corresponds to a parabolic element
fixing $\xi$, and cusp neighborhoods of $\xi$ in the Satake topology
may be chosen $\Stab_G(\xi)$-invariant, we also have
$\Psi\bigl(g^k(\ux_{k})\bigr)
=
\rho(g)^k\Psi(\ux_{k})
\longrightarrow \xi$.
Thus $\Psi^{-1}$ is not continuous at $\xi$, and hence $\Psi$ is not a
homeomorphism. 
\end{rem}

\begin{thm}\label{thm:markov-not-locally-closed}
For any finite triangulation of $\cD(U)$, there exists an open simplex $C^\circ$ such that $
\cD^{\SV}(U)\cap C^\circ$
is not open in its closure in $C^\circ$. In particular,
$\cD^{\SV}(U)$ is not locally closed with respect to any finite triangulation
of $\cD(U)$.
\end{thm}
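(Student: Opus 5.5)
We will locate the failure of local closedness at a vertex $v\in\VPi$, say $v=(0,-\tfrac12,-\tfrac12)$ in the tropical skeleton via $\Phi^-$. Our candidate witness is a sequence of exceptional rays accumulating onto a neighborhood of $v$ from inside the open $2$-cell of the triangulation that contains the non-special ray $\ell_v^-$. The key structural input is Theorem~\ref{thm:markov-special}: $\cD^{\SV}(U)\cap\cD^- = G\cdot\Pi$ is dense in $\cD^-$, and its complement there is the union of the open exceptional rays $\ell^-_{p,q,i}$. Each such ray is a segment of a rational line, and the rays accumulate because there are infinitely many coprime pairs $(p,q)$.

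\textbf{Step 1.} Fix a finite triangulation. Since $\cD(U)\cong\bS^2$ and the triangulation is finite, some closed $2$-simplex $C$ meets infinitely many of the rays $\ell^-_{p,q,i}$ in segments of positive length. We will try to arrange that the open simplex $C^\circ$ contains a point $u\in\cD^-\setminus\cD^{\SV}(U)$ that is a limit of points of $\cD^{\SV}(U)\cap C^\circ$. Density guarantees that such a $u$ lies in the closure of $\cD^{\SV}(U)\cap C^\circ$ in $C^\circ$.

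\textbf{Step 2.} The right witness is instead a special point $s$, for instance a point of $G\cdot\VPi$ in $C^\circ$, or a cusp-type point, such that every neighborhood of $s$ in $C^\circ$ meets exceptional rays. Those ray points are non-special, yet they lie in the closure of $\cD^{\SV}(U)\cap C^\circ$ because $\cD^{\SV,-}$ is dense. Then $\cD^{\SV}(U)\cap C^\circ$ cannot be open in its closure near $s$.

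\textbf{Step 3.} To produce $s$, we use that $G\cdot\VPi$ maps bijectively to $\bP^1(\bQ)$, which is infinite, so its image is an infinite set in $\cD^-$. Since $\cD(U)$ is compact with finitely many cells, infinitely many of these vertices lie in a single open simplex $C^\circ$ of some dimension. Lower-dimensional cells containing infinitely many vertices reduce to the same argument restricted to that cell. Each vertex $g\cdot v$ is the endpoint of the open exceptional ray $g\cdot\ell_v^-$, which consists of non-special points accumulating to the special point $g\cdot v$. The rays $\ell^-_{p,q,i}$ near their endpoints either lie in $C^\circ$ or leave it. For a vertex in the interior of a $2$-cell, the ray's initial segment is inside $C^\circ$. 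This ray is non-special and lies in the closure of $\cD^{\SV}(U)\cap C^\circ$: by density, and since $\Stab_G(v)\cdot\Pi\cup\ell_v^-$ fills a neighborhood of $v$, as shown in the proof of Theorem~\ref{thm:satake}. So any neighborhood of $g\cdot v$ in the closure contains non-special points, and local closedness fails at $g\cdot v$.

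\textbf{Main obstacle.} The delicate case is when all but finitely many of the vertices lie on $1$-cells or vertices of the triangulation, so that the exceptional ray is transverse to the cell. Here we will exploit that a $1$-cell is a rational PL segment while the vertices $G\cdot\VPi$ accumulate (e.g.\ at $\cD^0$). A segment containing infinitely many cusp points must cross infinitely many translates $g\cdot\Pi$ and rays. On that segment, the intersections with the rays are isolated non-special points lying between special points, which suffices once we check that the segment is not itself contained in a single exceptional line.

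\textbf{Cone counterexample.} For the final statement, we take the affine cone over the log CY-Fano compactification. The Koll\'ar valuations on this cone correspond to special valuations of the base via the standard cone correspondence of \cite{LX24}, so the failure of simplexwise local closedness transfers to the $4$-dimensional cone.
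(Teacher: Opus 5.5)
\textbf{Gap: the case where every cusp lies on the $1$-skeleton.} There is a genuine gap exactly in the case you flag as the main obstacle, namely when every point of $G\cdot\VPi$ lies in the $1$-skeleton $\mathscr T^{(1)}$. Your $2$-cell argument in Step~3 is correct and matches the paper. A single cusp in an open $2$-simplex already suffices, since near it $\cD^{\SV}(U)$ is a disk minus the ray $\ell^-_v$.

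However, the assertion that lower-dimensional cells ``reduce to the same argument'' is false, and your proposed fix for edges does not work. Write $A:=\cD^{\SV}(U)\cap e^\circ$. Openness of $A$ in its closure fails only at a point of $A$ that is a limit of points of $\overline A\setminus A$. Isolated non-special points lying between special points belong to $\overline A\setminus A$, but by themselves they witness nothing.

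\textbf{Why an edge cannot supply the failure.} On an edge in $\overline{\cD^-}$, the non-special points are the crossings with the rays $\ell^-_{p,q,i}$. After normalizing $\gamma_1+\gamma_2=1$, such a ray is the segment $3-\tfrac{2}{p+q}<\gamma_3<3$. So these crossings can only accumulate on $\cD^0$, which contains no special point. Near a cusp on $e$ that is transverse to its ray, $e$ meets only special points.

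Hence an edge can satisfy local closedness even when it carries infinitely many cusps. For example, consider the segment $\{[v'_{\gamma,i}]\mid \gamma_3=3\gamma_1+2\gamma_2\}$.
\begin{itemize}
\item It contains the cusps $[v'_{(p,2,3p+4),i}]$ for every odd $p$.
\item Its open interior lies in $\cD^-$.
\item Its only non-special interior points are $[v'_{(p,1,3p+2),i}]$ for $p\ge 1$.
\item These form a closed discrete subset of the open segment, accumulating only at the endpoint $[v'_{(1,0,3),i}]\in\cD^0$.
\end{itemize}
So no argument confined to a single $1$-cell can produce the failure.

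\textbf{The missing idea.} The delicate case cannot occur at all, and seeing this requires a global argument. Suppose $G\cdot\VPi\subset\mathscr T^{(1)}$. The set $\mathscr T^{(1)}$ is closed, and by Lemma~\ref{lem:vertex-accumulation} the accumulation set of $G\cdot\VPi$ is all of $\cD^0$. This holds because the cusps are $[v'_{(\frac{p}{p+q},\frac{q}{p+q},3-\frac{2}{p+q}),i}]$ and $\frac{p}{p+q}$ is dense in $[0,1]$. Hence $\cD^0\subset\mathscr T^{(1)}$.

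Now choose $x\in\cD^0$ in the relative interior of an edge. On a small neighborhood $W$ of $x$ we have $W\cap\mathscr T^{(1)}=W\cap\cD^0$. Yet cusps not lying on $\cD^0$ accumulate at $x$, and by assumption they lie in $\mathscr T^{(1)}$, which is a contradiction. Therefore some cusp lies in an open $2$-simplex, and your Step~3 finishes the proof.

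Your cone paragraph is not needed for this statement; it belongs to Theorem~\ref{thm:counterex-LX24}. As written it also omits the construction of a complement with $o$ as unique lc center, as well as the slicing argument.
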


\begin{proof}
Fix a finite triangulation $\mathscr T$ of $\cD(U)$, and suppose, to the
contrary, that for every open simplex $C^\circ$ of $\mathscr T$, the set $\cD^{\SV}(U)\cap C^\circ$
is open in its closure in $C^\circ$. Denote by $\mathscr T^{(1)}$ the
$1$-skeleton of $\mathscr T$.

We first claim that $G\cdot\VPi\subset \mathscr T^{(1)}$.
Indeed, let $v\in G\cdot\VPi$. By the description of the exceptional
rays in Proposition~\ref{prop:exc-rays}, together with the local computation in the
proof of Theorem~\ref{thm:satake}, a sufficiently small neighborhood of $v$ in $\cD^-$ meets exactly one open exceptional ray $\ell_v^-$ emanating
from $v$. Hence a small open neighborhood of $v\in \cD^{\SV}$ is homeomorphic to
$\bR^2\setminus\bR_{>0}$, with the origin corresponding to $v$.
If $v$ belonged to an open $2$-simplex $C^\circ$ of $\mathscr T$, it
would follow that $\cD^{\SV}(U)\cap C^\circ$ is not open in its closure
in $C^\circ$, a contradiction. Thus $v\in\mathscr T^{(1)}$, proving the
claim.

Since $\mathscr T$ is finite, $\mathscr T^{(1)}$ is closed. On the other
hand, the accumulation set of $G\cdot\VPi$ is precisely
$\cD^0$ by Lemma \ref{lem:vertex-accumulation}.
Therefore $\cD^0\subset \mathscr T^{(1)}$.
Choose a point $p\in\cD^0$ lying in the relative interior of an edge of $\mathscr T$ and away from the vertices of $\mathscr T$. Since
$\cD^0\subset\mathscr T^{(1)}$ and $\mathscr T$ is finite, after shrinking to a sufficiently small neighborhood $W$ of $p$ we may assume
that
$W\cap\mathscr T^{(1)}=W\cap\cD^0$.
However, $p$ is an accumulation point of $G\cdot\VPi$. Hence $W$ contains a point $q\in G\cdot\VPi$. Since
$G\cdot\VPi\subset\mathscr T^{(1)}$ and $
G\cdot\VPi\cap\cD^0=\emptyset$, we know that $q \in (W\cap \mathscr{T}^{(1)})\setminus (W\cap \cD^0)$, which  contradicts the equality above.
Therefore, some open simplex $C^\circ$ of $\mathscr T$ satisfies that
$\cD^{\SV}(U)\cap C^\circ$ is not open in its closure in $C^\circ$.
\end{proof}

\begin{lem}\label{lem:vertex-accumulation}
The accumulation set of $G\cdot\VPi$ in $\cD(U)$ is precisely
$\cD^0$.
\end{lem}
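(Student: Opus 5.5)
Two inclusions are needed: every accumulation point of $G\cdot\VPi$ lies in $\cD^0$, and every point of $\cD^0$ is an accumulation point. I will work throughout in coordinates. On $\cD^-$ I use the $G$-equivariant PL homeomorphism $\Phi^-$ of Theorem~\ref{thm:trop-iso-action}. On $\cD^0$ and $\cD^+$ I use the $v'_{\gamma,i}$ charts. By Proposition~\ref{prop:exc-rays} and the proof of Proposition~\ref{prop:exc-lines-fg}, $G\cdot\VPi$ is exactly the set of endpoints
\[
v'_{(p,q,3p+3q-2),i}
\]
of the exceptional lines, where $p,q$ run over coprime nonnegative integers and $1\le i\le 3$. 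This explicit list is the key input.

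\textbf{Accumulation points lie in $\cD^0$.} First I show that $G\cdot\VPi$ is discrete in $\cD^-$. Since $\Phi^-$ is a homeomorphism, this means the points
\[
\Phi^-\bigl(v'_{(p,q,3p+3q-2),i}\bigr)=-\tfrac12\,(q,p,p+q)
\]
(and the permutations of these coordinates for other $i$) form a discrete set. They are integer multiples of $\tfrac12$, so this is immediate. Hence no point of $\cD^-$ is an accumulation point. Since $\cD^+$ is open and disjoint from $G\cdot\VPi\subset\cD^-$, no point of $\cD^+$ is an accumulation point either. Because $\cD(U)=\cD^+\sqcup\cD^0\sqcup\cD^-$, the accumulation set is contained in $\cD^0$.

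\textbf{Every point of $\cD^0$ is an accumulation point.} Projectivize in the chart $v'_{\gamma,i}$ by normalizing $\gamma_1+\gamma_2=1$. The vertex $v'_{(p,q,3p+3q-2),i}$ then becomes the point
\[
\Bigl(\tfrac{p}{p+q},\ \tfrac{q}{p+q},\ 3-\tfrac{2}{p+q}\Bigr).
\]
Now take a point of $\cD^0$, which in this normalization is $(s,1-s,3)$ with $s\in[0,1]$. Choose coprime pairs $(p,q)$ with $p+q\to\infty$ and $p/(p+q)\to s$; this is possible by density of the rationals. The corresponding vertices converge to $(s,1-s,3)$. They are pairwise distinct, so the limit is a genuine accumulation point.

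\textbf{Remaining check.} I need this convergence to take place in the topology of $\cD(U)$. It does, because the chart $\{v'_{\gamma,i}:\gamma\in\sigma_0\}$ is a closed cell of the PL structure, so convergence of the $\gamma$-coordinates gives convergence in $\cD(U)$. The points of $\cD^0$ on the shared edges $\gamma_1=0$ or $\gamma_2=0$ are limits taken within the same cell, with $p=0$ or $q=0$ allowed, so they need no separate treatment. No real obstacle is expected. The only care needed is to confirm the identification of $G\cdot\VPi$ with the full set of endpoints, which was quoted from \cite[Theorem 9.3 and Proposition 9.4]{Jan23}.
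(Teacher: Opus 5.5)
Your proof is correct. The half showing that every point of $\cD^0$ is an accumulation point is the same as the paper's argument: normalize $\gamma_1+\gamma_2=1$ and use density of $p/(p+q)$ with $p+q\to\infty$.

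For the reverse inclusion you take a different route. The paper stays inside a single closed cell. It takes a convergent sequence of distinct endpoints and fixes $i$ after passing to a subsequence. Only finitely many coprime pairs have bounded $p+q$, so $p+q\to\infty$. It then reads off $\gamma_3=3(\gamma_1+\gamma_2)$ for the limit from the normalized coordinates $(\frac{p}{p+q},\frac{q}{p+q},3-\frac{2}{p+q})$.

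You instead normalize by $v(f)=-1$ via $\Phi^-$. There the endpoints become points of $\frac12\bZ^3$, so $G\cdot\VPi$ has no accumulation points in the open set $\cD^-$. You then rule out $\cD^+$ because it is open and disjoint from $G\cdot\VPi$. Two small precisions here. What you need, and what the lattice argument actually gives, is that $G\cdot\VPi$ is closed and discrete in $\cD^-$, not merely discrete. And openness of $\cD^-$ is what lets you pass from accumulation in $\cD(U)$ to accumulation inside $\cD^-$.

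Your version gives a cleaner structural picture: the vertex orbit is a uniformly discrete lattice set in tropical coordinates, so it can only accumulate on the frontier $\partial\cD^-=\cD^0$. The cost is invoking the homeomorphism $\Phi^-$ and the decomposition into $\cD^\pm$. The paper's version uses only the explicit list of endpoints and a single coordinate normalization, and the same computation handles both inclusions.
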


\begin{proof}
By Proposition~\ref{prop:exc-rays}, the set $G\cdot\VPi$ consists of the endpoints
$[v'_{(p,q,\,3p+3q-2),i}]$ for $1\le i\le 3$,
where $p,q\in\bZ_{\geq 0}$ are coprime. After rescaling, these points can be written as
\[
\left[
v'_{\left(
\frac{p}{p+q},
\frac{q}{p+q},
3-\frac{2}{p+q}
\right),i}
\right].
\]
On the other hand, the $i$-th edge of $\cD^0$ is
\[
\left\{
\left[v'_{(t,1-t,3),i}\right]
\ \middle|\
0\leq t\leq 1
\right\}.
\]
Since the fractions $\frac{p}{p+q}$ with $p,q$ coprime are dense in
$[0,1]$, and $3-\frac{2}{p+q}\to 3$ as $p+q\to\infty$,
every point of $\cD^0$ is an accumulation point of $G\cdot\VPi$.

Conversely, let a sequence of distinct points of $G\cdot\VPi$
converge in $\cD(U)$. After passing to a subsequence, we may assume
that the index $i$ is fixed. Since there are only finitely many
coprime pairs $(p,q)$ with $p+q$ bounded, we must have
$p+q\to\infty$. After the above normalization, any limit point $[v_{\gamma,i}']$ satisfies $\gamma_3=3(\gamma_1+\gamma_2)$,
hence lies in $\cD^0$. Thus the accumulation set of
$G\cdot\VPi$ is exactly $\cD^0$.
\end{proof}

\begin{proof}[Proof of Theorem \ref{thm:intro-markov}]
This follows directly from Theorems \ref{thm:markov-special} and \ref{thm:satake}.
\end{proof}

\begin{proof}[Proof of Theorem~\ref{thm:counterex-LX24}]
The first statement is precisely Theorem~\ref{thm:markov-not-locally-closed}. We prove the local statement
by applying the cone construction as in \cite[Example~3.5]{LX24}.

Recall that $(X,D;0):=(\bP^3,\overline S+H;0)$ is the log CY-Fano compactification of $U$ considered above. Write
\[
F:=w\bigl(xyz+(x^2+y^2+z^2)w\bigr),
\]
so that $D=(F=0)$ and $\deg F=4$. Consider the affine cone
$Z:=\bA^4_{x,y,z,w}$ over $(X=\bP^3, \cO(1))$
with vertex $o$, and let
$D_Z:=(F=0)\subset Z$
be the cone over $D$.

For $u\in U^{\trop}=\LCP(X,D)$, define a valuation
$\widehat u=(\ord_o,u)$ on $Z$ by
\[
\widehat u\left(\sum_{m\geq 0}g_m\right)
:=
\min_{g_m\neq 0}\{m+u(g_m)\},
\]
where $g_m$ is homogeneous of degree $m$. Since $u$ is an lc place of $(X,D)$, we have
$A_X(u)=u(D)$,
and therefore
\[
A_Z(\widehat u)
=
4+A_X(u)
=
4+u(D)
=
\widehat u(F).
\]
Thus $\widehat u$ is an lc place of $(Z,D_Z)$ centered at $o$. 

Fix an integer $\ell>4$ and set $
\mathfrak a:=(F,\mathfrak m_o^\ell)$.
Choose sufficiently many general generators
$h_1,\ldots,h_N\in\mathfrak a$, and let
\[
D_Z':=\frac1N\sum_{j=1}^N(h_j=0).
\]
As in \cite[Example~3.5]{LX24}, $D_Z'$ is a $\bQ$-complement of the
smooth germ $o\in Z$ for which $o$ is the unique lc center. Moreover,
\[
\cD(Z,D_Z')
=
\left\{
\widehat u
\ \middle|\
u\in U^{\trop},\ A_X(u)\leq \ell-4
\right\}.
\]
This is because $\widehat u(\mathfrak a)
=
\min\{4+A_X(u),\ell\}$
whereas $A_Z(\widehat u)=4+A_X(u)$. On the other hand, if \(v\) is an lc place of \((Z,D'_Z)\), then
\[
A_Z(v)=v(\mathfrak a)\le v(F)\le A_Z(v), 
\]
because \((Z,D_Z)\) is lc. Hence equality holds throughout, so \(v\) is an lc place of \((Z,D_Z)\); the cone description then gives that $v$ and $\widehat u\) are the same after scaling.

The same cone construction identifies the Koll\'ar valuations among these
lc places. More precisely,
$\widehat u$ is a Koll\'ar valuation if and only if $
u$ is special. Indeed, the associated graded ring of $\widehat u$ is the
affine cone over the projective degeneration induced by $u$;  its $\Proj$ is precisely the degeneration used in the
definition of specialness. Hence
\[
\cD^{\rm KV}(Z,D_Z')
=
\{\ord_o\}
\cup
\left\{
\widehat u
\ \middle|\
[u]\in\cD^{\SV}(U),\
0<A_X(u)\leq\ell-4
\right\}.
\]
In other words, $\cD(Z,D_Z')$ is a truncated cone over $\cD(U)$ and
$\cD^{\rm KV}(Z,D_Z')$ is the corresponding truncated cone over
$\cD^{\SV}(U)$.

Suppose now that \cite[Conjecture~1.8(1)]{LX24} holds for $(Z,D_Z')$, and
let $\mathscr T$ be a rational triangulation satisfying the asserted
local closedness property. Choose
$0<c<\ell-4$
and consider the transverse slice
\[
\Sigma_c
:=
\{\widehat u\in\cD(Z,D_Z')\mid A_X(u)=c\}.
\]
Since every nontrivial ray in $U^{\trop}$ meets $\{A_X=c\}$ exactly once,
there are natural PL identifications
\[
\Sigma_c\cong\cD(U),
\qquad
\Sigma_c\cap\cD^{\rm KV}(Z,D_Z')
\cong\cD^{\SV}(U).
\]
Intersecting $\mathscr T$ with $\Sigma_c$ and subdividing the resulting
polyhedral decomposition gives a finite triangulation of $\cD(U)$. The
simplexwise local closedness property is preserved under taking this slice
and under subdivision. Hence for every open simplex $C^\circ$ of the
resulting triangulation, $\cD^{\SV}(U)\cap C^\circ$
would be open in its closure in $C^\circ$, contradicting
Theorem~\ref{thm:markov-not-locally-closed}.

Thus \cite[Conjecture~1.8(1)]{LX24} fails. Since
\cite[Conjecture~1.8(1')]{LX24} implies \cite[Conjecture~1.8(1)]{LX24},
it fails as well.
\end{proof}

\section{Discussions}\label{sec:discussions}

\subsection{Toward a non-Archimedean cone conjecture}

Let $U$ be an affine log CY variety. Let $U^{\SV,\mathrm{top}}\subset U^{\SV}$ denote the union of all
special geodesic cones of dimension $\dim U^{\trop}$, and denote its projectivization by $\cD^{\SV, \mathrm{top}}(U)\subset \cD^{\SV}(U)$.
We call them the top-dimensional special cone and the top-dimensional
special skeleton, respectively.

The examples in Sections~\ref{sec:surfaces} and~\ref{sec:Markov} suggest that the top-dimensional special skeleton may be
a natural domain on which the discrete dynamics of automorphisms become
polyhedral. In Examples~\ref{ex:nodal-cubic} and~\ref{ex:line-parabola}, as well as for the Markov cubic
complement threefold in Section \ref{sec:Markov}, the top-dimensional special skeleton is generated by finitely many geodesic
chambers up to the relevant automorphism group. On the other hand, Example~\ref{ex:cayley-cubic} suggests that sufficiently
wild automorphism dynamics may force the (top-dimensional) special skeleton to disappear:  every orbit on the dual complex is dense,
while $\cD^{\SV}(U)=\emptyset$. 

The following question is motivated by a possible non-Archimedean analogue
of the Kawamata--Morrison cone conjecture \cite{Mor93, Kaw97}, with the top-dimensional special skeleton
playing the role of the relevant cone.

\begin{que}\label{que:cone}
Assume that $U$ is an affine log CY variety of coregularity zero that admits no positive-dimensional torus action. Does the action of $\Aut(U)$ on $\cD^{\SV, \mathrm{top}}(U)$ admit a
rational polyhedral fundamental domain that is a finite union of special
geodesic simplices?
\end{que}

The coregularity-zero assumption ensures that $\cD(U)$ has the maximal
possible dimension $\dim U-1$, so that the question concerns
full-dimensional geodesic chambers.
There are two reasons for excluding torus actions from the question.
First, the algebraic torus itself behaves quite differently: for
$\bT=\bG_m^n$ one has
$\bT^{\SV}=\bT^{\rfg}=\bT^{\trop}\cong N_{\bR}$,
and the essential dynamics come from the linear $\GL(N)$-action; see Example \ref{ex:torus}.
Second, more generally, when $U$ admits a positive-dimensional torus
action, one can factor out the torus directions and
study a lower-dimensional torus-free reduction, which should capture the
essential discrete dynamics on the special skeleton.

The automorphism group need not be the optimal symmetry group for such a statement. In analogy with the usual Kawamata--Morrison cone conjecture, one may seek a larger group $\Adm(U)$ of admissible monodromy transformations arising from locally trivial families of affine log CY varieties; see \cite{GHK15, Li25}.
Such a group, if suitably defined, should act globally on the special
skeleton. This could be important, for instance, when $\Aut(U)$ is relatively small while
$\cD^{\SV,\mathrm{top}}(U)$ contains many special geodesic chambers. We leave the
definition of $\Adm(U)$ and the corresponding refinement of
Question~\ref{que:cone} to future work.


There is also a possible connection with Corti's conjectural picture for
toric specializations of klt Fano varieties \cite{Cor25}. Corti expects
such toric specializations to arise from torus charts of a cluster-type
mirror, with different charts related by mutations. Under the coregularity-zero assumption above,
top-dimensional special geodesic chambers come from toric special
degenerations, and it would be interesting to understand whether they
admit a similar mutation-theoretic description.

\subsection{Relation with cluster complexes}

More broadly, there is a natural mirror-symmetric interpretation of the special cone. In the expected mirror picture, integral tropical points $U^{\trop}(\bZ)$ of an
affine log CY variety $U$ index theta functions on its mirror
$U^\vee$. Thus one may view $U^{\SV}(\bZ)$ as singling out
a distinguished subset of theta functions on the mirror. It would be
interesting to understand the mirror-theoretic meaning of this subset.

When $U$ is a cluster variety, this suggests a more concrete
connection with the Fock--Goncharov cluster complex of the Fock--Goncharov 
dual cluster variety $U^\vee$; see \cite{GHKK18}. Indeed, we expect that the cluster complex of $U^\vee$ coincides with $U^{\SV, \mathrm{top}}$. This would also
relate the cluster modular group to the automorphism or monodromy
actions considered in the previous subsection. We leave a precise
formulation of this correspondence to future work.


\subsection{Finite generation and maximality}
Let $U$ be an affine log CY variety with a log CY-Fano compactification
$(X,D;\Delta)$. Choose $l\in \bN$ sufficiently divisible such that $L:=-l(K_X+\Delta)$
is Cartier, and set
\[
A:=\Gamma(U,\cO_U),\qquad R:=R(X,L).
\]
Although finite generation of a valuation is defined using the section
ring $R$, Theorem~\ref{thm:main} shows that it is intrinsic to $U$. It is
therefore natural to ask whether it can be detected directly from the
affine coordinate ring $A$.

\begin{conj}\label{conj:fg-affine}
    For any valuation $v\in U^{\trop}$, we have that $\gr_v R$ is finitely generated if and only if $\gr_v A$ is finitely generated.
\end{conj}

The forward implication of Conjecture \ref{conj:fg-affine} follows easily from the localization construction of
the Rees algebra, while the converse seems more difficult.




In view of Theorem~\ref{thm:maximality-intro}, it is natural to ask whether
maximality alone might already imply finite generation.

\begin{conj}\label{conj:max-fg}
Let $v\in U^{\trop}$ be a valuation. If $v$ is maximal with respect to $\preceq$, then $v$ is finitely generated.
\end{conj}

Together with Theorem~\ref{thm:maximality-intro}, Conjecture~\ref{conj:max-fg}
would characterize special valuations as precisely the maximal elements
of $U^{\trop}$. Via the cone construction, it is also closely related to
the finite generation problem for valuations computing log canonical
thresholds of graded sequences of ideals; see
\cite[Question~6.24]{Zhu25}.

\bibliography{ref}

\end{document}